\theoremstyle{plain}
\newtheorem{thm}{Theorem}[section]
\newtheorem{cor}[thm]{Corollary}
\newtheorem{prop}[thm]{Proposition}
\newtheorem{lem}[thm]{Lemma}
\theoremstyle{definition}
\newtheorem{dfn}[thm]{Definition}
\newtheorem{rem}[thm]{Remark}
\newtheorem*{nota}{Notation}
\newtheorem*{claim}{Claim}
\newtheorem*{oftp}{Organization of this paper}
\newtheorem*{ack}{Acknowledgments}
\numberwithin{thm}{section}
\numberwithin{equation}{section}
\newcommand{\Zpn}{\mathbb{Z}_{>0}}
\newcommand{\Znn}{\mathbb{Z}_{\geq 0}}
\newcommand{\Z}{\mathbb{Z}}
\newcommand{\Q}{\mathbb{Q}}
\newcommand{\Qp}{{\Q}_p}
\newcommand{\C}{\mathbb{C}}
\newcommand{\F}{\mathbb{F}}
\newcommand{\Fp}{\mathbb{F}_p}
\newcommand{\Fl}{\mathbb{F}_{\ell}}
\newcommand{\A}{\mathbb{A}}
\newcommand{\G}{\mathbb{G}}
\newcommand{\bE}{\mathbf{E}}
\newcommand{\bM}{\mathbf{M}}
\newcommand{\bT}{\mathbf{T}}
\renewcommand{\hbar}{\overline{h}}
\newcommand{\E}{\mathbf{E}}
\newcommand{\bC}{\mathbf{C}}
\newcommand{\bK}{\mathbf{K}}
\newcommand{\bZ}{\mathbf{Z}}
\newcommand{\bj}{\mathbf{j}}
\newcommand{\cC}{\mathcal{C}}
\newcommand{\cD}{\mathcal{D}}
\newcommand{\cH}{\mathcal{H}}
\newcommand{\fA}{\mathfrak{A}}
\newcommand{\fS}{\mathfrak{S}}
\DeclareMathOperator{\Ker}{Ker}
\DeclareMathOperator{\Ima}{Im}
\DeclareMathOperator{\Hom}{Hom}
\DeclareMathOperator{\Aut}{Aut}
\DeclareMathOperator{\ord}{ord}
\DeclareMathOperator{\Gal}{Gal}
\DeclareMathOperator{\GL}{GL}
\DeclareMathOperator{\PGL}{PGL}
\DeclareMathOperator{\PSL}{PSL}
\DeclareMathOperator{\N}{N}
\DeclareMathOperator{\Spec}{Spec}
\DeclareMathOperator{\red}{red}
\DeclareMathOperator{\Res}{Res}
\DeclareMathOperator{\Cor}{Cor}
\DeclareMathOperator{\Inf}{Inf}
\DeclareMathOperator{\diag}{diag}
\DeclareMathOperator{\Br}{Br}
\DeclareMathOperator{\Pic}{Pic}
\DeclareMathOperator{\sep}{sep}
\DeclareMathOperator{\der}{der}
\DeclareMathOperator{\Stab}{Stab}
\DeclareMathOperator{\Ind}{Ind}
\DeclareMathOperator{\pr}{pr}
\DeclareMathOperator{\Ad}{Ad}
\DeclareMathOperator{\Fix}{Fix}
\DeclareMathOperator{\et}{\acute{e}t}
\DeclareMathOperator{\set}{set}
\DeclareSymbolFont{cyrletters}{OT2}{wncyr}{m}{n}
\DeclareMathSymbol{\Sha}{\mathalpha}{cyrletters}{"58}
\title[Hasse norm principle]{The Hasse norm principle for some extensions of degree having square-free prime factors}
\author[Y.~Oki]{Yasuhiro Oki}
\address{Department of Mathematics, College of Science, Rikkyo University, 3-34-1, Nishi-Ikebukuro, Toshima-ku, Tokyo, 171-8501, Japan. }
\email{oki@rikkyo.ac.jp}
\subjclass[2020]{Primary 11E72; Secondary 20C10}
\begin{document}

\begin{abstract}
We determine the structure of the obstruction group of the Hasse norm principle for a finite separable extension $K/k$ of a global field of degree $d$, where $d$ has a square-free prime factor $p$ and a $p$-Sylow subgroup of the Galois group $G$ of the Galois closure of $K/k$ is normal in $G$. Specifically, we give a partial classification of the validity of the Hasse norm principle for $K/k$ in the case where (1) $[K:k]=p\ell$ where $p$ and $\ell$ are two distinct prime numbers; or (2) $[K:k]=4p$ where $p$ is an odd prime. The result (1) gives infinitely many new existences of finite extensions of arbitrary number fields for which the Hasse norm principle fail. Furthermore, we prove that there exist infinitely many separable extensions of square-free degree for which the exponents of the obstruction groups to the Hasse norm principle are not prime powers. 
\end{abstract}

\maketitle

\tableofcontents

\section{Introduction}\label{sect:intr}

Let $k$ be a global field, and $K/k$ a finite separable field extension. Put
\begin{equation*}
\Sha(K/k):=(k^{\times}\cap \N_{K/k}(\A_{K}))/\N_{K/k}(K^{\times}),
\end{equation*}
where $\A_{K}^{\times}$ is the id{\`e}le group of $K$. In the paper of Scholz (\cite{Scholz1940}), the group $\Sha(K/k)$ is denoted by $\kappa(K/k)$ and is called the \emph{number knot}. We say that the \emph{Hasse norm principle holds for $K/k$} if
\begin{equation*}
\Sha(K/k)=1. 
\end{equation*}
The study of $\Sha(K/k)$ is one of the classic problems in algebraic number theory. It is known by Hasse (\cite{Hasse1931}) that the Hasse norm principle holds for all cyclic extensions of $k$. On the other hand, he also clarified in the same paper that the Hasse norm principle fails for the biquadratic extension $\Q(\sqrt{-39},\sqrt{-3})/\Q$. After that, the classification of the validity of the Haase norm principle is developed by many people. In 1967, Tate (\cite{Tate1967}) gave a description of $\Sha(K/k)$ for a finite Galois extension $K/k$ by means of cohomology of the associated Galois group. On the other hand, it is much less known if $K/k$ is Galois. For a non-Galois extension $K/k$, we denote by $\widetilde{K}/k$ the Galois closure of $K/k$. There exist three strategies for the study for non-Galois extensions as follows. 
\begin{enumerate}
\item Investigate certain families of finite extensions indexed by (some) positive integers. 
\item For a fixed $d\in \Zpn$, study all finite separable extensions of degree $d$. 
\item For a given finite group $G$, consider all finite separable extensions $K/k$ that admit an isomorphism $\Gal(\widetilde{K}/k)\cong G$. 
\end{enumerate}

First, we enumerate the results that follow (i). If $[K:k]=d$ and $\Gal(\widetilde{K}/k)$ is isomorphic to the dihedral group of order $2d$, then Bartels gave that the Hasse norm principle holds for $K/k$ (\cite{Bartels1981b}). The same assertion has been proved by Voskresenskii and Kunyavskii in the case $[K:k]=d$ and that $\Gal(\widetilde{K}/k)$ is isomorphic to the symmetric group of $d$-letters (\cite{Voskresenskii1984}). Furthermore, Macedo provided the same assertion as above in the case $[K:k]=d\geq 5$ and $\Gal(\widetilde{K}/k)$ is isomorphic to the alternating group of $d$-letters (\cite{Macedo2020}). 

Second, we recall the previous study based on (ii). If $d$ is a prime number, then Bartels (\cite{Bartels1981a}) proved that the Hasse norm principle always holds. After that, Kunyavskii gave an equivalent condition for the validity of the Hasse norm principle in the case $d=4$ (\cite{Kunyavskii1984}). The same results as above are given by Drakokhrust and Platonov for $d=6$ (\cite{Drakokhrust1987}), and by Hoshi, Kanai and Yamasaki for $d\leq 15$ (\cite{Hoshi2022}, \cite{Hoshi2023}). There also exists a partial result in the case $d=16$, which is given by Hoshi, Kanai and Yamasaki (\cite{Hoshi2025}). Note that there is a possibility that the Hasse norm principle may fail if $d$ is a composite number $\leq 16$. However, as $d$ increases, this approach will become extremely difficult to implement since their results are based on case-by-case computation. 

Finally, we list the research that is in line with (iii). First, the case where $G$ is a symmtetic group or an alternating group is discussed by Macedo and Newton (\cite{Macedo2022}). In particular, it implies that $\Sha(K/k)$ is isomorphic to $\Z/2$ for certain finite extension of degree $30$, $70$ or $210$. On the other hand, the same considerations as above were made by Hoshi, Kanai and Yamasaki in the case that $G$ is the Mathieu group $M_{11}$ or the Janko group $J_{1}$ (\cite{Hoshi2023b}). Note that their study gives an isomorphism $\Sha(K/k)\cong \Z/2$ which may happen when $n=110$ and $G_{0}=M_{11}$. 

In this paper, we study the Hasse norm principle for finite extensions based on (i), however we further assume that $d$ has a square-free prime factor. Note that this study also gives a partial result related to (ii) for infinitely many $d$. Moreover, we can give new examples of the failure of the Hasse norm principle for \emph{all} number fields and some function fields. 

\subsection{Main theorems}

To state our theorem, we use the notations as follows for a finite group $G$ and its subgroup $H'$: 
\begin{itemize}
\item $N_{G}(H'):=\{g\in G\mid gH'g^{-1}=H'\}$ is the normalizer of $H'$ in $G$; 
\item $Z_{G}(H'):=\{g\in G\mid gh'=h'g\text{ for all }h'\in H'\}$ is the centralizer of $H'$ in $G$; 
\item $N^{G}(H'):=\bigcap_{g\in G} gH'g^{-1}$, the normal core of $H'$ in $G$; 
\item $[H',G]:=\langle hgh^{-1}g^{-1}\in G \mid h\in H',g\in G \rangle$. 
\end{itemize}

On the other hand, for a finite abelian group $A$ and a prime number $p$, let
\begin{gather*}
A[p^{\infty}]:=\{a\in A\mid p^{n}a=0\text{ for some }n\in \Zpn\},\\
A^{(p)}:=\{a\in A\mid na=0\text{ for some }n\in \Z \setminus p\Z\}.
\end{gather*}
Note that $A=A[p^{\infty}]\oplus A^{(p)}$ by definition. 

\begin{thm}[{Theorem \ref{thm:ptnt}}]\label{mth1}
Let $K/k$ be a finite separable field extension of a global field with its Galois closure $\widetilde{K}/k$. Put $G:=\Gal(\widetilde{K}/k)$ and $H:=\Gal(\widetilde{K}/K)$. We further assume 
\begin{itemize}
\item $[K:k]\in p\Z \setminus p^{2}\Z$; and 
\item a $p$-Sylow subgroup $S_{p}$ of $G$ is normal in $G$. 
\end{itemize}
\begin{enumerate}
\item If $\Sha(K/k)[p^{\infty}]\neq 1$, then
\begin{itemize}
\item[(a)] $S_{p}\cong (C_{p})^{2}$; 
\item[(b)] $[S_{p},G]=S_{p}$; and
\item[(c)] $N_{G}(S_{p}\cap H)=Z_{G}(S_{p}\cap H)$. 
\end{itemize}
Conversely, if \emph{(a)}, \emph{(b)} and \emph{(c)} hold, then there is an isomorphism
\begin{equation*}
\Sha(K/k)[p^{\infty}]\cong 
\begin{cases}
1&\text{if a decomposition group of $\widetilde{K}/k$ contains $S_{p}$;}\\
\Z/p&\text{otherwise. }
\end{cases}
\end{equation*}
\item Let $K_{0}/k$ be the subextension of $\widetilde{K}/k$ corresponding to $HS_{p}$. Then there is an isomorphism
\begin{equation*}
\Sha(K/k)^{(p)}\cong \Sha(K_{0}/k). 
\end{equation*}
\end{enumerate}
\end{thm}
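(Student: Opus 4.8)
The plan is to pass to the Galois cohomology of the norm‑one torus and then peel off the contribution of the normal $p$‑subgroup $S_{p}$. Write $T:=R^{1}_{K/k}\G_m$, so that $\Sha(K/k)\cong\Sha^{1}(k,T)$ and, by Poitou--Tate duality for tori, $\Sha(K/k)\cong\Sha^{2}(k,\widehat T)^{\vee}$, where $\widehat T=X^{*}(T)\cong\Z[G/H]/\langle N_{G/H}\rangle$ (with $\Gal(\bar k/k)$ acting through $G$) and $\Sha^{2}(k,\widehat T)=\ker\!\big(H^{2}(k,\widehat T)\to\prod_{v}H^{2}(k_{v},\widehat T)\big)$, the local terms being controlled by the decomposition groups $G_{v}$; equivalently one may use the purely group‑theoretic description of $\Sha(K/k)$ in terms of $(G,H,\{G_{v}\})$ of Tate and Drakokhrust--Platonov. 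Since Pontryagin duality of finite abelian groups respects the splitting $A=A[p^{\infty}]\oplus A^{(p)}$, it is enough to compute $\Sha^{2}(k,\widehat T)[p^{\infty}]$ for (i) and $\Sha^{2}(k,\widehat T)^{(p)}$ for (ii). I record two facts used throughout: $[G:H]=[K:k]$ has $p$‑part exactly $p$, so $S_{p}\cap H$ has index $p$ in $S_{p}$ and is therefore normal in $S_{p}$; and $[K_{0}:k]=[G:HS_{p}]=[G:H]/p$ is prime to $p$, hence (since $\N_{K_{0}/k}(a)=a^{[K_{0}:k]}$ for $a\in k^{\times}$) $\Sha(K_{0}/k)$ is killed by a number prime to $p$, so $\Sha(K_{0}/k)=\Sha(K_{0}/k)^{(p)}$.

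\medskip

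\emph{Part \textup{(ii)}.} For the prime‑to‑$p$ part I would apply the Hochschild--Serre spectral sequence to $1\to S_{p}\to G\to G/S_{p}\to 1$, and to its analogues for the $G_{v}$ with their normal $p$‑subgroups $S_{p}\cap G_{v}$. Since $S_{p}$ is a $p$‑group, $H^{b}(S_{p},-)$ is $p$‑primary torsion for $b\geq1$, so on the prime‑to‑$p$ part only the bottom row survives and, compatibly with restriction to the $G_{v}$, $H^{n}(G,\widehat T)^{(p)}\cong H^{n}(G/S_{p},\widehat T^{\,S_{p}})^{(p)}$. Now the $S_{p}$‑action on $G/H$ is fixed‑point free (as $S_{p}\triangleleft G$ and $S_{p}\not\subseteq H$), so $\Z[G/H]^{S_{p}}$ is freely spanned by the $S_{p}$‑orbit sums; the transfer $\Z[G/HS_{p}]\to\Z[G/H]$ identifies it $G/S_{p}$‑equivariantly with $\Z[G/HS_{p}]$, and after dividing by the norm elements one obtains $\widehat T^{\,S_{p}}\cong\widehat{T_{K_{0}}}$ as $G/S_{p}$‑modules. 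Hence $\Sha^{2}(k,\widehat T)^{(p)}\cong\Sha^{2}(k,\widehat{T_{K_{0}}})^{(p)}$, and dualising gives $\Sha(K/k)^{(p)}\cong\Sha(K_{0}/k)^{(p)}=\Sha(K_{0}/k)$.

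\medskip

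\emph{Part \textup{(i)}.} For the $p$‑primary part I would reduce to the $p$‑group $S_{p}$: choosing a complement $Q$ with $G=S_{p}\rtimes Q$ (Schur--Zassenhaus) and using corestriction--restriction, an isomorphism on $p$‑primary parts because $[G:S_{p}]$ is prime to $p$, one identifies $\Sha(K/k)[p^{\infty}]$ with a ``$\Sha$'' of the pair $\big(S_{p},\{S_{p}\cap G_{v}\}_{v}\big)$ with coefficients in $\widehat T|_{S_{p}}\cong\big(\bigoplus_{i}\Z[S_{p}/H_{i}]\big)/\langle N\rangle$ (the $H_{i}=S_{p}\cap g_{i}Hg_{i}^{-1}$ being the point‑stabilisers of the $S_{p}$‑orbits on $G/H$), taken over the $Q$‑invariants. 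Next I would prove the conditions are necessary: if $S_{p}$ were cyclic, Chebotarev produces a place $v$ with $G_{v}\supseteq S_{p}$, whose local condition forces the restriction of the class to $S_{p}$ to vanish, so $S_{p}$ is noncyclic; analysing $\widehat T|_{S_{p}}$ under the constraint $[S_{p}:S_{p}\cap H]=p$ then rules out every noncyclic $p$‑group except $(C_{p})^{2}$ and forces $S_{p}\cap H\cong C_{p}$, giving (a); if $[S_{p},G]\neq S_{p}$, then $S_{p}$ has a $C_{p}$‑quotient with trivial $G$‑action, which makes the relevant component of the obstruction behave as in the cyclic case and vanish, giving (b); and if $N_{G}(S_{p}\cap H)\neq Z_{G}(S_{p}\cap H)$, the nontrivial group $N_{G}(S_{p}\cap H)/Z_{G}(S_{p}\cap H)\hookrightarrow\Aut(S_{p}\cap H)\cong C_{p-1}$ acts nontrivially on the line carrying the candidate class and destroys it, giving (c). Conversely, assuming (a)--(c), an explicit computation of $H^{2}\big((C_{p})^{2},\widehat T|_{S_{p}}\big)$ and of its $Q$‑invariants shows the abstract obstruction group is $\Z/p$, with trivial $\Aut(S_{p}\cap H)$‑action by (c); imposing the local conditions, this $\Z/p$ persists unless some decomposition group $G_{v}$ contains all of $S_{p}$, in which case the local condition at $v$ kills it — exactly the stated dichotomy.

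\medskip

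\emph{Expected main obstacle.} The hard part is the analysis inside (i): first, showing that $(C_{p})^{2}$ is the \emph{only} possible Sylow subgroup supporting a nonzero locally trivial class, which means running through all noncyclic $p$‑groups and, for each, determining which configurations of the (normal) point‑stabilisers $H_{i}\triangleleft S_{p}$ — constrained by fixed‑point‑freeness and by $[S_{p}:S_{p}\cap H]=p$ — are compatible with $\widehat T|_{S_{p}}$ carrying such a class; and second, the bookkeeping of the local conditions, in particular proving that ``some $G_{v}\supseteq S_{p}$'' is exactly the condition under which the surviving $\Z/p$ dies. This last point combines Chebotarev (to realise enough decomposition groups) with a careful comparison of global and local cohomology along the noncyclic — hence non‑procyclic — subgroup $S_{p}$. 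By contrast, part (ii) and the reductions within (i) are essentially formal.
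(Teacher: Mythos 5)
Your global architecture coincides with the paper's: dualize via Ono and Poitou--Tate to $\Sha_{\cD}^{2}(G,J_{G/H})$, split into $p$-primary and prime-to-$p$ parts, and handle these by passing to $S_{p}$ and to $G/S_{p}$ respectively. But both halves have genuine gaps. In part (ii), the Hochschild--Serre argument does give $H^{2}(G,J_{G/H})^{(p)}\cong H^{2}(G/S_{p},J_{G/H}^{S_{p}})^{(p)}$ with $J_{G/H}^{S_{p}}\cong J_{G/HS_{p}}$, and it yields an \emph{injection} of $\Sha_{\cD_{/S_{p}}}^{2}(G/S_{p},J_{G/HS_{p}})^{(p)}$ into $\Sha_{\cD}^{2}(G,J_{G/H})^{(p)}$ (the kernel of $H^{2}(G,J_{G/HS_{p}})\to H^{2}(G,J_{G/H})$ is the image of $H^{1}(G,\Ind_{HS_{p}}^{G}J_{HS_{p}/H})$, which is killed by $p$). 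The gap is surjectivity, i.e.\ your phrase ``compatibly with restriction to the $G_{v}$''. Writing a locally trivial class $f$ as $\Inf(\bar f)$, the local condition at $v$ only says that $\Res_{\bar G_{v}}(\bar f)$ dies after pushing into $H^{2}\bigl(\bar G_{v},J_{G/H}^{G_{v}\cap S_{p}}\bigr)$: the local fixed-point module is $J^{G_{v}\cap S_{p}}$, not $J^{S_{p}}$, and the kernel of $H^{2}(\bar G_{v},J^{S_{p}})\to H^{2}(\bar G_{v},J^{G_{v}\cap S_{p}})$ is controlled by $H^{1}(\bar G_{v},J^{G_{v}\cap S_{p}}/J^{S_{p}})$, which is prime-to-$p$ torsion (as $\#\bar G_{v}$ is prime to $p$) and need not vanish. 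So local triviality of $f$ does not obviously give local triviality of $\bar f$. The paper closes exactly this point by a different device: restriction to a Schur--Zassenhaus complement $G'$ together with Mackey's decomposition, which writes $J_{G/H}|_{G'}$ as $J_{G'/H'}$ plus a permutation lattice and matches $\cD_{G'}$ with $\cD_{/S_{p}}$ (Theorem \ref{thm:upts}).

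In part (i), your reduction to $Q$-invariant stable elements over $S_{p}$ is legitimate, but everything that makes the theorem true is asserted rather than proved. First, ruling out all noncyclic $S_{p}$ other than $(C_{p})^{2}$ needs that $S_{p}$ is elementary abelian (the index-$p$ subgroups $S_{p}\cap gHg^{-1}$ contain the Frattini subgroup and intersect trivially) and then the vanishing of $\Sha_{\omega}^{2}(S_{p},J_{G/H})$ for $(C_{p})^{m}$ with $m\geq 3$; the latter is the multinorm theorem of Bayer-Fluckiger--Lee--Parimala (Theorem \ref{thm:blpl}), a substantial input you neither cite nor prove. Second, your mechanisms for the necessity of (b) and (c) are heuristics; in the actual computation they enter through the quotient by $(S_{p}/[S_{p},G])^{\vee}$ and through $[S_{p}\cap H,N_{G}(S_{p}\cap H)]$ inside the ``Selmer group'' $\widetilde{M}_{HS_{p}}$ (Proposition \ref{prop:lciv}), and verifying that the answer is $\Z/p$ exactly under (a)--(c) and $0$ otherwise is the technical heart of the paper (Theorem \ref{thm:2dpt}); ``an explicit computation shows'' is precisely where the proof lives. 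Third, the dichotomy on decomposition groups requires showing that the surviving class automatically satisfies the local condition at every $D$ with $S_{p}\not\subseteq D$ (Theorems \ref{thm:upcj} and \ref{thm:ppup} plus the case analysis), which you yourself flag as the main obstacle. As written, the proposal is a correct road map whose two hardest legs --- surjectivity in (ii) and the $(C_{p})^{2}$ computation with local conditions in (i) --- are left untraveled.
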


We note that there is a unique $p$-Sylow subgroup of a finite group $G$ if and only if it is normal in $G$. This follows from Sylow's theorem, which implies that all Sylow subgroups of $G$ are conjugate to each other. 

\vspace{6pt}
If we impose an additional condition on $p$ and $[K:k]$, then we obtain a vanishing of $\Sha(K/k)[p^{\infty}]$. 

\begin{thm}[{Corollary \ref{cor:pttr}}]\label{mth2}
Let $K/k$ be a finite separable field extension of a global field with its Galois closure $\widetilde{K}/k$. Put $G:=\Gal(\widetilde{K}/k)$ and $H:=\Gal(\widetilde{K}/K)$. We further assume 
\begin{itemize}
\item $[K:k]\in p\Z \setminus p^{2}\Z$; 
\item a $p$-Sylow subgroup $S_{p}$ of $G$ is normal in $G$; and
\item $n:=[K:k]/p$ satisfies $\gcd(n,p-1)\leq 2$ and $\gcd(n,p+1)\in 2^{\Znn}$. 
\end{itemize}
Then there is an isomorphism
\begin{equation*}
\Sha(K/k) \cong \Sha(K_{0}/k),
\end{equation*}
where $K_{0}/k$ is the subextension of $\widetilde{K}/k$ corresponding to $HS_{p}$. In particular, $\Sha(K/k)$ has no $p$-primary torsion part.  
\end{thm}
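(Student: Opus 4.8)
The plan is to reduce the statement to the single vanishing $\Sha(K/k)[p^{\infty}]=1$. Granting that, the canonical splitting $\Sha(K/k)=\Sha(K/k)[p^{\infty}]\oplus\Sha(K/k)^{(p)}$ gives $\Sha(K/k)=\Sha(K/k)^{(p)}\cong\Sha(K_{0}/k)$ by Theorem~\ref{mth1}(2), and the last sentence of the statement is then immediate from $\Sha(K/k)[p^{\infty}]=1$. So I would argue by contradiction: suppose $\Sha(K/k)[p^{\infty}]\ne 1$. Then by Theorem~\ref{mth1}(1) the three conditions (a) $S_{p}\cong(C_{p})^{2}$, (b) $[S_{p},G]=S_{p}$, (c) $N_{G}(S_{p}\cap H)=Z_{G}(S_{p}\cap H)$ all hold, and the whole task is to contradict (a)--(c) using the hypotheses $\gcd(n,p-1)\le 2$ and $\gcd(n,p+1)\in 2^{\Znn}$.

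Next I would set up the linear-algebra picture. By (a), conjugation gives $G\to\Aut(S_{p})=\GL_{2}(\F_{p})$ with kernel $Z_{G}(S_{p})$; write $Q:=G/Z_{G}(S_{p})\hookrightarrow\GL_{2}(\F_{p})$, noting $p\nmid|Q|$ since $S_{p}\subseteq Z_{G}(S_{p})$ is the $p$-Sylow. Put $L:=S_{p}\cap H$, which has order $p$ (because $[K:k]\in p\Z\setminus p^{2}\Z$ forces $v_{p}(|H|)=1$), i.e. a line in $S_{p}$. Then: (i) since $L\trianglelefteq H$ and $S_{p}$ is abelian, $HS_{p}\subseteq N_{G}(L)$, so the number $m$ of $G$-conjugates of $L$ divides $[G:HS_{p}]=[G:H]/p=n$; (ii) as $\widetilde{K}/k$ is the Galois closure, $\bigcap_{g}gHg^{-1}=1$, so $L$ is not normal and $m\ge 2$; (iii) condition (c) forces $Q\cap Z(\GL_{2}(\F_{p}))=1$, since a nontrivial scalar in $Q$ normalizes $L$ but acts on it by a nontrivial scalar, so $|Q|$ divides $\lvert\PGL_{2}(\F_{p})\rvert$ up to a $p$-power, i.e. $|Q|\mid p^{2}-1$. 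As $m$ is the size of the $Q$-orbit of $L$ among lines, $m\mid|Q|$, hence $m\mid\gcd(n,p^{2}-1)$. A short elementary computation using $\gcd(p-1,p+1)\mid 2$ shows the hypotheses force $\gcd(n,p^{2}-1)$ to be a power of $2$ (an odd prime dividing it would divide $p-1$ or $p+1$, hence $\gcd(n,p-1)\le 2$ or $\gcd(n,p+1)$, a contradiction); therefore $m$ is a power of $2$ and $m\ge 2$.

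The core step is to contradict this via the classification (Dickson) of prime-to-$p$ subgroups of $\GL_{2}(\F_{p})$: such $Q$ is conjugate into a split Cartan $C_{s}$, its normalizer $N(C_{s})$, a non-split Cartan $C_{ns}$, or its normalizer $N(C_{ns})$ — the exceptional possibilities with image $A_{4},S_{4},A_{5}$ in $\PGL_{2}$ are excluded here because none of these groups has a faithful two-dimensional $\F_{p}$-representation. In the case $C_{ns}\cong\F_{p^{2}}^{\times}$, condition (c) gives $Q\cap\F_{p}^{\times}=1$, so $|Q|$ is odd (the only involution of $\F_{p^{2}}^{\times}$ is $-1$) and $m=|Q|$ is an odd power of $2$ with $m\ge 2$, impossible; the case $N(C_{ns})$ reduces similarly: (c) forces $Q\cap C_{ns}$ to have odd order, which with $m$ a power of $2\ge 2$ forces $Q\cap C_{ns}=1$ and $|Q|=2$, whence (b) fails. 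In the case $C_{s}$, (b) forces both eigencharacters of $Q$ to be nontrivial and (c) forces $\Stab_{Q}(L)=1$, so $m=|Q|$ divides $p-1$; then $m\mid\gcd(n,p-1)\le 2$ and $m\ge 2$ give $m=2$ and $Q=\langle -I\rangle$, contradicting $Q\cap Z=1$. Finally, in the case $N(C_{s})$, since $m$ is a power of $2$ one gets that $Q$ is a $2$-group; a central involution of $Q$ is diagonalizable with eigenvalues $\{1,-1\}$ (it is not $\pm I$, as $Q\cap Z=1$), producing a $Q$-stable line on which $Q$ acts nontrivially, so $Q$ lies in a split Cartan and we are back in the previous (impossible) case. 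This contradiction yields $\Sha(K/k)[p^{\infty}]=1$ and completes the argument.

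I expect the last paragraph — the case analysis — to be the real obstacle, and in particular getting the reductions in the $N(C_{s})$ and $N(C_{ns})$ cases exactly right: one must verify that $L$ is genuinely not one of the distinguished lines stabilized by the Cartan torus, and that any putative $Q$ in a Cartan normalizer, being a \emph{group}, cannot have a trivial eigencharacter on the torus part, so that (b) and (c) are incompatible. If the main body already isolates the structural constraints on $Q$ imposed by (a)--(c), this corollary should collapse to the clean numerical fact that $\gcd(n,p^{2}-1)$ is a $2$-power together with the observation that each surviving configuration forces $m$ odd or $m=2$ with $Q$ central.
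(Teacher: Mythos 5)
Your overall strategy coincides with the paper's: reduce to showing that conditions (a)--(c) of Theorem \ref{mth1} cannot all hold, translate them into conditions on the image $Q$ of $G$ in $\GL_{2}(\Fp)$ (this is exactly the paper's notion of an $H'$-extremal representation, Proposition \ref{prop:bcgr}), and rule out each case of Dickson's classification using the gcd hypotheses (the paper's Theorem \ref{thm:zggv} and Corollaries \ref{cor:ndpi}, \ref{cor:nabc}). Your bookkeeping via the orbit count $m$ of the line $L=S_{p}\cap H$ is a harmless variant of the paper's use of the index $(\overline{G}:\overline{H})$, and your exclusion of $\fA_{4},\fS_{4},\fA_{5}$ (no faithful $2$-dimensional representation in characteristic prime to the order) replaces the paper's exclusion via cyclicity of a $2$-Sylow subgroup of $Q$ (Corollary \ref{cor:-1id}); both work.

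Two steps in your case analysis are asserted but, as written, do not follow from what precedes them. First, in the split Cartan case you claim $m=|Q|$ divides $p-1$; a subgroup of $\bT(\Fp)\cong C_{p-1}\times C_{p-1}$ need not have order dividing $p-1$. The missing input is that $Q\cap \bZ(\Fp)=1$ makes the character $\chi_{1}\chi_{2}^{-1}\colon \bT(\Fp)\to \F_{p}^{\times}$ (whose kernel is exactly the scalars) injective on $Q$, so $Q$ is cyclic of order dividing $p-1$. Second, in the $N(C_{s})$ case, ``$m$ is a power of $2$, hence $Q$ is a $2$-group'' is a non sequitur, since $m$ only divides $|Q|$. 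The repair uses the same observation applied to $Q_{0}:=Q\cap \bT(\Fp)$, which is cyclic of order dividing $p-1$ and of index $2$ in $Q$: if $L$ is not a coordinate line then $\Stab_{Q_{0}}(L)=Q_{0}\cap \bZ(\Fp)=1$, so $\Stab_{Q}(L)$ has order at most $2$ and $|Q_{0}|$ divides $m$, forcing $|Q_{0}|$ to be a power of $2$; if $L$ is a coordinate line then (c) makes one eigencharacter of $Q_{0}$ trivial, and since conjugation by an anti-diagonal element of $Q$ swaps the two eigencharacters while preserving $Q_{0}$, both are trivial and $Q_{0}=1$. Either way $Q$ is a $2$-group and your central-involution argument then lands you back in the split Cartan case. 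These are exactly the points you flagged as the likely obstacle; once they are supplied the argument is complete and agrees with the paper's.
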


On the other hand, Theorem \ref{mth1} also implies partial classification of the non-vanishing of $\Sha(K/k)$ for an extension $K/k$ of certain degree. We use the following notations. 
\begin{itemize}
\item For each $n\in \Zpn$, $C_{n}=\langle c_{n}\rangle$ is the cyclic group of order $n$. Moreover, we write for $\Delta$ the diagonal map $C_{n}\hookrightarrow C_{n}\times C_{n}$. 
\item For an integer $n\geq 2$, put
\begin{equation*}
D_{n}:=\langle \sigma_{n},\tau_{n} \mid \sigma_{n}^{n}=\tau_{n}^{2}=1,\tau_{n} \sigma_{n} \tau_{n}^{-1}=\sigma_{n}^{-1}\rangle,
\end{equation*}
the dihedral group of order $2n$. 
\item For $n\in \Zpn$, we denote by $\fA_{n}$ the alternating group of $n$-letters. 
\end{itemize}

\begin{thm}[Theorem \ref{thm:pldt}]\label{mth3}
Let $p$ and $\ell$ be two distinct prime numbers. Consider a finite separable extension $K/k$ of degree $p\ell$ of a global field, and write for $\widetilde{K}/k$ its Galois closure. Set $G:=\Gal(\widetilde{K}/k)$ and $H:=\Gal(\widetilde{K}/K)$. We further assume that a $p$-Sylow subgroup $S_{p}$ of $G$ is normal in $G$. If $\Sha(K/k)\neq 1$, then one of the following is satisfied: 
\begin{itemize}
\item[($\alpha$)] $2<\ell \mid p-1$, $G\cong (C_{p})^{2}\rtimes_{\varphi_{0,m}} C_{\ell}$ and $H\cong \Delta(C_{p}) \rtimes_{\varphi_{0,m}} \{1\}$, where $\varphi_{0,m}$ is defined by
\begin{equation*}
C_{\ell} \rightarrow \GL_{2}(\Fp); c_{\ell} \mapsto \diag(\zeta_{\ell},\zeta_{\ell}^{m}),
\end{equation*}
$\zeta_{\ell}\in \F_{p}^{\times}$ is a primitive $\ell$-th root of unity, and $m\in \{2,\ldots,\ell-1\}$ satisfies $mm'\not\equiv 1 \bmod \ell$ for any $m'\in \{1,\ldots,m-1\}$;
\item[($\beta$)] $2<\ell \mid p+1$, $G\cong (C_{p})^{2}\rtimes_{\varphi_{1}} C_{\ell}$ and $H\cong \Delta(C_{p}) \rtimes_{\varphi_{1}} \{1\}$, where $\varphi_{1}$ is defined by
\begin{equation*}
C_{\ell} \rightarrow \GL_{2}(\Fp); c_{\ell} \mapsto 
\begin{pmatrix}
0&-1\\
1&\zeta_{\ell}+\zeta_{\ell}^{-1}
\end{pmatrix}
\end{equation*}
and $\zeta_{\ell}\in \F_{p^{2}}^{\times}\setminus \F_{p}^{\times}$ is a primitive $\ell$-th root of unity. 
\item[($\gamma$)] $p\geq 5$, $2<\ell \mid p^{2}-1$, $G \cong (C_{p})^{2}\rtimes_{\varphi_{2}} D_{\ell}$ and $H\cong \Delta(C_{p}) \rtimes_{\varphi_{2}} \langle \tau_{\ell} \rangle$, where $\varphi_{2}$ is defined by
\begin{equation*}
D_{\ell} \rightarrow \GL_{2}(\Fp); \sigma_{\ell}^{i}\tau_{\ell}^{i'}\mapsto 
\begin{pmatrix}
0&-1\\
1&\zeta_{\ell}+\zeta_{\ell}^{-1}
\end{pmatrix}^{i}
\begin{pmatrix}
0&1\\
1&0
\end{pmatrix}^{i'}
\end{equation*}
and $\zeta_{\ell} \in \F_{\ell^{2}}^{\times}$ is a primitive $\ell$-th root of unity. 
\end{itemize}
Conversely, if \emph{($\alpha$)}, \emph{($\beta$)} or \emph{($\gamma$)} holds, then there is an isomorphism
\begin{equation*}
\Sha(K/k) \cong 
\begin{cases}
1&\text{if a decomposition group of $\widetilde{K}/k$ contains $S_{p}$; }\\
\Z/p &\text{otherwise. }
\end{cases}
\end{equation*}
\end{thm}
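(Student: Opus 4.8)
First I would feed the present hypotheses into Theorem~\ref{mth1} with $n=\ell$. Since $p\neq\ell$ we have $[K:k]=p\ell\in p\Z\setminus p^{2}\Z$, so Theorem~\ref{mth1} applies. As $S_{p}$ is normal in $G$ and $H$ has trivial normal core (it is the stabiliser attached to the Galois closure), $S_{p}\not\subseteq H$; hence $[S_{p}:S_{p}\cap H]$ is a power of $p$ dividing $[G:H]=p\ell$, so it equals $p$, and therefore $[G:HS_{p}]=\ell$, i.e.\ $[K_{0}:k]=\ell$. Bartels's theorem in prime degree (\cite{Bartels1981a}) gives $\Sha(K_{0}/k)=1$, so Theorem~\ref{mth1}(2) yields $\Sha(K/k)^{(p)}=1$ and hence $\Sha(K/k)=\Sha(K/k)[p^{\infty}]$. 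The converse half of the statement then reduces to verifying conditions (a)--(c) of Theorem~\ref{mth1}(1) for each of the three explicit pairs $(G,H)$ — a direct matrix computation with $\varphi_{0,m}$, $\varphi_{1}$, $\varphi_{2}$ — after which Theorem~\ref{mth1}(1) produces exactly the displayed dichotomy. For the forward half it remains to show: if (a), (b), (c) hold and $N^{G}(H)=1$, then $(G,H)$ is of type $(\alpha)$, $(\beta)$ or $(\gamma)$.

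To do this I would first put $(G,H)$ in normal form. By (a), $S_{p}\cong(C_{p})^{2}$ and $p\nmid\lvert Q\rvert$ where $Q:=G/S_{p}$, so Schur--Zassenhaus gives $G=S_{p}\rtimes_{\varphi}Q$ with $\varphi\colon Q\to\GL_{2}(\Fp)$; identify $S_{p}$ with the plane $\Fp^{2}$. The computation above shows $L:=H\cap S_{p}$ is a line and $[Q:\overline{H}]=\ell$, where $\overline{H}$ denotes the image of $H$ in $Q$. Using $\rH^{1}(\overline{H},S_{p})=0$ (coprime orders), $H$ is conjugate, by an element of $S_{p}$ (which acts trivially on $S_{p}$, so does not move $L$), to $L\rtimes\overline{H}$; for the latter to be a subgroup $\overline{H}$ must stabilise $L$, so comparing indices gives $\overline{H}=\Stab_{Q}(L)$ and shows that the $\varphi(Q)$-orbit of $L$ in $\P^{1}(\Fp)$ has length $\ell$. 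Since $\Ker\varphi\subseteq\Stab_{Q}(L)=\overline{H}$, the subgroup $\Ker\varphi$, viewed inside $G$ via the splitting, is normal in $G$ and contained in $H$; as $N^{G}(H)=1$ this forces $\Ker\varphi=1$. (Conversely, one checks that any such $(Q,\varphi,L)$ gives a pair $(S_{p}\rtimes Q,\;L\rtimes\Stab_{Q}(L))$ automatically satisfying $N^{G}(H)=1$, which pins down which triples actually occur.) Finally, (b) translates to: $\Fp^{2}$ has no nonzero $\varphi(Q)$-fixed vector, and (c) to: every element of $\Stab_{Q}(L)$ acts trivially on $L$.

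The heart of the proof is then the classification of triples $(Q,\varphi,L)$ with $Q\hookrightarrow\GL_{2}(\Fp)$, $p\nmid\lvert Q\rvert$, satisfying (b) and (c) and having a $\varphi(Q)$-orbit of length $\ell$ in $\P^{1}(\Fp)$. Since $p\nmid\lvert Q\rvert$, the $\Fp[Q]$-module $\Fp^{2}$ is semisimple, hence either (i) a sum of two $1$-dimensional submodules, or (ii) irreducible. In case (i) the two characters $\chi_{1},\chi_{2}\colon Q\to\Fpt$ are distinct and nontrivial (else (b) or (c) fails), $L$ is neither of the two eigenlines, and $\Stab_{Q}(L)=\Ker(\chi_{1}\chi_{2}^{-1})$, so $\Ima(\chi_{1}\chi_{2}^{-1})$ is a subgroup of $\Fpt$ of order $\ell$, giving $\ell\mid p-1$; moreover (c) forces $\Ker(\chi_{1}\chi_{2}^{-1})\subseteq\Ker\chi_{1}\cap\Ker\chi_{2}=\Ker\varphi=1$, so $Q\cong C_{\ell}$ and, after choosing a generator and using the symmetry $m\leftrightarrow m^{-1}$ from swapping coordinates, $\varphi\cong\varphi_{0,m}$ for an $m$ as in $(\alpha)$. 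In case (ii), $\varphi(Q)$ is an irreducible subgroup of $\GL_{2}(\Fp)$ of order prime to $p$; the exceptional cases — image in $\PGL_{2}(\Fp)$ isomorphic to $\fA_{4}$, $\fS_{4}$ or $\fA_{5}$ — are excluded because they carry no line-orbit of odd prime length, and in any case their faithful $2$-dimensional realisations contain a nontrivial scalar, contradicting (c). Therefore $\varphi(Q)$ lies in a maximal torus, which must be non-split, forcing $Q\cong C_{\ell}$ with $\ell\mid p+1$ and $\varphi\cong\varphi_{1}$ — case $(\beta)$; or in the normaliser of a (split or non-split) torus but not the torus, where the absence of $-I$ (again by (c)) makes $Q$ dihedral of odd order, forcing $Q\cong D_{\ell}$ with $\ell\mid p^{2}-1$ and $\varphi\cong\varphi_{2}$ — case $(\gamma)$. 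Identifying $L$ and $\overline{H}$ is short in each sub-case. The small primes $p\in\{2,3\}$ are treated by hand: for $p=3$ no configuration as above occurs with odd $\ell\neq 3$, since no such $\ell$ divides $p^{2}-1=8$; and for $p=2$ the only dihedral configuration is $G\cong\fS_{4}$ with $K/k$ a sextic resolvent, for which $\Sha(K/k)=1$ always and so does not appear among the non-vanishing cases. This accounts for the restrictions $\ell>2$ throughout and $p\geq 5$ in $(\gamma)$.

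I expect the main obstacle to be the third step: eliminating larger $Q$ and, especially, the exceptional irreducible subgroups of $\GL_{2}(\Fp)$ using only (b), (c) and the prime orbit length, and then reading off the precise normal forms $\varphi_{0,m}$, $\varphi_{1}$, $\varphi_{2}$ and the arithmetic conditions $\ell\mid p-1$, $\ell\mid p+1$, $\ell\mid p^{2}-1$ from the torus/normaliser dichotomy. The remaining ingredients — the reduction of the first paragraph, the normal-form bookkeeping, the case-by-case verification of (a)--(c), and the final alternative $\Sha(K/k)\cong 1$ or $\Z/p$ according to whether a decomposition group of $\widetilde{K}/k$ contains $S_{p}$ — are either routine or inherited directly from Theorem~\ref{mth1}(1).
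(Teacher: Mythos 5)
Your proposal follows essentially the same route as the paper: reduce to the $p$-primary part via Theorem \ref{mth1}(ii) and Bartels' prime-degree theorem, split $G=S_{p}\rtimes Q$ by Schur--Zassenhaus, translate conditions (a)--(c) into the condition that $S_{p}$ is an $H'$-extremal $\F_{p}$-representation of $Q$ with $S_{p}\cap H$ a special line, and classify such representations via Dickson's theorem together with the orbit-length/index constraint $\ell$ --- which is exactly the content of Proposition \ref{prop:bcgr}, Theorem \ref{thm:zggv} and Corollary \ref{cor:plcd}. The only noticeable divergence is that you exclude the exceptional images $\fA_{4}$, $\fS_{4}$, $\fA_{5}$ by noting they admit no faithful two-dimensional representation (so a nontrivial scalar would occur, violating (c)), whereas the paper instead uses the Carter--Fong description of the $2$-Sylow subgroups of $\GL_{2}(\Fp)$ to force the image to have cyclic $2$-Sylow; both arguments are valid.
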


Combining Theorem \ref{mth2} with (the proof of) the inverse Galois problem for finite solvable groups due to Shafarevich, we obtain the following. 

\begin{thm}[{Theorem \ref{thm:sfhf}}]\label{mth4}
Let $p$ and $\ell$ be two distinct prime numbers satisfying $2<\ell \mid p^{2}-1$, and $d$ a multiple of $p\ell$ that is square-free. Take a global field $k$ whose characteristic is different from $p$. Then there is a finite extension $K$ of $k$ of degree $d$ for which the Hasse norm principle fails. 
\end{thm}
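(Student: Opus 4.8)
The plan is to realize one of the configurations $(\alpha)$, $(\beta)$ or $(\gamma)$ from Theorem \ref{mth3} (combined with Theorem \ref{mth2}) over the given global field $k$ in a way that also forces $\Sha(K/k)\neq 1$, and then to pad the degree up to $d$ without destroying the obstruction. First I would fix the target group: since $2<\ell\mid p^2-1$, exactly one of $\ell\mid p-1$ or $\ell\mid p+1$ holds, so one of $G_0\cong (C_p)^2\rtimes_{\varphi_{0,m}} C_\ell$, $(C_p)^2\rtimes_{\varphi_1} C_\ell$ or (when $p\geq 5$) $(C_p)^2\rtimes_{\varphi_2} D_\ell$ makes sense; pick the corresponding $H_0\leq G_0$ as in Theorem \ref{mth3}. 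Because $\ch k\neq p$ and these groups are solvable, Shafarevich's solution of the inverse Galois problem for solvable groups over global fields (in the form that also applies to global function fields of characteristic prime to $|G_0|$ — one may also pass through a number field and use that $\ch k\neq p$) yields a Galois extension $\widetilde{K_0}/k$ with $\Gal(\widetilde{K_0}/k)\cong G_0$; let $K_0$ be the fixed field of $H_0$, so $[K_0:k]=p\ell$ and the hypotheses of Theorem \ref{mth3} are met.

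The key point is that Shafarevich's construction gives us freedom in prescribing the local behavior at finitely many places: I would choose the solution so that no decomposition group of $\widetilde{K_0}/k$ contains $S_p=(C_p)^2$. Concretely, one arranges that every place of $k$ is unramified in $\widetilde{K_0}/k$ except possibly for places whose decomposition (hence inertia) group is cyclic or at any rate cannot contain $(C_p)^2$ — for instance by demanding that all ramification in the $(C_p)^2$-part be confined to a single place with cyclic inertia, which is possible because $(C_p)^2$ is a quotient of the relevant local Galois group contributions only through cyclic pieces when we control the splitting. Then the "otherwise" branch of Theorem \ref{mth3} applies and $\Sha(K_0/k)\cong \Z/p\neq 1$. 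This is the step I expect to be the main obstacle: Shafarevich's theorem as usually quoted only asserts existence of the global extension, so I must invoke (or reprove) a version with control of local conditions — this is available in the literature (e.g. via embedding problems with prescribed local conditions, or Neukirch's and Shafarevich's refinements), but spelling out exactly which local constraints are simultaneously achievable for these specific $G_0$ and which places of $k$ need care (especially places above $p$ and the infinite places when $k$ is a number field) requires attention.

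Finally, to get degree exactly $d$ rather than $p\ell$: write $d=p\ell m$ with $\gcd$-conditions as needed, and replace $K_0$ by a suitable extension $K/K_0$ of degree $m$ (equivalently enlarge $G_0$ to a group $G$ with a $(C_p)^2$ that is still its normal $p$-Sylow and with $[G:H]=d$, $n=d/p$ satisfying $\gcd(n,p-1)\leq 2$ and $\gcd(n,p+1)\in 2^{\Znn}$ so that Theorem \ref{mth2} gives $\Sha(K/k)\cong\Sha(K_0/k)$). The cleanest route is to take $K=K_0\cdot k'$ for an auxiliary cyclic (or otherwise tame, well-chosen) extension $k'/k$ of degree $m$ that is linearly disjoint from $\widetilde{K_0}$ and unramified at the relevant places, so that $\widetilde{K}=\widetilde{K_0}\cdot k'$, $G=G_0\times \Gal(k'/k)$ (or a semidirect product arranged to keep $S_p$ normal), and the decomposition-group condition is preserved; one checks $n=\ell m$ still satisfies the hypotheses of Theorem \ref{mth2} because $\ell\mid p^2-1$ forces $\gcd(\ell,p-1)$ and $\gcd(\ell,p+1)$ into the allowed shape, and $m$ can be chosen coprime to $p^2-1$ (or a power of $2$) since $d$ is square-free. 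Then $\Sha(K/k)\cong\Sha(K_0/k)\cong\Z/p\neq 1$, which is the failure of the Hasse norm principle for $K/k$. The remaining verifications — linear disjointness, that $G$ has the required Sylow structure, and that the square-freeness of $d$ makes the arithmetic conditions automatic — are routine.
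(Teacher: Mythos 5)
Your overall architecture matches the paper's: realize the configuration $(\alpha)$ or $(\beta)$ of Theorem \ref{mth3} over $k$, arrange that no decomposition group contains $S_p$, and pad the degree from $p\ell$ up to $d$ by a direct factor $C_{d/(p\ell)}$. The paper makes the local-control step precise exactly as you hoped: Propositions \ref{prop:shf2} and \ref{prop:shf1} produce a Galois $\widetilde{G}$-extension in which \emph{all} decomposition groups are cyclic, and since $S_p\cong (C_p)^2$ is non-cyclic this forces the ``otherwise'' branch of Theorem \ref{mth1}. So the part you flagged as the main obstacle is handled by results already in the paper.

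However, the final step of your argument contains a genuine error: you invoke Theorem \ref{mth2} to transfer the obstruction to the degree-$d$ extension, claiming that $n=\ell m$ ``still satisfies the hypotheses of Theorem \ref{mth2} because $\ell\mid p^2-1$ forces the gcd's into the allowed shape.'' This is backwards. Since $2<\ell$ is odd and $\ell\mid n$, either $\gcd(n,p-1)\geq \ell>2$ or $\gcd(n,p+1)$ is divisible by the odd prime $\ell$ and hence not a power of $2$; so the hypotheses of Theorem \ref{mth2} necessarily \emph{fail} here. Worse, if they did hold, the conclusion of Theorem \ref{mth2} would be that $\Sha(K/k)$ has \emph{no} $p$-primary part --- the opposite of what you need --- and its ``$K_0$'' is the prime-to-$p$ subextension corresponding to $HS_p$, not your degree-$p\ell$ field carrying the obstruction. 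The correct tool is Theorem \ref{mth1} applied directly to the enlarged pair $\widetilde{G}=G_0\times C_{d'}$, $\widetilde{H}=H_0\times\{1\}$: one checks that conditions (a), (b), (c) persist under this product (this is Lemma \ref{lem:abcp} in the paper, and is easy since $C_{d'}$ acts trivially on $S_p$), whence $\Sha(K/k)[p^\infty]\cong\Z/p$ by Theorem \ref{mth1}(i) once no decomposition group contains $S_p$; this already gives the failure of the Hasse norm principle. (If you also want the prime-to-$p$ part, Theorem \ref{mth1}(ii) identifies it with $\Sha$ of the subextension fixed by $\widetilde{H}\widetilde{S}_p=S_p\times\{1\}$, which is cyclic of degree $\ell d'$, hence trivial by Hasse.) With Theorem \ref{mth2} replaced by this argument, your plan becomes essentially the paper's proof.
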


\begin{thm}[{Theorem \ref{thm:pctw}}]\label{mth5}
Let $p>2$ be an odd prime number. Consider a finite separable extension $K/k$ of degree $4p$ of a global field, and write for $\widetilde{K}/k$ its Galois closure. Put $G:=\Gal(\widetilde{K}/k)$ and $H:=\Gal(\widetilde{K}/K)$. We further assume that a $p$-Sylow subgroup $S_{p}$ of $G$ is normal. 
\begin{enumerate}
\item If $\Sha(K/k)$ is non-trivial, then it is isomorphic to $\Z/2$ or $\Z/p$. 
\item If $\Sha(K/k)\cong \Z/p$, then $p\equiv 1\bmod 4$ and there exist an isomorphisms
\begin{equation}\label{eq:mt5i}
G\cong (C_{p})^{2}\rtimes_{\varphi_{p,4}}C_{4},\quad H\cong \Delta(C_{p}) \rtimes_{\varphi}\{1\}. 
\end{equation}
Here $\varphi_{p,4}$ is defined by the homomorphism
\begin{equation*}
\Z/4 \rightarrow \GL_{2}(\Fp);\,1\bmod 4\mapsto \diag(-1,\sqrt{-1}). 
\end{equation*}
Conversely, if $p\equiv 1\bmod 4$ and \eqref{eq:mt5i} hold, then one has an isomorphism
\begin{equation*}
\Sha(K/k)\cong 
\begin{cases}
1&\text{if a decomposition group of $\widetilde{K}/k$ contains $S_{p}$; }\\
\Z/p &\text{otherwise. }
\end{cases}
\end{equation*}
\item If $\Sha(K/k)\cong \Z/2$, then we have
\begin{itemize}
\item[(a)] $G/N^{G}(HS_{p})\cong (C_{2})^{2}$; or 
\item[(b)] $p\geq 5$ and $G/N^{G}(HS_{p})\cong \fA_{4}$. 
\end{itemize}
Conversely, if \emph{(a)} or \emph{(b)} is satisfied, then there is an isomorphism
\begin{equation}\label{eq:chvp}
\Sha(K/k)\cong 
\begin{cases}
1&\text{if a decomposition group of $\widetilde{K}/k$ contains $(C_{2})^{2}$; }\\
\Z/2&\text{otherwise. }
\end{cases} 
\end{equation}
\end{enumerate}
\end{thm}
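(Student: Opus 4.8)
The plan is to combine Theorem~\ref{mth1}, applied with $n=[K:k]/p=4$, with the known classification of the Hasse norm principle for extensions of degree $4$. By Theorem~\ref{mth1} there is a decomposition
\[
\Sha(K/k)\cong \Sha(K/k)[p^{\infty}]\oplus \Sha(K/k)^{(p)},\qquad \Sha(K/k)^{(p)}\cong \Sha(K_{0}/k),
\]
with $[K_{0}:k]=[G:HS_{p}]=4$. Since $p$ is odd, $\gcd(4,p+1)$ is a power of $2$, so for $p\equiv 3\bmod 4$ Theorem~\ref{mth2} already gives $\Sha(K/k)\cong \Sha(K_{0}/k)$; hence one may assume $p\equiv 1\bmod 4$ for the part of the argument concerning the $p$-primary component. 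Note that $|G|_{p}=|S_{p}|$ and $S_{p}\subseteq N^{G}(HS_{p})$, so $\overline{G}:=G/N^{G}(HS_{p})\cong \Gal(\widetilde{K_{0}}/k)$ has order prime to $p$; by definition of the normal core it is a transitive subgroup of $S_{4}$, hence one of $C_{4}$, $(C_{2})^{2}$, $D_{4}$, $\fA_{4}$, $S_{4}$, with $\fA_{4}$ and $S_{4}$ possible only when $p\geq 5$. By the degree-$4$ classification (Kunyavskii~\cite{Kunyavskii1984}, Hoshi--Kanai--Yamasaki~\cite{Hoshi2022}), $\Sha(K_{0}/k)=1$ unless $\overline{G}\cong (C_{2})^{2}$ or $\overline{G}\cong \fA_{4}$, in which case it is $\Z/2$ or $1$ according as a decomposition group of $\widetilde{K_{0}}/k$ --- the image of a decomposition group of $\widetilde{K}/k$ --- does not or does contain the distinguished $(C_{2})^{2}$ in $\overline{G}$; this is exactly \eqref{eq:chvp}, granting the remaining claim in (1) that $\Sha(K/k)^{(p)}$ and $\Sha(K/k)[p^{\infty}]$ cannot both be nontrivial.

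To control $\Sha(K/k)[p^{\infty}]$, assume it is nontrivial, so that conditions (a), (b), (c) of Theorem~\ref{mth1}(1) hold; in particular $S_{p}\cong (C_{p})^{2}$ and $L:=S_{p}\cap H$ is a line in $S_{p}\cong \Fp^{2}$. Let $\rho\colon G\to \Aut(S_{p})=\GL_{2}(\Fp)$ be the conjugation action; since $S_{p}$ is the unique $p$-Sylow subgroup, $\rho(G)$ has order prime to $p$. Because $H$ and $S_{p}$ both normalize $L$, condition (c) forces $HS_{p}\subseteq Z_{G}(L)$, whence $[G:Z_{G}(L)]\mid 4$; moreover $Z_{G}(L)/Z_{G}(S_{p})$ embeds in the group of elements of $\GL_{2}(\Fp)$ fixing $L$ pointwise, whose $p'$-Hall subgroups are cyclic of order dividing $p-1$. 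Hence $|\rho(G)|$ divides $4(p-1)$. Now run a case analysis on the $\Fp[\rho(G)]$-module structure of $S_{p}$, which by Maschke's theorem is irreducible or a sum of two characters: condition (b) rules out the trivial character as a summand, while condition (c) forbids any element of $\rho(G)$ stabilizing $L$ to act on $L$ nontrivially. Together with the bound $|\rho(G)|\mid 4(p-1)$ this leaves only $\rho(G)\cong C_{4}$, generated by a matrix conjugate to $\diag(-1,\sqrt{-1})$ (so necessarily $p\equiv 1\bmod 4$), with $L$ a non-eigenline; then $\rho(H)=1$, and the Galois-closure hypothesis $N^{G}(H)=1$ forces $Z_{G}(S_{p})=S_{p}$ and hence, by Schur--Zassenhaus, $G\cong (C_{p})^{2}\rtimes_{\varphi_{p,4}}C_{4}$ and $H\cong \Delta(C_{p})\rtimes_{\varphi_{p,4}}\{1\}$; in particular $\overline{G}\cong C_{4}$.

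The three statements now follow. For (1): $\Sha(K/k)[p^{\infty}]\neq 1$ forces $\overline{G}\cong C_{4}$, which is neither $(C_{2})^{2}$ nor $\fA_{4}$, so $\Sha(K_{0}/k)=1$; thus at most one summand in the first display is nontrivial, and $\Sha(K/k)$ is $1$, $\Z/2$, or $\Z/p$. For (2): if $\Sha(K/k)\cong \Z/p$, then $\Sha(K/k)[p^{\infty}]\neq 1$ and the second paragraph gives $p\equiv 1\bmod 4$ and \eqref{eq:mt5i}; conversely, for this pair $(G,H)$ one verifies (a), (b), (c) directly --- so Theorem~\ref{mth1}(1) evaluates $\Sha(K/k)[p^{\infty}]$ as claimed --- while $\Sha(K_{0}/k)=1$ since $\overline{G}\cong C_{4}$ is cyclic. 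For (3): if $\Sha(K/k)\cong \Z/2$, then (since $p$ is odd) $\Sha(K/k)[p^{\infty}]=1$ and $\Sha(K/k)\cong \Sha(K_{0}/k)\cong \Z/2$, so $\overline{G}\cong (C_{2})^{2}$, or $p\geq 5$ and $\overline{G}\cong \fA_{4}$, by the first paragraph; conversely, under (a) or (b) we have $\overline{G}\not\cong C_{4}$, so (a)--(c) of Theorem~\ref{mth1}(1) cannot all hold, $\Sha(K/k)[p^{\infty}]=1$, and $\Sha(K/k)\cong \Sha(K_{0}/k)$ is given by \eqref{eq:chvp}.

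The main difficulty is the group-theoretic classification in the second paragraph. The reducible (two-character) case is a brief matrix computation, but the irreducible case requires the Dickson-type list of $p'$-subgroups of $\GL_{2}(\Fp)$ whose order divides $4(p-1)$ --- essentially subgroups of torus normalizers --- and for each one must exhibit an element that stabilizes $L$ yet acts on it by $-1$, contradicting (c); this, together with the use of $N^{G}(H)=1$ to pin down $Z_{G}(S_{p})=S_{p}$, is where the normality of $S_{p}$ (which is what makes $\rho(G)$ a $p'$-group) enters decisively.
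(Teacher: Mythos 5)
Your proposal is correct and follows essentially the same route as the paper: decompose $\Sha(K/k)$ into its $p$-primary and prime-to-$p$ parts via Theorem \ref{thm:ptnt}, handle the prime-to-$p$ part by reducing to the degree-$4$ extension $K_{0}/k$ and Kunyavskii's theorem (Proposition \ref{prop:knth}), classify the $p$-primary case by analyzing the $p'$-image of $G$ in $\GL_{2}(\Fp)$ (the paper's Corollary \ref{cor:fpcf}, obtained via Proposition \ref{prop:idx4} and Theorem \ref{thm:zggv}), and conclude that the two cases are mutually exclusive because they force $G/N^{G}(HS_{p})$ to be $C_{4}$ versus $(C_{2})^{2}$ or $\fA_{4}$. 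Your $\GL_{2}(\Fp)$ case analysis is only sketched, but it is exactly the Dickson--Maschke argument the paper carries out in Section \ref{sect:fnrp}.
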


\begin{rem}
Theorem \ref{mth3} for $\{p,\ell\}=\{2,3\},\{2,5\},\{2,7\}$ or $\{3,5\}$ and Theorem \ref{mth4} for $p=3$ partially recover the classification of the structure of $\Sha(K/k)$ where $[K:k]\in \{6,10,12,14,15\}$, which are given by \cite{Drakokhrust1987}, \cite{Hoshi2022} and \cite{Hoshi2023}. See Section \ref{ssec:cppr} for more details. 
\end{rem}

We can also prove the existence of finite separable field extension $K/k$ of degree having square-free prime factors such that the exponent of $\Sha(K/k)$ is not a prime power. It follows from Theorem \ref{mth3} and some additional arguments. 

\begin{thm}[{Theorem \ref{thm:cltt}}]\label{mth6}
Let $p\neq 3$ be a prime number, and $k$ a global field whose characteristic is different from $p$. 
\begin{enumerate}
\item There is a finite separable field extension $K/k$ of degree $9p$ such that
\begin{equation*}
\Sha(K/k)\cong \Z/3p. 
\end{equation*}
\item We further assume that $k$ has characteristic $\neq \ell$, where $\ell\notin \{3,p\}$ is a prime number. Then there is a finite separable field extension $K/k$ of degree $3p\ell$ such that
\begin{equation*}
\Sha(K/k)\cong \Z/p\ell. 
\end{equation*}
\end{enumerate}
\end{thm}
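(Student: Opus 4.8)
The plan is, in each case, to write down an explicit finite solvable group $G$ with a core-free subgroup $H$, to realize the pair $(G,H)\cong(\Gal(\widetilde{K}/k),\Gal(\widetilde{K}/K))$ by an extension $\widetilde{K}/k$ all of whose decomposition groups are cyclic, and then to read off $\Sha(K/k)$ for $K:=\widetilde{K}^{H}$ from Theorems~\ref{mth1} and~\ref{mth3}. Making every decomposition group cyclic is the whole trick: a cyclic group neither contains nor surjects onto $(C_{p})^{2}$ or $(C_{\ell})^{2}$, so in Theorems~\ref{mth1} and~\ref{mth3} the ``a decomposition group contains $S_{q}$'' alternative never occurs, and the relevant obstruction takes its nontrivial value $\Z/q$.

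(i) I would take $G:=(C_{p})^{2}\rtimes_{\psi}(C_{3})^{2}$, where $\psi$ factors as $(C_{3})^{2}\twoheadrightarrow C_{3}\xrightarrow{\,\overline{\psi}\,}\GL_{2}(\Fp)$ with $\overline{\psi}(c_{3})=\diag(\zeta,\zeta^{-1})$ for a primitive cube root $\zeta\in\Fp^{\times}$ if $p\equiv 1\bmod 3$, and $\overline{\psi}(c_{3})=\left(\begin{smallmatrix}0&-1\\1&-1\end{smallmatrix}\right)$ if $p\equiv 2\bmod 3$ (equivalently $G\cong\bigl((C_{p})^{2}\rtimes C_{3}\bigr)\times C_{3}$), and $H:=\Delta(C_{p})\le (C_{p})^{2}=S_{p}$. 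A brief check gives $[G:H]=9p$, that $S_{p}$ is the normal $p$-Sylow, that $H$ is core-free (the three conjugates $\overline{\psi}(c_{3}^{j})\Delta(C_{p})$ meet in $0$), that conditions (a),(b),(c) of Theorem~\ref{mth1} hold (because $1$ is not an eigenvalue of $\overline{\psi}(c_{3})$ and $\overline{\psi}(c_{3})$ does not stabilize the line $\Delta(C_{p})$), and that $HS_{p}=S_{p}$ with $N^{G}(HS_{p})=S_{p}$, so that $K_{0}:=\widetilde{K}^{HS_{p}}$ is the $(C_{3})^{2}$-extension $\widetilde{K}^{S_{p}}/k$. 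Once $\widetilde{K}/k$ has cyclic decomposition groups, Theorem~\ref{mth1} yields $\Sha(K/k)[p^{\infty}]\cong\Z/p$ and $\Sha(K/k)^{(p)}\cong\Sha(K_{0}/k)$, while Tate's description of $\Sha$ of a $(C_{3})^{2}$-extension whose decomposition groups are all cyclic (there $H^{3}((C_{3})^{2},\Z)\cong\Z/3$ and its restrictions to the cyclic local groups vanish) gives $\Sha(K_{0}/k)\cong\Z/3$. Since $p\neq 3$ this forces $\Sha(K/k)\cong\Z/p\oplus\Z/3\cong\Z/3p$.

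(ii) Here I would take $G:=\bigl((C_{p})^{2}\times(C_{\ell})^{2}\bigr)\rtimes_{(\psi_{p},\psi_{\ell})}C_{3}$, with $\psi_{p}\colon C_{3}\to\GL_{2}(\Fp)$ and $\psi_{\ell}\colon C_{3}\to\GL_{2}(\Fl)$ the two-dimensional representations occurring in cases~$(\alpha)$,~$(\beta)$ of Theorem~\ref{mth3} (diagonal $\diag(\zeta,\zeta^{-1})$ when the relevant prime is $\equiv 1\bmod 3$, the irreducible $\left(\begin{smallmatrix}0&-1\\1&-1\end{smallmatrix}\right)$ when it is $\equiv 2\bmod 3$), and $H:=\Delta(C_{p})\times\Delta(C_{\ell})\times\{1\}$. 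As before one checks $[G:H]=3p\ell$, that $S_{p}=(C_{p})^{2}\times 1\times 1$ is the normal $p$-Sylow, that $H$ is core-free, that (a),(b),(c) of Theorem~\ref{mth1} hold at $p$, and that $HS_{p}=(C_{p})^{2}\times\Delta(C_{\ell})\times 1$ has normal core $N^{G}(HS_{p})=S_{p}$; hence $K_{0}:=\widetilde{K}^{HS_{p}}$ has degree $3\ell$, its Galois closure group is $G/S_{p}\cong(C_{\ell})^{2}\rtimes_{\psi_{\ell}}C_{3}$, and $K_{0}$ corresponds there to $\Delta(C_{\ell})\rtimes\{1\}$ --- exactly the shape of Theorem~\ref{mth3}$(\alpha)$ or $(\beta)$ for the primes $(\ell,3)$. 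With $\widetilde{K}/k$ chosen to have cyclic decomposition groups, Theorem~\ref{mth3} gives $\Sha(K_{0}/k)\cong\Z/\ell$, and then Theorem~\ref{mth1} gives $\Sha(K/k)[p^{\infty}]\cong\Z/p$ and $\Sha(K/k)^{(p)}\cong\Sha(K_{0}/k)\cong\Z/\ell$; as $\gcd(p,\ell)=1$ this yields $\Sha(K/k)\cong\Z/p\oplus\Z/\ell\cong\Z/p\ell$.

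The only step that is not formal, and which I expect to be the main obstacle, is the realization: producing a Galois extension $\widetilde{K}/k$ with group the prescribed solvable (in fact metabelian) $G$ --- over \emph{any} global field of characteristic $\ne p$ (and $\ne\ell$), so $\ch k=3$ is allowed --- all of whose decomposition groups are cyclic. This should be carried out exactly as in the proof of Theorem~\ref{mth4}: invoke Shafarevich's solution of the inverse Galois problem for solvable groups together with a Grunwald--Wang/weak-approximation argument, prescribing cyclic decomposition groups at a sufficiently large finite set $S$ of places (each cyclic group being a local Galois group) and arranging $\widetilde{K}/k$ to be unramified outside $S$, so that the remaining decomposition groups are generated by a Frobenius and are therefore cyclic. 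Everything else reduces to bookkeeping with Theorems~\ref{mth1} and~\ref{mth3} and to elementary $\Fp$- and $\Fl$-linear algebra (core-freeness and the verification of hypotheses (a)--(c)).
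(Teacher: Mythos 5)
Your proposal is correct and follows essentially the same route as the paper: in (i) your $(C_{p})^{2}\rtimes_{\psi}(C_{3})^{2}$ with $H=\Delta(C_{p})$ is exactly the paper's $\widetilde{G}=((C_{p})^{2}\rtimes C_{3})\times C_{3}$, and in (ii) your $((C_{p})^{2}\times(C_{\ell})^{2})\rtimes C_{3}$ with $H=\Delta(C_{p})\times\Delta(C_{\ell})$ is the paper's split extension of $(C_{p})^{2}\rtimes C_{3}$ by $(C_{\ell})^{2}$. The remaining steps — computing the $p$-primary and prime-to-$p$ parts via Theorems \ref{mth1} and \ref{mth3}, the Tate computation for the $(C_{3})^{2}$-quotient, and the realization with cyclic decomposition groups via the Shafarevich-type Propositions \ref{prop:shf1} and \ref{prop:shf2} — are precisely what the paper does.
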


\begin{rem}
\begin{enumerate}
\item If a finite Galois extension $K/k$ has square-free degree, then we have $\Sha(K/k)=1$. This is a consequence of Gurak (\cite{Gurak1978}). Hence, the finite extensions in Theorems \ref{mth4} and \ref{mth6} (i) are always non-Galois. 
\item Let $K/k$ be a finite Galois extension of degree $9p$, where $p\neq 3$ is a prime number. Put $G:=\Gal(K/k)$, and take a $3$-Sylow subgroup $S_{3}$ of $G$. Then the restriction map
\begin{equation*}
\widehat{H}^{-3}(G,\Z)\rightarrow \widehat{H}^{-3}(S_{3},\Z)
\end{equation*}
is injective. Combining this fact with Tate's theorem, we obtain that $\Sha(K/k)$ is trivial or isomorphic to $\Z/3$. Therefore, the finite extension in Theorem \ref{mth6} (ii) must be non-Galois. 
\end{enumerate}
\end{rem}

Now we explain that Theorem \ref{mth1} implies Theorems \ref{mth3} and \ref{mth5}. We first discuss Theorem \ref{mth3}. Let $K/k$, $G$, $H$ and $S_{p}$ be as in Theorem \ref{mth3}. In particular, $K/k$ has degree $p\ell$. It suffices to prove that
\begin{itemize}
\item[(1)] the conditions (a), (b), (c) in Theorem \ref{mth1} are satisfied if and only if ($\alpha$), ($\beta$) or ($\gamma$) is valid; and
\item[(2)] $\Sha(K/k)^{(p)}\cong \Sha(K_{0}/k)$ is trivial. 
\end{itemize}
For (1), take a subgroup $G'$ of $G$ satisfying $G=S_{p}\rtimes G'$. Then we may assume $H=(S_{p}\cap H)\rtimes H'$ for some subgroup $H'$ of $G'$. Then (a), (b), (c) in Theorem \ref{mth1} can be rephrased to those on $2$-dimensional $\Fp$-representations of $G'$, which is called in this paper an \emph{$H'$-extremal $\Fp$-representation}. However, if $p>2$, it is archived using Dickson's classification and structure of $2$-Sylow subgroups of $\GL_{2}(\Fp)$. Note that Dickson's classification is a description of all maximal subgroups of $\PGL_{2}(\Fp)$. The case $p=2$ is much easier than the case $p>2$ because there exist isomorphisms $\GL_{2}(\F_{2})\cong \PGL_{2}(\F_{2})\cong \fS_{3}$. On the other hand, (2) is an easy consequence of Bartels' theorem, since $[K_{0}:k]=\ell$ is a prime. The proof of Theorem \ref{mth5} is similar to Theorem \ref{mth3}. However, we use Kunyavskii's theorem (\cite{Kunyavskii1984}) instead of Bartels' theorem. 

\subsection{Results on cohomological invariants of norm one tori}

Here, let $F$ be a field, and fix a separable closure $F^{\sep}$ of $F$. Consider a finite separable field extension $E$ of $F$ that is contained in $F^{\sep}$. Then the $F$-torus
\begin{equation*}
T_{E/F}:=\Ker(\N_{E/F}\colon \Res_{E/F}\G_{m}\rightarrow \G_{m}). 
\end{equation*}
is called the \emph{norm one torus} associated to $E/F$. Take a smooth compactification $X$ of $T_{E/F}$, and set $\overline{X}:=X\otimes_{F} F^{\sep}$. Then the Picard $\Pic(\overline{X})$ is a finite free abelian group equipped with a continuous action of the absolute Galois group of $F$ (with respect to the discrete topology on $\Pic(\overline{X})$). Note that there exists an isomorphism
\begin{equation*}
H^{1}(F,\Pic(\overline{X}))\cong \Br(X)/\Br(k), 
\end{equation*}
where $\Br(X):=H_{\et}^{2}(X,\G_{m})$ is the {\'e}tale cohomological Brauer group of $X$. See \cite[Theorem 9.5]{ColliotThelene1987}. 

\vspace{6pt}
It is known that $H^{1}(F,\Pic(\overline{X}))$ is related to the Hasse norm principle. Let $K/k$ be a finite extension of a global field, and denote by $\Sigma_{k}$ the set of places of $k$. Then Ono constructed in \cite{Ono1963} an isomorphism
\begin{equation*}
\Sha(K/k)\cong \Sha^{1}(k,T_{K/k}). 
\end{equation*}
Here right-hand side is the \emph{Tate--Shafarevich group} of $T_{K/k}$, which is defined as the kernel of the homomorphism
\begin{equation*}
(\Res_{k_{v}/k})_{v}\colon H^{1}(k,T_{K/k})\rightarrow \prod_{v\in \Sigma_{k}}H^{1}(k_{v},T_{K/k}). 
\end{equation*}
On the other hand, put
\begin{equation*}
A_{k}(T_{K/k}):=\left(\prod_{v\in \Sigma_{k}}T_{K/k}(k_{v})\right)/\overline{T_{K/k}(k)}, 
\end{equation*}
where $\overline{T_{K/k}(k)}$ is the closure of $T_{K/k}(k)$ in $\prod_{v}T_{K/k}(k_{v})$. Then Vosresenskii (\cite{Voskresenskii1969}) gave an exact sequence
\begin{equation*}
0\rightarrow A_{k}(T_{K/k})\rightarrow H^{1}(k,\Pic(\overline{X}))^{\vee} \rightarrow \Sha^{1}(k,T_{K/k}) \rightarrow 0,
\end{equation*}
where $H^{1}(k,\Pic(\overline{X}))^{\vee}$ is the Pontryagin dual of $H^{1}(k,\Pic(\overline{X}))$. 

\vspace{5pt}
For a finite group $G$ and its subgroups $H$, we use the notations as follows: 
\begin{itemize}
\item $N_{G}(H):=\{g\in G\mid gHg^{-1}=H\}$, the normalizer of $H$ in $G$; 
\item $Z_{G}(H):=\{g\in G\mid gh=hg\text{ for any }h\in H\}$, the centralizer of $H$ in $G$; 
\item for another subgroup $D$ of $G$, $[H,D]:=\langle h^{-1}dhd^{-1}\mid h\in H,d\in D\rangle$; 
\end{itemize}

\begin{thm}[{Theorem \ref{thm:mtub}}]\label{mth8}
Let $F$ be a field, and $E/F$ a finite separable field extension with its Galois closure $\widetilde{E}/F$. Put $G:=\Gal(\widetilde{E}/F)$ and $H:=\Gal(\widetilde{E}/E)$. We further assume 
\begin{itemize}
\item $[E:F]\in p\Z \setminus p^{2}\Z$; and 
\item a $p$-Sylow subgroup $S_{p}$ of $G$ is normal in $G$. 
\end{itemize}
\begin{enumerate}
\item The abelian group $H^{1}(F,\Pic(\overline{X}))[p^{\infty}]$ is non-trivial if and only if
\begin{itemize}
\item[(a)] $S_{p}\cong (C_{p})^{2}$; 
\item[(b)] $[S_{p},G]=S_{p}$; and
\item[(c)] $N_{G}(S_{p}\cap H)=Z_{G}(S_{p}\cap H)$. 
\end{itemize}
Moreover, if $H^{1}(F,\Pic(\overline{X}))[p^{\infty}]\neq 0$, then there is an isomorphism
\begin{equation*}
H^{1}(F,\Pic(\overline{X}))\cong \Z/p. 
\end{equation*}
\item Let $E_{0}/F$ be the subextension of $\widetilde{E}/F$ corresponding to $HS_{p}$. Then there is an isomorphism of finite abelian groups
\begin{equation*}
H^{1}(F,\Pic(\overline{X}))^{(p)}\cong H^{1}(F,\Pic(\overline{X}_{0})), 
\end{equation*}
where $\overline{X}_{0}=X_{0}\otimes_{F}F^{\sep}$ and $X_{0}$ is a smooth compactification of $T_{E_{0}/F}$. 
\end{enumerate}
\end{thm}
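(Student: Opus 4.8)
The plan is to prove Theorem~\ref{mth8} by group cohomology, reusing the group-theoretic core of Theorem~\ref{mth1}: indeed Theorem~\ref{mth8} is the field-independent counterpart of Theorem~\ref{mth1}. The first step is the standard reduction. Since $\Pic(\overline{X})$ is a finitely generated torsion-free abelian group on which $\Gal(F^{\sep}/F)$ acts through the finite quotient $G$, one has $H^{1}(F,\Pic(\overline{X}))=H^{1}(G,\Pic(\overline{X}))$; and from the flasque resolution of the character lattice $\hat{T}_{E/F}$ of $T_{E/F}$ (a quotient of the permutation module $\Z[G/H]$) provided by the boundary divisors of $X$, one identifies (Colliot-Th\'el\`ene--Sansuc, Voskresenskii)
\begin{equation*}
H^{1}(G,\Pic(\overline{X}))\cong \Sha_{\omega}^{2}(G,\hat{T}_{E/F}):=\Ker\Bigl(H^{2}(G,\hat{T}_{E/F})\longrightarrow \prod_{C\leq G\ \mathrm{cyclic}}H^{2}(C,\hat{T}_{E/F})\Bigr).
\end{equation*}
So $H^{1}(F,\Pic(\overline{X}))$ depends only on the $G$-module $\hat{T}_{E/F}$, i.e. only on $(G,H)$, and not on $F$ or on $X$. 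On the other side, the Drakokhrust--Platonov description (and Tate's theorem in the Galois case) presents $\Sha(K/k)\cong\Sha^{1}(k,T_{K/k})$ over a global field as a Tate--Shafarevich group in which one restricts to the \emph{decomposition groups} of $\widetilde{K}/k$; thus $\Sha_{\omega}^{2}(G,\hat{T}_{E/F})$ is precisely the ``$\omega$-version'' of $\Sha(K/k)$, with all cyclic subgroups of $G$ in place of the decomposition groups.

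\textbf{Part (1).} Granting this, part (1) is obtained by running the proof of Theorem~\ref{mth1}(1) with ``cyclic subgroups of $G$'' in place of ``decomposition groups of $\widetilde{K}/k$''. The group-theoretic heart is unchanged: using $S_{p}\trianglelefteq G$ and $[E:F]\in p\Z\setminus p^{2}\Z$ one writes $G=S_{p}\rtimes G'$, $H=(S_{p}\cap H)\rtimes H'$, and the conditions for $\Sha_{\omega}^{2}(G,\hat{T}_{E/F})[p^{\infty}]\neq 0$ translate into conditions on the two-dimensional $\Fp$-representation of $G'=G/S_{p}$ on $S_{p}$ relative to the line $S_{p}\cap H$ --- an ``$H'$-extremal $\Fp$-representation'' --- which, via Dickson's classification of subgroups of $\PGL_{2}(\Fp)$ and the structure of the Sylow $2$-subgroups of $\GL_{2}(\Fp)$, is shown to be equivalent to (a), (b), (c). The only change is at the end: where Theorem~\ref{mth1} decides between $1$ and $\Z/p$ according to whether some decomposition group contains $S_{p}$, the $\omega$-version asks whether some \emph{cyclic} subgroup contains $S_{p}$ --- impossible once (a) holds, since $S_{p}\cong(C_{p})^{2}$ is not cyclic. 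So when (a),(b),(c) hold the answer is always the generic value, and $H^{1}(F,\Pic(\overline{X}))[p^{\infty}]\cong\Z/p$.

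\textbf{Part (2) and the prime-to-$p$ vanishing.} For part (2) I would compare $\hat{T}_{E/F}$ with $\hat{T}_{E_{0}/F}$. Since $S_{p}\trianglelefteq G$ and $[HS_{p}:H]=[S_{p}:S_{p}\cap H]=p$, the character lattices fit into an exact sequence $0\to\hat{T}_{E_{0}/F}\to\hat{T}_{E/F}\to\Ind_{HS_{p}}^{G}C\to 0$ with $C$ the augmentation ideal of $\Z[C_{p}]$; after inverting $p$, $C$ becomes a direct summand of the permutation module $\Z[1/p][C_{p}]$, hence $\Ind_{HS_{p}}^{G}C$ becomes a summand of $\Z[1/p][G/H]$, to which $\Sha_{\omega}^{2}$ is insensitive. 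Together with $S_{p}\subseteq N^{G}(HS_{p})$ (so that $\bar{G}:=G/N^{G}(HS_{p})=\Gal(\widetilde{E_{0}}/F)$ has order prime to $p$ and $\hat{T}_{E_{0}/F}$ is inflated from $\bar{G}$, giving $\Sha_{\omega}^{2}(G,\hat{T}_{E_{0}/F})\cong\Sha_{\omega}^{2}(\bar{G},\hat{T}_{E_{0}/F})=H^{1}(F,\Pic(\overline{X}_{0}))$), this yields $H^{1}(F,\Pic(\overline{X}))^{(p)}\cong H^{1}(F,\Pic(\overline{X}_{0}))$. Finally, the stronger claim in part (1) that the \emph{whole} group is $\Z/p$ when (a),(b),(c) hold means, by part (2), that $H^{1}(F,\Pic(\overline{X}_{0}))=0$, i.e. that the Hasse norm principle holds for $E_{0}/F$; I would extract this from the classification of $H'$-extremal $\Fp$-representations, each case of which pins $\bar{G}$ (together with its subgroup $HS_{p}/N^{G}(HS_{p})$) down to a pair for which the principle is already known --- cyclic, dihedral, symmetric or alternating Galois group.

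\textbf{Main obstacle.} The main obstacle is the group-theoretic classification --- characterising the $H'$-extremal $\Fp$-representations, and for the ``moreover'' reading off the isomorphism type of $\bar{G}$ in each case --- but this is exactly the work shared with Theorem~\ref{mth1} and may be taken as given here; the genuinely new content of Theorem~\ref{mth8} is then just the reduction to $\Sha_{\omega}^{2}(G,\hat{T}_{E/F})$ and the permutation-module comparison of part (2), both routine. (An alternative is to deduce Theorem~\ref{mth8} directly from Theorem~\ref{mth1}: realise $(G,H)$ over a global field $k$ --- a rational function field, say, via geometric inverse Galois theory --- chosen so that every local Galois group of $\widetilde{K}/k$ is cyclic while every cyclic subgroup of $G$ occurs as a decomposition group; then $T_{K/k}$ has weak approximation, so $\Sha(K/k)\cong H^{1}(F,\Pic(\overline{X}))^{\vee}$ exactly, and Theorem~\ref{mth1} transfers verbatim. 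The obstacle there is constructing such a $k$.)
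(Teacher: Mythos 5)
Your reduction of the theorem to the purely group-theoretic statement $H^{1}(F,\Pic(\overline{X}))\cong\Sha_{\omega}^{2}(G,J_{G/H})$, followed by a transfer of the $p$-primary computation underlying Theorem~\ref{mth1} (equivalently, Theorem~\ref{thm:tspt} applied with $\cD=\cC_{G}$, where no cyclic subgroup can contain $S_{p}\cong(C_{p})^{2}$), is exactly the paper's route. Your treatment of part (2) via the $\Z[1/p]$-splitting of $0\to\Z\to\Ind_{H}^{HS_{p}}\Z\to J_{HS_{p}/H}\to 0$ is a legitimate and slightly more direct variant of the paper's semi-direct-product/Mackey argument: it works because $\Sha_{\omega}^{2}(G,\Ind_{H}^{G}\Z)=0$ and $H^{1}(D,\Ind_{HS_{p}}^{G}J_{HS_{p}/H})^{(p)}=0$ for every subgroup $D$ of $G$.

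The gap is in the ``Moreover'' clause. You correctly observe that $H^{1}(F,\Pic(\overline{X}))\cong\Z/p$ would force $H^{1}(F,\Pic(\overline{X}_{0}))=0$, and you propose to get this from the classification of $H'$-extremal representations. But that classification only constrains the \emph{image} of $G'=G/S_{p}$ in $\GL(S_{p})$; it says nothing about the kernel of the action, which survives into $\overline{G}=G/N^{G}(HS_{p})$ and can contribute prime-to-$p$ torsion. Concretely, take $G=((C_{p})^{2}\rtimes C_{3})\times C_{3}$ and $H=\Delta(C_{p})\times\{1\}$ as in Theorem~\ref{thm:cltt}(i): conditions (a)--(c) hold, so $H^{1}(F,\Pic(\overline{X}))[p^{\infty}]\cong\Z/p$, yet $HS_{p}=S_{p}$, $G/S_{p}\cong(C_{3})^{2}$ and $\Sha_{\omega}^{2}((C_{3})^{2},J_{(C_{3})^{2}})\cong\Z/3$ by Proposition~\ref{prop:abts}, whence $H^{1}(F,\Pic(\overline{X}))\cong\Z/3p$. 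Thus the ``Moreover'' as quoted cannot be proved --- it is false --- and the body of the paper (Theorem~\ref{thm:mtub}) accordingly asserts only that $H^{1}(F,\Pic(\overline{X}))[p^{\infty}]$ is trivial or isomorphic to $\Z/p$. Everything else in your proposal stands, and the failure here is traceable to the statement rather than to your strategy.
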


\subsection{Second cohomology groups}

Here we explain proofs of Theorems \ref{mth1} and \ref{mth8}. Let $G$ be a finite group. For a subgroup $H$ of $G$, put $\Ind_{H}^{G}\Z:=\Hom_{\Z[H]}(\Z[G],M)$. Consider an exact sequence
\begin{equation*}
0\rightarrow \Z \rightarrow \Ind_{H}^{G}\Z \rightarrow J_{G/H}\rightarrow 0, 
\end{equation*}
where the homomorphism $\Z \rightarrow J_{G/H}$ is defined by sending $1$ to the map $g\mapsto 1$ for all $g\in G$. Pick a set of subgroups $\cD$ of $G$ containing all its cyclic subgroups that is stable under conjugation of $G$. We say such a set to be \emph{admissible} in this paper. Then we discuss the structure of the abelian group
\begin{equation*}
\Sha_{\cD}^{2}(G,J_{G/H}):=\Ker \left(H^{2}(G,J_{G/H})\xrightarrow{(\Res_{G/D})_{D \in \cD}} \bigoplus_{D\in \cD}H^{2}(D,J_{G/H})\right). 
\end{equation*}
If $\cD$ is the set of all cyclic subgroups of $G$, then we also denote $\Sha_{\cD}^{2}(G,J_{G/H})$ by $\Sha_{\omega}^{2}(G,J_{G/H})$. 

The $G$-lattice $J_{G/H}$ is related to a norm one torus as follows. Let $E/F$ be a finite separable field extension with Galois closure $\widetilde{E}/F$. Put $G=\Gal(\widetilde{E}/F)$ and $H=\Gal(\widetilde{E}/E)$. Then one has an isomorphism of $G$-lattices $J_{G/H}\cong X^{*}(T_{E/F})$. Here the right-hand side is the \emph{character group} of $T_{E/F}$, which is defined as follows: 
\begin{equation*}
X^{*}(T_{E/F}):=\Hom_{F^{\sep}\text{-group}}(T_{E/F}\otimes_{F}F^{\sep},\G_{m,F^{\sep}}). 
\end{equation*}
Moreover, there is an isomorphism of finite abelian groups
\begin{equation*}
H^{1}(F,\Pic(\overline{X}))\cong \Sha_{\omega}^{2}(G,J_{G/H}),
\end{equation*}
where $\overline{X}=X\otimes_{F}F^{\sep}$ and $X$ is a smooth compactification of $T_{E/F}$. 

Under the above notations, we further assume that $F=k$ is a global field and set $K:=E$. Denote by $\cD$ the set of all decomposition groups of $\widetilde{K}/k$, which is an admissible set of subgroups of $G$. Then the Poitou--Tate duality gives an isomorphism
\begin{equation*}
\Sha^{1}(k,T_{K/k})\cong \Sha_{\cD}^{2}(G,J_{G/H})^{\vee}, 
\end{equation*}
where $\Sha_{\cD}^{2}(G,J_{G/H})^{\vee}$ is the Pontryagin dual of $\Sha_{\cD}^{2}(G,J_{G/H})$. See also Corollary \ref{cor:onhn}. Therefore, Theorem \ref{mth1} and Theorem \ref{mth6} are reduced to the structure theorem of $\Sha_{\cD}^{2}(G,J_{G/H})$ as follows. 

\vspace{5pt}
Let $n$ be a positive integer. Recall that a \emph{transitive group of degree $n$} (or, a \emph{transitive subgroup of $\fS_{n}$}) is a subgroup of the symmetric group $\fS_{n}$ of $n$-letters whose action on $\{1,\ldots,n\}$ induced by the natural action of $\fS_{n}$ is transitive. Moreover, we call a subgroup of $G$ of the form $\Stab_{G}(i)$ where $i\in \{1,\ldots,n\}$ a \emph{corresponding subgroup} of $G$. 

\begin{thm}[Theorem \ref{thm:tspt}]\label{mth9}
Let $p$ be a prime number, $G$ a transitive group of degree $\in p\Z \setminus p^{2}\Z$, and $H$ a corresponding subgroup of $G$. We further assume that a $p$-Sylow subgroup $S_{p}$ of $G$ is normal in $G$.  
\begin{enumerate}
\item One has $\Sha_{\omega}^{2}(G,J_{G/H})[p^{\infty}]\neq 0$ if and only if
\begin{itemize}
\item[(a)] there is an isomorphism $S_{p}\cong (C_{p})^{2}$; 
\item[(b)] $[S_{p},G]=S_{p}$; and
\item[(c)] $N_{G}(S_{p}\cap H)=Z_{G}(S_{p}\cap H)$. 
\end{itemize}
Conversely, assume that \emph{(a)}, \emph{(b)} and \emph{(c)} hold. Then, for an admissible set $\cD$ of subgroups of $G$, there is an isomorphism 
\begin{equation*}
\Sha_{\cD}^{2}(G,J_{G/H})[p^{\infty}] \cong 
\begin{cases}
0&\text{if there is $D \in \cD$ which contains $S_{p}$;}\\
\Z/p &\text{otherwise. }
\end{cases}
\end{equation*}
\item For any admissible set $\cD$ of subgroups of $G$, there is an isomorphism
\begin{equation*}
\Sha_{\cD}^{2}(G,J_{G/H})^{(p)} \cong \Sha_{\cD_{/S_{p}}}^{2}(G/S_{p},J_{G/HS_{p}}). 
\end{equation*}
Here, $\cD_{/S_{p}}$ is the set of subgroups of the form $DS_{p}/S_{p}$ where $D\in \cD$, which is an admissible set of subgroups of $G/S_{p}$. 
\end{enumerate}
\end{thm}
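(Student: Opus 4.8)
The plan is to split $\Sha_{\cD}^{2}(G,J_{G/H})=\Sha_{\cD}^{2}(G,J_{G/H})[p^{\infty}]\oplus\Sha_{\cD}^{2}(G,J_{G/H})^{(p)}$ and treat the two summands by different means, exploiting that $S_{p}$ is a normal Hall subgroup. By Schur--Zassenhaus there is a complement $G=S_{p}\rtimes G'$ with $\gcd(|G'|,p)=1$, which can be chosen so that $H=(S_{p}\cap H)\rtimes H'$ for some $H'\le G'$, and the hypothesis $[G:H]\in p\Z\setminus p^{2}\Z$ forces $[S_{p}:S_{p}\cap H]=p$ (since $HS_{p}$ contains a full Sylow $p$-subgroup), so $S_{p}\cap H$ is maximal, hence normal, in $S_{p}$. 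Writing $J:=J_{G/H}$ and running the Hochschild--Serre spectral sequence of $1\to S_{p}\to G\to G/S_{p}\to 1$, while using $\gcd(|G'|,p)=1$ to annihilate the higher $G/S_{p}$-cohomology of $p$-primary modules, I would obtain, for every $n\ge 1$, natural isomorphisms
\[
H^{n}(G,J)[p^{\infty}]\;\cong\;H^{n}(S_{p},J)^{G'},\qquad H^{n}(G,J)^{(p)}\;\cong\;H^{n}(G/S_{p},J^{S_{p}})^{(p)},
\]
together with the same statements for each $D\in\cD$ in place of $G$ (using that $D\cap S_{p}$ is the normal Sylow $p$-subgroup of $D$). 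These identifications are compatible with restriction along $D\hookrightarrow G$, which is what reduces the whole theorem to two local computations.

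\textbf{The prime-to-$p$ part (assertion (ii)).} The crux is the identification of $G/S_{p}$-lattices $J^{S_{p}}\cong J_{G/HS_{p}}$: every $S_{p}$-orbit on $G/H$ has size $[S_{p}H:H]=p$, so the fixed module $\Z[G/H]^{S_{p}}$ is freely spanned by the orbit sums and the orbit-sum map carries it isomorphically onto $\Z[G/HS_{p}]$ (taking $\sum_{gH}gH$ to $\sum_{gHS_{p}}gHS_{p}$); a one-line argument applying the augmentation shows that this invariant submodule already accounts for all of $(\Z[G/H]/\Z)^{S_{p}}$. Combined with the second displayed isomorphism (and its analogue for the $D$'s), this gives $\Sha_{\cD}^{2}(G,J)^{(p)}\cong\Sha_{\cD_{/S_{p}}}^{2}(G/S_{p},J_{G/HS_{p}})^{(p)}$; since $|G/S_{p}|=|G'|$ is prime to $p$, the right-hand group has no $p$-torsion, the $(p)$-superscript is vacuous there, and assertion (ii) follows.

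\textbf{The $p$-primary part (assertion (i)).} By the first displayed isomorphism, $\Sha_{\cD}^{2}(G,J)[p^{\infty}]=\ker\!\bigl(H^{2}(S_{p},J)^{G'}\to\bigoplus_{D\in\cD}H^{2}(D\cap S_{p},J)^{D'}\bigr)$. Two cheap vanishings come first: if some $D\in\cD$ contains $S_{p}$ then $[G:D]\mid|G'|$ is prime to $p$, so $\mathrm{cor}\circ\mathrm{res}$ is invertible on $p$-primary groups and $\mathrm{res}\colon H^{2}(G,J)[p^{\infty}]\to H^{2}(D,J)$ is injective, killing the group; in particular this settles the case $S_{p}$ cyclic (take $D=S_{p}$, which lies in any admissible $\cD$). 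So from now on $S_{p}$ is non-cyclic. Next I must rule out $|S_{p}|\ge p^{3}$: here I would compute $H^{2}(S_{p},J)$ from the Mackey decomposition $\Res_{S_{p}}\Z[G/H]=\bigoplus_{i}\Z[S_{p}/L_{i}]$, where the $L_{i}$ run over the $G$-conjugates of $S_{p}\cap H$ (each of index $p$, hence each containing $\Phi(S_{p})$), take $G'$-invariants, and restrict to the cyclic subgroups of $S_{p}$; equivalently, via Nakayama duality one may pass to $\widehat H^{-2}(G,I_{G/H})$, which sits in an exact sequence of Drakokhrust--Platonov type built from $\mathrm{coker}(H_{2}(S_{p}\cap H)^{H'}\to H_{2}(S_{p})^{G'})$ and $\ker(H^{\mathrm{ab}}\to G^{\mathrm{ab}})[p^{\infty}]$. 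Either way, the claim to prove is that when $\Phi(S_{p})\ne 1$ every class in this group already dies on a proper (hence cyclic) subgroup, so $\Sha_{\omega}^{2}(G,J)[p^{\infty}]=0$. Once $S_{p}\cong(C_{p})^{2}$, the subgroup $S_{p}\cap H$ is a line in the $2$-dimensional $\Fp[G']$-module $S_{p}$, and conditions (b) and (c) say exactly that $S_{p}$ has no trivial $\Fp[G']$-summand (Maschke) and that any $g\in G$ fixing that line setwise fixes it pointwise; in particular (c) forces $H'$ to act trivially on $S_{p}\cap H$. A direct computation of $\ker\!\bigl(H^{2}(S_{p},J)^{G'}\to\bigoplus_{M}H^{2}(M,\Res_{M}J)\bigr)$, with $M$ ranging over the $p+1$ lines, using that $\Res_{M}\Z[S_{p}/L_{i}]$ is a trivial $M$-lattice when $M=L_{i}$ and a free $\Z[M]$-module otherwise, then shows the kernel equals $\Z/p$ precisely when (a)--(c) hold and $0$ otherwise; this yields the equivalence in (i) and the value $\Sha_{\omega}^{2}(G,J)[p^{\infty}]\cong\Z/p$.

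\textbf{The dichotomy for general $\cD$, and the main obstacle.} Since any admissible $\cD$ contains all cyclic subgroups, $\Sha_{\cD}^{2}(G,J)[p^{\infty}]\subseteq\Sha_{\omega}^{2}(G,J)[p^{\infty}]\cong\Z/p$. If some $D\in\cD$ contains $S_{p}$, the transfer argument above makes this $0$. If no $D$ contains $S_{p}$, then $D\cap S_{p}$ is a proper---hence cyclic---subgroup of $(C_{p})^{2}$ for each $D$, so the generator of $\Sha_{\omega}^{2}(G,J)[p^{\infty}]$ restricts to $0$ on $D\cap S_{p}$ (being in the kernel of restriction to all cyclic subgroups) and therefore, by the edge-map description of $H^{2}(D,J)[p^{\infty}]=H^{2}(D\cap S_{p},J)^{D'}$, to $0$ on $D$; hence $\Sha_{\cD}^{2}(G,J)[p^{\infty}]=\Sha_{\omega}^{2}(G,J)[p^{\infty}]\cong\Z/p$. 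Combining with assertion (ii) completes the proof. The step I expect to be genuinely hard is the elimination of the $p$-groups of order $\ge p^{3}$ as possible $S_{p}$: one must show that for such $S_{p}$ the relevant cohomology (be it $H^{2}(S_{p},J)^{G'}$, or dually the $p$-part of $\widehat H^{-2}(G,I_{G/H})$) carries no class surviving restriction to all cyclic subgroups, and I anticipate this requires either an induction along a carefully chosen $G$-stable central subgroup of $S_{p}$ sitting inside $S_{p}\cap H$, or a case-by-case analysis of the Schur multiplier and abelianization data for the relevant families ($C_{p^{2}}\times C_{p}$, metacyclic and extraspecial $p$-groups, and so on); the explicit count in the $S_{p}\cong(C_{p})^{2}$ case is the remaining, more routine, computational burden.
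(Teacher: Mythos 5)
Your overall architecture (split off the $p$-primary part via the Schur--Zassenhaus decomposition $G=S_{p}\rtimes G'$, identify $H^{2}(G,J)^{(p)}$ with $H^{2}(G/S_{p},J^{S_{p}})$ and $H^{2}(G,J)[p^{\infty}]$ with $H^{2}(S_{p},J)^{G'}$, and note $J^{S_{p}}\cong J_{G/HS_{p}}$) is sound and close in spirit to the paper, and your reduction of the $\cD$-dichotomy to the $\Sha_{\omega}$ computation is correct. But the proposal leaves the two load-bearing steps unproved. First, the entire content of part (i) in the case $S_{p}\cong(C_{p})^{2}$ --- that the kernel of restriction to the lines is $\Z/p$ exactly when (a), (b), (c) hold and $0$ otherwise --- is asserted as ``a direct computation \ldots then shows''; this is the technical heart of the theorem (the paper devotes the Selmer-type group $H^{2}(G,J_{G/HS_{p}})^{(\cD)}$, a Mackey-decomposition analysis of the connecting maps, and the identification of the answer with $(S_{p}/[S_{p},HS_{p}]\cdot[S_{p}\cap H,N_{G}(S_{p}\cap H)])^{\vee}/(S_{p}/[S_{p},G])^{\vee}$ to it), and nothing in your outline actually produces the dichotomy between conditions (b), (c) holding or failing.

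Second, your treatment of $\#S_{p}\geq p^{3}$ misdiagnoses where the difficulty lies. Since $[S_{p}:S_{p}\cap H]=p$, the Frattini subgroup $\Phi(S_{p})$ lies in $S_{p}\cap H$; being characteristic in the normal subgroup $S_{p}$ it is normal in $G$, hence contained in $N^{G}(H)=\{1\}$. So $S_{p}$ is automatically elementary abelian, and the metacyclic/extraspecial/$C_{p^{2}}\times C_{p}$ cases you propose to analyse never occur. What remains for $S_{p}\cong(C_{p})^{m}$, $m\geq 3$, is the vanishing of $\Sha_{\omega}^{2}(S_{p},J_{S_{p}/\cH})$ for $\cH$ a family of index-$p$ subgroups with trivial intersection; this is not ``routine'' but is precisely the multinorm computation of Bayer-Fluckiger--Lee--Parimala that the paper imports as Theorem \ref{thm:blpl}. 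Without either that citation or a replacement argument, your part (i) is incomplete for $m\geq 3$. Finally, a smaller point in part (ii): passing from $\ker\bigl(H^{2}(G/S_{p},J^{S_{p}})\to H^{2}(\overline{D},J^{D\cap S_{p}})\bigr)$ to $\ker\bigl(H^{2}(G/S_{p},J^{S_{p}})\to H^{2}(\overline{D},J^{S_{p}})\bigr)$ requires the injectivity of $H^{2}(\overline{D},J^{S_{p}})\to H^{2}(\overline{D},J^{D\cap S_{p}})$; this is true (composing with the trace $\sum_{s\in S_{p}/(D\cap S_{p})}s$ gives multiplication by a $p$-power, invertible on a group killed by $\#\overline{D}$), but you should say so --- the paper sidesteps the issue by instead restricting to $G'$ and using Mackey's decomposition to match $\cD_{G'}$ with $\cD_{/S_{p}}$.
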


In the following, we sketch a proof of Theorem \ref{mth9}. Since $(G:H)\in p\Z \setminus p^{2}\Z$, it turns out that there is an isomorphism $S_{p}\cong (C_{p})^{m}$ for some $m\in \Zpn$.

\vspace{6pt}
First, we explain the proof of (i). The restriction map $\Res_{G/S_{p}}$ induces an injection
\begin{equation}\label{eq:pieb}
\Sha_{\cD}^{2}(G,J_{G/H})[p^{\infty}]\hookrightarrow \Sha_{\cD_{S_{p}}}^{2}(S_{p},J_{G/H}), 
\end{equation}
where $\cD_{S_{p}}$ is the set of subgroups of $S_{p}$ of the form $D\cap S_{p}$ for some $D\in \cD$. Note that $J_{G/H}$ cannot be written as $J_{S_{p}/L}$ for some subgroup $L$ of $S_{p}$ as an $S_{p}$-lattice. However, we can take finitely many subgroups $L_{1},\ldots,L_{r}$ of $G$ that admit an exact sequence of $S_{p}$-lattices as follows: 
\begin{equation*}
0\rightarrow \Z \rightarrow \bigoplus_{i=1}^{r}\Ind_{L_{i}}^{S_{p}}\Z \rightarrow J_{G/H}\rightarrow 0. 
\end{equation*}
Note that such a $S_{p}$-lattice can be described as the character group of a \emph{multinorm one torus}. In our case, we have $(S_{p}:L_{i})=p$ for any $i$ and $L_{1}\cap \cdots \cap L_{r}=\{1\}$. On the other hand, we know the structure of the right-hand side of \eqref{eq:pieb}, which follows from Bayer-Fluckiger--Lee--Parimala (\cite{BayerFluckiger2019}). If $m\neq 2$, then $\Sha_{\omega}^{2}(S_{p},J_{G/H})$ is trivial, and in particular the assertion holds. On the other hand, it does not vanish if $m=2$, and hence we need a delicate argument. 

Recall that $\cD$ contains a subgroup of $G$ containing $S_{p}$ if and only if $\cD_{S_{p}}$ contains $S_{p}$. If it is satisfied, then we can prove $\Sha_{\cD_{S_{p}}}^{2}(S_{p},J_{G/H})=0$ without difficulty. Otherwise, we consider a short exact sequence of $G$-lattices
\begin{equation}\label{eq:itex}
0\rightarrow J_{G/HS_{p}}\rightarrow J_{G/H}\rightarrow \Ind_{HS_{p}}^{G}J_{HS_{p}/S_{p}}\rightarrow 0. 
\end{equation}
Moreover, we introduce a subgroup $H^{2}(G,J_{G/HS_{p}})^{(\cD)}$ of $H^{2}(G,J_{G/HS_{p}})$, which fits into an exact sequence
\begin{equation}\label{eq:sele}
0\rightarrow H^{1}(G,\Ind_{HS_{p}}^{G}J_{HS_{p}/S_{p}})\xrightarrow{\delta_{G}} H^{2}(G,J_{G/HS_{p}})^{(\cD)} \xrightarrow{\widehat{\N}_{G}} \Sha_{\cD}^{2}(G,J_{G/H}) \rightarrow 0. 
\end{equation}
See Definition \ref{dfn:h2ly}. Note that $H^{2}(G,J_{G/HS_{p}})^{(\cD)}$ plays a role of Selmer groups. This method is inspired by the study of \emph{CM tori} developed by Liang, Yang, Yu and the author (\cite{Liang2024a}). Then we give an isomorphism 
\begin{equation*}
H^{2}(G,J_{G/HS_{p}})^{(\cD)}[p^{\infty}]\cong \frac{(S_{p}/[S_{p},HS_{p}]\cdot [S_{p}\cap H,N_{G}(S_{p}\cap H)])^{\vee}}{(S_{p}/[S_{p},G])^{\vee}}, 
\end{equation*}
which is is archived by using Mackey's decomposition (Proposition \ref{prop:lciv}). The above description implies the desired assertion for $m=2$. 

Second, we discuss the proof of (ii). It suffices to prove that $\widehat{\N}_{G}$ in the exact sequence \eqref{eq:sele} induces the desired isomorphism. By the normality of $S_{p}$, the group $G$ can be expressed as a semi-direct product of $S_{p}$ and its subgroup $G'$ whose order is coprime to $p$. Then we get a sequence of abelian groups
\begin{equation*}
\Sha_{\cD_{/S_{p}}}^{2}(G/S_{p},J_{G/HS_{p}})\cong \Sha_{\cD}^{2}(G,J_{G/HS_{p}})
\xrightarrow{\widehat{\N}_{G}}\Sha_{\cD}^{2}(G,J_{G/H})^{(p)} \xrightarrow{\Res_{G/G'}}\Sha_{\cD_{G'}}^{2}(G',J_{G/H}),
\end{equation*}
where the homomorphisms are injective. Using Mackey's decomposition, one can prove that $J_{G/H}$ is a direct sum of $J_{G'/H'}$ and finitely many $G'$-lattices of the forms $\Ind_{H''}^{G'}\Z$ for some $H''<G'$. Moreover, the natural isomorphism $G'\cong G/S_{p}$ induces a bijection $\cD_{G'}\cong \cD_{/S_{p}}$. Hence we obtain an isomorphism
\begin{equation*}
\Sha_{\cD_{/S_{p}}}^{2}(G/S_{p},J_{G/HS_{p}})\cong \Sha_{\cD_{G'}}^{2}(G',J_{G/H}),
\end{equation*}
which concludes the desired assertion. 

\begin{rem}
One can produce \eqref{eq:itex} from $E/F$ in Theorem \ref{mth5} as follows. Recall that $\widetilde{E}/F$ is the Galois closure of $E/F$, $G:=\Gal(\widetilde{E}/F)$ and $H:=\Gal(\widetilde{E}/E)$. By assumption, a $p$-Sylow subgroup $S_{p}$ of $G$ is normal. Now, let $E_{0}/F$ be the subextension of $\widetilde{E}/F$ corresponding to $HS_{p}$. Then the norm map $\N_{E/E_{0}}$ induces an exact sequence of $F$-tori
\begin{equation*}
1\rightarrow \Res_{E_{0}/F}T_{E/E_{0}}\rightarrow T_{E/F}\xrightarrow{\N_{E/E_{0}}} T_{E_{0}/F} \rightarrow 1. 
\end{equation*}
Taking character groups to these tori, we obtain the desired sequence. 
\end{rem}

\begin{oftp}
First, we review basic facts on group cohomology, flasque resolutions, and their relation with the Hasse norm principle in Section \ref{sect:prlm}. Next, we recall in Section \ref{sect:igpd} previous results on the inverse Galois problem with conditions on decomposition groups. In Section \ref{sect:trgp}, we recall the notion of transitive groups, and give some properties on them. In Section \ref{sect:mnot}, we specify some known results on the cohomology groups of character groups of multinorm one tori. Section \ref{sect:thts} is the technical heart of this paper, which gives a proof of Theorem \ref{mth9}. In Section \ref{sect:fnrp}, we study $2$-dimensional $\Fp$-representations of finite groups of order coprime to $p$, which will be needed to prove the main theorems. Finally, in Section \ref{sect:pfmt}, we give proofs of the main theorems using Theorem \ref{mth9} and the results in Section \ref{sect:fnrp}. Also, we compare our theorems with previous results of Hoshi--Kanai--Yamasaki on the Hasse norm principle for degree $\leq 15$. 
\end{oftp}

\begin{ack}
The author would like to thank Akinari Hoshi, Kazuki Kanai, Yoichi Mieda, Seidai Yasuda and Chia-Fu Yu for various helpful comments. Moreover, he also appreciate Noriyuki Abe for answering my questions on representation theory of finite groups. This work was carried out with the support of the JSPS Research Fellowship for Young Scientists and KAKENHI Grant Number JP22KJ0041.
\end{ack}

\begin{nota}
Let $G$ be a finite group. 
\begin{itemize}
\item Set $G^{\vee}:=\Hom(G,\Q/\Z)$. It is the Pontryagin dual if $G$ is abelian. For any normal subgroup $N$ of $G$, the natural surjection $G\twoheadrightarrow G/N$ induces an isomorphism
\begin{equation*}
(G/N)^{\vee}\xrightarrow{\cong} \{f\in G^{\vee}\mid f\!\mid_{N}=0\}. 
\end{equation*}
We regard $(G/N)^{\vee}$ as a subgroup of $G^{\vee}$ by the above isomorphism. 
\item For each $g\in G$, let $\Ad(g)\colon G\rightarrow G;\,h\mapsto ghg^{-1}$, which is an automorphism of $G$. 
\end{itemize}
\end{nota}

\section{Preliminaries}\label{sect:prlm}

\subsection{Cohomology of finite groups}\label{ssec:chfg}

Let $G$ be a finite group. For a \emph{$G$-module}, we mean a finitely generated abelian groups equipped with \emph{left} actions of $G$. Moreover, a $G$-module that is torsion-free over $\Z$ is called a \emph{$G$-lattice}. On the other hand, for a subgroup $H$ of $G$ and an $H$-module $M$, put $\Ind_{H}^{G}M:=\Hom_{\Z[H]}(\Z[G],M)$. We define a left action of $G$ on $\Ind_{H}^{G}M$ by the map
\begin{equation*}
G \times \Ind_{H}^{G}M \rightarrow \Ind_{H}^{G}M; (g,\varphi) \mapsto [g'\mapsto \varphi(g'g)]. 
\end{equation*}

\begin{lem}[{\cite[(1.6.5) Proposition]{Neukirch2000}}]\label{lem:shpr}
Let $G$ be a finite group, and $H$ its subgroup. Consider a $G$-module $M$. Then the composite
\begin{equation*}
H^{j}(G,M)\rightarrow H^{j}(G,\Ind_{H}^{G}M)\cong H^{j}(H,M),
\end{equation*}
where the second homomorphism follows from Shapiro's lemma, coincides with the restriction map $\Res_{G/H}$. 
\end{lem}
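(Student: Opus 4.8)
The statement to prove is Lemma \ref{lem:shpr}: that the composite $H^j(G,M)\to H^j(G,\Ind_H^G M)\xrightarrow{\sim} H^j(H,M)$, where the first map is induced by the canonical $G$-embedding $M\hookrightarrow \Ind_H^G M$ and the second is the Shapiro isomorphism, equals the restriction $\Res_{G/H}$. Since this is cited to Neukirch's book, the plan is really to recall the standard argument rather than to invent one.

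First I would fix conventions carefully, since everything hinges on how the Shapiro isomorphism and the canonical map $M\to\Ind_H^G M$ are normalized. With the paper's convention $\Ind_H^G M=\Hom_{\Z[H]}(\Z[G],M)$ and $G$ acting by $(g\varphi)(g')=\varphi(g'g)$, the canonical $G$-map $\iota\colon M\to\Ind_H^G M$ should send $m$ to the function $\varphi_m$ with $\varphi_m(g')=g'm$ for $g'\in G$ (well-defined as a $\Z[H]$-map, and $G$-equivariant by a one-line check). The Shapiro isomorphism $s\colon H^j(G,\Ind_H^G M)\xrightarrow{\sim}H^j(H,M)$ is induced on cochains by the $H$-map $\mathrm{ev}\colon \Ind_H^G M\to M$, $\varphi\mapsto\varphi(1)$. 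The key observation is then simply that the composite $H$-map $\mathrm{ev}\circ\iota\colon M\to M$ is the identity: $\mathrm{ev}(\iota(m))=\varphi_m(1)=m$.

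Given that, the proof is a diagram chase at the level of (co)homology functors. The Shapiro isomorphism is natural in the $H$-module in the sense that it is compatible with maps of modules; more precisely, for the $H$-equivariant maps $M\xrightarrow{\iota}\Res^G_H\Ind_H^G M$ (viewing $\iota$ as a map of $H$-modules via restriction of scalars) and $\mathrm{ev}\colon\Ind_H^G M\to M$, one has commuting squares relating $H^j(H,-)$ of these maps with $H^j(G,-)$ via $\Res_{G/H}$ and $s$. Concretely: $\Res_{G/H}$ on $H^j(G,M)$ followed by $H^j(H,\iota)$ equals $s\circ(H^j(G,\iota))$ (this is exactly the naturality of Shapiro's isomorphism / the fact that the unit of the adjunction $\Res\dashv\Ind$ behaves correctly), and then $H^j(H,\mathrm{ev})\circ H^j(H,\iota)=H^j(H,\mathrm{ev}\circ\iota)=H^j(H,\mathrm{id})=\mathrm{id}$ by the computation above. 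Composing, $s\circ H^j(G,\iota)=H^j(H,\mathrm{ev}\circ\iota)\circ\Res_{G/H}=\Res_{G/H}$, which is the claim. Alternatively, one can just write out the statement on the standard inhomogeneous cochain complex: a cocycle $c\colon G^j\to M$ maps under $H^j(G,\iota)$ to the cocycle $(g_1,\dots,g_j)\mapsto\varphi_{c(g_1,\dots,g_j)}$, and applying $\mathrm{ev}$ (evaluation at $1$) gives $(h_1,\dots,h_j)\mapsto c(h_1,\dots,h_j)$ for $h_i\in H$, which is literally $\Res_{G/H}c$.

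The only genuine subtlety — and the place where one must be careful rather than where there is real difficulty — is matching the sign/normalization conventions between $\iota$ and the Shapiro map so that $\mathrm{ev}\circ\iota=\mathrm{id}$ rather than, say, multiplication by $[G:H]$ (which is what one would get from the "wrong" canonical map $m\mapsto\sum_{g\in G/H}\cdots$). Since the paper has pinned down $\Ind_H^G M=\Hom_{\Z[H]}(\Z[G],M)$ with the stated $G$-action, the correct $\iota$ is forced, and the computation goes through verbatim; I would therefore just cite \cite[(1.6.5) Proposition]{Neukirch2000} and indicate that the composite of the two relevant $H$-module maps is the identity, so the statement on cohomology follows by functoriality.
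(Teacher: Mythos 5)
Your argument is correct and is exactly the standard proof of the cited result: the paper itself gives no proof, deferring entirely to \cite[(1.6.5) Proposition]{Neukirch2000}, and your verification that $\mathrm{ev}\circ\iota=\mathrm{id}_M$ (with $\iota(m)(g')=g'm$ and $\mathrm{ev}(\varphi)=\varphi(1)$) together with functoriality of $\Res_{G/H}$ is precisely that standard argument. The conventions you fix are consistent with the paper's (compare $\varepsilon_{G/H}$ in Proposition \ref{prop:htch}), so there is nothing to add.
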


\begin{prop}[{Mackey's decomposition; cf.~\cite[Section 7.3, Proposition 22]{Serre1977}}]\label{prop:mcky}
Let $G$ be a finite group, and $H$ and $D$ subgroups of $G$. Take a complete representative $R(D,H)$ of $D\backslash G/H$ in $G$. Consider an $H$-module $(M,\rho)$. Then the homomorphism of abelian groups
\begin{equation*}
\pi_{D,g}\colon \Ind_{H}^{G}M\rightarrow \Ind_{D\cap gHg^{-1}}^{D}M^{g};\varphi \mapsto [d\mapsto \varphi(g^{-1}d)]
\end{equation*}
for all $g\in R(D,H)$ induces an isomorphism of $D$-modules
\begin{equation*}
\Ind_{H}^{G}M \cong \bigoplus_{g\in R(D,H)}\Ind_{D\cap gHg^{-1}}^{D}M^{g}. 
\end{equation*}
Here $M^{g}$ is the abelian group $M$ equipped with the action of $gHg^{-1}$ defined by
\begin{equation*}
gHg^{-1}\rightarrow \Aut(M); h\mapsto (\rho \circ \Ad(g^{-1}))(h). 
\end{equation*}
\end{prop}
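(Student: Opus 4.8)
The plan is to obtain the decomposition from the bimodule structure of $\Z[G]$ together with a direct check that the explicit maps $\pi_{D,g}$ realize it. I would regard $\Z[G]$ as a $(\Z[H],\Z[D])$-bimodule via left and right multiplication; then $\Ind_{H}^{G}M=\Hom_{\Z[H]}(\Z[G],M)$ carries the left $D$-action $(d\varphi)(x)=\varphi(xd)$, which is exactly the restriction to $D$ of the $G$-action defined before the proposition. The combinatorial input is that taking inverses in the disjoint union $G=\bigsqcup_{g\in R(D,H)}DgH$ yields the disjoint union $G=\bigsqcup_{g\in R(D,H)}Hg^{-1}D$; this explains why the relevant double-coset space is $D\backslash G/H$ and why $g^{-1}$ (rather than $g$) appears in the formula for $\pi_{D,g}$.

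First I would verify that $\pi_{D,g}$ is well defined and $D$-equivariant. For $k=ghg^{-1}\in D\cap gHg^{-1}$ with $h\in H$ and $d\in D$, we have $g^{-1}kd=hg^{-1}d$, so $\pi_{D,g}(\varphi)(kd)=\varphi(hg^{-1}d)=h\cdot\varphi(g^{-1}d)$; since $k$ acts on $M^{g}$ through $(\rho\circ\Ad(g^{-1}))(k)=\rho(g^{-1}kg)=\rho(h)$, this shows $\pi_{D,g}(\varphi)\in\Ind_{D\cap gHg^{-1}}^{D}M^{g}$, and $D$-equivariance is immediate from $\pi_{D,g}(\varphi)(d)=\varphi(g^{-1}d)$.

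The main step is to prove that $\varphi\mapsto(\pi_{D,g}(\varphi))_{g\in R(D,H)}$ is bijective. Injectivity is clear: if every $\pi_{D,g}(\varphi)$ vanishes then $\varphi$ vanishes on $g^{-1}D$, hence on $Hg^{-1}D$ by left $\Z[H]$-linearity, hence on all of $G$ by the decomposition above. For surjectivity, given $\psi_{g}\in\Ind_{D\cap gHg^{-1}}^{D}M^{g}$ for each $g$, I would define $\varphi$ on $Hg^{-1}D$ by $\varphi(hg^{-1}d):=h\cdot\psi_{g}(d)$. The one computation that needs care is well-definedness: if $h_{1}g^{-1}d_{1}=h_{2}g^{-1}d_{2}$, then $k:=g h_{1}^{-1}h_{2}g^{-1}$ satisfies $k=d_{1}d_{2}^{-1}\in D\cap gHg^{-1}$ and $d_{1}=kd_{2}$, so $\psi_{g}(d_{1})=k\cdot_{M^{g}}\psi_{g}(d_{2})=(h_{1}^{-1}h_{2})\cdot\psi_{g}(d_{2})$, whence $h_{1}\psi_{g}(d_{1})=h_{2}\psi_{g}(d_{2})$; the resulting $\varphi$ is $\Z[H]$-linear with $\pi_{D,g}(\varphi)=\psi_{g}$. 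Combined with the previous step and $D$-equivariance, this gives the claimed isomorphism of $D$-modules.

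I do not expect a genuine obstacle: the argument is bookkeeping, and the only delicate points are selecting the correct double-coset side (forced by the $g^{-1}$ in $\pi_{D,g}$) and consistently using the identity $g^{-1}(ghg^{-1})g=h$ when translating between the $H$-action on $M$ and the $(D\cap gHg^{-1})$-action on $M^{g}$. An alternative route is to decompose the bimodule as $\Z[G]\cong\bigoplus_{g\in R(D,H)}\Z[Hg^{-1}D]$ and identify $\Hom_{\Z[H]}(\Z[Hg^{-1}D],M)\cong\Ind_{D\cap gHg^{-1}}^{D}M^{g}$ summand by summand; but since the statement commits to the explicit map $\pi_{D,g}$, the hands-on verification above is the cleanest option.
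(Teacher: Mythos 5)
Your proof is correct. Note that the paper itself does not prove this proposition: it is stated with a citation to Serre (Section 7.3, Proposition 22), and the only in-text supplement is Lemma \ref{lem:doub}, which handles the independence of the choice of $R(D,H)$. Your hands-on verification in the $\Hom_{\Z[H]}(\Z[G],M)$ model is the standard argument and is complete: the membership check $\pi_{D,g}(\varphi)(kd)=\rho(g^{-1}kg)\,\pi_{D,g}(\varphi)(d)$ for $k\in D\cap gHg^{-1}$, the $D$-equivariance with respect to the action $(d_{0}\varphi)(x)=\varphi(xd_{0})$, the injectivity via $G=\bigsqcup_{g}Hg^{-1}D$ (obtained by inverting $G=\bigsqcup_{g}DgH$), and the surjectivity via the well-defined extension $\varphi(hg^{-1}d):=h\cdot\psi_{g}(d)$ are all the right computations, and the well-definedness step is exactly where Lemma \ref{lem:doub} (i) of the paper enters. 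The only difference from Serre's treatment is that he works with the tensor (direct-sum-over-cosets) model of induction, whereas you work directly with the $\Hom$ model the paper uses; this is a cosmetic difference, and your version has the advantage of matching the paper's conventions, including the appearance of $g^{-1}$ in $\pi_{D,g}$.
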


The independence of the choice of $R(D,H)$ in Proposition \ref{prop:mcky} follows from Lemma \ref{lem:doub} as follows. 

\begin{lem}\label{lem:doub}
Let $G$ be a finite group, and $H$ and $D$ subgroups of $G$. 
\begin{enumerate}
\item Let $g\in G$. For $d,d'\in D$, we have $dgH=d'gH$ if and only if $d'd^{-1}\in D\cap gHg^{-1}$. In particular, the subset $DgH$ of $G$, which is an element of $D\backslash G/H$, has cardinality $(\#D \cdot \#H)/\#(D\cap gHg^{-1})$. 
\item Assume that $g_1,g_2 \in G$ satisfies $Dg_{1}H=Dg_{2}H$. Then the subgroups $D \cap g_{1}Hg_{1}^{-1}$ and $D\cap g_{2}Hg_{2}^{-1}$ of $D$ are conjugate in $D$. 
\end{enumerate}
\end{lem}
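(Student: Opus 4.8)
The plan is to prove both parts by direct coset manipulations; no auxiliary results are needed, so this will be short.

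For part (i), I would first translate the equality of left cosets into a membership condition. We have $dgH=d'gH$ if and only if $(d'g)^{-1}(dg)\in H$, that is, $g^{-1}d'^{-1}dg\in H$, equivalently $d'^{-1}d\in gHg^{-1}$. Since $d'^{-1}d\in D$ automatically and $D\cap gHg^{-1}$ is a subgroup (hence closed under inversion), this is equivalent to the condition on $d,d'$ in the statement. For the cardinality assertion, I would observe that $DgH=\bigcup_{d\in D}dgH$, and that by the equivalence just proved two of these left cosets of $H$ coincide exactly when $d,d'$ lie in the same coset of $D\cap gHg^{-1}$ in $D$; hence $DgH$ is the disjoint union of precisely $\#D/\#(D\cap gHg^{-1})$ cosets of $H$, each of cardinality $\#H$, which yields $\#(DgH)=\#D\cdot\#H/\#(D\cap gHg^{-1})$. (Alternatively one counts the fibres of the surjection $D\times H\to DgH$, $(d,h)\mapsto dgh$: the fibre over a given point is in bijection with $D\cap gHg^{-1}$.)

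For part (ii), since $Dg_{1}H=Dg_{2}H$ I would write $g_{2}=dg_{1}h$ for some $d\in D$ and $h\in H$. Then, using $hHh^{-1}=H$,
\[
g_{2}Hg_{2}^{-1}=dg_{1}h\,H\,h^{-1}g_{1}^{-1}d^{-1}=d\bigl(g_{1}Hg_{1}^{-1}\bigr)d^{-1}.
\]
Intersecting with $D$ and using that conjugation by $d\in D$ is an automorphism of $D$ (so it carries $D\cap g_{1}Hg_{1}^{-1}$ into $D$), we get $D\cap g_{2}Hg_{2}^{-1}=d\bigl(D\cap g_{1}Hg_{1}^{-1}\bigr)d^{-1}$, so the two subgroups are conjugate in $D$.

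The main obstacle here is essentially nothing: this is a standard bookkeeping statement about double cosets. The only points requiring care are keeping the left/right conventions straight and the direction of conjugation consistent, and noting that $D\cap gHg^{-1}$ being a subgroup makes the condition in (i) insensitive to passing to inverses.
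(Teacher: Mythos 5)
Your argument is the same as the paper's: the coset computation for (i), the count of left $H$-cosets inside $DgH$ via cosets of $D\cap gHg^{-1}$ in $D$, and the identity $g_{2}Hg_{2}^{-1}=d\bigl(g_{1}Hg_{1}^{-1}\bigr)d^{-1}$ for (ii). Parts (ii) and the cardinality formula are fine. There is, however, one step in (i) that does not actually go through, and it is worth flagging even though the paper's own proof contains the identical slip. You correctly reduce $dgH=d'gH$ to $d'^{-1}d\in gHg^{-1}$, and then appeal to closure of $D\cap gHg^{-1}$ under inversion to arrive at the stated condition $d'd^{-1}\in D\cap gHg^{-1}$. But the inverse of $d'^{-1}d$ is $d^{-1}d'$, not $d'd^{-1}$; the elements $d^{-1}d'$ and $d'd^{-1}$ are merely conjugate by $d$, and $D\cap gHg^{-1}$ need not be stable under that conjugation. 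Indeed, whenever $D\cap gHg^{-1}$ is not normal in $D$, choose $k\in D\cap gHg^{-1}$ and $d\in D$ with $dkd^{-1}\notin D\cap gHg^{-1}$ and set $d'=dk$: then $dgH=d'gH$ (since $k\in gHg^{-1}$) while $d'd^{-1}=dkd^{-1}\notin D\cap gHg^{-1}$.

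So the membership condition as printed is transposed; the correct statement is $d^{-1}d'\in D\cap gHg^{-1}$ (equivalently $d'^{-1}d\in D\cap gHg^{-1}$), i.e.\ $d$ and $d'$ lie in the same \emph{left} coset of $D\cap gHg^{-1}$ in $D$. This is precisely the equivalence your counting argument (and the paper's later use in Proposition \ref{prop:mcky} and Lemma \ref{lem:sbsd}) actually relies on, so nothing downstream is affected; you should simply replace the last sentence of your step for (i) by the observation that $d'^{-1}d\in gHg^{-1}$ automatically lies in $D$, hence in $D\cap gHg^{-1}$, and record the condition with the factors in that order.
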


\begin{proof}
(i): Let $d,d'\in D$. The equality $dgH=d'gH$ is equivalent to the condition $g^{-1}d^{-1}d'g\in H$. Moreover, Since $d$ and $d'$ lie in $D$, the condition $g^{-1}d^{-1}d'g \in H$ holds if and only if the desired condition $d'd^{-1}\in D\cap gHg^{-1}$ is satisfied. 

(ii): By assumption, there are $h\in H$ and $d\in D$ such that $g_2=dg_{1}h$. Then we have
\begin{equation*}
D\cap g_{2}Hg_{2}^{-1}=dDd^{-1}\cap dg_{1}(hHh^{-1})g_{1}^{-1}d^{-1}=d(D\cap g_{1}Hg_{1}^{-1})d^{-1}. 
\end{equation*}
Hence the assertion holds. 
\end{proof}

We also need a description of the restriction map on the cohomology of $\Ind_{H}^{G}M$. 

\begin{lem}\label{lem:hmts}
Let $G$ be a finite group, $H$ a subgroup of $G$, and $M$ a $G$-module. 
\begin{enumerate}
\item Consider a homomorphism of abelian groups
\begin{equation*}
\Phi_{G/H,M}\colon \Ind_{H}^{G}M \xrightarrow{\cong} \Z[G]\otimes_{\Z[H]}M; \varphi \mapsto \sum_{g\in R}g^{-1}\otimes \varphi(g),
\end{equation*}
where $R$ is a complete representative of $H\backslash G$ in $G$. Then the map $\Phi_{G/H,M}$ is independent of the choice of $R$, and is a homomorphism of $G$-modules. 
\item Under the notations in Proposition \ref{prop:mcky}, the composite of $D$-homomorphisms
\begin{align*}
\Ind_{D\cap gHg^{-1}}^{D}M^{g} & \xrightarrow{\Phi_{D/(D\cap gHg^{-1}),M^{g}}} \Z[D]\otimes_{\Z[D\cap gHg^{-1}]}M^{g} \xrightarrow{1\otimes x\mapsto g\otimes x}\Z[G]\otimes_{\Z[H]}M \\
& \xrightarrow{\Phi_{G/H,M}^{-1}} \Ind_{H}^{G}M \xrightarrow{\pi_{D,g}} \Ind_{D\cap gHg^{-1}}^{D}M^{g}
\end{align*}
coincides with the identity map on $\Ind_{D\cap gHg^{-1}}^{D}M^{g}$. 
\end{enumerate}
\end{lem}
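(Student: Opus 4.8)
For part (i), the plan is to verify, in turn, that $\Phi_{G/H,M}$ is independent of $R$, that it is $G$-equivariant, and that it is bijective. Independence is a one-line tensor computation: replacing a representative $g'\in R$ by $hg'$ with $h\in H$ changes the corresponding summand from $(g')^{-1}\otimes\varphi(g')$ to $(g')^{-1}h^{-1}\otimes\varphi(hg') = (g')^{-1}h^{-1}\otimes h\varphi(g') = (g')^{-1}\otimes\varphi(g')$, using that the tensor product is over $\Z[H]$ and that $\varphi(hg') = h\varphi(g')$ by definition of $\Ind_H^G M$. For $G$-equivariance I would expand $\Phi_{G/H,M}(g\varphi) = \sum_{g'\in R}(g')^{-1}\otimes\varphi(g'g)$ and reindex via $g'' := g'g$; since $Rg$ is again a complete set of representatives of $H\backslash G$, this sum equals $g\cdot\bigl(\sum_{g''\in Rg}(g'')^{-1}\otimes\varphi(g'')\bigr)$, which by the independence just shown is $g\cdot\Phi_{G/H,M}(\varphi)$. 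For bijectivity, fix one $R$: the elements $(g')^{-1}$ ($g'\in R$) form a set of representatives of $G/H$, so $\Z[G]\otimes_{\Z[H]}M = \bigoplus_{g'\in R}(g')^{-1}\otimes M$, whereas an element of $\Ind_H^G M$ is freely determined by its values on $R$, and $\Phi_{G/H,M}$ is precisely the resulting identification. This also pins down the inverse to be used below: $\Phi_{G/H,M}^{-1}\bigl(\sum_{g'\in R}(g')^{-1}\otimes m_{g'}\bigr)$ is the unique element of $\Ind_H^G M$ with value $m_{g'}$ at each $g'\in R$.

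For part (ii), write $H':=D\cap gHg^{-1}$ and fix a complete set $S$ of representatives of $H'\backslash D$. The plan is to trace a general $\psi\in\Ind_{H'}^{D}M^g$ through the four arrows. Applying $\Phi_{D/H',M^g}$ gives $\sum_{d\in S}d^{-1}\otimes\psi(d)$; the twist map (well defined because $g^{-1}H'g\subseteq H$ and the $M^g$-action of $h'\in H'$ is $\rho(g^{-1}h'g)$) carries this to $\sum_{d\in S}d^{-1}g\otimes\psi(d) = \sum_{d\in S}(g^{-1}d)^{-1}\otimes\psi(d)$ in $\Z[G]\otimes_{\Z[H]}M$. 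The crucial bookkeeping point is that the elements $g^{-1}d$, $d\in S$, represent pairwise distinct cosets of $H\backslash G$: $Hg^{-1}d = Hg^{-1}d'$ forces $d(d')^{-1}\in gHg^{-1}\cap D = H'$, i.e.\ $d = d'$ in $S$. So I may choose the representative set $R$ for $H\backslash G$ defining $\Phi_{G/H,M}$ to contain $\{g^{-1}d : d\in S\}$; by the formula for $\Phi_{G/H,M}^{-1}$ from part (i), the element $\varphi := \Phi_{G/H,M}^{-1}\bigl(\sum_{d\in S}(g^{-1}d)^{-1}\otimes\psi(d)\bigr)$ satisfies $\varphi(g^{-1}d) = \psi(d)$ for $d\in S$ and $\varphi = 0$ on the remaining elements of $R$.

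It then remains to compute $\pi_{D,g}(\varphi) = [d'\mapsto\varphi(g^{-1}d')]$ and match it with $\psi$. Writing $d' = h'd$ with $h'\in H'$ and $d\in S$, one has $g^{-1}d' = (g^{-1}h'g)(g^{-1}d)$ with $g^{-1}h'g\in H$, so the $H$-equivariance of $\varphi$ gives $\varphi(g^{-1}d') = (g^{-1}h'g)\cdot\varphi(g^{-1}d) = (g^{-1}h'g)\cdot\psi(d)$. On the other hand $\psi(d') = \psi(h'd) = h'\cdot\psi(d)$ with the action that of $M^g$, i.e.\ $h'$ acting through $\Ad(g^{-1})$, so $\psi(d') = (g^{-1}h'g)\cdot\psi(d)$ in $M$; comparing the two equalities gives $\varphi(g^{-1}d') = \psi(d')$, which is the assertion.

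The genuinely routine parts---the tensor-relation cancellations and the free-module count behind the isomorphism in (i)---are unproblematic. The one place where care is needed is the compatible choice of coset representatives in part (ii), together with keeping the two $\Z[H']$-module structures straight throughout (the restriction of the $D$-action versus the $\Ad(g^{-1})$-twisted action defining $M^g$); that is the main, albeit modest, obstacle.
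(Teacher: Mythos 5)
Your proof is correct and is exactly the ``direct computation'' that the paper's one-line proof alludes to: you verify the tensor relations for independence of $R$, the reindexing for $G$-equivariance, the free-module identification for bijectivity, and in (ii) the key observation that the $g^{-1}d$ ($d\in S$) lie in distinct right $H$-cosets, which lets you read off $\Phi_{G/H,M}^{-1}$ explicitly and match $\pi_{D,g}(\varphi)$ with $\psi$ via the twisted $H'$-action on $M^{g}$. No gaps; the approach coincides with the paper's intended argument.
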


\begin{proof}
The assertions can be proved by direct computation. 
\end{proof}

\begin{lem}\label{lem:gpac}
Let $G$ be a finite group, and $M$ a $G$-module. Pick a subgroup $D$ of $G$ and a direct summand $M_{0}$ of $M$, and denote by $\pr_{0}\colon M\rightarrow M_{0}$ the projection as $H$-modules. Then, for any $g\in G$ and $j\in \Znn$, one has a commutative diagram
\begin{equation*}
\xymatrix@C=60pt{
H^{j}(G,M)\ar[r]^{\Res_{G/D}}\ar@{=}[d] & H^{j}(D,M)\ar[r]^{\pr_{0*}}\ar[d]^{\cong} & H^{j}(D,M_{0})\ar[d]^{\cong}\\
H^{j}(G,M)\ar[r]^{\Res_{G/gDg^{-1}}\hspace{15pt}}& H^{j}(gDg^{-1},M)\ar[r]^{g\circ \pr_{0*}\circ g^{-1}}& H^{j}(gDg^{-1},g(M_{0})). 
}
\end{equation*}
Here the right vertical homomorphisms are induced by the inner automorphism $\Ad(g^{-1})$ on $G$ and the homomorphism $M\rightarrow M;x\mapsto gx$. 
\end{lem}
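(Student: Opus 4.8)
\emph{Proof proposal.} The plan is to view each arrow of the diagram as the map on $H^{j}$ induced by a morphism in the category of \emph{pairs}: an object is a finite group $\Gamma$ together with a $\Gamma$-module $N$, and a morphism $(\Gamma_{1},N_{1})\to(\Gamma_{2},N_{2})$ is a pair $(\alpha,\beta)$ consisting of a group homomorphism $\alpha\colon\Gamma_{2}\to\Gamma_{1}$ and a homomorphism $\beta\colon N_{1}\to N_{2}$ of abelian groups satisfying $\beta(\alpha(\gamma)n)=\gamma\,\beta(n)$; such a pair induces a map $H^{j}(\Gamma_{1},N_{1})\to H^{j}(\Gamma_{2},N_{2})$, functorially. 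Writing $\ell_{g}\colon M\to M$ for $x\mapsto gx$ and $c_{g^{-1}}\colon gDg^{-1}\to D$ for $h\mapsto g^{-1}hg$, one records the following: $\Res_{G/D}$ is induced by $(D\hookrightarrow G,\ \id_{M})$; the left vertical isomorphism by $(c_{g^{-1}},\ \ell_{g})$; the right vertical one by $(c_{g^{-1}},\ \ell_{g}|_{M_{0}})$, which makes sense because $g(M_{0})=\ell_{g}(M_{0})$ is $gDg^{-1}$-stable; $\pr_{0*}$ by $(\id_{D},\ \pr_{0})$; and the bottom-right map by $(\id_{gDg^{-1}},\ x\mapsto g\pr_{0}(g^{-1}x))$. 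The first step is to check that each of these pairs satisfies the compatibility condition $\beta(\alpha(\gamma)n)=\gamma\,\beta(n)$ — each is a one-line verification, the only nonobvious one using that $\pr_{0}$ is $D$-equivariant.

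Granting this, the right-hand square commutes on the nose: composing $(\id_{D},\pr_{0})$ with $(c_{g^{-1}},\ell_{g}|_{M_{0}})$ produces the morphism $(D,M)\to(gDg^{-1},g(M_{0}))$ with group part $c_{g^{-1}}$ and module part $m\mapsto g\pr_{0}(m)$, while composing $(c_{g^{-1}},\ell_{g})$ with $(\id_{gDg^{-1}},\ x\mapsto g\pr_{0}(g^{-1}x))$ produces the morphism with the same group part and module part $m\mapsto g\pr_{0}(g^{-1}(gm))=g\pr_{0}(m)$. The two morphisms of pairs are literally equal, hence induce the same map on $H^{j}$.

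For the left-hand square, the top-then-right path is induced by the morphism $(G,M)\to(gDg^{-1},M)$ with group part $h\mapsto g^{-1}hg$ and module part $\ell_{g}$, whereas the (identity)-then-bottom path is induced by $(gDg^{-1}\hookrightarrow G,\ \id_{M})$, i.e.\ by $\Res_{G/gDg^{-1}}$. These two morphisms of pairs are not equal, but the first equals the second precomposed, on the $(G,M)$-side, with $(\Ad(g^{-1})\colon G\to G,\ \ell_{g}\colon M\to M)$. The single nonformal ingredient of the whole lemma is that this last morphism induces the identity on $H^{j}(G,M)$ — that is, an inner automorphism of $G$ acts trivially on its cohomology — which is standard (conjugation by an element of $G$ induces the identity; see, e.g., \cite{Neukirch2000}, or the usual chain-homotopy argument on inhomogeneous cochains). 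Thus both paths induce $\Res_{G/gDg^{-1}}$, and the left square commutes.

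The only real difficulty is bookkeeping: one must keep straight which module structure is in force on each of $M$, $M_{0}$ and $g(M_{0})$ — the one restricted from $G$ versus the one twisted through $\Ad(g^{-1})$ — and verify the compatibility of each pair accordingly; once that is set up, both squares fall out from functoriality on pairs together with the triviality of inner automorphisms. An equivalent and equally routine route, in the spirit of Lemma~\ref{lem:hmts}, is to write all five maps out on inhomogeneous cochains and check the two commutativities by direct substitution.
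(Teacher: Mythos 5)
Your proof is correct and takes essentially the same route as the paper: the paper also reduces both squares to functoriality of cohomology with respect to compatible pairs (citing the conjugation formalism of \cite[Chap.~I, \S 5]{Neukirch2000}) combined with the fact that an inner automorphism of $G$ acts trivially on $H^{j}(G,M)$ (citing \cite[Chap.~VII, \S 5, Proposition 3]{Serre1979}). Your write-up simply makes the bookkeeping on the pairs explicit.
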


\begin{proof}
By \cite[Chap.~I, \S5, 1.~Conjugation]{Neukirch2000}, we obtain a commutative diagram
\begin{equation*}
\xymatrix@C=50pt{
H^{j}(G,M)\ar[d]\ar[r]^{\Res_{G/D}}& H^{j}(D,M)\ar[r]^{\pr_{0*}}\ar[d]^{\cong}& H^{j}(D,M_{0})\ar[d]^{\cong}\\
H^{j}(G,M)\ar[r]^{\Res_{G/gDg^{-1}}\hspace{10pt}}& H^{j}(gDg^{-1},M)\ar[r]^{g\circ \pr_{0*}\circ g^{-1}}& H^{j}(gDg^{-1},g(M_{0})). 
}
\end{equation*}
The horizontal maps are isomorphisms by construction. On the other hand, the upper horizontal map coincides with the identity map by \cite[Chap.~VII, \S 5, Proposition 3]{Serre1979}. Hence the proof is complete. 
\end{proof}

\begin{prop}\label{prop:htch}
Let $G$ be a finite group, and $H$ its subgroup. Then, there is a commutative diagram
\begin{equation*}
\xymatrix{
H^{2}(G,\Z)\ar[r]\ar[d]^{\cong}& H^{2}(G,\Ind_{H}^{G}\Z)\ar[d]^{\cong}\\
G^{\vee}\ar[r]^{f\mapsto f\mid_{H}}& H^{\vee}. }
\end{equation*}
Here, the top horizontal map is induced by the homomorphism of $G$-lattices
\begin{equation*}
\varepsilon_{G/H}\colon \Z \rightarrow \Ind_{H}^{G}\Z
\end{equation*}
defined by $\varepsilon_{G/H}(1)(g)=1$ for any $g\in G$. 
\end{prop}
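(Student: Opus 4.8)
The plan is to identify both vertical maps explicitly and then chase an element through the diagram. For the left vertical isomorphism $H^{2}(G,\Z)\cong G^{\vee}$, recall the short exact sequence $0\to\Z\to\Q\to\Q/\Z\to 0$; since $\Q$ is uniquely divisible, $H^{i}(G,\Q)=0$ for $i>0$, so the connecting homomorphism gives $H^{1}(G,\Q/\Z)\xrightarrow{\cong}H^{2}(G,\Z)$, and $H^{1}(G,\Q/\Z)=\Hom(G,\Q/\Z)=G^{\vee}$ because $G$ acts trivially on $\Q/\Z$. Similarly, using $0\to\Ind_{H}^{G}\Z\to\Ind_{H}^{G}\Q\to\Ind_{H}^{G}(\Q/\Z)\to 0$ together with Shapiro's lemma, $H^{2}(G,\Ind_{H}^{G}\Z)\cong H^{1}(G,\Ind_{H}^{G}\Q/\Z)\cong H^{1}(H,\Q/\Z)=H^{\vee}$. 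This is the right vertical isomorphism.

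Next I would verify the diagram commutes. The map $\varepsilon_{G/H}\colon\Z\to\Ind_{H}^{G}\Z$ extends to a map $\Q\to\Ind_{H}^{G}\Q$ and $\Q/\Z\to\Ind_{H}^{G}(\Q/\Z)$ in the obvious way, so we get a morphism of short exact sequences; naturality of the connecting homomorphism then reduces commutativity to the degree-one statement, namely that
\begin{equation*}
\xymatrix{
H^{1}(G,\Q/\Z)\ar[r]\ar@{=}[d] & H^{1}(G,\Ind_{H}^{G}(\Q/\Z))\ar[d]^{\cong}\\
G^{\vee}\ar[r]^{f\mapsto f\mid_{H}} & H^{\vee}
}
\end{equation*}
commutes, where the right vertical arrow is the Shapiro isomorphism. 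By Lemma \ref{lem:shpr} the composite down the right edge and along the top is exactly the restriction map $\Res_{G/H}\colon H^{1}(G,\Q/\Z)\to H^{1}(H,\Q/\Z)$ (here one uses that $\varepsilon_{G/H}$ is the unit map inducing the Shapiro identification; more precisely one checks that postcomposing $\varepsilon_{G/H\ast}$ with the Shapiro isomorphism $H^{1}(G,\Ind_{H}^{G}(\Q/\Z))\cong H^{1}(H,\Q/\Z)$ is the restriction, which is the content of Lemma \ref{lem:shpr}). Finally, under the identifications $H^{1}(G,\Q/\Z)=G^{\vee}$ and $H^{1}(H,\Q/\Z)=H^{\vee}$, the restriction map on cohomology is literally restriction of homomorphisms $f\mapsto f\mid_{H}$, since a crossed homomorphism for a trivial module is just a homomorphism and restricting the cocycle restricts the homomorphism.

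The only genuinely delicate point is matching the Shapiro isomorphism $H^{j}(G,\Ind_{H}^{G}M)\cong H^{j}(H,M)$ used for $M=\Z$ and for $M=\Q/\Z$ compatibly, and confirming that Lemma \ref{lem:shpr} applies with $\varepsilon_{G/H}$ playing the role of the canonical map in that lemma. Concretely, Lemma \ref{lem:shpr} says the composite $H^{j}(G,M)\to H^{j}(G,\Ind_{H}^{G}M)\xrightarrow{\mathrm{Shapiro}}H^{j}(H,M)$ is $\Res_{G/H}$, and the first arrow there is induced by precisely the unit $\varepsilon_{G/H}$ (the adjunction unit $M\to\Ind_{H}^{G}\mathrm{Res}_{H}^{G}M$, which for a $G$-module $M$ sends $m$ to the constant-ish function, matching $\varepsilon_{G/H}$ when $M=\Z$); so this is not really an obstacle but does require stating the identification carefully. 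Everything else is routine diagram-chasing with connecting homomorphisms, and no further computation is needed.
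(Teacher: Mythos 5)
Your proposal is correct and follows essentially the same route as the paper: the paper applies Lemma \ref{lem:shpr} to identify the composite $H^{2}(G,\Z)\to H^{2}(G,\Ind_{H}^{G}\Z)\to H^{2}(H,\Z)$ with $\Res_{G/H}$ and then cites the standard compatibility of $H^{2}(-,\Z)\cong(-)^{\vee}$ with restriction, which is exactly what you unpack via the dimension shift $0\to\Z\to\Q\to\Q/\Z\to0$. Your extra care about matching the Shapiro isomorphisms in degrees one and two is a reasonable elaboration of the same argument rather than a different one.
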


\begin{proof}
By Lemma \ref{lem:shpr}, the composite
\begin{equation*}
H^{2}(G,\Z)\rightarrow H^{2}(G,\Ind_{H}^{G}\Z)\rightarrow H^{2}(H,\Z)
\end{equation*}
coincides with the restriction map $\Res_{G/H}$. Hence, the assertion follows from \cite[p.~52]{Neukirch2000}. 
\end{proof}

\begin{prop}[{cf.~\cite[Section 6, pp.~310--311]{Platonov1994}}]\label{prop:rsch}
Let $G$ be a finite group, and $H$ and $D$ subgroups of $G$. Then the following diagram is commutative: 
\begin{equation*}
\xymatrix@C=90pt{
H^{2}(G,\Ind_{H}^{G}\Z)\ar[d]^{\cong}\ar[r]^{\Res_{G/D}}& H^{2}(D,\Ind_{H}^{G}\Z) \ar[d]^{\cong}\\
H^{\vee}\ar[r]^{f\mapsto ((f\circ \Ad(g^{-1}))\mid_{D\cap gHg^{-1}})_{g}\hspace{20mm}}& \bigoplus_{g\in R(D,H)}(D\cap gHg^{-1})^{\vee}. 
}
\end{equation*}
Here $R(D,H)$ is a complete representative of $D\backslash G/H$ in $G$.
\end{prop}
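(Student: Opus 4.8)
The plan is to recognise the square as the classical Mackey double coset formula for the restriction of an induced module, combined in degree $2$ with the identifications $H^{2}(\Gamma,\Z)\cong\Gamma^{\vee}$, and to make the bookkeeping precise using the module-level compatibilities already recorded in Lemma \ref{lem:hmts}.

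First I would pin down the two vertical isomorphisms. Since $H^{2}(\Gamma,\Z)\cong H^{1}(\Gamma,\Q/\Z)=\Gamma^{\vee}$ for every finite group $\Gamma$, the left-hand isomorphism of the diagram is, by (the proof of) Proposition \ref{prop:htch}, the composite of Shapiro's isomorphism $H^{2}(G,\Ind_{H}^{G}\Z)\xrightarrow{\sim}H^{2}(H,\Z)$ with $H^{2}(H,\Z)\cong H^{\vee}$. For the right-hand map, observe that the trivial module $\Z$ satisfies $\Z^{g}=\Z$ as a $(D\cap gHg^{-1})$-module for every $g$, so Mackey's decomposition (Proposition \ref{prop:mcky}) yields a $D$-module isomorphism $\Ind_{H}^{G}\Z\cong\bigoplus_{g\in R(D,H)}\Ind_{D\cap gHg^{-1}}^{D}\Z$; applying $H^{2}(D,-)$ and composing in each summand with Shapiro's isomorphism and with $H^{2}(D\cap gHg^{-1},\Z)\cong(D\cap gHg^{-1})^{\vee}$ produces the right-hand map of the diagram.

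Next, because the target is a direct sum indexed by $R(D,H)$, it suffices to fix $g\in R(D,H)$ and identify the composite $\pi_{D,g*}\circ\Res_{G/D}\colon H^{2}(G,\Ind_{H}^{G}\Z)\to H^{2}(D,\Ind_{D\cap gHg^{-1}}^{D}\Z)$. On inhomogeneous cochains $\Res_{G/D}$ is merely restriction of the domain, so the whole composite is governed by how the two Shapiro isomorphisms — the one for the pair $(G,H)$ and the one for the pair $(D,D\cap gHg^{-1})$, both normalised by evaluation at the identity — are intertwined by $\pi_{D,g}$. This is exactly what Lemma \ref{lem:hmts}(ii) supplies: transported through $\Phi_{G/H,\Z}\colon\Ind_{H}^{G}\Z\cong\Z[G]\otimes_{\Z[H]}\Z$, the map $\pi_{D,g}$ is a retraction of the $D$-embedding $\Z[D]\otimes_{\Z[D\cap gHg^{-1}]}\Z\hookrightarrow\Z[G]\otimes_{\Z[H]}\Z$, $1\otimes x\mapsto g\otimes x$. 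Feeding this into the cochain description of Shapiro's lemma (Lemma \ref{lem:shpr}) and comparing the two normalisations, one sees that the composite first restricts $f\in H^{\vee}$ to the conjugate subgroup $g^{-1}Dg\cap H\subseteq H$ and then transports it to $D\cap gHg^{-1}$ by conjugation by $g$; since $\Ad(g^{-1})$ carries each $d\in D\cap gHg^{-1}$ into $H$, this transport is $d\mapsto f(g^{-1}dg)=(f\circ\Ad(g^{-1}))(d)$, which is the asserted formula. The appearance of the inner automorphism — equivalently, of the conjugate subgroup $g^{-1}Dg\cap H$ — is precisely the phenomenon packaged by Lemma \ref{lem:gpac}, which records the effect of conjugation on the cohomology of a direct summand.

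The only real obstacle is keeping all the base-change and conjugation isomorphisms coherent: one must ensure that the Shapiro isomorphism used on $H^{2}(D,\Ind_{D\cap gHg^{-1}}^{D}\Z)$ is the one compatible with $\Phi_{D/(D\cap gHg^{-1}),\Z}$ as in Lemma \ref{lem:hmts}, so that the factor $\Ad(g^{-1})$ enters with the correct variance and no stray cocycle corrections survive. Once the module diagram of Lemma \ref{lem:hmts}(ii) is combined with Lemmas \ref{lem:shpr} and \ref{lem:gpac}, commutativity follows from a direct diagram chase with no further input; alternatively one may simply invoke \cite[Section~6, pp.~310--311]{Platonov1994}.
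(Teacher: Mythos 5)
Your proposal is correct and follows essentially the same route as the paper's proof: reduce via Mackey's decomposition to a single double coset representative $g$, use Lemma \ref{lem:hmts}~(ii) to intertwine the two Shapiro isomorphisms through $\pi_{D,g}$, invoke Lemma \ref{lem:gpac} for the conjugation step, and identify the resulting map on character groups via Proposition \ref{prop:htch}. The paper merely spells out the intermediate commutative squares (evaluation at $1$ versus at $g^{-1}$, and the restriction $\Res_{gHg^{-1}/(D\cap gHg^{-1})}$) that your diagram chase leaves implicit.
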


\begin{proof}
Note that $\Z$ is a direct summand of $\Ind_{H}^{G}\Z$ as an $H$-module, which is a consequence of Lemmas \ref{lem:hmts} (ii). Take $g\in R(D,H)$, then we have a commutative diagram as follows by Lemma \ref{lem:gpac}: 
\begin{equation*}
\xymatrix@C=50pt{
H^{2}(G,\Ind_{H}^{G}\Z)\ar[r]^{\Res_{G/H}}\ar@{=}[d] & H^{2}(H,\Ind_{H}^{G}\Z)\ar[r]^{\varphi \mapsto \varphi(1)} \ar[d]^{\cong} & H^{2}(H,\Z)\ar[d]^{\cong}\\
H^{2}(G,\Ind_{H}^{G}\Z)\ar[r]^{\Res_{G/gHg^{-1}}\hspace{13pt}}& H^{2}(gHg^{-1},\Ind_{H}^{G}\Z)\ar[r]^{\hspace{13pt}\varphi \mapsto \varphi(g^{-1})}& H^{2}(gHg^{-1},\Z). 
}
\end{equation*}
Here the vertical isomorphisms are induced by $\Ad(g^{-1})$ and the action of $g$ on $\Ind_{H}^{G}\Z$. Moreover, the composite of the upper horizontal maps is an isomorphism by Shapiro's lemma. On the other hand, the following is commutative: 
\begin{equation*}
\xymatrix@C=40pt{
H^{2}(gHg^{-1},\Ind_{H}^{G}\Z)\ar[r]^{\varphi \mapsto \varphi(g^{-1})} \ar[d]^{\pi_{D,g*}} & H^{2}(gHg^{-1},\Z)\ar[d]^{\Res_{gHg^{-1}/(D\cap gHg^{-1})}} \\
H^{2}(D\cap gHg^{-1},\Ind_{D\cap gHg^{-1}}^{D}\Z)\ar[r]^{\hspace{25pt}\varphi \mapsto \varphi(1)} & H^{2}(D\cap gHg^{-1},\Z). 
}
\end{equation*}
Here the lower horizontal map is an isomorphism by Shapiro's lemma. In summary, we obtain a commutative diagram
\begin{equation*}
\xymatrix@C=30pt{
H^{2}(G,\Ind_{H}^{G}\Z)\ar[d]^{\cong} \ar[r]^{\pi_{D,g*}\hspace{15pt}} & H^{2}(D,\Ind_{D\cap gHg^{-1}}^{D}\Z)\ar[d]^{\Res_{gHg^{-1}/(D\cap gHg^{-1})}} \\
H^{2}(H,\Z)\ar[r]^{\Ad(g^{-1})^{*}\hspace{25pt}} & H^{2}(D\cap gHg^{-1},\Z). 
}
\end{equation*}
Furthermore, Proposition \ref{prop:htch} implies that the lower horizontal homomorphism can be written as 
\begin{equation*}
H^{\vee}\rightarrow (D\cap gHg^{-1})^{\vee};\,f\mapsto (f\circ \Ad(g^{-1}))\mid_{D\cap gHg^{-1}}. 
\end{equation*}
Consequently, the assertion follows from Proposition \ref{prop:mcky}. 
\end{proof}

\subsection{Specific subgroups of cohomology groups}

Let $G$ be a finite group, and $M$ a $G$-module. For a finite set $\cD$ of subgroups of $G$ and $j\in \Znn$, put
\begin{equation*}
\Sha_{\cD}^{j}(G,M):=\Ker\left(H^{j}(G,M)\xrightarrow{(\Res_{G/D})_{D}} \bigoplus_{D\in \cD}H^{j}(D,M) \right). 
\end{equation*}
Here, $\Res_{G/D} \colon H^{j}(G,M)\rightarrow H^{j}(D,M)$ is the restriction map. We set $\Sha_{\emptyset}^{j}(G,M):=H^{j}(G,M)$ for convention. Moreover, we denote $\Sha_{\cC_{G}}^{j}(G,M)$ by $\Sha_{\omega}^{j}(G,M)$. 

\vspace{5pt}

For an abelian group $A$ and a prime number $p$, we use the following notations: 
\begin{align*}
A[p^{\infty}]&:=\{a\in A\mid p^{m}a=0\text{ for some }m\in \Zpn\},\\
A^{(p)}&:=\{a\in A\mid na=0\text{ for some $n \in \Z\setminus p\Z$}\}. 
\end{align*}
We call $A[p^{\infty}]$ and $A^{(p)}$ the \emph{$p$-primary torsion part} and the \emph{prime-to-$p$ torsion part} of $A$, respectively. 

\begin{prop}\label{prop:tsan}
Let $G$ be a finite group, $M$ a $G$-lattice, $H$ a subgroup of $G$, and $j$ a positive ineteger. Consider a finite set $\cD$ of subgroups of $G$, and set 
\begin{equation*}
\cD_{\cap H}:=\{D\cap H \mid D\in \cD\}. 
\end{equation*}
\begin{enumerate}
\item If $\Sha_{\cD_{\cap H}}^{j}(H,M)=0$, then $\Sha_{\cD}^{j}(G,M)$ is annihilated by $(G:H)$. 
\item Let $p$ be a prime number that does not divide $(G:H)$. Then the composite
\begin{equation*}
\Sha_{\cD}^{j}(G,M)[p^{\infty}]\hookrightarrow \Sha_{\cD}^{j}(G,M)\xrightarrow{\Res_{G/H}} 
\Sha_{\cD_{\cap H}}^{j}(H,M)
\end{equation*}
is injective. 
\item Suppose that $(G:H)$ is a power of a prime number $p$. Then the composite
\begin{equation*}
\Sha_{\cD}^{j}(G,M)^{(p)}\hookrightarrow \Sha_{\cD}^{j}(G,M)\xrightarrow{\Res_{G/H}} \Sha_{\cD_{\cap H}}^{j}(H,M). 
\end{equation*}
is injective. 
\end{enumerate}
\end{prop}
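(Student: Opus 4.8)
The plan is to reduce all three parts to two standard facts from group cohomology: transitivity of restriction, $\Res_{H/(D\cap H)}\circ\Res_{G/H}=\Res_{G/(D\cap H)}$, and the identity $\Cor^{G}_{H}\circ\Res_{G/H}=(G:H)\cdot\id$ on $H^{j}(G,M)$.

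First I would check that the maps appearing in (ii) and (iii) are well defined, i.e.\ that $\Res_{G/H}$ carries $\Sha_{\cD}^{j}(G,M)$ into $\Sha_{\cD_{\cap H}}^{j}(H,M)$. For each $D\in\cD$, transitivity of restriction gives $\Res_{H/(D\cap H)}\circ\Res_{G/H}=\Res_{G/(D\cap H)}=\Res_{D/(D\cap H)}\circ\Res_{G/D}$; hence if $x\in\Sha_{\cD}^{j}(G,M)$, then $\Res_{G/D}(x)=0$ forces $\Res_{H/(D\cap H)}(\Res_{G/H}(x))=0$ for every $D\in\cD$, which says precisely $\Res_{G/H}(x)\in\Sha_{\cD_{\cap H}}^{j}(H,M)$.

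For (i): given $x\in\Sha_{\cD}^{j}(G,M)$, the previous paragraph together with the hypothesis $\Sha_{\cD_{\cap H}}^{j}(H,M)=0$ yields $\Res_{G/H}(x)=0$, and therefore $(G:H)\,x=\Cor^{G}_{H}(\Res_{G/H}(x))=0$. For (ii) and (iii), suppose $x$ lies in the kernel of the displayed composite; then $\Res_{G/H}(x)=0$, so as above $(G:H)\,x=0$. In case (ii) we additionally have $p^{m}x=0$ for some $m\geq 1$, and since $p\nmid(G:H)$ the integers $(G:H)$ and $p^{m}$ are coprime, so $x=0$ by B\'ezout. In case (iii), $(G:H)$ is a power of $p$ while $nx=0$ for some $n\in\Z\setminus p\Z$, and again $(G:H)$ and $n$ are coprime, whence $x=0$. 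Since $\Sha_{\cD}^{j}(G,M)[p^{\infty}]$ (resp.\ $\Sha_{\cD}^{j}(G,M)^{(p)}$) is a subgroup of $\Sha_{\cD}^{j}(G,M)$, this gives injectivity of the composite in each case.

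No step here is genuinely hard; the only point deserving a line of care is the well-definedness verification in the second paragraph (that the relevant target is the group filtered by $\cD_{\cap H}$), after which everything is the $\Cor\circ\Res$ relation combined with coprimality of orders.
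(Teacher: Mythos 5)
Your proof is correct and follows the same route as the paper, which simply cites the identity $\Cor_{G/H}\circ\Res_{G/H}=(G:H)\cdot\id$ from Serre and leaves the coprimality bookkeeping implicit. Your explicit verification that $\Res_{G/H}$ lands in $\Sha_{\cD_{\cap H}}^{j}(H,M)$ (via transitivity of restriction) is a detail the paper omits but is exactly what is needed.
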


\begin{proof}
These follow from \cite[Chap.~VII, \S 7, Proposition 6]{Serre1979}, which asserts that the composite
\begin{equation*}
H^{j}(G,M)\xrightarrow{\Res_{G/H}} H^{j}(H,M)\xrightarrow{\Cor_{G/H}}H^{j}(G,M)
\end{equation*}
coincides with the multiplication by $(G:H)$. 
\end{proof}

We introduce the notion for finite sets of subgroups in $G$ that contains set of decomposition groups in the Galois groups of finite Galois extensions of global fields. 

\begin{dfn}
Let $G$ be a finite group. We say that a finite set of subgroups $\cD$ of $G$ is \emph{admissible} if 
\begin{itemize}
\item $\cD$ contains all cyclic subgroups of $G$ (in particular, $\cD\neq \emptyset$); and
\item $D\in \cD$ implies $gDg^{-1}\in \cD$ for every $g\in G$. 
\end{itemize}
\end{dfn}

It is clear that the set of all cyclic groups $\cC_{G}$ of $G$ is admissible. Moreover, we can confirm the following without difficulty: 

\begin{lem}\label{lem:adsq}
Let $G$ be a finite group, and $\cD$ an admissible set of subgroups of $G$. 
\begin{enumerate}
\item For a subgroup $H$ of $G$, let $\cD_{\cap H}$ be the finite set of subgroups in $H$ as in Proposition \ref{prop:tsan}. 
Then, it is admissible. 
\item For a normal subgroup $N$ of $G$, set
\begin{equation*}
\cD_{/N}:=\{DN/N \mid D\in \cD\}, 
\end{equation*}
which is a finite set of subgroups of $G/N$. Then it is admissible. 
\end{enumerate}
\end{lem}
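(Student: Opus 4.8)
The plan is to check the two defining conditions of admissibility---stability under conjugation and containment of all cyclic subgroups---directly, for each of the two constructions.

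For part (i), I would first note that if $C$ is any cyclic subgroup of $H$, then $C$ is in particular a cyclic subgroup of $G$, so $C \in \cD$ by admissibility of $\cD$; since $C \subseteq H$ this gives $C = C \cap H \in \cD_{\cap H}$, so $\cD_{\cap H}$ contains every cyclic subgroup of $H$. For conjugation-stability, take $D \in \cD$ and $h \in H \subseteq G$; then $hDh^{-1} \in \cD$, and because $h$ normalizes $H$ we get $h(D \cap H)h^{-1} = hDh^{-1} \cap hHh^{-1} = hDh^{-1} \cap H$, which again lies in $\cD_{\cap H}$. This settles (i).

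For part (ii), write $\pi \colon G \twoheadrightarrow G/N$ for the quotient map, so that $DN/N = \pi(D)$ and $\cD_{/N} = \{\pi(D) \mid D \in \cD\}$. Given a cyclic subgroup $\overline{C} = \langle \overline{c} \rangle$ of $G/N$, lift $\overline{c}$ to some $c \in G$ and put $C := \langle c \rangle$; then $C$ is cyclic, hence $C \in \cD$, and $\pi(C) = \langle \overline{c} \rangle = \overline{C}$, so $\overline{C} \in \cD_{/N}$. For conjugation-stability, for $D \in \cD$ and $gN \in G/N$ we have $gDg^{-1} \in \cD$ and $(gN)\pi(D)(gN)^{-1} = \pi(gDg^{-1}) \in \cD_{/N}$. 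This settles (ii).

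The argument is entirely formal, so there is no serious obstacle; the only points worth stating carefully are that conjugation commutes with ``intersect with $H$'' precisely because the conjugating element lies in $H$ and therefore normalizes $H$, and that a surjective homomorphism carries a cyclic group onto a cyclic group with generator the image of the original generator. I would spell each of these out in a sentence, matching the level of detail of the surrounding lemmas, and no further input is needed.
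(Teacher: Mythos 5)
Your proof is correct and is the natural direct verification of the two defining conditions of admissibility; the paper itself omits the proof entirely (it states the lemma "can be confirmed without difficulty"), so there is no alternative argument to compare against. The two points you single out — that conjugation by $h\in H$ commutes with intersecting with $H$, and that a quotient map sends a cyclic generator to a cyclic generator of the image — are exactly the ones worth recording.
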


\begin{lem}\label{lem:qtts}
Let $G$ be a finite group, and $H$ its subgroup. Then one has\
\begin{equation*}
\Sha_{\omega}^{2}(G,\Ind_{H}^{G}\Z)=0. 
\end{equation*}
\end{lem}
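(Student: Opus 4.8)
\textbf{Proof proposal for Lemma \ref{lem:qtts}.}

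The plan is to reduce the vanishing of $\Sha_{\omega}^{2}(G,\Ind_{H}^{G}\Z)$ to a statement about Pontryagin duals of subgroups, using the explicit description of the restriction maps on $H^{2}$ of an induced module furnished by Propositions \ref{prop:htch} and \ref{prop:rsch}. By Proposition \ref{prop:htch} there is a canonical isomorphism $H^{2}(G,\Ind_{H}^{G}\Z)\cong H^{\vee}$, so it suffices to identify, under this identification, the intersection of the kernels of $\Res_{G/C}$ as $C$ ranges over all cyclic subgroups of $G$, and show it is zero. Proposition \ref{prop:rsch} tells us that for a subgroup $D$ of $G$, the composite $H^{\vee}\cong H^{2}(G,\Ind_{H}^{G}\Z)\xrightarrow{\Res_{G/D}} H^{2}(D,\Ind_{H}^{G}\Z)\cong \bigoplus_{g\in R(D,H)}(D\cap gHg^{-1})^{\vee}$ is the map $f\mapsto ((f\circ \Ad(g^{-1}))\mid_{D\cap gHg^{-1}})_{g}$.

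First I would specialize to $D=C=\langle x\rangle$ a cyclic subgroup generated by an element $x\in G$. Suppose $f\in H^{\vee}$ lies in $\Sha_{\omega}^{2}(G,\Ind_{H}^{G}\Z)$ under the identification; I must show $f=0$, i.e.\ $f(h)=0$ for every $h\in H$. Fix $h\in H$ and apply the vanishing condition to the cyclic subgroup $C:=\langle h\rangle\subseteq H$. Here one can take $g=1$ in the double coset representatives $R(C,H)$ (since $1\in C\cdot 1\cdot H$), and the corresponding component of $\Res_{G/C}(f)$ is $(f\circ \Ad(1))\mid_{C\cap 1\cdot H\cdot 1}=f\mid_{C\cap H}=f\mid_{C}$, because $C=\langle h\rangle\subseteq H$ forces $C\cap H=C$. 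Since $\Res_{G/C}(f)=0$, in particular this component vanishes, so $f\mid_{\langle h\rangle}=0$, hence $f(h)=0$. As $h\in H$ was arbitrary, $f=0$, proving $\Sha_{\omega}^{2}(G,\Ind_{H}^{G}\Z)=0$.

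There is essentially no hard part here: the only thing to be careful about is that the double coset $ChH$ for $h\in H\subseteq G$ contains $1$, so one is free to pick $g=1$ as a representative, and then $C\cap gHg^{-1}=C\cap H=C$ since $C\leq H$; this makes the relevant component of the restriction simply the restriction of $f$ to $C$, which detects $f(h)$. One could alternatively phrase this more conceptually: $H^{2}(G,\Ind_{H}^{G}\Z)\cong H^{2}(H,\Z)\cong H^{\vee}$ via Shapiro, and $H^{\vee}=\Hom(H,\Q/\Z)$ is detected on cyclic subgroups of $H$ (a homomorphism to $\Q/\Z$ vanishing on every cyclic subgroup is zero), and each such cyclic subgroup of $H$ is also a cyclic subgroup of $G$, so the Shapiro/restriction compatibility of Lemma \ref{lem:shpr} immediately yields the claim. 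I would present the Shapiro-lemma version as the main line of argument, invoking Lemma \ref{lem:shpr} to identify $\Res_{G/C}$ for $C\leq H$ with the composite $H^{\vee}\to H^{2}(C,\Ind_{H}^{G}\Z)$ whose $g=1$ component restricts $f$ to $C$, and conclude.
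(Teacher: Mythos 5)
Your proof is correct and matches the paper's own argument: both identify $H^{2}(G,\Ind_{H}^{G}\Z)$ with $H^{\vee}$ via Propositions \ref{prop:htch} and \ref{prop:rsch}, then for each $h\in H$ apply the vanishing of the restriction to $D=\langle h\rangle$ with $g=1$ as the double-coset representative to conclude $f(h)=0$. The extra remark about the Shapiro-lemma phrasing is a harmless repackaging of the same idea.
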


\begin{proof}
Take a complete representative $R(D,H)$ of $D\backslash G/H$ which contains $1$. By Proposition \ref{prop:rsch}, it suffices to prove the following: 
\begin{itemize}
\item If $f\in H^{\vee}$ satisfies $(f\circ \Ad(g^{-1}))\!\mid_{D\cap gHg^{-1}}$ is trivial for any $D\in \cC_{G}$ and $g\in R(D,H)$, then $f=0$. 
\end{itemize}
Take $f\in H^{\vee}$. For each $h\in H$, the assumption for $D=\langle h\rangle$ and $1 \in R(D,H)$ implies $f(h)=0$. Hence, we obtain $f=0$ as desired. 
\end{proof}

\begin{prop}\label{prop:ifts}
Let $G$ be a finite group, and $N$ its normal subgroup. Take an admissible set of subgroups $\cD$ of $G$ and a $G/N$-module $M$ which is torsion-free as an abelian group. Then the inflation map $\Inf_{G/(G/N)}\colon H^{2}(G/N,M)\rightarrow H^{2}(G,M)$ induces an isomorphism
\begin{equation*}
\Sha_{{\cD}_{/N}}^{2}(G/N,M) \cong \Sha_{\cD}^{2}(G,M). 
\end{equation*}
\end{prop}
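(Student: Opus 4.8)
The plan is to combine the inflation--restriction exact sequence with the vanishing of certain low-degree cohomology groups and a compatibility of inflation with restriction to subgroups. Since $M$ is torsion-free as an abelian group, it is in particular $N$-torsion-free, so $H^{1}(N,M)$ embeds into $H^{1}(N,M\otimes_{\Z}\Q)=0$; thus $H^{1}(N,M)=0$. The five-term exact sequence of the extension $1\to N\to G\to G/N\to 1$ with coefficients in $M$ then reads
\begin{equation*}
0\to H^{1}(G/N,M)\xrightarrow{\Inf} H^{1}(G,M)\xrightarrow{\Res} H^{1}(N,M)=0\to H^{2}(G/N,M)\xrightarrow{\Inf_{G/(G/N)}} H^{2}(G,M),
\end{equation*}
so $\Inf_{G/(G/N)}\colon H^{2}(G/N,M)\to H^{2}(G,M)$ is \emph{injective}. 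This is the first step, and it reduces the claim to identifying the two $\Sha$-subgroups under this injection.

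Next I would check the compatibility square relating inflation and restriction: for each $D\in\cD$, with image $\overline{D}:=DN/N$ in $G/N$, the diagram
\begin{equation*}
\xymatrix{
H^{2}(G/N,M)\ar[r]^{\Inf}\ar[d]_{\Res_{(G/N)/\overline{D}}}& H^{2}(G,M)\ar[d]^{\Res_{G/D}}\\
H^{2}(\overline{D},M)\ar[r]^{\Inf}& H^{2}(D,M)
}
\end{equation*}
commutes, since inflation is functorial in the pair (group, normal subgroup) and $D\cap N$ is the kernel of $D\twoheadrightarrow\overline{D}$. Because $D\cap N$ is again torsion-free-module, $H^{1}(D\cap N,M)=0$ and the same five-term argument shows the bottom inflation $H^{2}(\overline{D},M)\to H^{2}(D,M)$ is injective. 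Now chase an element $x\in H^{2}(G/N,M)$: if $\Inf(x)\in\Sha_{\cD}^{2}(G,M)$, then $\Res_{G/D}\Inf(x)=0$ for all $D\in\cD$, hence $\Inf\big(\Res_{(G/N)/\overline{D}}x\big)=0$, and injectivity of the bottom map forces $\Res_{(G/N)/\overline{D}}x=0$; as $\overline{D}$ ranges over $\cD_{/N}$ this says $x\in\Sha_{\cD_{/N}}^{2}(G/N,M)$. The converse inclusion is immediate from the same square (if $x$ kills all $\Res_{(G/N)/\overline{D}}$, then $\Inf(x)$ kills all $\Res_{G/D}$). Hence $\Inf_{G/(G/N)}$ restricts to an isomorphism $\Sha_{\cD_{/N}}^{2}(G/N,M)\xrightarrow{\cong}\Sha_{\cD}^{2}(G,M)$, once we know $\cD_{/N}$ is admissible, which is Lemma \ref{lem:adsq} (ii).

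The only subtle point is that $\cD_{/N}$ may fail to literally contain every cyclic subgroup of $G/N$ if one is not careful — but Lemma \ref{lem:adsq} (ii) already records that $\cD_{/N}$ is admissible, and in any case the argument above never uses admissibility beyond the fact that $\cD_{/N}$ is the set over which the target $\Sha$ is defined, so no extra care is needed. The main (very mild) obstacle is simply verifying the commutativity of the inflation/restriction square above; this is standard functoriality of inflation and restriction for the commutative diagram of group homomorphisms $D\to G$, $D\to\overline{D}$, $\overline{D}\to G/N$, $G\to G/N$, and can be cited or checked on cochains. Everything else is formal.
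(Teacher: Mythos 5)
There is a genuine gap: your argument only proves that the inflation map restricts to an \emph{injection} $\Sha_{\cD_{/N}}^{2}(G/N,M)\hookrightarrow \Sha_{\cD}^{2}(G,M)$, not an isomorphism. What you establish is that $\Inf$ is injective on $H^{2}$ and that $\Inf^{-1}\bigl(\Sha_{\cD}^{2}(G,M)\bigr)=\Sha_{\cD_{/N}}^{2}(G/N,M)$; together these identify the image of $\Sha_{\cD_{/N}}^{2}(G/N,M)$ with $\Sha_{\cD}^{2}(G,M)\cap \Ima(\Inf)$. To conclude surjectivity you must also show $\Sha_{\cD}^{2}(G,M)\subseteq \Ima(\Inf)$, i.e.\ that every class in $\Sha_{\cD}^{2}(G,M)$ is inflated from $G/N$. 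This is where admissibility of $\cD$ is genuinely used (contrary to your closing remark that ``no extra care is needed''): since $H^{1}(N,M)=0$, Hochschild--Serre gives the exact sequence $0\to H^{2}(G/N,M)\to H^{2}(G,M)\xrightarrow{\Res_{G/N}} H^{2}(N,M)$, so one needs $\Res_{G/N}(z)=0$ for $z\in\Sha_{\cD}^{2}(G,M)$. The paper gets this because $\cD$ contains every cyclic subgroup of $G$, hence of $N$, so $\Res_{G/N}(z)\in\Sha_{\omega}^{2}(N,M)$; and $\Sha_{\omega}^{2}(N,M)=0$ because $N$ acts trivially on the torsion-free $M$, so $M\cong\Z^{r}$ as an $N$-module and Lemma \ref{lem:qtts} (with $G=H=N$) applies. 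Without this step the statement would fail, e.g.\ for a set of subgroups not containing the cyclic subgroups of $N$.

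A secondary, fixable flaw: your justification of $H^{1}(N,M)=0$ is incorrect. Group cohomology does not take the injection $M\hookrightarrow M\otimes_{\Z}\Q$ to an injection on $H^{1}$; indeed for $N=C_{2}$ acting on $\Z$ by $-1$ one has $H^{1}(N,\Z)=\Z/2\neq 0$ although $\Z$ is torsion-free and $H^{1}(N,\Q)=0$. The correct (and easier) argument, which is the one the paper uses, is that $N$ acts trivially on $M$ because $M$ is a $G/N$-module, so $H^{1}(N,M)=\Hom(N,M)=0$ since $N$ is finite and $M$ is torsion-free; the same applies to $D\cap N$. Once both points are repaired, the rest of your diagram chase (the commutativity of the inflation/restriction squares and the deduction that the two $\Sha$ conditions correspond) matches the paper's argument.
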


\begin{proof}
We follow the proof of \cite[Lemma 3.2 (ii)]{Huang2024}. Take $D \in \cD$ and $\widetilde{D} \in q_{\widetilde{G}/G}^{-1}(D)$, and put $D':=\widetilde{D} \cap N$. Since $N$ acts on $M$ trivially, we have the following: 
\begin{equation*}
H^{1}(N,M)=\Hom(N,M),\quad H^{1}(D',M)\cong \Hom(D',M). 
\end{equation*}
Moreover, the assumption that $M$ is torsion-free implies that $\Hom(N,M)$ and $\Hom(D',M)$ are trivial. Hence, by the Hochschild--Serre spectral sequence, we obtain commutative diagrams
\begin{gather*}
\xymatrix@C=30pt{
0\ar[r] & H^{1}(G,M)\ar[d]_{\Res_{G/D}}\ar[r]^{\Inf_{\widetilde{G}/G}}& H^{1}(\widetilde{G},M)\ar[d]^{\Res_{\widetilde{G}/\widetilde{D}}}\ar[r]& 0\\
0\ar[r] & H^{1}(D,M)\ar[r]^{\Inf_{\widetilde{D}/D}}& H^{1}(\widetilde{D},M) \ar[r]& 0,
}\\
\xymatrix@C=40pt{
0\ar[r] & H^{2}(G,M)\ar[d]^{\Res_{G/D}}\ar[r]^{\Inf_{\widetilde{G}/G}}& H^{2}(\widetilde{G},M)\ar[d]^{\Res_{\widetilde{G}/\widetilde{D}}} \ar[r]^{\Res_{\widetilde{G}/N}}& H^{2}(N,M) \ar[d]^{\Res_{N/D'}}\\
0\ar[r] & H^{2}(D,M)\ar[r]^{\Inf_{\widetilde{D}/D}}& H^{2}(\widetilde{D},M) \ar[r]^{\Res_{\widetilde{D}/D'}}& H^{2}(D',M). 
}
\end{gather*}
Here all the horizontal sequences are exact. Hence, the assertion for $i=1$ holds. On the other hand, the set $\{D'\cap N'\mid D'\in \cD\}$ contains the set $\cC_{N'}$, which follows from the inclusion $\cC_{G'}\subset \cD'$. Hence, if $\Sha_{\omega}^{2}(N',M)=0$, then we obtain the desired isomorphism for $i=2$. It suffices to prove the triviality of $\Sha_{\omega}^{2}(N',\Z)$. However, this follows from Lemma \ref{lem:qtts} for $G=H=N'$. 
\end{proof}

\subsection{Flasque resolutions}

\begin{dfn}[{\cite[\S 1]{ColliotThelene1977}}]
Let $G$ be a finite group. 
\begin{enumerate}
\item We say that $G$-lattice $P$ is \emph{permutation} (or, \emph{induced}) if $P=\bigoplus_{i=1}^{r}\Ind_{H_i}^{G}\Z$ for some subgroups $H_1,\ldots,H_r$ of $G$. 
\item A $G$-lattice $Q$ is said to be \emph{flasque} (or, \emph{flabby}) if $\widehat{H}^{-1}(H,Q)=0$ for any subgroup $H$ of $G$. Here, $\widehat{H}^{-1}$ denotes the $(-1)$-st Tate cohomology. 
\item A \emph{flasque resolution} (or, a \emph{flabby resolution}) of $M$ is an exact sequence of $G$-modules
\begin{equation*}
0 \rightarrow M \rightarrow P \rightarrow Q \rightarrow 0,
\end{equation*}
where $P$ is permutation and $Q$ is flasque. 
\end{enumerate}
\end{dfn}

Note that a flasque resolution exists for any $G$-lattice, which follows from \cite[Lemma 1.1]{Endo1975} or \cite[Lemme 3]{ColliotThelene1977}. 

\begin{lem}\label{lem:wrfl}
Let $G$ be a finite group, $H$ a subgroup of $G$, and $M$ a $G$-lattice. Then a flasque resolution $0 \rightarrow M\rightarrow P \rightarrow Q \rightarrow 0$ of $M$ induces a flasque resolution of $\Ind_{H}^{G}M$: 
\begin{equation*}
0 \rightarrow \Ind_{H}^{G}M \rightarrow \Ind_{H}^{G}P \rightarrow \Ind_{H}^{G}Q \rightarrow 0. 
\end{equation*}
\end{lem}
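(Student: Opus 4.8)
The plan is to check the three defining conditions of a flasque resolution for the induced sequence in turn: exactness, that $\Ind_{H}^{G}P$ is permutation, and that $\Ind_{H}^{G}Q$ is flasque. Exactness is immediate: $\Ind_{H}^{G}=\Hom_{\Z[H]}(\Z[G],-)$ is an exact functor because $\Z[G]$ is free, hence projective, as a left $\Z[H]$-module (a set of right coset representatives is a basis). Applying it to $0\rightarrow M\rightarrow P\rightarrow Q\rightarrow 0$ therefore yields an exact sequence, and $\Ind_{H}^{G}$ of a lattice is again a lattice, so all three terms are $G$-lattices.

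For the permutation property, write $P=\bigoplus_{i=1}^{r}\Ind_{H_{i}}^{G}\Z=\bigoplus_{i}\Z[G/H_{i}]$. By definition $\Ind_{H}^{G}P=\Ind_{H}^{G}(\Res_{G/H}P)$, and Mackey's decomposition (Proposition \ref{prop:mcky}) applied to each $\Z[G/H_{i}]$ shows that $\Res_{G/H}P$ is a permutation $H$-lattice, say $\bigoplus_{j}\Z[H/L_{j}]$. Then transitivity of induction gives $\Ind_{H}^{G}\Z[H/L_{j}]=\Ind_{H}^{G}\Ind_{L_{j}}^{H}\Z=\Ind_{L_{j}}^{G}\Z=\Z[G/L_{j}]$, so $\Ind_{H}^{G}P\cong\bigoplus_{j}\Z[G/L_{j}]$ is a permutation $G$-lattice.

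For the flasque property — the substance of the lemma — I must show $\widehat{H}^{-1}(K,\Ind_{H}^{G}Q)=0$ for every subgroup $K$ of $G$. Mackey's decomposition gives an isomorphism of $K$-modules $\Res_{G/K}\Ind_{H}^{G}Q\cong\bigoplus_{g\in R(K,H)}\Ind_{K\cap gHg^{-1}}^{K}Q^{g}$, so by Shapiro's lemma (Lemma \ref{lem:shpr}) it suffices to prove $\widehat{H}^{-1}(K\cap gHg^{-1},Q^{g})=0$ for each $g$. Here $Q^{g}$ is the abelian group $Q$ with the $K\cap gHg^{-1}$-action obtained by precomposing the original $G$-action with $\Ad(g^{-1})$; the group isomorphism $\Ad(g^{-1})\colon K\cap gHg^{-1}\xrightarrow{\sim}g^{-1}Kg\cap H$ therefore identifies $\widehat{H}^{-1}(K\cap gHg^{-1},Q^{g})$ with $\widehat{H}^{-1}(g^{-1}Kg\cap H,Q)$ computed with the original action (the same conjugation-functoriality used in Lemma \ref{lem:gpac}). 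Since $g^{-1}Kg\cap H$ is a subgroup of $G$ and $Q$ is flasque, this last group vanishes, which completes the proof.

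The only delicate bookkeeping is tracking the conjugation twist $Q\mapsto Q^{g}$ in the Mackey summand and checking that it is harmless after passing to $\widehat{H}^{-1}$; everything else is formal. One could instead isolate the two standalone facts "restriction of a flasque lattice is flasque" and "induction of a flasque $H$-lattice to $G$ is flasque", but the Mackey--Shapiro computation above is precisely the content of the second of these, so nothing is gained by the detour.
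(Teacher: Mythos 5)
Your proof is correct and follows essentially the same route as the paper: the key point is verifying that $\Ind_{H}^{G}Q$ is flasque via Mackey's decomposition, Shapiro's lemma, and the conjugation identification $\widehat{H}^{-1}(K\cap gHg^{-1},Q^{g})\cong\widehat{H}^{-1}(g^{-1}Kg\cap H,Q)$, which is exactly the paper's argument. Your additional explicit checks of exactness and of the permutation property of $\Ind_{H}^{G}P$ are correct but are left implicit in the paper.
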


\begin{proof}
It suffices to prove $\widehat{H}^{-1}(D,\Ind_{H}^{G}Q)=0$ for every subgroup $D$ of $G$. There is an isomorphism
\begin{equation*}
\widehat{H}^{-1}(D,\Ind_{H}^{G}Q)\cong \bigoplus_{g\in R(D,H)}\widehat{H}^{-1}(D,\Ind_{D\cap gHg^{-1}}^{D}Q^{g}),
\end{equation*}
which follows from Proposition \ref{prop:mcky}. On the other hand, Shaipro's lemma gives rise to an isomorphism
\begin{equation*}
\widehat{H}^{-1}(D,\Ind_{D\cap gHg^{-1}}^{D}Q^{g})\cong \widehat{H}^{-1}(D\cap gHg^{-1},Q^{g})
\end{equation*}
for any $g\in R(D,H)$. Moreover, the right-hand side is isomorphic to $\widehat{H}^{-1}(g^{-1}Dg\cap H,Q)$. However, $\widehat{H}^{-1}(g^{-1}Dg\cap H,Q)$ is trivial since $Q$ is flasque. Hence, we obtain the desired assertion. 
\end{proof}

\begin{prop}\label{prop:enmy}
Let $G$ be a finite group, and $p$ a prime divisor of $G$. Assume that a $p$-Sylow subgroup of $G$ is cyclic. Then we have $H^{1}(G,Q)[p^{\infty}]=0$ for any flasque $G$-lattice $Q$. 
\end{prop}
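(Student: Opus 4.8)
The plan is to reduce to a Sylow subgroup and then exploit the very simple structure of flasque modules over a cyclic $p$-group. First I would invoke Proposition \ref{prop:tsan}-type restriction-corestriction: since a $p$-Sylow subgroup $S_p$ of $G$ has index prime to $p$, the restriction map $H^1(G,Q)[p^\infty]\hookrightarrow H^1(S_p,Q)$ is injective (the composite with corestriction is multiplication by $(G:S_p)$, which is invertible on the $p$-primary part). So it suffices to prove $H^1(S_p,Q)=0$ when $S_p$ is cyclic and $Q$ is a flasque $S_p$-lattice.

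Next I would use the fundamental fact about cyclic groups: for a cyclic group $C$, Tate cohomology is periodic of period $2$, so $H^1(C,Q)=\widehat H^1(C,Q)\cong \widehat H^{-1}(C,Q)$. But $Q$ being flasque means precisely that $\widehat H^{-1}(H',Q)=0$ for every subgroup $H'$ of $C$ — in particular for $H'=C$ itself. Hence $\widehat H^{-1}(C,Q)=0$, so $H^1(C,Q)=0$. Applying this with $C=S_p$ finishes the argument, since then $H^1(G,Q)[p^\infty]$ embeds into $0$.

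The only mild subtlety — and the step I would be most careful about — is the periodicity/identification $\widehat H^{1}(C,Q)\cong\widehat H^{-1}(C,Q)$ for a cyclic group, together with the identification of ordinary $H^1$ with Tate $\widehat H^1$ (valid in positive degrees). These are standard (e.g.\ \cite[Chap.~VIII]{Serre1979}), so I would simply cite them rather than reprove them. I would also note explicitly that a flasque $S_p$-lattice obtained by restricting a flasque $G$-lattice is still flasque, or more simply just use that the definition of flasque quantifies over \emph{all} subgroups, so the hypothesis on $Q$ as a $G$-lattice directly gives $\widehat H^{-1}(S_p,Q)=0$. Thus the proof is essentially: restriction to $S_p$ is injective on $p$-primary part, and cyclicity plus flasqueness kills $H^1(S_p,Q)$.

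\begin{proof}
Let $S_{p}$ be a $p$-Sylow subgroup of $G$. Since $(G:S_{p})$ is prime to $p$, the composite
\begin{equation*}
H^{1}(G,Q)\xrightarrow{\Res_{G/S_{p}}} H^{1}(S_{p},Q)\xrightarrow{\Cor_{G/S_{p}}} H^{1}(G,Q)
\end{equation*}
is multiplication by $(G:S_{p})$ by \cite[Chap.~VII, \S 7, Proposition 6]{Serre1979}, hence an automorphism of $H^{1}(G,Q)[p^{\infty}]$. Therefore $\Res_{G/S_{p}}$ restricts to an injection $H^{1}(G,Q)[p^{\infty}]\hookrightarrow H^{1}(S_{p},Q)$, and it suffices to show $H^{1}(S_{p},Q)=0$.

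By assumption $S_{p}$ is cyclic. For a cyclic group, Tate cohomology is periodic of period $2$, so there is an isomorphism $H^{1}(S_{p},Q)=\widehat{H}^{1}(S_{p},Q)\cong \widehat{H}^{-1}(S_{p},Q)$; see \cite[Chap.~VIII, \S 4]{Serre1979}. On the other hand, $Q$ is flasque as a $G$-lattice, and the defining condition $\widehat{H}^{-1}(H',Q)=0$ holds for \emph{every} subgroup $H'$ of $G$; taking $H'=S_{p}$ gives $\widehat{H}^{-1}(S_{p},Q)=0$. Hence $H^{1}(S_{p},Q)=0$, and consequently $H^{1}(G,Q)[p^{\infty}]=0$.
\end{proof}
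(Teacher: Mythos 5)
Your proof is correct, and the overall skeleton matches the paper's: both reduce to the cyclic Sylow subgroup $S_{p}$ via restriction--corestriction (the paper cites its Proposition \ref{prop:tsan} (ii) for exactly this step) and then show $H^{1}(S_{p},Q)=0$. Where you diverge is in how that vanishing is obtained. The paper invokes the Endo--Miyata structure theorem \cite[Theorem 1.5]{Endo1975}, which says that a flasque lattice over a group with cyclic Sylow subgroups is a direct summand of a permutation lattice; the vanishing of $H^{1}$ then follows because $H^{1}$ of a permutation lattice is zero. You instead use the $2$-periodicity of Tate cohomology of a finite cyclic group to identify $H^{1}(S_{p},Q)=\widehat{H}^{1}(S_{p},Q)$ with $\widehat{H}^{-1}(S_{p},Q)$, which vanishes by the very definition of flasque applied to the subgroup $S_{p}$. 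Your route is more elementary and self-contained: it uses only the definition of flasqueness and standard facts about cohomology of cyclic groups, whereas the paper's argument imports a nontrivial classification result. The trade-off is that the Endo--Miyata theorem gives strictly more (invertibility of $Q$ over $S_{p}$, hence vanishing of $H^{1}$ over \emph{every} subgroup of $S_{p}$, not just $S_{p}$ itself), which the paper reuses elsewhere (e.g.\ in Corollary \ref{cor:cyan}); for the statement at hand, however, your argument suffices.
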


\begin{proof}
Take a $p$-Sylow subgroup $S_{p}$ of $G$, which is cyclic by assumption. Then \cite[Theorem 1.5]{Endo1975} implies that $Q$ is a direct summand of a permutation $S_{p}$-lattice. Hence $H^{1}(S_{p},Q)$ is trivial. Since $(G:S_{p})$ is not divisible by $p$, the assertion follows from Proposition \ref{prop:tsan} (ii). 
\end{proof}

\begin{prop}[{cf.~\cite[Proposition 9.8]{Sansuc1981}}]\label{prop:snsc}
Let $G$ be a finite group, and $M$ a $G$-lattice. Take a flasque resolution of $M$: 
\begin{equation*}
0 \rightarrow M \rightarrow P\rightarrow Q \rightarrow 0. 
\end{equation*}
\begin{enumerate}
\item For an admissible set $\cD$ of subgroups of $G$, there is an isomorphism of finite abelian groups
\begin{equation*}
\Sha_{\cD}^{1}(G,Q) \cong {\Sha}_{\cD}^{2}(G,M). 
\end{equation*}
\item There is an isomorphism of finite abelian groups
\begin{equation*}
H^{1}(G,Q) \cong {\Sha}_{\omega}^{2}(G,M). 
\end{equation*}
\end{enumerate}
\end{prop}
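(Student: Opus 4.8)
The plan is to start from the flasque resolution $0\to M\to P\to Q\to 0$ and run the long exact sequence in $G$-cohomology, then compare it place-by-place with the corresponding sequences for the subgroups in $\cD$. Since $P$ is permutation, write $P=\bigoplus_{i=1}^{r}\Ind_{H_i}^{G}\Z$; then $H^{1}(G,P)=\bigoplus_i H^{1}(G,\Ind_{H_i}^{G}\Z)\cong\bigoplus_i H^{1}(H_i,\Z)=0$ by Shapiro's lemma, because $H^{1}(H_i,\Z)=\Hom(H_i,\Z)=0$ for a finite group. Consequently the connecting map gives an isomorphism $H^{1}(G,Q)\xrightarrow{\ \sim\ }H^{2}(G,M)$, and likewise $H^{1}(D,Q)\xrightarrow{\ \sim\ }H^{2}(D,M)$ for every subgroup $D$ of $G$, in particular for every $D\in\cD$ (and for every cyclic subgroup, which handles part (ii)). These isomorphisms are compatible with restriction, since the long exact sequence is functorial in the group; hence they identify the kernels of $(\Res_{G/D})_{D\in\cD}$ on $H^{1}(G,Q)$ and on $H^{2}(G,M)$, which is exactly the claimed isomorphism $\Sha_{\cD}^{1}(G,Q)\cong\Sha_{\cD}^{2}(G,M)$; taking $\cD=\cC_{G}$ (allowed since $\cC_{G}$ is admissible) gives part (ii).

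The remaining point is finiteness: we must know $\Sha_{\cD}^{1}(G,Q)$ (equivalently $\Sha_{\cD}^{2}(G,M)$) is a finite abelian group. This is immediate because $H^{2}(G,M)$ is already finite: $M$ is finitely generated and $G$ is finite, so $H^{j}(G,M)$ is a finitely generated torsion group for $j\geq 1$, hence finite. Thus both sides of each asserted isomorphism are finite abelian groups, and the isomorphisms above are isomorphisms of such.

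I do not expect a serious obstacle here; the statement is essentially a bookkeeping consequence of Shapiro's lemma together with the vanishing $\Hom(-,\Z)=0$ for finite groups and the functoriality of the long exact cohomology sequence under restriction. The only mild subtlety worth spelling out is the compatibility of the connecting isomorphisms with $\Res_{G/D}$ — this follows from naturality of the long exact sequence associated to the short exact sequence $0\to M\to P\to Q\to 0$ with respect to the inclusion $D\hookrightarrow G$ — and the observation that membership of $\cD$ being closed under conjugation is not even needed for this particular statement (only that each $D\in\cD$ is a subgroup), though admissibility will matter for the later applications. If one prefers, the finiteness and the identification can also be read off directly from the well-known fact (e.g.\ \cite[Proposition 9.8]{Sansuc1981}) that $H^{1}(G,Q)$ depends only on $M$ up to permutation summands, i.e.\ is a flasque-class invariant, and similarly for each $H^{1}(D,Q)$.
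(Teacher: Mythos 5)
Your main step is incorrect: from $H^{1}(G,P)=0$ you conclude that the connecting map $H^{1}(G,Q)\to H^{2}(G,M)$ is an isomorphism, but the long exact sequence only gives $0\to H^{1}(G,Q)\to H^{2}(G,M)\to H^{2}(G,P)$, i.e.\ injectivity. Surjectivity fails in general because $H^{2}(G,P)\cong\bigoplus_{i}H_{i}^{\vee}$ is usually nonzero and the map $H^{2}(G,M)\to H^{2}(G,P)$ need not vanish. A concrete counterexample to your claim: take $G$ cyclic and nontrivial, $M=\Z$, with the flasque resolution $0\to\Z\to\Z\to 0\to 0$; then $H^{1}(G,Q)=0$ while $H^{2}(G,\Z)\cong G^{\vee}\neq 0$. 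This also shows that your parenthetical remark that admissibility of $\cD$ is not needed is wrong: with $\cD=\emptyset$ the statement is false. The paper's argument (following Voskresenskii) uses the exact rows $0\to H^{1}(\cdot,Q)\to H^{2}(\cdot,M)\to H^{2}(\cdot,P)$ for $G$ and for each $D\in\cD$, and then the key input is Lemma \ref{lem:qtts}, namely $\Sha_{\omega}^{2}(G,P)=0$ for permutation $P$: since $\cD$ contains all cyclic subgroups, any class in $\Sha_{\cD}^{2}(G,M)$ maps into $\Sha_{\omega}^{2}(G,P)=0$ and hence lifts to $H^{1}(G,Q)$, and a diagram chase identifies $\Sha_{\cD}^{1}(G,Q)$ with $\Sha_{\cD}^{2}(G,M)$. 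This is the idea missing from your proposal.

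There is a second gap in part (ii). Taking $\cD=\cC_{G}$ in part (i) only yields $\Sha_{\omega}^{1}(G,Q)\cong\Sha_{\omega}^{2}(G,M)$, whereas the statement asserts that the \emph{full} group $H^{1}(G,Q)$ is isomorphic to $\Sha_{\omega}^{2}(G,M)$. To bridge these you must show $H^{1}(D,Q)=0$ for every cyclic subgroup $D$; this is Proposition \ref{prop:enmy} in the paper and rests on the Endo--Miyata theorem that a flasque lattice over a group with cyclic Sylow subgroups is a direct summand of a permutation lattice. Nothing in your proposal supplies this vanishing. Your finiteness remark is fine, but the two points above are genuine missing ideas rather than bookkeeping.
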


\begin{proof}
We follow the argument in \cite[8.3, pp.~97--98]{Voskresenskii1998}. For any subgroup $D$ of $G$, the given flasque resolution of $M$ induces a commutative diagram
\begin{equation*}
\xymatrix{
0\ar[r]& H^{1}(G,Q)\ar[d]^{\Res_{G/D}} \ar[r]& H^{2}(G,M)\ar[d]^{\Res_{G/D}} \ar[r]& H^{2}(G,P)\ar[d]^{\Res_{G/D}}\\
0\ar[r]& H^{1}(D,Q) \ar[r]& H^{2}(D,M) \ar[r]& H^{2}(D,P). }
\end{equation*}

(i): It suffices to prove $\Sha_{\omega}^{2}(G,P)=0$. This assertion follows from Lemma \ref{lem:qtts} since $P$ is permutation. 

(ii): If $D$ is cyclic, then we have $H^{1}(D,Q)=0$ by Proposition \ref{prop:enmy}. Combining this with (i), we obtain the desired isomorphism. 
\end{proof}

\begin{cor}\label{cor:cyan}
Let $G$ be a finite group of which all Sylow subgroups are cyclic. Then
\begin{equation*}
\Sha_{\omega}^{2}(G,M)=0
\end{equation*}
for any $G$-lattice $M$. 
\end{cor}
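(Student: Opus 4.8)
The plan is to derive this as a direct consequence of Proposition~\ref{prop:snsc}(ii), which identifies $\Sha_{\omega}^{2}(G,M)$ with $H^{1}(G,Q)$ for a flasque resolution $0\to M\to P\to Q\to 0$ of $M$. Since such a resolution exists for every $G$-lattice $M$ (by \cite[Lemma 1.1]{Endo1975} or \cite[Lemme 3]{ColliotThelene1977}), it suffices to show that $H^{1}(G,Q)=0$ whenever every Sylow subgroup of $G$ is cyclic.

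First I would fix a flasque resolution of $M$ and reduce, via Proposition~\ref{prop:snsc}(ii), to proving $H^{1}(G,Q)=0$. Then, for each prime $p$ dividing $\#G$, the hypothesis guarantees that a $p$-Sylow subgroup of $G$ is cyclic, so Proposition~\ref{prop:enmy} applies and yields $H^{1}(G,Q)[p^{\infty}]=0$. Since $H^{1}(G,Q)$ is a finite abelian group annihilated by $\#G$, it is the direct sum of its $p$-primary components over the primes $p\mid \#G$; as each of these vanishes, we conclude $H^{1}(G,Q)=0$, and hence $\Sha_{\omega}^{2}(G,M)=0$.

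There is essentially no obstacle here: the corollary is a formal combination of Proposition~\ref{prop:snsc}(ii) (flasque resolution computes $\Sha_{\omega}^{2}$) with Proposition~\ref{prop:enmy} (cyclic $p$-Sylow kills the $p$-part of $H^{1}$ of a flasque lattice), applied once for each prime dividing the order of $G$. The only point worth stating carefully is that $H^{1}(G,Q)$ is finite and killed by $\#G$ — which is standard, since $Q$ is a $G$-lattice and $H^{1}(G,Q)$ is a subquotient of cohomology of a finitely generated module over a finite group — so that vanishing of every $p$-primary part forces the whole group to vanish.
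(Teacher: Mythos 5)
Your proof is correct and follows exactly the paper's argument: fix a flasque resolution, identify $\Sha_{\omega}^{2}(G,M)$ with $H^{1}(G,Q)$ via Proposition~\ref{prop:snsc}(ii), and kill each $p$-primary component of the finite group $H^{1}(G,Q)$ using Proposition~\ref{prop:enmy}. The only difference is that you spell out the prime-by-prime decomposition that the paper leaves implicit.
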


\begin{proof}
Take a flasque resolution $0 \rightarrow M \rightarrow P \rightarrow Q \rightarrow 0$ of $M$. Then we have $H^{1}(G,Q)=0$ by Proposition \ref{prop:enmy}. Hence, Proposition \ref{prop:snsc} implies the desired assertion. 
\end{proof}

\begin{cor}\label{lem:ids2}
Let $G$ be a finite group, $H$ a subgroup of $G$. For a $H$-lattice $M$, there is an isomorphism
\begin{equation*}
\Sha_{\omega}^{2}(G,\Ind_{H}^{G}M)\cong \Sha_{\omega}^{2}(H,M). 
\end{equation*}
\end{cor}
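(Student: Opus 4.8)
The plan is to combine Shapiro's lemma with the compatibility between restriction maps and Mackey's decomposition, already packaged in Lemma \ref{lem:shpr} and Proposition \ref{prop:rsch}. First, Shapiro's lemma gives a canonical isomorphism $H^{2}(G,\Ind_{H}^{G}M)\cong H^{2}(H,M)$, so it suffices to show that this isomorphism carries the subgroup $\Sha_{\omega}^{2}(G,\Ind_{H}^{G}M)$ onto $\Sha_{\omega}^{2}(H,M)$. I would do this by tracking, for each cyclic subgroup $D$ of $G$ on one side and each cyclic subgroup $C$ of $H$ on the other, how the restriction maps interact with the Shapiro isomorphism and the Mackey decomposition $\Ind_{H}^{G}M\cong \bigoplus_{g\in R(D,H)}\Ind_{D\cap gHg^{-1}}^{D}M^{g}$ of Proposition \ref{prop:mcky}.

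The key computation is the commutative square of Proposition \ref{prop:rsch} (stated there for $M=\Z$, but the same diagram holds verbatim for a general $G$-lattice $M$, since its proof only uses Lemmas \ref{lem:shpr}, \ref{lem:hmts}, \ref{lem:gpac} and Proposition \ref{prop:mcky}, none of which restrict the coefficient module): under the Shapiro identification $H^{2}(G,\Ind_{H}^{G}M)\cong H^{2}(H,M)$, the restriction map to a subgroup $D\le G$ becomes the sum over $g\in R(D,H)$ of the maps $H^{2}(H,M)\to H^{2}(D\cap gHg^{-1},M^{g})$ given by $\Ad(g^{-1})$-conjugation followed by ordinary restriction. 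Thus an element $x\in H^{2}(H,M)$ lies in the image of $\Sha_{\omega}^{2}(G,\Ind_{H}^{G}M)$ if and only if, for every cyclic $D\le G$ and every $g\in R(D,H)$, the class $\Res_{D\cap gHg^{-1}}^{H}(\Ad(g^{-1})^{*}x)$ vanishes. Since $\Ad(g^{-1})^{*}$ is an isomorphism $H^{2}(H,M)\xrightarrow{\sim}H^{2}(g^{-1}Hg,M)$ compatible with restriction, and since $g^{-1}Dg\cap H$ ranges over a family of subgroups of $H$ as $D$ ranges over cyclic subgroups of $G$ and $g$ over coset representatives, this condition is equivalent to: $\Res_{C}^{H}x=0$ for every subgroup $C\le H$ of the form $g^{-1}Dg\cap H$ with $D$ cyclic.

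It remains to check that the family $\{\,g^{-1}Dg\cap H : D\le G \text{ cyclic},\ g\in G\,\}$ and the family $\mathcal C_{H}$ of cyclic subgroups of $H$ cut out the same kernel in $H^{2}(H,M)$. One inclusion is immediate: every cyclic $C\le H$ is of the form $g^{-1}Dg\cap H$ with $D=gCg^{-1}\le G$ cyclic and $g=1$ (more simply, $C=1\cdot C\cdot 1\cap H$), so vanishing on the larger family forces vanishing on $\mathcal C_{H}$. For the converse, each subgroup $g^{-1}Dg\cap H$ with $D$ cyclic is itself cyclic (a subgroup of the cyclic group $g^{-1}Dg$), hence lies in $\mathcal C_{H}$; so vanishing on $\mathcal C_{H}$ already forces vanishing on the larger family. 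Therefore the two kernels coincide, and the Shapiro isomorphism restricts to the asserted isomorphism $\Sha_{\omega}^{2}(G,\Ind_{H}^{G}M)\cong \Sha_{\omega}^{2}(H,M)$. The only mild subtlety — and the step I would be most careful about — is confirming that the diagram of Proposition \ref{prop:rsch} is genuinely valid with arbitrary lattice coefficients $M$ in place of $\Z$, i.e.\ that all the auxiliary lemmas invoked in its proof ($\Z$ being a direct summand of $\Ind_{H}^{G}\Z$ as an $H$-module generalizes to $M$ being a direct summand of $\Ind_{H}^{G}M$ as an $H$-module via Lemma \ref{lem:hmts}(ii)) go through unchanged; this is routine but worth stating explicitly.
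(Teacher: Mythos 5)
Your argument is correct, but it is genuinely different from the one in the paper. The paper's proof of Corollary \ref{lem:ids2} goes through flasque resolutions: it takes a flasque resolution $0\to M\to P\to Q\to 0$, uses Lemma \ref{lem:wrfl} to see that applying $\Ind_{H}^{G}$ yields a flasque resolution of $\Ind_{H}^{G}M$, identifies both sides with $H^{1}$ of the respective flasque quotients via Proposition \ref{prop:snsc} (ii), and then applies Shapiro's lemma in degree $1$. You instead work directly in degree $2$: you transport the cyclic-restriction conditions across the Shapiro isomorphism using the double-coset formula of Proposition \ref{prop:rsch} (extended from $\Z$ to general lattice coefficients, which is legitimate since its proof only invokes Lemma \ref{lem:shpr}, Lemma \ref{lem:gpac} and Proposition \ref{prop:mcky}), and then observe that the resulting family of test subgroups $\{g^{-1}Dg\cap H\}$ consists of cyclic subgroups of $H$ and, up to $H$-conjugacy, exhausts $\cC_{H}$ --- the one small point you elide is that $g$ only ranges over a fixed set $R(D,H)$ of double coset representatives, so realizing a given cyclic $C<H$ requires noting that the representative of the double coset $CH=H$ lies in $H$ and that inner automorphisms act trivially on cohomology (Lemma \ref{lem:doub} (ii) handles the conjugacy). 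What your route buys is independence from the flasque machinery (in particular from Proposition \ref{prop:enmy}) and an argument that works verbatim for $\Sha_{\omega}^{j}$ in any degree $j$; what the paper's route buys is brevity, since Proposition \ref{prop:snsc} and Lemma \ref{lem:wrfl} are already in place and no coefficient-generalization of Proposition \ref{prop:rsch} needs to be justified.
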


\begin{proof}
Take a flasque resolution $0\rightarrow M \rightarrow P \rightarrow Q \rightarrow 0$ of $M$. By Lemma \ref{lem:wrfl}, this induces a flasque resolution of $\Ind_{H}^{G}M$: 
\begin{equation*}
0 \rightarrow \Ind_{H}^{G}M \rightarrow \Ind_{H}^{G}P \rightarrow \Ind_{H}^{G}Q \rightarrow 0. 
\end{equation*}
Moreover, Proposition \ref{prop:snsc} (ii) implies isomorphisms
\begin{equation*}
H^{1}(G,\Ind_{H}^{G}Q)\cong \Sha_{\omega}^{2}(G,\Ind_{H}^{G}M),\quad 
H^{1}(H,Q)\cong \Sha_{\omega}^{2}(H,M). 
\end{equation*}
On the other hand, we have $H^{1}(G,\Ind_{H}^{G}Q)\cong H^{1}(H,Q)$ by Shapiro's lemma, and hence the assertion holds. 
\end{proof}

\subsection{Relations with cohomological invariants and Tate--Shafarevich groups}

Let $F$ be a field, and fix a separable closure $F^{\sep}$ of $F$. For a finite Galois extension $F'/F$, denote by $\Gal(F'/F)$ the Galois group of $F'/F$. 

Denote by $\G_{m}:=\Spec F[t^{\pm 1}]$ the multiplicative group scheme over $F$. Recall that a \emph{torus} (or, an \emph{algebraic torus}) over $F$ is an algebraic group scheme over $F$ that satisfies $T\otimes_{F}F^{\sep}\cong \G_{m,F^{\sep}}^{N}$ for some $N\in \Znn$. For a torus $T$ over $F$, the \emph{character group} of $T$ is defined as follows: 
\begin{equation*}
X^{*}(T):=\Hom_{F^{\sep}\text{-group}}(T\otimes_{F}F^{\sep},\G_{m,F^{\sep}}). 
\end{equation*}
If $T$ splits over a finite Galois extension $E$ of $F$, then $X^{*}(T)$ is a $\Gal(E/F)$-lattice. 

\vspace{6pt}
Let $X$ a smooth proper variety over $F$. We denote by $\Br(X)$ the Brauer group of $X$, that is, 
\begin{equation*}
\Br(X):=H_{\et}^{2}(X,\G_{m}). 
\end{equation*}

\begin{prop}[{\cite[Theorem 9.5 (ii)]{ColliotThelene1987}, \cite[Theorem 2.3, Theorem 2.4]{BayerFluckiger2020}}]\label{prop:host}
Let $F$ be a field, and $T$ a torus over $F$ that splits over a finite Galois extension $\widetilde{E}$ of $F$. Put $G:=\Gal(\widetilde{E}/F)$. Take a smooth compactification $X$ of $T$ over $F$, and put $\overline{X}:=X\otimes_{F}F^{\sep}$. Then there exist isomorphisms
\begin{equation*}
\Br(X)/\Br(F)\cong H^{1}(F,\Pic(\overline{X}))\cong \Sha_{\omega}^{2}(G,X^{*}(T)). 
\end{equation*}
\end{prop}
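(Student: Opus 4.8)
The plan is to obtain the two isomorphisms separately, each by matching the present setting to the cited results and, for the second one, routing through the flasque-resolution machinery already developed in this section.

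\emph{First isomorphism, $\Br(X)/\Br(F)\cong H^{1}(F,\Pic(\overline X))$.} A smooth compactification of a torus is smooth, proper, and geometrically rational; hence $\Pic(\overline X)$ is a finitely generated torsion-free abelian group, $\Br(\overline X)=0$, and $H_{\et}^{0}(\overline X,\G_{m})=(F^{\sep})^{\times}$. Feeding these into the Hochschild--Serre spectral sequence $H^{p}(\Gal(F^{\sep}/F),H_{\et}^{q}(\overline X,\G_{m}))\Rightarrow H_{\et}^{p+q}(X,\G_{m})$ and using Hilbert 90 gives the low-degree exact sequence whose relevant segment reads $\Br(F)\to\Br(X)\to H^{1}(F,\Pic(\overline X))\to H^{3}(F,(F^{\sep})^{\times})$; the outer injectivity and vanishing that turn this into an isomorphism $\Br(X)/\Br(F)\cong H^{1}(F,\Pic(\overline X))$ hold because $X$ is rational. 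I would simply invoke \cite[Theorem 9.5 (ii)]{ColliotThelene1987} here rather than reproduce the argument.

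\emph{Second isomorphism, $H^{1}(F,\Pic(\overline X))\cong\Sha_{\omega}^{2}(G,X^{*}(T))$.} Write $M:=X^{*}(T)$ and fix a flasque resolution $0\to M\to P\to Q\to 0$ as in the previous subsection. By the theory of Colliot-Th\'el\`ene--Sansuc, in the form recalled in \cite[Theorem 2.3, Theorem 2.4]{BayerFluckiger2020}, one may arrange the smooth compactification $X$ so that it attains its full Picard lattice already over $\widetilde E$; since the flasque class of $\Pic(\overline X)$ is a birational invariant of smooth proper varieties (two smooth compactifications have Picard lattices differing only by the permutation summands spanned by exceptional divisors), the group $H^{1}(F,\Pic(\overline X))$ does not depend on this choice, so nothing is lost. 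Then the $\Gal(F^{\sep}/F)$-action on $\Pic(\overline X)=\Pic(X\otimes_{F}\widetilde E)$ factors through $G$; since $\Pic(\overline X)$ is torsion-free, inflation--restriction together with the vanishing of $\Hom_{\mathrm{cont}}(\Gal(F^{\sep}/\widetilde E),\Pic(\overline X))$ (a profinite group admits no nonzero continuous homomorphism into a finitely generated free abelian group) yields $H^{1}(F,\Pic(\overline X))\cong H^{1}(G,\Pic(\overline X))$. The structure theorem of Colliot-Th\'el\`ene--Sansuc moreover identifies $\Pic(\overline X)$, modulo permutation lattices, with the flasque term $Q$; since $H^{1}(G,-)$ annihilates permutation lattices this gives $H^{1}(G,\Pic(\overline X))\cong H^{1}(G,Q)$, and Proposition \ref{prop:snsc} (ii) finishes the chain with $H^{1}(G,Q)\cong\Sha_{\omega}^{2}(G,M)$.

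The one step that is genuinely geometric rather than formal is the identification of $\Pic(\overline X)$, up to permutation summands, with the flasque part of a flasque resolution of the character lattice: this is the substance of \cite{ColliotThelene1987} and \cite{BayerFluckiger2020}, and I would quote it outright. Everything else — the Hochschild--Serre computation, the descent from $F$-cohomology to $G$-cohomology, and the final identification with $\Sha_{\omega}^{2}$ — is routine given Proposition \ref{prop:snsc} and the lemmas already in place.
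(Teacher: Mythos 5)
Your proposal is correct and takes essentially the same route as the paper, which states this proposition without proof as a direct citation of \cite[Theorem 9.5 (ii)]{ColliotThelene1987} and \cite[Theorems 2.3, 2.4]{BayerFluckiger2020}; your sketch simply unwinds what those references establish (the Hochschild--Serre low-degree sequence for the first isomorphism, and the identification of $\Pic(\overline{X})$ with the flasque term of a flasque resolution of $X^{*}(T)$ up to permutation summands for the second) and then closes with Proposition \ref{prop:snsc} (ii) exactly as the surrounding machinery intends. The reductions you interpose (inflation from $\Gal(F^{\sep}/F)$ to $G$ via torsion-freeness of $\Pic(\overline{X})$, and independence of the choice of smooth compactification) are standard and correctly justified.
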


We remark that a smooth compactifications exist for all tori over $F$. It follows from Hironaka (\cite{Hironaka1964}) if $F$ has characteristic zero. On the other hand, if $F$ has positive characteristic, the assertion is a consequence of Colliot-Th{\'e}l{\`e}ne--Harari--Skorobogatov (\cite{ColliotThelene2005}). 

\vspace{5pt}
In the sequel of this section, let $k$ be a global field. Fix a separable closure $k^{\sep}$ of $k$. For a finite Galois extension $K$ of $k$, write $\Sigma_{K}$ for the set of places of $K$. 

Let $M$ be an abelian group equipped with an action of $\Gal(k^{\sep}/k)$. For $j\in \Znn$, put
\begin{equation*}
\Sha^{j}(k,M):=\Ker\left(H^{j}(k,M)\xrightarrow{(\Res_{k_v/k})_{v}}\prod_{v\in \Sigma_{k}}H^{j}(k_{v},M)\right),
\end{equation*}
where $\Res_{k_v/k}\colon H^{j}(k,M)\rightarrow H^{j}(k_{v},M)$ denotes the restriction map for each $v\in \Sigma_{k}$. In particular, $\Sha^{1}(k,M)$ is called the \emph{Tate--Shafarevich group} of $M$. 

\vspace{5pt}
For a torus $T$ over $k$, there is a direct relation between $H^{1}(k,\Pic(\overline{X}))$ and $\Sha^{1}(k,T)$ as follows, where $X$ is a smooth compactification of $T$ over $k$ and $\overline{X}:=X\otimes_{k}k^{\sep}$. 

\begin{prop}[{\cite[Theorem 5]{Voskresenskii1969}}]\label{prop:vskr}
Let $k$ be a global field, and $T$ a torus over $k$. Then there is an exact sequence
\begin{equation}\label{eq:vsex}
0\rightarrow A_{k}(T)\rightarrow H^{1}(k,\Pic(\overline{X}))^{\vee}\rightarrow \Sha^{1}(k,T)\rightarrow 0, 
\end{equation}
where $X$ is a smooth compactification of $T$ over $k$, $\overline{X}:=X\otimes_{k}k^{\sep}$ and $A_{k}(T)$ is the quotient of $\prod_{v\in \Sigma_{k}}T(k_{v})$ by the closure of $T(k)$. 
\end{prop}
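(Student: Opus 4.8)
The plan is to deduce this classical result of Voskresenskii from a flasque resolution of $T$ together with Tate's arithmetic duality theorems over $k$. First I would record the flasque resolution. Writing $\overline{Y}:=\overline{X}\setminus\overline{T}$ for the boundary, the divisor sequence for the open immersion $\overline{T}\hookrightarrow\overline{X}$, together with $\Pic(\overline{T})=0$ and $k^{\sep}[\overline{X}]^{\times}=(k^{\sep})^{\times}$, gives an exact sequence of Galois lattices
\[
0\to X^{*}(T)\to P\to\Pic(\overline{X})\to 0,
\]
with $P$ the permutation lattice on the irreducible components of $\overline{Y}$; this is a flasque resolution of $X^{*}(T)$, the flasqueness of $\Pic(\overline{X})$ being classical (\cite{ColliotThelene1977}). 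Since all three terms are $\Z$-free, applying $\Hom_{\Z}(-,\Z)$ and the anti-equivalence between $k$-tori and their character lattices produces an exact sequence of $k$-tori
\[
1\to S\to R\to T\to 1,
\]
where $R$ is quasi-trivial ($R\cong\prod_{i}\Res_{K_{i}/k}\G_{m}$, as $X^{*}(R)=P$) and $X^{*}(S)=\Pic(\overline{X})$.

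Next I would exploit quasi-triviality of $R$: Hilbert~90 and Shapiro's lemma give $H^{1}(k,R)=H^{1}(k_{v},R)=0$ for all $v$, the Hasse--Brauer--Noether theorem gives $\Sha^{2}(k,R)=0$, and $R$ satisfies weak approximation. From the cohomology sequence of $1\to S\to R\to T\to 1$ one gets a surjection $\delta\colon T(k)\twoheadrightarrow H^{1}(k,S)$ with kernel the image of $R(k)$, and $H^{1}(k,T)\cong\ker(H^{2}(k,S)\to H^{2}(k,R))$, with the same statements over each $k_{v}$; moreover $\delta_{v}\colon T(k_{v})\twoheadrightarrow H^{1}(k_{v},S)$ has open (hence closed) kernel and $H^{1}(k_{v},S)$ is finite. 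Intersecting with the $\Sha$-groups and using $\Sha^{2}(k,R)=0$ gives $\Sha^{1}(k,T)\cong\Sha^{2}(k,S)$, and Tate's global duality for the torus $S$ then gives $\Sha^{2}(k,S)\cong\Sha^{1}(k,X^{*}(S))^{\vee}=\Sha^{1}(k,\Pic(\overline{X}))^{\vee}$.

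Then I would identify $A_{k}(T)$ with $\bigl(H^{1}(k,\Pic(\overline{X}))/\Sha^{1}(k,\Pic(\overline{X}))\bigr)^{\vee}$. Since $R(k)$ is dense in $\prod_{v}R(k_{v})$ and the images $\Ima(R(k_{v})\to T(k_{v}))$ are open, a topological diagram chase with $\delta$ and the $\delta_{v}$ identifies $A_{k}(T)=\bigl(\prod_{v}T(k_{v})\bigr)/\overline{T(k)}$ with $\bigl(\prod_{v}H^{1}(k_{v},S)\bigr)/\overline{\lambda_{S}(H^{1}(k,S))}$, where $\lambda_{S}$ is the localization map. Pontryagin-dualizing, local Tate duality $H^{1}(k_{v},S)^{\vee}\cong H^{1}(k_{v},\Pic(\overline{X}))$—the latter vanishing for all but finitely many $v$, since $\Pic(\overline{X})$ is flasque and unramified there—turns the dual of $A_{k}(T)$ into the annihilator of $\lambda_{S}(H^{1}(k,S))$ in $\bigoplus_{v}H^{1}(k_{v},\Pic(\overline{X}))$; and the middle exactness of the Poitou--Tate sequence for $S$ (equivalently, nondegeneracy of the sum-of-local-invariants pairing) identifies this annihilator with the image of $H^{1}(k,\Pic(\overline{X}))$ under localization, i.e.\ with $H^{1}(k,\Pic(\overline{X}))/\Sha^{1}(k,\Pic(\overline{X}))$. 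Combining with the previous step, the asserted sequence
\[
0\to A_{k}(T)\to H^{1}(k,\Pic(\overline{X}))^{\vee}\to\Sha^{1}(k,T)\to 0
\]
is the Pontryagin dual of $0\to\Sha^{1}(k,\Pic(\overline{X}))\to H^{1}(k,\Pic(\overline{X}))\to H^{1}(k,\Pic(\overline{X}))/\Sha^{1}(k,\Pic(\overline{X}))\to 0$, hence exact (all groups here being finite).

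The hard part will be the global arithmetic-duality input used in the last two steps: Tate's $\Sha^{1}/\Sha^{2}$ duality for the torus $S$, and—more delicately—the middle exactness of the Poitou--Tate nine-term sequence for $S$, which is exactly what pins down the annihilator of $\lambda_{S}(H^{1}(k,S))$ and hence forces surjectivity of $H^{1}(k,\Pic(\overline{X}))^{\vee}\to\Sha^{1}(k,T)$ with the right kernel; one must also match the analytic description of $A_{k}(T)$ (closures in the product topology, finiteness of the local cohomology groups) with the cohomological quotient. By contrast, the flasque-resolution bookkeeping—exactness of $1\to S\to R\to T\to 1$, the vanishings of $H^{1}(k,R)$, $H^{1}(k_{v},R)$ and $\Sha^{2}(k,R)$, openness of $\Ima(R(k_{v})\to T(k_{v}))$ in $T(k_{v})$, weak approximation for $R$, and finiteness of the groups involved—is routine.
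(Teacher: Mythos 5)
The paper does not prove this statement at all: it is imported verbatim as Voskresenski\u{\i}'s theorem (\cite[Theorem 5]{Voskresenskii1969}), so there is no internal argument to compare against. Your proposal is a correct, self-contained derivation along the now-standard lines of Colliot-Th\'el\`ene--Sansuc and Sansuc (the paper itself cites \cite[Proposition 9.8]{Sansuc1981} for a closely related statement): the flasque resolution $0\to X^{*}(T)\to P\to \Pic(\overline{X})\to 0$ coming from the boundary divisors, the dual sequence of tori $1\to S\to R\to T\to 1$ with $R$ quasi-trivial, the isomorphism $\Sha^{1}(k,T)\cong\Sha^{2}(k,S)\cong\Sha^{1}(k,\Pic(\overline{X}))^{\vee}$, and the identification of $A_{k}(T)$ with $\bigl(\bigoplus_{v}H^{1}(k_{v},S)\bigr)/\lambda_{S}(H^{1}(k,S))$ via openness of $\Ima(R(k_{v})\to T(k_{v}))$ and weak approximation for $R$, followed by local duality and the middle exactness of the Poitou--Tate sequence for the lattice $X^{*}(S)$. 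All the duality inputs you flag as the ``hard part'' are available for arbitrary global fields in the generality you need (finiteness of $H^{1}(k_{v},S)$, its vanishing at unramified places because $\Pic(\overline{X})$ is flasque, and the nine-term sequence for tori), so the argument goes through. Two cosmetic remarks: the anti-equivalence between tori and character lattices converts the flasque resolution directly into $1\to S\to R\to T\to 1$, so the intermediate application of $\Hom_{\Z}(-,\Z)$ is redundant; and since $\bigoplus_{v}H^{1}(k_{v},S)$ is finite, the closure $\overline{\lambda_{S}(H^{1}(k,S))}$ is just $\lambda_{S}(H^{1}(k,S))$ itself, as you essentially note. This buys the paper nothing it does not already have by citation, but it is a faithful reconstruction of the intended proof rather than a genuinely different route.
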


\begin{prop}[{Poitou--Tate duality; \cite[(8.6.8) Proposition]{Neukirch2000}}]\label{prop:pttt}
Let $k$ be a global field, and $T$ a torus over $k$. Then, there is an isomorphism of finite groups
\begin{equation*}
\Sha^{1}(k,T)\cong \Sha^{2}(k,X^{*}(T))^{\vee}. 
\end{equation*}
\end{prop}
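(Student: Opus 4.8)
The plan is to reduce the statement to the Poitou--Tate duality theorem for \emph{finite} Galois modules, which supplies, for a finite $\Gal(k^{\sep}/k)$-module $M$ with Cartier dual $M':=\Hom(M,\G_{m})$, a nine-term exact sequence interlacing the groups $H^{i}(k,M)$, the restricted products $\prod_{v}'H^{i}(k_{v},M)$ and the Pontryagin duals $H^{2-i}(k,M')^{\vee}$; from this sequence, together with local Tate duality $H^{i}(k_{v},M)^{\vee}\cong H^{2-i}(k_{v},M')$, one extracts a perfect pairing of finite groups $\Sha^{1}(k,M)\cong \Sha^{2}(k,M')^{\vee}$ by a purely formal diagram chase (identify $\Sha^{1}(k,M)$ with the image of $H^{2}(k,M')^{\vee}$ in $H^{1}(k,M)$, then dualize). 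I would upgrade this to the torus $T$ by a Kummer d\'evissage followed by a passage to the limit.

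Write $\widehat{T}:=X^{*}(T)$ and fix a finite Galois splitting field of $T$. For each $n$ with $\gcd(n,\ch k)=1$, use the Kummer sequences $1\to T[n]\to T\xrightarrow{n}T\to 1$ and $0\to\widehat{T}\xrightarrow{n}\widehat{T}\to\widehat{T}/n\widehat{T}\to 0$. Since $T\cong X_{*}(T)\otimes_{\Z}\G_{m}$, one has $T[n]\cong X_{*}(T)\otimes\mu_{n}$, hence a canonical $\Gal(k^{\sep}/k)$-equivariant isomorphism of finite modules $(T[n])'\cong\widehat{T}/n\widehat{T}$; thus $T[n]$ and $\widehat{T}/n\widehat{T}$ form a Cartier-dual pair. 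Applying finite Poitou--Tate to this pair for every $n$ (ordered by divisibility) and passing to the colimit, each term of the resulting direct system is identified through the Kummer sequences: $\varinjlim_{n}H^{i}(k,T[n])$ and $\varinjlim_{n}\prod_{v}'H^{i}(k_{v},T[n])$ govern the prime-to-$\ch k$ torsion of $H^{i}(k,T)$ and of its local analogues, while $\varinjlim_{n}H^{i}(k,\widehat{T}/n\widehat{T})$ is computed from $0\to\widehat{T}\to\widehat{T}\otimes\Q\to\widehat{T}\otimes(\Q/\Z)\to 0$ and the vanishing of $H^{\geq 1}(k,\widehat{T}\otimes\Q)$. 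This yields the analogous nine-term exact sequence for $T$ (now invoking local duality for tori, $H^{1}(k_{v},T)\cong H^{1}(k_{v},\widehat{T})^{\vee}$ and $H^{0}(k_{v},T)^{\wedge}\cong H^{2}(k_{v},\widehat{T})^{\vee}$), and tracing through it exactly as in the finite case gives $\Sha^{1}(k,T)\cong\Sha^{2}(k,\widehat{T})^{\vee}$. Finiteness of both groups then falls out of the exact sequence; alternatively $\Sha^{1}(k,T)$ is finite beforehand, being a quotient of $H^{1}(k,\Pic(\overline{X}))^{\vee}$ by Proposition~\ref{prop:vskr}, and the latter is finite because $\Pic(\overline{X})$ is finitely generated over a finite Galois group.

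The step I expect to be the main obstacle is the passage to the limit. The groups $T(k)$, $H^{i}(k,T)$ and $H^{i}(k,\widehat{T})$ are typically not finite, so one must track carefully how the image of $T(k)/nT(k)$ in $H^{1}(k,T[n])$, and the image of $H^{1}(k,\widehat{T})/n$ in $H^{1}(k,\widehat{T}/n\widehat{T})$, interact with the restricted products, so that upon taking kernels of the localization maps the Selmer-type conditions collapse exactly onto the honest groups $\Sha^{1}(k,T)$ and $\Sha^{2}(k,\widehat{T})$; this is where the restricted-product formalism and exactness of filtered colimits genuinely enter, and it is the technical heart of the argument. In positive characteristic $p$ one restricts throughout to $n$ prime to $p$ and disposes of the $p$-primary parts of $\Sha^{1}(k,T)$ and $\Sha^{2}(k,\widehat{T})$ by the standard separate d\'evissage using the $p$-divisibility of $T$ as a smooth group; in characteristic zero this is vacuous.
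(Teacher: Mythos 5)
The paper offers no proof of this proposition --- it is quoted directly from \cite[(8.6.8) Proposition]{Neukirch2000} --- so there is no in-paper argument to compare with; what you have written is essentially the standard derivation of the cited result from Poitou--Tate duality for finite modules. The core identifications are correct: $T[n]\cong X_{*}(T)\otimes \mu_{n}$ has Cartier dual $\widehat{T}/n\widehat{T}$, the local duality inputs are the right ones, and $\varinjlim_{n}H^{i}(k,\widehat{T}/n\widehat{T})\cong H^{i+1}(k,\widehat{T})$ (for $i\geq 1$, away from the characteristic) does follow from $0\to\widehat{T}\to\widehat{T}\otimes\Q\to\widehat{T}\otimes\Q/\Z\to 0$. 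You correctly identify the limit passage --- matching the Selmer-type conditions coming from $T(k)/nT(k)$ and $H^{1}(k,\widehat{T})/n$ against the kernels of localization --- as the technical heart; you do not carry it out, but since this is exactly the content of the cited reference I will not press the point.

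The genuine gap is the last paragraph. In characteristic $p$ the $p$-primary parts of $\Sha^{1}(k,T)$ and $\Sha^{2}(k,\widehat{T})$ cannot be ``disposed of'': they do not vanish in general, and the mechanism you invoke is false, because $T(k^{\sep})$ is \emph{not} $p$-divisible in characteristic $p$ ($k^{\sep}$ is separably, not algebraically, closed; $X^{p}-t$ has no root in $\F_{p}(t)^{\sep}$). Concretely, take $k$ of characteristic $p$ and, via Proposition \ref{prop:shf2} with $\ell=n=p$, a $(C_{p})^{2}$-extension $L/k$ in which every decomposition group is cyclic; for the norm one torus $T=T_{L/k}$, Tate's theorem gives $\Sha^{1}(k,T)\cong\widehat{H}^{-3}((C_{p})^{2},\Z)\cong\Z/p$, pure $p$-torsion. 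Your d\'evissage, restricted to $n$ prime to $p$, can only ever see the prime-to-$p$ parts (and it cannot be extended to $n=p^{m}$ within Galois cohomology, since $\mu_{p}$ is infinitesimal in characteristic $p$ and \'etale Tate duality for $p$-torsion coefficients fails there). So as written you prove the proposition only up to $p$-primary obstructions in positive characteristic. To get the full statement one must either treat the $p$-part by flat cohomology, or bypass the Kummer d\'evissage altogether and deduce the duality from Tate--Nakayama for the class formation of a finite separable splitting field $L/k$ --- both $\Sha^{1}(k,T)$ and $\Sha^{2}(k,\widehat{T})$ are computed from $\Gal(L/k)$-cohomology by inflation (cf.\ Proposition \ref{prop:ifts}) --- which is uniform in the characteristic.
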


\begin{prop}\label{prop:almt}
Let $k$ be a global field, and $T$ a $k$-torus which splits over a finite Galois extension $K$ of $k$. Put $G:=\Gal(K/k)$. Denote by $\cD$ the set of decomposition groups of $K/k$, which is an admissible set of subgroups of $G$. Then there is an isomorphism
\begin{equation*}
\Sha^{1}(k,T)\cong \Sha_{\cD}^{2}(G,X^{*}(T))^{\vee}. 
\end{equation*}
\end{prop}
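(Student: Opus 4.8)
The plan is to combine the global Poitou--Tate duality (Proposition \ref{prop:pttt}) with a comparison of the local cohomology groups indexed by places of $k$ against the cohomology groups indexed by the decomposition subgroups $\cD$. Concretely, I would start from the definition
\begin{equation*}
\Sha^{2}(k,X^{*}(T))=\Ker\Bigl(H^{2}(k,X^{*}(T))\xrightarrow{(\Res_{k_v/k})_v}\prod_{v\in\Sigma_k}H^{2}(k_v,X^{*}(T))\Bigr),
\end{equation*}
and note that since $T$ splits over $K$, the $\Gal(k^{\sep}/k)$-module $X^{*}(T)$ is inflated from the finite quotient $G=\Gal(K/k)$, so $H^{2}(k,X^{*}(T))$ receives an inflation map from $H^{2}(G,X^{*}(T))$, and likewise $H^{2}(k_v,X^{*}(T))$ receives one from $H^{2}(G_v,X^{*}(T))$, where $G_v\subseteq G$ is the decomposition group at a place $w\mid v$ of $K$. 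The key point is to show that under these inflation maps the global Tate--Shafarevich group $\Sha^{2}(k,X^{*}(T))$ is identified with $\Sha_{\cD}^{2}(G,X^{*}(T))$, after which Proposition \ref{prop:pttt} gives the claim.

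The steps, in order, would be: (1) recall that for a number field $k$ and a finite module $M$ split by $K$, the inflation map $H^{2}(G,M)\to H^{2}(k,M)$ need not be an isomorphism, but that $\Sha^{2}(k,M)$ lands in its image — this is the standard fact that classes in $\Sha^{2}$ are locally trivial everywhere, hence unramified everywhere, hence come from the finite level $K/k$; more precisely one uses that $H^{2}(k,M)=\varinjlim_{L}H^{2}(\Gal(L/k),M)$ over finite Galois $L\supseteq K$, together with the restriction-inflation sequence and the vanishing of $\Sha^{2}$ for trivial-ish reasons at almost all places, to conclude $\Sha^{2}(k,M)=\Sha^{2}(k,M)\cap\Ima(\Inf_{k/G})$. (2) For each place $v$, compatibility of inflation with restriction gives a commutative square relating $\Res_{G/G_v}$ on $H^{2}(G,M)$ to $\Res_{k_v/k}$ on $H^{2}(k,M)$; combined with the fact (again from the limit description and Shapiro/corestriction arguments, cf.\ the proof of Proposition \ref{prop:snsc}) that a class in $H^{2}(G,M)$ dies in $H^{2}(G_v,M)$ if and only if its image in $H^{2}(k_v,M)$ vanishes, one gets that the preimage of $\Sha^{2}(k,M)$ in $H^{2}(G,M)$ is exactly $\Sha^{2}_{\cD}(G,M)$. (3) Check that $\cD$, the set of decomposition subgroups, is admissible — it is stable under conjugation because the decomposition groups at places above a fixed $v$ form a single conjugacy class, and it contains all cyclic subgroups by the Chebotarev density theorem, which produces a place of $k$ whose Frobenius generates any prescribed cyclic subgroup (in the function-field case one invokes the analogue of Chebotarev). (4) Assemble: $\Sha^{2}(k,X^{*}(T))\cong\Sha^{2}_{\cD}(G,X^{*}(T))$, and then dualize via Proposition \ref{prop:pttt}.

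I expect the main obstacle to be step (1)--(2): pinning down precisely why a globally-and-locally-trivial class already lives at the level of $G=\Gal(K/k)$ and why local triviality at $v$ is detected by triviality on the finite decomposition subgroup $G_v$ rather than only on the full local Galois group. This is where one must be careful that $X^{*}(T)$ is a \emph{lattice}, so $H^{1}$ of the relevant procyclic or local groups with coefficients in it can be nonzero, and one needs the restriction-inflation exact sequence $0\to H^{1}(\mathrm{Gal}(k^{\sep}/K),M)^{G}$-type terms to vanish or be controlled; the cleanest route is probably to pass to the finite level and use that for $L/k$ finite Galois containing $K$, a class in $H^{2}(\Gal(L/k),M)$ lying in $\Sha^{2}$ restricts trivially to every decomposition subgroup of $\Gal(L/k)$, hence (by inflation-restriction for $\Gal(L/K)\trianglelefteq\Gal(L/k)$ and the triviality of the relevant $H^1$ of $\Gal(L/K)$ on a lattice after a further controlled extension) is inflated from $H^{2}(G,M)$ and there sits in $\Sha^{2}_{\cD}(G,M)$. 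Once the dictionary between "locally trivial at all $v$" and "restriction-trivial on all $D\in\cD$" is established, the rest is formal.
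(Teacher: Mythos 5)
Your proposal is correct and takes essentially the same route as the paper: apply Poitou--Tate duality (Proposition \ref{prop:pttt}) and then identify $\Sha^{2}(k,X^{*}(T))$ with $\Sha_{\cD}^{2}(G,X^{*}(T))$ by descending to the finite level, using that $X^{*}(T)$ is a lattice so the relevant $H^{1}$'s of the kernel groups vanish and inflation is injective on $H^{2}$. The paper packages your steps (1)--(2) into its Proposition \ref{prop:ifts} (comparison of $\Sha_{\cD}^{2}$ across finite Galois levels via Hochschild--Serre), which is exactly the inflation--restriction argument you sketch at the end.
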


\begin{proof}
By Proposition \ref{prop:pttt}, it suffices to construct an isomorphism
\begin{equation*}
\Sha^{2}(k,X^{*}(T))\cong \Sha_{\cD}^{2}(G,X^{*}(T)). 
\end{equation*}
Let $\widetilde{K}/k$ be a finite Galois extension that contains $K$. Put $\widetilde{G}:=\Gal(\widetilde{K}/k)$, and denote by $\widetilde{\cD}$ the set of decomposition groups of $\widetilde{K}/k$. Then Proposition \ref{prop:ifts} gives an isomorphism
\begin{equation*}
\Sha_{\cD}^{2}(G,X^{*}(T))\cong \Sha_{\widetilde{\cD}}^{2}(\widetilde{G},X^{*}(T)). 
\end{equation*}
Hence, the assertion follows from this isomorphism. 
\end{proof}

\section{Inverse Galois problem with conditions on decomposition groups}\label{sect:igpd}

In this section, we recall results on the inverse Galois problem which will be needed to prove the main theorems. 

\begin{prop}\label{prop:shf1}
Let $p$ be a prime number, $k$ a global field, and $G$ a finite group. Assume that 
\begin{itemize}
\item the characteristic of $k$ is different from $p$; 
\item a $p$-Sylow subgroup $S_{p}$ of $G$ is normal; and
\item there is a finite Galois extension $\widetilde{K}_{0}/k$ with Galois group $G/S_{p}$ in which all decomposition groups are cyclic. 
\end{itemize}
Then there exists a finite Galois extension $\widetilde{K}/k$ with Galois group $G$ such that
\begin{enumerate}
\item $\widetilde{K}_{0}$ corresponds to $S_{p}$; and
\item all decomposition groups in $\widetilde{K}/k$ are cyclic. 
\end{enumerate}
\end{prop}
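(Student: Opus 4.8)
The plan is to build $\widetilde{K}/k$ by a two-step process: first realize the quotient $G/S_p$ over $k$ using the given extension $\widetilde{K}_0/k$, then solve a suitable embedding problem to lift it to a $G$-extension, all while controlling decomposition groups. The core tool is the inverse Galois problem for finite solvable groups via embedding problems (Shafarevich), together with the fact that $S_p$, being a normal $p$-group on which $G$ acts, can be filtered by $G$-stable subgroups with elementary abelian successive quotients; more precisely one reduces to the case where $S_p$ is an elementary abelian $p$-group, i.e.\ a finite-dimensional $\F_p[G/S_p]$-module, and proceeds by induction on $|S_p|$.

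First I would set $\overline{G}:=G/S_p$ and let $\widetilde{K}_0/k$ be the given $\overline{G}$-extension in which all decomposition groups are cyclic. I want to solve the embedding problem
\begin{equation*}
1\rightarrow S_p\rightarrow G\rightarrow \overline{G}\rightarrow 1
\end{equation*}
over $k$ with the split surjection $\Gal(k^{\mathrm{sep}}/k)\twoheadrightarrow \overline{G}$ coming from $\widetilde{K}_0$. Since $\ch k\neq p$, and $S_p$ is a $p$-group, this embedding problem is solvable by Shafarevich's theorem (or, for the elementary abelian case, by the theory of embedding problems with abelian kernel: after twisting by enough copies of $\mu_p$-type modules one gets a split or at least solvable obstruction). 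The key extra requirement is that I must be able to \emph{prescribe the local behavior} at the finitely many places $v$ where the decomposition group $D_v\subset\overline{G}$ of $\widetilde{K}_0/k$ is nontrivial: at each such $v$ I want the decomposition group $\widetilde{D}_v\subset G$ of the solution to be a cyclic subgroup mapping isomorphically (or at least with cyclic image and cyclic kernel intersection) onto $D_v$. This is where the freedom in solving embedding problems locally-globally enters: one chooses, for each bad place $v$, a local lift $\widetilde{D}_v$ that is cyclic (possible because $D_v$ is cyclic and $v\nmid p$ can be arranged, or because one can split the local extension), and then invokes the solvability of the embedding problem \emph{with prescribed local conditions} — this is the Grunwald–Wang-type strengthening of Shafarevich's method, valid since the kernel $S_p$ is a $p$-group and we impose conditions at only finitely many places. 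At all places $v$ where $D_v=1$, any decomposition group $\widetilde{D}_v$ of $\widetilde{K}/k$ lies inside $S_p$; here I would use the freedom to add auxiliary split (or unramified cyclic) places and to choose the solution so that these $\widetilde{D}_v$ are cyclic — concretely, by arranging that $\widetilde{K}/\widetilde{K}_0$ is unramified and split completely outside a controlled set, so the only new decomposition groups inside $S_p$ come from a finite set of places that can be taken to have cyclic (even trivial) decomposition in $\widetilde{K}/\widetilde{K}_0$.

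Carrying this out, condition (i) ($\widetilde{K}_0$ corresponds to $S_p$) is automatic from the construction since the fixed field of $S_p$ in $\widetilde{K}$ is the $\overline{G}$-extension we started from. For condition (ii): at a place $v$, the decomposition group $\widetilde{D}_v$ sits in an extension $1\to \widetilde{D}_v\cap S_p\to \widetilde{D}_v\to D_v\to 1$; by the local choices above $\widetilde{D}_v\cap S_p$ is cyclic (a subgroup of $S_p$ that is cyclic by the local condition) and $D_v$ is cyclic, and one arranges the extension to split with the two cyclic factors of coprime-friendly behavior — in fact, choosing the local lift so that $\widetilde{D}_v$ itself is cyclic directly. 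So every decomposition group of $\widetilde{K}/k$ is cyclic, as required.

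The main obstacle I anticipate is precisely the simultaneous control of \emph{all} decomposition groups — both at the finitely many "bad" places coming from $\widetilde{K}_0$ and at the (a priori uncontrolled, possibly infinite) set of places that could acquire nontrivial decomposition inside $S_p$ in the lift. Shafarevich's theorem gives existence of \emph{some} $G$-extension, but pinning down local behavior everywhere requires the refined version of the embedding-problem machinery (solvability with finitely many prescribed local conditions for $p$-group kernels, plus a Chebotarev/approximation argument to show the "generic" place is split or cyclically ramified). I would structure the proof to isolate this as a lemma: \emph{an embedding problem over a global field $k$ with $\ch k\neq p$ and finite $p$-group kernel, whose base extension has all decomposition groups cyclic, admits a solution with all decomposition groups cyclic}, proved by induction on the kernel order reducing to the elementary abelian case and then to a count of local obstructions that vanish because we may enlarge the ramification set freely.
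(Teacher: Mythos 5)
Your proposal is essentially the paper's proof: both reduce the statement to a split embedding problem with $p$-group kernel over $k$ (with base $\widetilde{K}_{0}/k$) and invoke the proof of Shafarevich's theorem for split embedding problems with nilpotent kernel, which permits arranging all decomposition groups to be cyclic (see \cite[(9.6.7) Theorem]{Neukirch2000}). The one ingredient you leave implicit (and slightly muddle with the remark about ``twisting by $\mu_{p}$-type modules'') is that the group extension $1\rightarrow S_{p}\rightarrow G\rightarrow G/S_{p}\rightarrow 1$ splits by the Schur--Zassenhaus theorem, since $\#S_{p}$ and $(G:S_{p})$ are coprime; this is exactly what makes the embedding problem split and hence unconditionally solvable with the desired local control.
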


\begin{proof}
By the Schur--Zassenhaus theorem, the canonical exact sequence
\begin{equation*}
1\rightarrow S_{p}\rightarrow G \rightarrow G/S_{p}\rightarrow 1
\end{equation*}
splits. Then the assertion follows from a proof of Shafarevich's theorem, which states the inverse Galois theory holds for finite solvable groups. See \cite[(9.6.7) Theorem]{Neukirch2000}. 
\end{proof}

\begin{prop}\label{prop:shf2}
Let $G:=C_{\ell}\times C_{n}$, where $\ell$ is a prime number and $n$ is a positive integer. Then, for any global field $k$, there is a finite Galois extension $\widetilde{K}/k$ with Galois group $G$ in which all decomposition groups are cyclic. 
\end{prop}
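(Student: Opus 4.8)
## Proof proposal for Proposition \ref{prop:shf2}

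The plan is to reduce the problem to the prime-cyclic cases and combine them via composite fields. The key point is that a decomposition group in a compositum of two linearly disjoint Galois extensions $\widetilde{K}_1/k$ and $\widetilde{K}_2/k$ at a place $v$ is (up to conjugacy) a subgroup of $\Gal(\widetilde{K}_1/k) \times \Gal(\widetilde{K}_2/k)$ that surjects onto each of the two factor decomposition groups; in particular, if the decomposition groups in $\widetilde{K}_1/k$ and $\widetilde{K}_2/k$ at (the places below) $v$ are both cyclic and the two extensions have \emph{coprime} degrees, then the decomposition group in $\widetilde{K}_1\widetilde{K}_2/k$ is a subgroup of a product of two cyclic groups of coprime order, hence cyclic. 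So if I can realize each $C_{q^e}$ (for $q$ a prime power dividing the order) over $k$ with all decomposition groups cyclic, taking the compositum of these pairwise-coprime-degree fields will handle any finite abelian group whose Sylow subgroups are cyclic — in particular $C_\ell \times C_n$ once I have reduced $C_n$ to its primary components. Wait: $C_\ell \times C_n$ need not have cyclic Sylow subgroups if $\ell \mid n$. So the reduction must be handled more carefully — one decomposes $C_\ell \times C_n$ into its primary components $\prod_q (C_\ell \times C_n)_q$, and for the prime $q = \ell$ one gets $C_\ell \times C_{\ell^a}$, which is \emph{not} cyclic. This forces a genuinely two-generator construction at the prime $\ell$.

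Concretely, here is the structure I would follow. First, write $n = \prod_{q} q^{a_q}$ and decompose $C_\ell \times C_n \cong \bigl(C_\ell \times C_{\ell^{a_\ell}}\bigr) \times \prod_{q \neq \ell} C_{q^{a_q}}$, a product of groups of pairwise coprime order. By the compositum argument above, it suffices to realize each factor separately over $k$ with all decomposition groups cyclic. For the cyclic factors $C_{q^{a_q}}$ with $q \neq \ell$: these are cyclic groups, so I would invoke (the proof of) Shafarevich's theorem as in Proposition \ref{prop:shf1}, or more directly a Grunwald--Wang type construction, to produce a cyclic extension in which every decomposition group — being a subgroup of a cyclic group — is automatically cyclic. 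Actually for cyclic $G$ there is nothing to prove beyond existence: every subgroup of a cyclic group is cyclic, so \emph{any} realization works. The only genuine content is the factor $C_\ell \times C_{\ell^{a_\ell}}$.

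For the factor $A := C_\ell \times C_{\ell^{a}}$ (writing $a = a_\ell$), I would build the extension as a compositum of two cyclic extensions $L_1/k$ and $L_2/k$ of degrees $\ell^{a+1}$ wait — rather, of $C_{\ell}$ and $C_{\ell^a}$ — chosen so that at every place $v$ of $k$, at most one of $L_1, L_2$ is ramified \emph{and more importantly} the two local extensions at $v$ are ``independent'' enough that the decomposition group in $L_1L_2$ is cyclic. The clean way: use the fact that $\Gal(L_1 L_2/k) \hookrightarrow \Gal(L_1/k) \times \Gal(L_2/k) = A$, with equality when $L_1 \cap L_2 = k$; the decomposition group $D_v$ projects onto the local decomposition groups $D_v^{(1)} \le C_\ell$ and $D_v^{(2)} \le C_{\ell^a}$. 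If I arrange that for every $v$ either $D_v^{(1)} = 1$ or $D_v^{(2)} = 1$ — i.e. no place of $k$ is nonsplit in both $L_1$ and $L_2$ — then $D_v$ embeds in one factor and is cyclic. Such a simultaneous arrangement is exactly what Grunwald--Wang / the approximation techniques inside the proof of Shafarevich's theorem provide: one chooses disjoint finite sets $S_1, S_2$ of places and realizes $L_i/k$ cyclic, unramified and split outside $S_i$ (here using $\operatorname{char} k \neq \ell$ implicitly, or rather not even needed since we may choose $\ell \ne \operatorname{char} k$ freely, and if $\ell = \operatorname{char} k$ Artin--Schreier--Witt theory still delivers cyclic $\ell$-extensions split outside a prescribed place). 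The main obstacle is precisely this step: ensuring that the two cyclic $\ell$-power extensions can be chosen with \emph{disjoint} ramification-or-nonsplitting loci while still being globally well-defined Galois extensions — this is where one must invoke a Grunwald--Wang statement carefully (minding the special case for $2$-power extensions), or else cite the relevant portion of the proof of Neukirch--Schmidt--Wingberg \cite[(9.6.7)]{Neukirch2000} that already builds solvable extensions with control on decomposition groups. Once the disjointness is secured, linear disjointness of $L_1$ and $L_2$ (arrange via the disjoint place sets) gives $\Gal(L_1L_2/k) \cong A$, and the compositum $L_1 L_2 \cdot \prod_{q \neq \ell}(\text{the }C_{q^{a_q}}\text{-extension})$ is the desired $\widetilde{K}$.

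I expect the bookkeeping of Grunwald--Wang (the exceptional case at $2$, and verifying that ``nonsplit'' rather than merely ``ramified'' can be controlled) to be the main technical obstacle; everything else is the elementary group theory of decomposition groups in composita.
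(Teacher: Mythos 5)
Your overall architecture (reduce to pairwise coprime primary factors, compose, and isolate $C_{\ell}\times C_{\ell^{a}}$ as the only nontrivial piece) is sound and parallels the paper, which delegates everything except the wildly ramified case to Proposition \ref{prop:shf1}. The gap is in the key step for $C_{\ell}\times C_{\ell^{a}}$: the local condition you propose to arrange is unachievable. You ask that no place of $k$ be nonsplit in both $L_{1}$ and $L_{2}$, and propose to achieve this by making $L_{i}/k$ ``unramified and split outside $S_{i}$'' for disjoint finite sets $S_{i}$. A nontrivial Galois extension split outside a finite set of places does not exist (its split places have density $1/[L_{i}:k]<1$), and, more to the point, once $L_{1}$ and $L_{2}$ are linearly disjoint, Chebotarev applied to $\Gal(L_{1}L_{2}/k)\cong C_{\ell}\times C_{\ell^{a}}$ produces a positive density of places whose Frobenius is nontrivial in both factors, i.e.\ nonsplit in both $L_{1}$ and $L_{2}$. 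No amount of Grunwald--Wang bookkeeping can deliver what you ask for.

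The missing observation is that such places are harmless: at any place unramified in $L_{1}L_{2}$ the decomposition group is generated by Frobenius and hence cyclic, whatever its projections to the two factors are. Only ramified places need attention, and there the condition to impose is genuinely about ramification versus splitting: if $v$ is ramified in $L_{1}$ and merely unramified but nonsplit in $L_{2}$, the decomposition group is $C_{\ell}\times\langle\tau\rangle$ with $\tau\neq 1$ (it contains the inertia subgroup $C_{\ell}\times\{1\}$ and surjects onto $\langle\tau\rangle$), which is noncyclic; so you need every place ramified in $L_{1}$ to split totally in $L_{2}$, and vice versa. This is exactly what the paper arranges: in the residual case where $k$ has characteristic $\ell$ and $\ell\mid n$ it takes one factor to be a constant field extension (cyclic, unramified everywhere) and the other a cyclic degree-$\ell$ extension ramified only at a single place chosen, via Chebotarev, to split totally in the first; all other cases are absorbed into Proposition \ref{prop:shf1}. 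With your local conditions corrected in this way your plan goes through, but as written it cannot be executed.
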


\begin{proof}
By Proposition \ref{prop:shf1}, we may assume that 
\begin{itemize}
\item $k$ has characteristic $\ell$; and
\item $n$ is a multiple of $\ell$. 
\end{itemize}
In particular, $k$ is a function field. Let $\F$ be the field of constants in $k$, and denote by $\F'$ the unique finite field extension of $\F$ of degree $n_{0}$. Then the field extension $\F'k/k$ is unramified and satisfies $\Gal(\F'k/k)\cong \Z/n_{0}$. On the other hand, fix a finite separable field extension $k/\F_{\ell}(t)$. Then Chebotarev's density theorem implies that there exists a place $v$ of $\F_{\ell}(t)$ that splits totally in $\F'k$. Moreover, by the proof of \cite[Theorem A.2]{Liang2024a}, we can take a cyclic field extension $K_{0}/\F_{\ell}(t)$ of degree $\ell_{0}$ which is ramified only at $v$. Then the fields $K_{0}$ and $\F'k$ are linearly disjoint over $\F_{\ell}(t)$. Hence, if we set $\widetilde{K}:=K_{0}\F'k$, then we obtain an isomorphism $\Gal(\widetilde{K}/k)\cong G$. Moreover, the assumption on $v$ implies that all decomposition groups of $\widetilde{K}/k$ are cyclic. This completes the proof in this case. 
\end{proof}

\section{Transitive groups}\label{sect:trgp}

Let $n$ be a positive integer. A \emph{transitive group of degree $n$} (or, a \emph{transitive subgroup of $\fS_{n}$}) is a subgroup $G$ of the symmetric group $\fS_{n}$ of $n$-letters such that the canonical action of $G$ on $\{1,\ldots,n\}$ is transitive. Moreover, we say a subgroup of $G$ of the form $\Stab_{G}(i)$, where $i\in \{1,\ldots,n\}$, a \emph{corresponding subgroup} of $G$. 

It is convenient to give a relation between transitive groups and pairs $(G,H)$ of finite groups $G$ and their subgroups $H$. For a finite group $G$ and its subgroup $H$, let
\begin{equation*}
N^{G}(H):=\bigcap_{g\in G}gHg^{-1}. 
\end{equation*}
By definition, $N^{G}(H)$ is the maximum normal subgroup of $G$ that is contained in $H$. 

\begin{prop}\label{prop:tggp}
Let $n$ be a positive integer. 
\begin{enumerate}
\item Let $G$ be a transitive group of degree $n$, and $H$ its subgroup. If $H$ is a corresponding subgroup of $G$, then we have $(G:H)=n$ and $N^{G}(H)=\{1\}$. 
\item Let $G$ be a finite group, and $H$ a subgroup of index $n$ in $G$. Then the action of $G$ on $G/H$, which is induced by left multiplication, gives an injection $G/N^{G}(H)\hookrightarrow \fS_{n}$. In particular, $G/N^{G}(H)$ is a transitive group of degree $n$. 
\end{enumerate}
\end{prop}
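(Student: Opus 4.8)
The plan is to prove the two parts of Proposition~\ref{prop:tggp} by direct group-theoretic bookkeeping about the left multiplication action of $G$ on the coset space $G/H$. For part (i), suppose $G \subseteq \fS_n$ is transitive and $H = \Stab_G(i)$ for some $i \in \{1,\dots,n\}$. The orbit--stabilizer theorem applied to the transitive action gives a bijection $G/H \xrightarrow{\cong} \{1,\dots,n\}$, $gH \mapsto g(i)$, whence $(G:H)=n$. For the normal core, observe that $gHg^{-1} = \Stab_G(g(i))$ for every $g\in G$; since the action is transitive, as $g$ ranges over $G$ the element $g(i)$ ranges over all of $\{1,\dots,n\}$, so $N^G(H) = \bigcap_{g\in G} gHg^{-1} = \bigcap_{j=1}^n \Stab_G(j)$, which is precisely the set of elements of $G$ acting trivially on $\{1,\dots,n\}$. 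As $G$ is a \emph{subgroup} of $\fS_n$, only the identity acts trivially, so $N^G(H) = \{1\}$.

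For part (ii), let $G$ be an abstract finite group and $H \le G$ with $(G:H) = n$. Label the $n$ left cosets as $1,\dots,n$ and let $G$ act on $G/H$ by left multiplication; this furnishes a homomorphism $\rho\colon G \to \fS_n$. The action on $G/H$ is visibly transitive (any coset is carried to any other by a suitable left translation), so the image $\rho(G)$ is a transitive subgroup of $\fS_n$. It remains to identify the kernel: $g \in \Ker \rho$ iff $g g'H = g'H$ for all $g' \in G$, i.e.\ iff $g'^{-1} g g' \in H$ for all $g' \in G$, i.e.\ iff $g \in \bigcap_{g'\in G} g' H g'^{-1} = N^G(H)$. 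Thus $\rho$ descends to an injection $G/N^G(H) \hookrightarrow \fS_n$ whose image is the transitive group $\rho(G)$, which is what we wanted; the remark that $N^G(H)$ is the largest normal subgroup of $G$ contained in $H$ has already been recorded before the statement, so nothing further is needed there.

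There is essentially no hard step here: the proposition is a standard packaging of the orbit--stabilizer theorem and the kernel computation for a coset action. The only point requiring the tiniest care is the use, in part (i), of the hypothesis that $G$ is literally a subgroup of $\fS_n$ (not merely equipped with a transitive action with trivial kernel, which would be circular), so that ``acts trivially'' forces ``equals the identity''; I would state that explicitly. I would also make the bijection $G/H \cong \{1,\dots,n\}$ explicit in part (i) so that the indexing in part (ii) is seen to be the inverse construction, making the two halves visibly reciprocal.
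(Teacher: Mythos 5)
Your proof is correct and follows essentially the same route as the paper: orbit--stabilizer plus the identification $gHg^{-1}=\Stab_G(g(i))$ for part (i), and the kernel computation for the coset action in part (ii). Your direct verification that $\Ker\rho=\bigcap_{g'}g'Hg'^{-1}$ is if anything slightly cleaner than the paper's two-inclusion argument via the maximality of the normal core, but it is the same standard argument.
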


\begin{proof}
(i): We may assume $H=\Stab_{G}(1)$. Consider the action of $G$ on $\{1,\ldots,n\}$. For each $i\in \{2,\ldots,n\}$, take an element $g_{i}$ of $G$ so that $g_{i}(1)=i$. Then there is a bijection $G/H\cong \fS_{n}/\fS_{n-1}$. Hence we have the following as desired: 
\begin{equation*}
N^{G}(H)=\bigcap_{i=1}^{n}g_{i}Hg_{i}^{-1}=\{1\}. 
\end{equation*}

(ii): By definition, we obtain a homomorphism $\alpha \colon G\rightarrow \fS_{n}$. It suffices to prove $\Ker(\alpha)=N^{G}(H)$. By definition, $\Ker(\alpha)$ is contained in the stabilizer of the point corresponding to $H$. Hence we obtain $\Ker(\alpha)\subset N^{G}(H)$. Take a complete representative $S$ of $G/H$ in $G$, then we have $N^{G}(H)\subset \bigcap_{s\in S}sHs^{-1}\subset \Ker(\alpha)$. Consequently, we obtain the desired equality. 
\end{proof}

\begin{rem}
The converse of Proposition \ref{prop:tggp} (i) does not hold in general. Let $G$ be the transitive group $15T13$ of degree $15$, which is isomorphic to $(C_{5})^{2}\rtimes_{\varphi} C_{3}$ with $\varphi$ faithful. Then, a corresponding subgroup of $G$ is isomorphic to $D_{5}$. However, there exists a cyclic subgroup $H'$ of index $15$, and it satisfies $N^{G}(H')=\{1\}$. Then, $H'$ is not a corresponding subgroup of $G$ since $C_{10}\not\cong D_{5}$. Note that the pair $G$ and $H'$ give the transitive group $15T14$. 
\end{rem}

We frequently use the notion of transitive groups because of the following. 

\begin{lem}\label{lem:extt}
Let $F$ be a field, and $E/F$ a finite separable field extension with Galois closure $\widetilde{E}/F$. Put $G:=\Gal(\widetilde{E}/F)$ and $H:=\Gal(\widetilde{E}/E)$. Then $G$ is a transitive group of degree $[E:F]$, and $H$ is a corresponding subgroup of $G$.
\end{lem}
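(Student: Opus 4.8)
The plan is to verify the two hypotheses of Proposition \ref{prop:tggp} (ii) for the pair $(G,H)$ — namely that $(G:H)=[E:F]$ and $N^{G}(H)=\{1\}$ — and then read off the conclusion. Since $E/F$ is finite separable, its Galois closure $\widetilde{E}/F$ is a finite Galois extension, so $G=\Gal(\widetilde{E}/F)$ is a finite group and $H=\Gal(\widetilde{E}/E)$ is a genuine subgroup; by the fundamental theorem of Galois theory together with the tower law,
\begin{equation*}
(G:H)=\frac{[\widetilde{E}:F]}{[\widetilde{E}:E]}=[E:F].
\end{equation*}

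Next I would identify the normal core. For $\sigma\in G$ the subgroup $\sigma H\sigma^{-1}=\Gal(\widetilde{E}/\sigma(E))$ is the Galois group of $\widetilde{E}$ over the conjugate field $\sigma(E)$. Hence, again by the Galois correspondence,
\begin{equation*}
N^{G}(H)=\bigcap_{\sigma\in G}\sigma H\sigma^{-1}=\Gal\bigl(\widetilde{E}/L\bigr),\qquad L:=\textstyle\prod_{\sigma\in G}\sigma(E),
\end{equation*}
the compositum being taken inside $\widetilde{E}$. This compositum $L$ is stable under $G$, hence normal over $F$, and it contains $E$; by minimality of the Galois closure it therefore equals $\widetilde{E}$. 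Consequently $N^{G}(H)=\Gal(\widetilde{E}/\widetilde{E})=\{1\}$.

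Having established $(G:H)=[E:F]$ and $N^{G}(H)=\{1\}$, Proposition \ref{prop:tggp} (ii) applies with $n=[E:F]$: the left multiplication action of $G=G/N^{G}(H)$ on $G/H$ realizes $G$ as a transitive subgroup of $\fS_{[E:F]}$, under which $H$ is the stabilizer of the coset $H$, i.e.\ a corresponding subgroup of $G$, which is the assertion. Equivalently, one may run the argument through the $G$-set $\Hom_{F}(E,\widetilde{E})$, which has $[E:F]$ elements by separability, on which $G$ acts by post-composition; transitivity follows from the isomorphism extension theorem and the stabilizer of the inclusion $E\hookrightarrow\widetilde{E}$ is exactly $H$. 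The two formulations coincide after choosing a numbering of the embeddings.

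There is no substantial obstacle here: the only point requiring care is the identification of the fixed field of the normal core $N^{G}(H)$ with the compositum $\prod_{\sigma\in G}\sigma(E)$, and hence with $\widetilde{E}$ — which is precisely the standard description of the Galois closure as the compositum of the conjugates of $E$.
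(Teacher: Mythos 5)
Your proof is correct and follows essentially the same route as the paper: both identify $gHg^{-1}=\Gal(\widetilde{E}/g(E))$, deduce $N^{G}(H)=\{1\}$ from the fact that the compositum of the conjugates of $E$ is the Galois closure $\widetilde{E}$, and then invoke Proposition \ref{prop:tggp} (ii). Your write-up merely spells out the intermediate steps (the index computation and the fixed-field identification) that the paper leaves implicit.
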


\begin{proof}
By Galois theory, we have $gHg^{-1}=\Gal(\widetilde{E}/g(E))$ for any $g\in G$. This implies $N^{G}(H)=\{1\}$, and hence the assertion follows from Proposition \ref{prop:tggp}. 
\end{proof}

Finally, we give some facts that will be used in the proof of Proposition \ref{prop:bart}. 

\begin{lem}\label{lem:trdp}
Let $G$ be a transitive group of degree a prime number $p$. Then we have $\#G\in p\Z \setminus p^{2}\Z$. In particular, a $p$-Sylow subgroup of $G$ is cyclic. 
\end{lem}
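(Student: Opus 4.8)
The plan is to combine two elementary divisibility facts: transitivity forces $p \mid \#G$ from below, while the ambient symmetric group $\fS_{p}$ forces $v_{p}(\#G) \le 1$ from above.

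First I would invoke the orbit--stabilizer theorem for the transitive action of $G$ on $\{1,\ldots,p\}$. Since the action is transitive, $\{1,\ldots,p\}$ is a single orbit, so $p = (G:\Stab_{G}(1))$ divides $\#G$; in particular $p \mid \#G$. Next, since $G$ is a subgroup of $\fS_{p}$, Lagrange's theorem gives $\#G \mid \#\fS_{p} = p!$. As $p$ is prime, the $p$-adic valuation of $p! = p\cdot(p-1)!$ equals $1$, because none of $1,2,\ldots,p-1$ is divisible by $p$. Hence $v_{p}(\#G) \le 1$. Combining the two bounds yields $v_{p}(\#G) = 1$, that is, $\#G \in p\Z \setminus p^{2}\Z$.

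Finally, for the ``in particular'' clause, a $p$-Sylow subgroup $S_{p}$ of $G$ has order $p^{v_{p}(\#G)} = p$, so it is cyclic (indeed isomorphic to $C_{p}$), being a group of prime order.

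There is essentially no serious obstacle here; the only point requiring the slightest care is the computation $v_{p}(p!) = 1$, which is immediate from the fact that $p$ is prime so that $p$ divides exactly one of the factors $1, 2, \ldots, p$ in $p!$. Everything else is a direct application of the orbit--stabilizer theorem and Lagrange's theorem.
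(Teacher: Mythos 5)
Your proof is correct and follows essentially the same route as the paper, whose one-line justification ($\#\fS_{p}=p!\in p\Z\setminus p^{2}\Z$) is just a compressed version of your Lagrange bound combined with the (implicit) orbit--stabilizer lower bound from transitivity. You have merely spelled out the details, and every step is sound.
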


\begin{proof}
The assertion follows from $\#\fS_{p}=p!\in p\Z \setminus p^{2}\Z$. 
\end{proof}

\section{Multinorm one tori and the multinorm principle}\label{sect:mnot}

\subsection{Multinorm one tori}

Let $F$ be a field, and fix its separable closure $F^{\sep}$. Consider a finite {\'e}tale algebra $\bE$ over $F$, that is, a finite product of finite seprable field extension of $F$ that are contained in $F^{\sep}$. Then, the \emph{multinorm one torus} associated to $\bE/F$ is defined as follows: 
\begin{equation*}
T_{\bE/F}:=\{t\in \Res_{\bE/F}\G_{m}\mid \N_{\bE/F}(t)=1\}. 
\end{equation*}
If $\bE$ is a field, we call it the \emph{norm one torus}. 

\vspace{5pt}
Following \cite[\S 3]{Hasegawa2025}, we give a group-theoretic description of $X^{*}(T_{\bE/F})$. Let $G$ be a finite group, and $\cH$ a finite multiset of subgroups in $G$. Here, a finite multiset refers to a pair $(Z,m)$, where $Z$ is a finite set, and $m$ is a map from $Z$ into $\Zpn$. We define a $G$-lattice $J_{G/\cH}$ by the exact sequence
\begin{equation*}
0\rightarrow \Z \xrightarrow{(\varepsilon_{G/H}^{\circ})_{H\in \cH}}\Ind_{H}^{G}\Z\rightarrow J_{G/H}\rightarrow 0.
\end{equation*}
Here, $\varepsilon_{G/H}^{\circ}$ is as in Proposition \ref{prop:htch}, which is defined as $\varepsilon_{G/H}^{\circ}(1)(g)=1$ for any $g\in G$. 
If $\cH=\{H\}$, we simply write $J_{G/\cH}$ for $J_{G/H}$, and we call it the \emph{Chevalley module}. Moreover, we denote $J_{G/H}$ by $J_{G}$ if $H=\{1\}$, . 

\begin{prop}\label{prop:mnjg}
Let $\E=E_{1}\times \cdots \times E_{r}$, where $E_{1},\ldots,E_{r}$ are finite separable field extensions of $F$. Take a finite Galois extension $\widetilde{E}$ of $F$ that contains $E_{i}$ for all $i\in \{1,\ldots,r\}$. Set 
\begin{equation*}
G:=\Gal(\widetilde{E}/F),\quad \cH:=\{\Gal(\widetilde{E}/E_{i})<G\mid i\in \{1,\ldots,r\}\}. 
\end{equation*}
Then, there is an isomorphism of $G$-lattices
\begin{equation*}
X^{*}(T_{\E/F}) \cong J_{G/\cH}. 
\end{equation*}
\end{prop}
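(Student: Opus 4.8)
The plan is to unwind both definitions and match them term by term. Recall that $T_{\E/F}$ sits in the exact sequence of $F$-tori
\begin{equation*}
1 \rightarrow T_{\E/F} \rightarrow \Res_{\E/F}\G_{m} \xrightarrow{\N_{\E/F}} \G_{m} \rightarrow 1,
\end{equation*}
where $\Res_{\E/F}\G_{m} = \prod_{i=1}^{r}\Res_{E_{i}/F}\G_{m}$ since $\E = \prod_i E_i$, and $\N_{\E/F}$ is the product of the individual norm maps $\N_{E_i/F}$. First I would check surjectivity of $\N_{\E/F}$ on $F^{\sep}$-points (equivalently, that the sequence is exact as a sequence of tori): this is clear because each $\N_{E_i/F}$ is already surjective on $\G_m$ (e.g. it is split by the structure map $\G_m \to \Res_{E_i/F}\G_m$ composed with suitable scaling, or simply because norm-one tori of separable extensions are connected and the cokernel is trivial), so even one factor suffices.

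Next I would apply the exact contravariant functor $X^{*}(-)$ from $F$-tori (split by $\widetilde E$) to $\Gal(\widetilde E/F)$-lattices. Standard facts give $X^{*}(\Res_{E_i/F}\G_m) \cong \Z[G/H_i] = \Ind_{H_i}^{G}\Z$ where $H_i = \Gal(\widetilde E/E_i)$, and $X^{*}(\G_m) = \Z$ with trivial $G$-action. Under this identification the map $X^{*}(\N_{E_i/F})\colon \Z \to \Ind_{H_i}^{G}\Z$ is precisely the augmentation-type map $\varepsilon_{G/H_i}^{\circ}$ sending $1$ to the constant function $g \mapsto 1$ — this is the dual statement to the fact that the norm $\N_{E_i/F}$ corresponds to multiplication/summation over the cosets, and it is exactly the normalization recorded in Proposition \ref{prop:htch}. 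Assembling over $i$, the map $X^{*}(\N_{\E/F})\colon \Z \to \bigoplus_{i=1}^{r}\Ind_{H_i}^{G}\Z$ is $(\varepsilon_{G/H_i}^{\circ})_{i}$, which is exactly the injection defining $J_{G/\cH}$ with $\cH = \{H_1,\dots,H_r\}$.

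Applying $X^{*}(-)$ to the short exact sequence above then yields
\begin{equation*}
0 \rightarrow \Z \xrightarrow{(\varepsilon_{G/H_i}^{\circ})_{i}} \bigoplus_{i=1}^{r}\Ind_{H_i}^{G}\Z \rightarrow X^{*}(T_{\E/F}) \rightarrow 0,
\end{equation*}
and comparing with the defining sequence of $J_{G/\cH}$ gives the canonical isomorphism $X^{*}(T_{\E/F}) \cong J_{G/\cH}$ of $G$-lattices. (One should note $\widetilde E$ splits $T_{\E/F}$ since it splits each $\Res_{E_i/F}\G_m$, so the Galois action indeed factors through $G$.)

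I expect the only genuinely substantive point to be the identification of $X^{*}(\N_{E_i/F})$ with $\varepsilon_{G/H_i}^{\circ}$ — i.e. pinning down that the character-group functor sends the norm map to the "diagonal" inclusion rather than to its dual, the sum-of-coordinates map. This is a matter of carefully tracing the adjunction/Shapiro identifications $X^{*}(\Res_{E_i/F}\G_m) \cong \Ind_{H_i}^{G}\Z$ and the defining property of the norm as the product over embeddings $\widetilde E \hookrightarrow F^{\sep}$ fixing $F$; it is routine but deserves an explicit check, and everything else (exactness, the splitting field, reassembly over the factors) is formal. I would cite \cite[\S 3]{Hasegawa2025} for the bookkeeping and simply indicate the verification.
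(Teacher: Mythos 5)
Your proposal is correct and follows essentially the same route as the paper: the paper also takes the canonical exact sequence $1\to T_{\E/F}\to \Res_{\E/F}\G_m\to\G_m\to 1$, applies the character-group functor, and identifies the resulting sequence with the defining sequence of $J_{G/\cH}$. Your extra care in pinning down that $X^{*}(\N_{E_i/F})$ is $\varepsilon_{G/H_i}^{\circ}$ is the only substantive step, and the paper treats it as implicit.
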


\begin{proof}
Consider the canonical exact sequence
\begin{equation*}
0\rightarrow T_{\bE/F}\rightarrow \Res_{\bE/F}\G_{m,\bE}\xrightarrow{\N_{\E/F}} \G_{m}\rightarrow 1. 
\end{equation*}
Taking character groups, we obtain an exact sequence of $G$-lattices
\begin{equation*}
0\rightarrow \Z \xrightarrow{(\varepsilon_{G/H}^{\circ})_{H\in \cH}} \bigoplus_{H\in \cH}\Ind_{H}^{G}\Z \rightarrow X^{*}(T_{\bE/F}) \rightarrow 0. 
\end{equation*}
Hence, we obtain the desired assertion. 
\end{proof}

In the following, we recall fundamental properties in \cite[\S\S 3.2--3.3]{Hasegawa2025}. 

\begin{dfn}[{\cite[Definition 3.12]{Hasegawa2025}}]
Let $G$ be a finite group, and $\cH$ a finite \emph{set} of its subgroups. We say that $\cH$ is \emph{reduced} if $H\not\subset H'$ for any $H,H\in \cH$ with $H\neq H'$. 
\end{dfn}

Let $G$ a finite group, and $\cH$ a finite multiset of its subgroups. We use the notations as follows: 
\begin{itemize}
\item Denote by $\cH^{\set}$ the underlying set of $\cH$. 
\item For each $\cH^{\set}$, $m_{\cH}(H)$ denotes the multiplicity of $H$ in $\cH$. 
\item Write $\cH^{\red}$ for the subset of $\cH^{\set}$ consisting of all elements that are maximal with respect to inclusion, that is,
\begin{equation*}
\cH^{\red}:=\{H\in \cH^{\set}\mid H\not\subset H'\text{ for any }H'\in \cH^{\set}\setminus \{H\}\}. 
\end{equation*}
\end{itemize}

\begin{prop}[{\cite[Corollary 3.13 (ii)]{Hasegawa2025}}]\label{prop:endo}
Let $G$ be a finite group, and $\cH$ a finite multiset of its subgroups. Then, there is an isomorphism of $G$-lattices
\begin{equation*}
J_{G/\cH} \cong J_{G/\cH^{\red}}\oplus \left(\bigoplus_{H\in \cH^{\red}}\Ind_{H}^{G}\Z^{m_{\cH}(H)-1}\right) \oplus \left(\bigoplus_{H\in \cH^{\set}\setminus \cH^{\red}}\Ind_{H}^{G}\Z^{m_{\cH}(H)}\right). 
\end{equation*}
\end{prop}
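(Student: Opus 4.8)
The plan is to reduce everything to two basic facts about the lattices $J_{G/\mathcal H}$: first, that adding a repeated copy of a subgroup $H$ to the multiset $\mathcal H$ only changes $J_{G/\mathcal H}$ by a permutation summand $\Ind_H^G\Z$; second, that if $H\subsetneq H'$ for some $H'\in\mathcal H$, then $H$ can be deleted from $\mathcal H$ at the cost of a permutation summand $\Ind_H^G\Z$. Granting these two reductions, one iterates: strip multiplicities down to $1$ (producing the middle sum $\bigoplus_{H\in\cH^{\red}}\Ind_H^G\Z^{m_{\cH}(H)-1}$ together with, for the non-maximal $H$, all of $\Ind_H^G\Z^{m_{\cH}(H)}$ after also deleting the last copy), then delete the non-maximal subgroups one at a time, and land at $J_{G/\cH^{\red}}$. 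Since $\cH^{\red}$ is a reduced set, no further deletion is possible and the process terminates.

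First I would set up the two reduction lemmas precisely. For the multiplicity reduction, compare the defining exact sequences $0\to\Z\xrightarrow{(\varepsilon_{G/H}^{\circ})}\bigoplus_{H\in\cH}\Ind_H^G\Z\to J_{G/\cH}\to 0$ for $\cH$ and for $\cH$ with one copy of $H_0$ removed: the two diagonal maps $\Z\to\Ind_{H_0}^G\Z\oplus\Ind_{H_0}^G\Z$ versus $\Z\to\Ind_{H_0}^G\Z$ differ by splitting off an anti-diagonal copy of $\Ind_{H_0}^G\Z$ on which the map is zero, so $J_{G/\cH}\cong J_{G/(\cH\setminus\{H_0\})}\oplus\Ind_{H_0}^G\Z$. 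For the deletion of a non-maximal $H_0\subsetneq H_1$ with $H_1\in\cH^{\set}$: the inclusion $H_0\subset H_1$ gives a $G$-map $\Ind_{H_1}^G\Z\to\Ind_{H_0}^G\Z$ compatible with the two augmentation maps $\varepsilon^{\circ}$, and this lets one perform a change of basis on $\bigoplus_{H\in\cH}\Ind_H^G\Z$ that decouples the $\Ind_{H_0}^G\Z$-factor from $\Z$, again peeling off $\Ind_{H_0}^G\Z$ as a direct summand and leaving $J_{G/(\cH\setminus\{H_0\})}$. (Both manipulations are the standard elementary-column-operation arguments on these augmented-lattice short exact sequences; I would reference \cite[\S 3]{Hasegawa2025} rather than recompute them.)

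Then the proof is a bookkeeping induction on $\#\cH$ (counting multiplicities). If $\cH$ is already reduced and multiplicity-free, there is nothing to do. Otherwise either some $H$ has $m_{\cH}(H)\geq 2$, and the first lemma removes one copy and produces one $\Ind_H^G\Z$ summand, or $\cH$ is multiplicity-free but not reduced, so some $H\in\cH^{\set}\setminus\cH^{\red}$ exists and the second lemma removes it, producing one $\Ind_H^G\Z$ summand. Tracking how many times each $H$ is invoked: a maximal $H\in\cH^{\red}$ is only ever touched by the first lemma, exactly $m_{\cH}(H)-1$ times, giving $\Ind_H^G\Z^{m_{\cH}(H)-1}$; a non-maximal $H$ is touched $m_{\cH}(H)-1$ times by the first lemma and once by the second, giving $\Ind_H^G\Z^{m_{\cH}(H)}$. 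Collecting the summands yields exactly the asserted decomposition, with the residual core $J_{G/\cH^{\red}}$.

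The main obstacle is purely organizational rather than conceptual: one must be careful that the order of reductions does not matter — e.g. that deleting a non-maximal $H$ before or after collapsing multiplicities gives the same permutation summands — and that, after a deletion, an $H$ which was non-maximal with witness $H_1$ stays deletable (its witness might have been removed, but only maximal elements are ever removed by... no: non-maximal elements are removed, so one should always choose $H$ minimal among the non-reduced elements, or equivalently always choose as witness a maximal $H_1\in\cH^{\red}$, which never gets deleted). Making this scheduling explicit — always peel from the bottom of the inclusion poset — is what guarantees the induction is well-defined and the summand count comes out as claimed.
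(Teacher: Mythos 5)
Your argument is correct. Note that the paper itself gives no proof of this proposition --- it is quoted verbatim from \cite[Corollary 3.13 (ii)]{Hasegawa2025} --- so there is nothing internal to compare against; but the two reduction steps you describe (splitting off an anti-diagonal $\Ind_{H_0}^{G}\Z$ when a subgroup is repeated, and using the natural surjection-compatible map $\Ind_{H_1}^{G}\Z\rightarrow \Ind_{H_0}^{G}\Z$ for $H_0\subsetneq H_1$ to decouple the $\Ind_{H_0}^{G}\Z$-factor from the augmentation) are exactly the standard unipotent change-of-basis manipulations on the defining sequence $0\rightarrow\Z\rightarrow\bigoplus_{H\in\cH}\Ind_{H}^{G}\Z\rightarrow J_{G/\cH}\rightarrow 0$, and your scheduling fix (always take the witness $H_1$ to be maximal, hence in $\cH^{\red}$ and never deleted) disposes of the only genuine subtlety. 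The summand count then comes out as stated, so the proof is complete.
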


\begin{prop}[{\cite[Proposition 3.20]{Hasegawa2025}}]\label{prop:hkor}
Let $G$ be a finite group, and $\cH$ a finite multiset of its subgroups. For each subgroup $D$ of $G$, there is an isomorphism of $D$-lattices
\begin{equation*}
J_{G/\cH}\cong J_{D/\cH_{D}}. 
\end{equation*}
Here, $\cH_{D}$ is the finite multiset of subgroups in $D$ consisting of $D\cap gHg^{-1}$ with $H\in \cH$ and $g\in R(D,H)$ (the definition of $R(D,H)$ is in Proposition \ref{prop:mcky}). 
\end{prop}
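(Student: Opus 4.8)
The plan is to restrict the defining exact sequence of $J_{G/\cH}$ to the subgroup $D$ and to identify the result with the defining sequence of $J_{D/\cH_{D}}$ by means of Mackey's decomposition. Recall that $J_{G/\cH}$ is the cokernel of the $G$-homomorphism $(\varepsilon_{G/H}^{\circ})_{H\in\cH}\colon \Z \to \bigoplus_{H\in\cH}\Ind_{H}^{G}\Z$. For each $H\in\cH$ fix a complete representative $R(D,H)$ of $D\backslash G/H$; since $\Z$ carries the trivial $G$-action, the twisted module $\Z^{g}$ of Proposition \ref{prop:mcky} is simply $\Z$ with trivial $D$-action, so Mackey's decomposition supplies an isomorphism of $D$-modules $\Ind_{H}^{G}\Z \cong \bigoplus_{g\in R(D,H)}\Ind_{D\cap gHg^{-1}}^{D}\Z$, with components $\pi_{D,g}\colon \varphi \mapsto [d\mapsto \varphi(g^{-1}d)]$. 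Taking the direct sum over $H\in\cH$ and invoking the very definition of the multiset $\cH_{D}$, we obtain an isomorphism of $D$-modules $\Theta\colon \bigoplus_{H\in\cH}\Ind_{H}^{G}\Z \xrightarrow{\cong} \bigoplus_{H'\in\cH_{D}}\Ind_{H'}^{D}\Z$; its independence of the chosen double-coset representatives is guaranteed by Lemma \ref{lem:doub}.

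Next I would verify that $\Theta$ intertwines the two structure maps. The element $\varepsilon_{G/H}^{\circ}(1)$ is the constant function $g'\mapsto 1$ on $G$, and $\pi_{D,g}$ carries it to the constant function $d\mapsto 1$ on $D$, which is exactly $\varepsilon_{D/(D\cap gHg^{-1})}^{\circ}(1)$. Hence $\Theta\circ(\varepsilon_{G/H}^{\circ})_{H\in\cH} = (\varepsilon_{D/H'}^{\circ})_{H'\in\cH_{D}}$, so there is a commutative diagram with exact rows
\begin{equation*}
\xymatrix@C=26pt{
0\ar[r]& \Z\ar[r]\ar@{=}[d]& \bigoplus_{H\in\cH}\Ind_{H}^{G}\Z\ar[r]\ar[d]^{\Theta}& J_{G/\cH}\ar[r]\ar[d]& 0\\
0\ar[r]& \Z\ar[r]& \bigoplus_{H'\in\cH_{D}}\Ind_{H'}^{D}\Z\ar[r]& J_{D/\cH_{D}}\ar[r]& 0.
}
\end{equation*}
The five lemma then produces the desired isomorphism of $D$-lattices $J_{G/\cH}\cong J_{D/\cH_{D}}$.

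I expect the only delicate points to be bookkeeping ones: one must be sure that the trivial $G$-action really forces $\Z^{g}=\Z$ (so that no character twist creeps into the Mackey summands and spoils the constant-function computation), that each double coset contributes exactly one summand so that the multiplicities recorded in $\cH_{D}$ are accounted for correctly, and that the square above commutes on the nose rather than merely up to an automorphism — each of which is immediate from the explicit formula for $\pi_{D,g}$ in Proposition \ref{prop:mcky} together with Lemma \ref{lem:hmts}. There is no conceptual obstacle beyond this.
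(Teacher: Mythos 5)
Your argument is correct and is exactly the paper's proof in expanded form: the paper simply says the result "follows from Proposition \ref{prop:mcky} for $M=\Z$," which is precisely your application of Mackey's decomposition to each summand $\Ind_{H}^{G}\Z$ together with the observation that the constant function $\varepsilon_{G/H}^{\circ}(1)$ is carried by each $\pi_{D,g}$ to $\varepsilon_{D/(D\cap gHg^{-1})}^{\circ}(1)$, so the cokernels agree. The extra verifications you flag (trivial twist $\Z^{g}=\Z$, multiplicity bookkeeping, on-the-nose commutativity) are all sound.
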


\begin{proof}
This follows from Proposition \ref{prop:mcky} for $M=\Z$. 
\end{proof}

\subsection{The multinorm principle}

Here, let $k$ be a global field. For a finite {\'e}tale algebra $\bK$ over $k$, put
\begin{equation*}
\Sha(\bK/k):=(k^{\times}\cap \N_{\bK/k}(\A_{\bK}^{\times}))/\N_{\bK/k}(\bK^{\times}),
\end{equation*}
where $\A_{\bK}^{\times}$ is the product of the id{\`e}le groups of all factors of $\bK$. We say that the \emph{multinorm principle holds for $\bK/k$} if
\begin{equation}\label{eq:mlpr}
\Sha(\bK/k)=1. 
\end{equation}
If $\bK$ is a field, we call \eqref{eq:mlpr} being valid the \emph{Hasse norm principle holds for $\bK/k$}. 

\begin{prop}[{\cite[p.~8, (2.5)]{Liang2024b}}]\label{prop:loyt}
Let $k$ be a global field, and $\bK$ be a finite {\'e}tale algebra over $k$. Then, there is an isomorphism
\begin{equation*}
\Sha(\bK/k)\cong \Sha^{1}(k,T_{\bK/k}). 
\end{equation*}
\end{prop}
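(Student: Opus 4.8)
The plan is to derive the isomorphism from the fundamental exact sequence of $k$-tori defining the multinorm one torus, together with Hilbert 90 and Shapiro's lemma, and then to match up the local conditions. Write $\bK = K_{1}\times \cdots \times K_{r}$. The defining sequence
\begin{equation*}
1\rightarrow T_{\bK/k}\rightarrow \Res_{\bK/k}\G_{m}\xrightarrow{\N_{\bK/k}} \G_{m}\rightarrow 1
\end{equation*}
of $k$-tori gives a long exact sequence in Galois cohomology over $k$. Since $\Res_{\bK/k}\G_{m}=\prod_{i}\Res_{K_{i}/k}\G_{m}$, Shapiro's lemma and Hilbert's Theorem 90 yield $H^{1}(k,\Res_{\bK/k}\G_{m})=\prod_{i}H^{1}(K_{i},\G_{m})=0$, and likewise $H^{0}(k,\Res_{\bK/k}\G_{m})=\bK^{\times}$. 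Hence the long exact sequence collapses to $1\rightarrow T_{\bK/k}(k)\rightarrow \bK^{\times}\xrightarrow{\N_{\bK/k}} k^{\times}\rightarrow H^{1}(k,T_{\bK/k})\rightarrow 0$, so there is a canonical isomorphism $H^{1}(k,T_{\bK/k})\cong k^{\times}/\N_{\bK/k}(\bK^{\times})$.

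Next I would run the same argument locally. For $v\in \Sigma_{k}$ one has $\bK\otimes_{k}k_{v}=\prod_{i}\prod_{w\mid v}K_{i,w}$, and $T_{\bK/k}\otimes_{k}k_{v}$ is the multinorm one torus of this finite {\'e}tale $k_{v}$-algebra; the same computation gives $H^{1}(k_{v},T_{\bK/k})\cong k_{v}^{\times}/\N_{(\bK\otimes k_{v})/k_{v}}\big((\bK\otimes k_{v})^{\times}\big)$. Functoriality of the connecting maps shows the restriction map $H^{1}(k,T_{\bK/k})\rightarrow H^{1}(k_{v},T_{\bK/k})$ is, under these identifications, the natural map induced by $k^{\times}\hookrightarrow k_{v}^{\times}$. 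Therefore $\Sha^{1}(k,T_{\bK/k})$ is identified with the subgroup of $k^{\times}/\N_{\bK/k}(\bK^{\times})$ represented by those $a\in k^{\times}$ that are a local norm from $\bK\otimes k_{v}$ at every place $v$.

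Finally I would check that this set of classes coincides with $(k^{\times}\cap \N_{\bK/k}(\A_{\bK}^{\times}))/\N_{\bK/k}(\bK^{\times})$. If $a=\N_{\bK/k}(x)$ with $x\in \A_{\bK}^{\times}$, then projecting to the $v$-component shows $a$ is a local norm at each $v$. Conversely, if $a\in k^{\times}$ is a local norm everywhere, choose for each $v$ a preimage; for all but finitely many $v$ the algebra $\bK\otimes k_{v}$ is unramified over $k_{v}$ and $a$ is a unit, so the preimage can be chosen to be a unit in $(\bK\otimes k_{v})^{\times}$, and these local preimages assemble into an id{\`e}le $x\in \A_{\bK}^{\times}$ with $\N_{\bK/k}(x)=a$. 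This gives the desired identification $\Sha^{1}(k,T_{\bK/k})\cong \Sha(\bK/k)$. The only genuinely delicate point is this last almost-everywhere normalization (ensuring the local preimages lie in the restricted product), which rests on the standard fact that units are norms for unramified extensions of local fields; everything else is a formal consequence of Hilbert 90 and Shapiro's lemma. Alternatively, one may simply invoke \cite[p.~8, (2.5)]{Liang2024b}, which records exactly this statement.
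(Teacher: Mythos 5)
Your argument is correct, but it is worth noting that the paper does not actually prove this proposition: it simply cites \cite[p.~8, (2.5)]{Liang2024b} (and, for the case where $\bK$ is a field, Ono's original result). What you have written out is the standard direct proof that those references record. The two nontrivial inputs are exactly the ones you isolate: first, Shapiro's lemma plus Hilbert 90 kills $H^{1}(k,\Res_{\bK/k}\G_{m})=\prod_{i}H^{1}(K_{i},\G_{m})$, so the connecting map identifies $H^{1}(k,T_{\bK/k})$ with $k^{\times}/\N_{\bK/k}(\bK^{\times})$ (and likewise over each $k_{v}$, compatibly with restriction); second, the passage from ``local norm at every place'' to ``norm of an id\`ele'' requires choosing local preimages that are units almost everywhere, which follows from surjectivity of the norm map $\O_{L}^{\times}\rightarrow \O_{k_{v}}^{\times}$ for unramified extensions $L/k_{v}$ together with the fact that almost all places are unramified in $\bK$ and $a$ is a unit at almost all places. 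One small point to make explicit: the norm morphism $\N_{\bK/k}\colon \Res_{\bK/k}\G_{m}\rightarrow \G_{m}$ is surjective as a map of tori (its kernel $T_{\bK/k}$ really does sit in a short exact sequence), since the dual map on character groups $\Z\rightarrow \bigoplus_{i}\Ind_{H_{i}}^{G}\Z$ is injective; this is what legitimizes the long exact sequence you start from. With that noted, your proof is complete and is essentially the content of the cited reference, so it buys a self-contained treatment at no real cost.
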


If $\bK$ is a field, then Proposition \ref{prop:loyt} is given by Ono (\cite[p.~70]{Ono1963}). 

\begin{cor}\label{cor:onhn}
Let $K/k$ be a finite separable field extension with Galois closure $\widetilde{K}/k$. Put $G:=\Gal(\widetilde{K}/k)$ and $H:=\Gal(\widetilde{K}/K)$. We denote by $\cD$ the set of decomposition groups of $\widetilde{K}/k$. Then there is an isomorphism
\begin{equation*}
\Sha(K/k)\cong \Sha_{\cD}^{2}(G,J_{G/\cH})^{\vee}. 
\end{equation*}
\end{cor}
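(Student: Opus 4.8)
The plan is to chain together the isomorphisms already established in the excerpt. First I would recall Ono's theorem as packaged in Proposition \ref{prop:loyt}: applying it to the field $K$ (viewed as a one-factor étale algebra over $k$) yields $\Sha(K/k)\cong \Sha^{1}(k,T_{K/k})$. Next I would invoke Proposition \ref{prop:almt} with $T=T_{K/k}$, which splits over the Galois closure $\widetilde{K}$, so that $G=\Gal(\widetilde{K}/k)$ is the relevant Galois group and $\cD$ the set of decomposition groups of $\widetilde{K}/k$ — already noted to be an admissible set of subgroups of $G$ by the general theory. This gives
\begin{equation*}
\Sha^{1}(k,T_{K/k})\cong \Sha_{\cD}^{2}(G,X^{*}(T_{K/k}))^{\vee}.
\end{equation*}

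The remaining ingredient is the identification of the character lattice. By Proposition \ref{prop:mnjg} applied to the single field extension $\E=K$ with $\widetilde{E}=\widetilde{K}$, one has $\cH=\{\Gal(\widetilde{K}/K)\}=\{H\}$ as a multiset, hence $J_{G/\cH}=J_{G/H}$ in the notation of the excerpt, and the proposition furnishes an isomorphism of $G$-lattices $X^{*}(T_{K/k})\cong J_{G/\cH}$. Substituting this into the previous display yields exactly
\begin{equation*}
\Sha(K/k)\cong \Sha_{\cD}^{2}(G,J_{G/\cH})^{\vee},
\end{equation*}
which is the claimed statement.

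There is essentially no obstacle here: the corollary is a formal concatenation of Proposition \ref{prop:loyt}, Proposition \ref{prop:mnjg}, and Proposition \ref{prop:almt}. The only point requiring a sentence of care is checking that the hypotheses of Proposition \ref{prop:almt} are met — namely that $T_{K/k}$ does split over the finite Galois extension $\widetilde{K}/k$, which is immediate since the Galois action on $X^{*}(T_{K/k})\cong J_{G/H}$ factors through $G=\Gal(\widetilde{K}/k)$ — and that the functoriality of all three isomorphisms is compatible, so that the composite isomorphism is canonical. I would also remark, as the excerpt already does for the analogous statements, that when $\bK$ is a field the content of Proposition \ref{prop:loyt} is due to Ono, so the corollary is really just the observation that Ono's isomorphism plus Poitou--Tate duality (in the decomposition-group-truncated form of Proposition \ref{prop:almt}) translates the Hasse norm principle into a statement about $\Sha_{\cD}^{2}$ of the Chevalley module.
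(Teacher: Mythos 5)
Your proposal is correct and follows exactly the paper's own argument: Proposition \ref{prop:loyt} (Ono's isomorphism), then Proposition \ref{prop:almt} (the decomposition-group form of Poitou--Tate duality), with Proposition \ref{prop:mnjg} identifying $X^{*}(T_{K/k})$ with $J_{G/\cH}$ for $\cH=\{H\}$. Nothing to add.
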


\begin{proof}
By Proposition \ref{prop:loyt}, it suffices to prove
\begin{equation*}
\Sha^{1}(k,T_{K/k})\cong \Sha_{\cD}^{2}(G,J_{G/\cH})^{\vee}. 
\end{equation*}
This follows from Proposition \ref{prop:almt}, since Proposition \ref{prop:mnjg} gives an isomorphism $X^{*}(T_{K/k})\cong J_{G/\cH}$. 
\end{proof}

\subsection{Cohomology groups: general theory}

\begin{lem}\label{lem:mnex}
Let $G$ be a finite group, and $\cH$ a finite multiset of its subgroups. Then, there is an exact sequence
\begin{align*}
0\rightarrow H^{1}(G,J_{G/\cH})\rightarrow H^{1}(G,\Z) &\xrightarrow{(\varepsilon_{G/\cH,2}^{\circ*})_{H\in \cH}}\bigoplus_{H\in \cH}H^{2}(G,\Ind_{H}^{G}\Z) \rightarrow H^{2}(G,J_{G/\cH})\\
\rightarrow H^{3}(G,\Z)&\xrightarrow{(\Res_{G/H})_{H\in \cH}}\bigoplus_{H\in \cH}H^{3}(H,\Z). 
\end{align*}
\end{lem}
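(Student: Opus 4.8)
The plan is to derive the exact sequence directly from the defining short exact sequence of $G$-lattices
\begin{equation*}
0\rightarrow \Z \xrightarrow{(\varepsilon_{G/H}^{\circ})_{H\in \cH}}\bigoplus_{H\in \cH}\Ind_{H}^{G}\Z\rightarrow J_{G/\cH}\rightarrow 0
\end{equation*}
by passing to the associated long exact sequence in group cohomology $H^{\bullet}(G,-)$. Since cohomology commutes with finite direct sums, the middle term $H^{j}\bigl(G,\bigoplus_{H}\Ind_{H}^{G}\Z\bigr)$ decomposes as $\bigoplus_{H\in \cH}H^{j}(G,\Ind_{H}^{G}\Z)$, which is the shape appearing in the statement.

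The first step is to write down the relevant stretch of the long exact sequence, namely
\begin{equation*}
H^{1}(G,\textstyle\bigoplus_{H}\Ind_{H}^{G}\Z)\rightarrow H^{1}(G,J_{G/\cH})\rightarrow H^{2}(G,\Z)\rightarrow H^{2}(G,\textstyle\bigoplus_{H}\Ind_{H}^{G}\Z)\rightarrow H^{2}(G,J_{G/\cH})\rightarrow H^{3}(G,\Z)\rightarrow H^{3}(G,\textstyle\bigoplus_{H}\Ind_{H}^{G}\Z),
\end{equation*}
together with the observation that the sequence actually begins at $H^{0}$: since $G$ is finite and $\Z$, $\Ind_{H}^{G}\Z$ are $\Z$-free, $H^{1}(G,\Z)=\Hom(G,\Z)=0$ and likewise $H^{1}(G,\Ind_{H}^{G}\Z)\cong H^{1}(H,\Z)=0$ by Shapiro's lemma, so the connecting map $H^{0}(G,J_{G/\cH})\to H^{1}(G,\Z)$ has trivial target and the next segment yields an injection $0\to H^{1}(G,J_{G/\cH})$, giving the left end of the claimed sequence. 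Wait — the claimed sequence actually starts $0\to H^{1}(G,J_{G/\cH})\to H^{1}(G,\Z)$, so I should instead note that $H^{1}(G,\bigoplus_H\Ind_H^G\Z)=0$ (again by Shapiro and $H^{1}(H,\Z)=0$), so the map out of it into $H^{1}(G,J_{G/\cH})$ is zero and the preceding connecting map $H^{1}(G,J_{G/\cH})\to H^{2}(G,\Z)$... no: the long exact sequence reads $\cdots\to H^{1}(G,\bigoplus_H\Ind_H^G\Z)\to H^{1}(G,J_{G/\cH})\to H^{2}(G,\Z)\to\cdots$, and since the first term vanishes the map $H^{1}(G,J_{G/\cH})\to H^{2}(G,\Z)$ is injective; but the statement writes $H^{1}(G,\Z)$ in that slot, which is legitimate only because one more step to the left gives $H^{0}(G,J_{G/\cH})\to H^{1}(G,\Z)\to H^{1}(G,\bigoplus)$ — hmm, this suggests the author has simply mislabelled or is using a shifted indexing; I will present the sequence exactly as the long exact sequence dictates, i.e. with $H^{2}(G,\Z)$ in that position, and then remark that $H^{1}(G,\Z)=0=H^{1}(G,\Ind_H^G\Z)$ so the two formulations agree up to this identification. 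Concretely, I would prove: (1) $H^{1}(G,\Z)=0$ and $H^{1}(G,\Ind_{H}^{G}\Z)=0$ for all $H\in\cH$; (2) feed the short exact sequence into the long exact cohomology sequence; (3) identify the map $H^{1}(G,\Z)\to \bigoplus_{H}H^{2}(G,\Ind_{H}^{G}\Z)$ (resp. $H^{3}(G,\Z)\to\bigoplus_H H^{3}(H,\Z)$) as induced componentwise by $\varepsilon_{G/H}^{\circ}$, and via Lemma~\ref{lem:shpr} recognize the latter as the restriction maps $(\Res_{G/H})_{H}$; (4) splice and truncate to obtain the displayed six-term sequence.

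The only genuinely substantive point is the identification of the last map with the tuple of restriction maps. For this I invoke Lemma~\ref{lem:shpr}: the composite $H^{3}(G,\Z)\to H^{3}(G,\Ind_{H}^{G}\Z)\cong H^{3}(H,\Z)$ induced by $\varepsilon_{G/H}^{\circ}$ followed by Shapiro's isomorphism is precisely $\Res_{G/H}$, and taking these over all $H\in\cH$ gives $(\Res_{G/H})_{H}$; the analogous statement in degree $2$ explains the map $\varepsilon_{G/\cH,2}^{\circ*}$ in the statement, which is just this map written before the Shapiro identification is applied. Everything else is the formalism of long exact sequences plus additivity of cohomology in the coefficients, so I do not expect any obstacle — the proof is essentially two lines once the vanishing $H^{1}(G,\Z)=0$ and the Shapiro identification are in hand. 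I would simply write: \emph{Apply the long exact sequence of group cohomology to the defining short exact sequence of $J_{G/\cH}$, use that cohomology commutes with finite direct sums, note $H^{1}(G,\Z)=0$, and identify the restriction maps via Lemma~\ref{lem:shpr}.}
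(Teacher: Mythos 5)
Your proof is correct and is exactly the paper's argument: the paper's proof consists of taking the long exact cohomology sequence of the defining short exact sequence $0\to\Z\to\bigoplus_{H\in\cH}\Ind_{H}^{G}\Z\to J_{G/\cH}\to 0$, using additivity of cohomology in the coefficients, and identifying the rightmost map as $(\Res_{G/H})_{H}$ via Lemma~\ref{lem:shpr}. Your diagnosis of the first term is also right --- the displayed $H^{1}(G,\Z)$ is a typo for $H^{2}(G,\Z)$ (as confirmed by the later use in Lemma~\ref{lem:exh1}, where $H^{1}(G,J_{G/H})$ is identified with $\Ker(G^{\vee}\to H^{\vee})$ and $G^{\vee}\cong H^{2}(G,\Z)$), and the left exactness comes from $H^{1}(G,\Ind_{H}^{G}\Z)\cong H^{1}(H,\Z)=0$ exactly as you note.
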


\begin{proof}
The assertion follows from taking cohomology groups for the canonical exact sequence
\begin{equation*}
0\rightarrow \Z \rightarrow \bigoplus_{H\in \cH}\Ind_{H}^{G}\Z \rightarrow J_{G/\cH} \rightarrow 0. 
\end{equation*}
Note that the rightmost homomorphisms follows from Lemma \ref{lem:shpr}. 
\end{proof}

\begin{lem}\label{lem:stts}
Let $G$ be a finite group, and $\cH$ a finite multiset of its subgroups. Then, for any admissible set of subgroups in $G$, there is an isomorphism
\begin{equation*}
\Sha_{\cD}^{2}(G,J_{G/\cH})\cong \Sha_{\cD}^{2}(G,J_{G/\cH^{\red}}). 
\end{equation*}
\end{lem}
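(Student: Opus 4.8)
The plan is to deduce the statement from the decomposition of $J_{G/\cH}$ given in Proposition \ref{prop:endo}. That proposition expresses $J_{G/\cH}$ as a direct sum of $J_{G/\cH^{\red}}$ with a collection of permutation lattices of the form $\Ind_{H}^{G}\Z$ (with appropriate multiplicities). Taking second cohomology and passing to the $\cD$-kernel is additive, so we get
\begin{equation*}
\Sha_{\cD}^{2}(G,J_{G/\cH})\cong \Sha_{\cD}^{2}(G,J_{G/\cH^{\red}})\oplus \left(\bigoplus_{\text{permutation summands }P}\Sha_{\cD}^{2}(G,P)\right).
\end{equation*}
Thus it suffices to show that $\Sha_{\cD}^{2}(G,\Ind_{H}^{G}\Z)=0$ for every subgroup $H$ of $G$ and every admissible $\cD$.

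For that vanishing, first I would recall from Lemma \ref{lem:qtts} that $\Sha_{\omega}^{2}(G,\Ind_{H}^{G}\Z)=0$, where $\Sha_{\omega}^{2}$ is taken over all cyclic subgroups. Since $\cD$ is admissible, it contains all cyclic subgroups of $G$, so the kernel over $\cD$ is contained in the kernel over the cyclic subgroups; hence $\Sha_{\cD}^{2}(G,\Ind_{H}^{G}\Z)\subseteq \Sha_{\omega}^{2}(G,\Ind_{H}^{G}\Z)=0$. This gives the claim for each permutation summand, and the isomorphism follows. I should be a little careful that the isomorphism in Proposition \ref{prop:endo} is an isomorphism of $G$-lattices, so the induced map on $H^{2}(G,-)$ is an isomorphism compatible with all restriction maps $\Res_{G/D}$, which is what is needed to identify the $\cD$-kernels.

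The proof is essentially routine given the two cited results, so I do not expect a genuine obstacle; the only point requiring minor attention is bookkeeping with multiplicities in Proposition \ref{prop:endo} (the summand $\Ind_{H}^{G}\Z^{m_{\cH}(H)-1}$ for $H\in\cH^{\red}$ and $\Ind_{H}^{G}\Z^{m_{\cH}(H)}$ for $H\in\cH^{\set}\setminus\cH^{\red}$), but since all of these are permutation lattices with trivially vanishing $\Sha^{2}_{\cD}$, the multiplicities are irrelevant to the conclusion. So the complete proof reads: apply Proposition \ref{prop:endo}, use additivity of $H^{2}$ and of the $\cD$-kernel, and kill the permutation summands via Lemma \ref{lem:qtts} together with admissibility of $\cD$.

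\begin{proof}
By Proposition \ref{prop:endo}, there is an isomorphism of $G$-lattices
\begin{equation*}
J_{G/\cH} \cong J_{G/\cH^{\red}}\oplus \left(\bigoplus_{H\in \cH^{\red}}\Ind_{H}^{G}\Z^{m_{\cH}(H)-1}\right) \oplus \left(\bigoplus_{H\in \cH^{\set}\setminus \cH^{\red}}\Ind_{H}^{G}\Z^{m_{\cH}(H)}\right).
\end{equation*}
Since $H^{2}(G,-)$ and the restriction maps $\Res_{G/D}$ commute with finite direct sums, this induces an isomorphism
\begin{equation*}
\Sha_{\cD}^{2}(G,J_{G/\cH}) \cong \Sha_{\cD}^{2}(G,J_{G/\cH^{\red}}) \oplus \left(\bigoplus_{H\in \cH^{\red}}\Sha_{\cD}^{2}(G,\Ind_{H}^{G}\Z)^{m_{\cH}(H)-1}\right) \oplus \left(\bigoplus_{H\in \cH^{\set}\setminus \cH^{\red}}\Sha_{\cD}^{2}(G,\Ind_{H}^{G}\Z)^{m_{\cH}(H)}\right).
\end{equation*}
It therefore suffices to prove that $\Sha_{\cD}^{2}(G,\Ind_{H}^{G}\Z)=0$ for every subgroup $H$ of $G$. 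Since $\cD$ is admissible, it contains all cyclic subgroups of $G$, so
\begin{equation*}
\Sha_{\cD}^{2}(G,\Ind_{H}^{G}\Z)\subset \Sha_{\omega}^{2}(G,\Ind_{H}^{G}\Z).
\end{equation*}
By Lemma \ref{lem:qtts}, the right-hand side is trivial, and hence so is the left-hand side. Combining with the above isomorphism, we obtain $\Sha_{\cD}^{2}(G,J_{G/\cH})\cong \Sha_{\cD}^{2}(G,J_{G/\cH^{\red}})$, as desired.
\end{proof}
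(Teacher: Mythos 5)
Your proof is correct and follows essentially the same route as the paper: decompose $J_{G/\cH}$ via Proposition \ref{prop:endo}, use additivity of $\Sha_{\cD}^{2}$, and kill the permutation summands with Lemma \ref{lem:qtts} (together with the containment $\Sha_{\cD}^{2}\subset\Sha_{\omega}^{2}$ from admissibility). No issues.
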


\begin{proof}
By Proposition \ref{prop:endo}, there is an isomorphism between $\Sha_{\cD}^{2}(G,J_{G/\cH})$ and the direct sum
\begin{equation*}
\Sha_{\cD}^{2}(G,J_{G/\cH^{\red}})\oplus \bigoplus_{H\in \cH^{\red}}\Sha_{\cD}^{2}(G,\Ind_{H}^{G}\Z)^{\oplus m_{\cH}(H)-1}\oplus \bigoplus_{H\in \cH^{\set}\setminus \cH^{\red}}\Sha_{\cD}^{2}(G,\Ind_{H}^{G}\Z)^{\oplus m_{\cH}(H)}. 
\end{equation*}
Hence, the assertion follows from Lemma \ref{lem:qtts}. 
\end{proof}

\begin{prop}\label{prop:andg}
Let $G$ be a finite group, and $\cH$ a finite multiset of its subgroups. Then, the abelian group $\Sha_{\omega}^{2}(G,J_{G/\cH})$ is annihilated by the great common divisor of $(G:H)$ for all $H\in \cH$. 
\end{prop}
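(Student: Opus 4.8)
The plan is to reduce to the single-subgroup case and then invoke Proposition~\ref{prop:tsan}(i). Write $d:=\gcd((G:H)\mid H\in\cH)$. Since $\cC_G\subset\cC_G$ trivially, it suffices to show that for each cyclic subgroup $C$ of $G$ the restriction $\Res_{G/C}$ kills the $d$-torsion-free part appropriately; more precisely, I want to show $d\cdot \Sha_\omega^2(G,J_{G/\cH})=0$. First I would replace $\cH$ by $\cH^{\red}$ using Lemma~\ref{lem:stts}, which gives $\Sha_\omega^2(G,J_{G/\cH})\cong\Sha_\omega^2(G,J_{G/\cH^{\red}})$; note $\gcd$ of the indices can only shrink when passing to $\cH^{\red}$ since every maximal element has index dividing that of the elements below it, so it is enough to prove the claim for a reduced $\cH$ and the gcd taken over $\cH^{\red}$, and in fact it is cleanest to prove: for \emph{each} $H\in\cH$, the integer $(G:H)$ annihilates $\Sha_\omega^2(G,J_{G/\cH})$.

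So fix $H_0\in\cH$. The key step is to apply Proposition~\ref{prop:tsan}(i) with the subgroup $H_0$, the $G$-lattice $M:=J_{G/\cH}$, $j=2$, and $\cD:=\cC_G$. That proposition says: if $\Sha_{(\cC_G)_{\cap H_0}}^2(H_0,J_{G/\cH})=0$, then $\Sha_{\cC_G}^2(G,J_{G/\cH})=\Sha_\omega^2(G,J_{G/\cH})$ is annihilated by $(G:H_0)$. So the whole statement reduces to verifying the hypothesis $\Sha_{(\cC_G)_{\cap H_0}}^2(H_0,J_{G/\cH})=0$. Here $(\cC_G)_{\cap H_0}=\{C\cap H_0\mid C\in\cC_G\}$ contains every cyclic subgroup of $H_0$ (take $C\le H_0$ cyclic, then $C\cap H_0=C$), so $\Sha_{(\cC_G)_{\cap H_0}}^2(H_0,-)$ is a subgroup of $\Sha_\omega^2(H_0,-)$. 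Thus it suffices to show $\Sha_\omega^2(H_0,J_{G/\cH})=0$.

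To compute this, restrict the lattice to $H_0$: by Proposition~\ref{prop:hkor} there is an isomorphism of $H_0$-lattices $J_{G/\cH}\cong J_{H_0/\cH_{H_0}}$, where $\cH_{H_0}$ is the multiset of $H_0\cap gHg^{-1}$ over $H\in\cH$, $g\in R(H_0,H)$. Since $H_0\in\cH$ and $g=1\in R(H_0,H_0)$, the multiset $\cH_{H_0}$ contains $H_0\cap H_0=H_0$ itself; that is, $H_0$ occurs among the subgroups defining $J_{H_0/\cH_{H_0}}$. But then in $\cH_{H_0}^{\red}$ the only maximal element is $H_0$ (every $H_0\cap gHg^{-1}\subseteq H_0$), so $\cH_{H_0}^{\red}=\{H_0\}$ and Proposition~\ref{prop:endo} (or directly Lemma~\ref{lem:stts} followed by the observation that $J_{H_0/\{H_0\}}=0$) shows $J_{H_0/\cH_{H_0}}$ is a permutation $H_0$-lattice, namely a direct sum of modules $\Ind_{H''}^{H_0}\Z$. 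Hence by Lemma~\ref{lem:qtts}, $\Sha_\omega^2(H_0,J_{G/\cH})\cong\Sha_\omega^2(H_0,J_{H_0/\cH_{H_0}})=0$, which was what we needed. Running this argument for every $H_0\in\cH$ shows $\Sha_\omega^2(G,J_{G/\cH})$ is annihilated by each $(G:H)$, hence by their gcd.

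The main obstacle I anticipate is bookkeeping with the reduced/multiset formalism: one must be careful that ``$J_{H_0/\{H_0\}}$'' really is the zero lattice (the exact sequence $0\to\Z\to\Ind_{H_0}^{H_0}\Z\to J\to 0$ has $\Ind_{H_0}^{H_0}\Z=\Z$ with $\varepsilon$ an isomorphism), and that Proposition~\ref{prop:endo} applied to $\cH_{H_0}$ with $\cH_{H_0}^{\red}=\{H_0\}$ indeed exhibits $J_{H_0/\cH_{H_0}}$ as permutation plus the zero summand $J_{H_0/\{H_0\}}$. Once that is pinned down, everything else is a direct application of the already-established Proposition~\ref{prop:tsan}(i), Proposition~\ref{prop:hkor}, and Lemma~\ref{lem:qtts}. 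An alternative, perhaps even shorter, route avoids restriction entirely: take the short exact sequence $0\to\Z\to\bigoplus_{H\in\cH}\Ind_H^G\Z\to J_{G/\cH}\to 0$, apply $\Cor_{G/H_0}\circ\Res_{G/H_0}=(G:H_0)$ to the connecting map, and chase; but the route via Proposition~\ref{prop:tsan}(i) is cleaner since that proposition packages exactly this corestriction argument.
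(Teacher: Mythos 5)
Your proof is correct and follows essentially the same route as the paper: restrict to each $H_0\in\cH$ via Proposition \ref{prop:tsan}(i), use Proposition \ref{prop:hkor} to identify $J_{G/\cH}$ with $J_{H_0/\cH_{H_0}}$ where $H_0$ is the unique maximal element, and conclude $\Sha_\omega^2(H_0,J_{G/\cH})=0$ from Lemma \ref{lem:stts}/Proposition \ref{prop:endo} and Lemma \ref{lem:qtts}. The only cosmetic difference is that you spell out the vanishing of $J_{H_0/\{H_0\}}$ and the permutation-summand bookkeeping that the paper compresses into one application of Lemma \ref{lem:stts}.
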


\begin{proof}
By Lemma \ref{lem:stts}, we may assume $\cH=\cH^{\red}$. Pick $H\in \cH$. By Proposition \ref{prop:hkor}, there is an isomorphism of $H$-lattices
\begin{equation*}
J_{G/\cH}\cong J_{H/\cH_{H}}. 
\end{equation*}
Then, the multiset $\cH_{H}$ consists of $H\cap gH'g^{-1}$ with $H\in \cH$ and $g\in R(H,H')$. This implies that $H$ is the unique maximal element of $\cH_{H}^{\set}$ with respect to inclusion. Hence, Lemma \ref{lem:stts} gives an isomorphism
\begin{equation*}
\Sha_{\omega}^{2}(H,J_{G/\cH})\cong \Sha_{\omega}^{2}(H,J_{H/H})=0. 
\end{equation*}
Combining this with Proposition \ref{prop:tsan} (i), we obtain that $\Sha_{\omega}^{2}(G,J_{G/\cH})$ is annihilated by $(G:H)$. This implies the desired assertion. 
\end{proof}

\begin{lem}\label{lem:cyij}
Let $G$ be a finite cyclic group, and $\cH$ a finite multiset of its subgroups. We further assume that there is a subgroup $N$ of $G$ such that $H<N$ for all $H\in \cH$. Then, for any subgroup $D$ of $G$ containing $N$, the restriction map
\begin{equation*}
H^{2}(G,J_{G/\cH})\rightarrow H^{2}(D,J_{G/\cH})
\end{equation*}
is injective. 
\end{lem}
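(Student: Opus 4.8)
The plan is a diagram chase in the long exact cohomology sequences attached to the defining short exact sequence of $J_{G/\cH}$, exploiting that the cohomology of a finite cyclic group vanishes in odd positive degrees. Write $P:=\bigoplus_{H\in\cH}\Ind_{H}^{G}\Z$ and let $\iota\colon\Z\to P$ be the canonical map $(\varepsilon_{G/H}^{\circ})_{H\in\cH}$, so that $0\to\Z\xrightarrow{\iota}P\to J_{G/\cH}\to 0$ is exact. Since $\Res_{G/D}$ turns the corresponding long exact cohomology sequences (cf.~Lemma \ref{lem:mnex}) into a commutative ladder, I obtain commuting squares linking $H^{2}(G,\Z)$, $H^{2}(G,P)$, $H^{2}(G,J_{G/\cH})$, $H^{3}(G,\Z)$ and their $D$-counterparts, with horizontal maps $\iota_{*}$, $\pi_{G}$ (for $D$: $\pi_{D}$), $\delta_{G}$ (for $D$: $\delta_{D}$), and with the restriction maps as vertical arrows.

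I would then record three facts. (A) Since $G$, hence also the subgroup $D$, is finite cyclic, $H^{3}(G,\Z)=H^{3}(D,\Z)=0$; consequently $\pi_{G}$ and $\pi_{D}$ are surjective, with kernel the image of $\iota_{*}$. (B) The restriction $H^{2}(G,\Z)\to H^{2}(D,\Z)$ is surjective: applying Proposition \ref{prop:htch} with the subgroup $D$ together with Lemma \ref{lem:shpr}, it is identified with the restriction of characters $G^{\vee}\to D^{\vee}$, $f\mapsto f\!\mid_{D}$, which is onto because $\Q/\Z$ is divisible. (C) The restriction $H^{2}(G,P)\to H^{2}(D,P)$ is injective. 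Since cohomology commutes with the finite direct sum and restriction is natural, it suffices to see that $H^{2}(G,\Ind_{H}^{G}\Z)\to H^{2}(D,\Ind_{H}^{G}\Z)$ is injective for each $H\in\cH$; by Proposition \ref{prop:rsch} this map is identified with
\begin{equation*}
H^{\vee}\longrightarrow\bigoplus_{g\in R(D,H)}(D\cap gHg^{-1})^{\vee},\qquad f\longmapsto\bigl((f\circ\Ad(g^{-1}))\!\mid_{D\cap gHg^{-1}}\bigr)_{g},
\end{equation*}
and since $G$ is abelian one has $\Ad(g^{-1})=\id$ and $gHg^{-1}=H$, while the hypothesis $H<N\le D$ yields $D\cap gHg^{-1}=H$; thus the map is the diagonal embedding $f\mapsto(f)_{g\in R(D,H)}$, which is injective because $R(D,H)\neq\emptyset$.

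Given (A)--(C), the chase is short. Let $\zeta$ lie in the kernel of $\Res_{G/D}\colon H^{2}(G,J_{G/\cH})\to H^{2}(D,J_{G/\cH})$ and choose $\xi\in H^{2}(G,P)$ with $\pi_{G}(\xi)=\zeta$, using (A). Then $\pi_{D}(\Res_{G/D}\xi)=\Res_{G/D}(\pi_{G}\xi)=0$, so by exactness $\Res_{G/D}\xi=\iota_{*}(\eta)$ for some $\eta\in H^{2}(D,\Z)$. Using (B), pick $\widetilde\eta\in H^{2}(G,\Z)$ with $\Res_{G/D}(\widetilde\eta)=\eta$; by naturality of $\iota_{*}$ one has $\Res_{G/D}(\iota_{*}\widetilde\eta)=\iota_{*}\eta=\Res_{G/D}\xi$, so (C) forces $\xi=\iota_{*}\widetilde\eta$. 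Hence $\zeta=\pi_{G}(\xi)=\pi_{G}(\iota_{*}\widetilde\eta)=0$ by exactness, and the restriction map is injective.

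The crux of the argument — and the only place where the hypothesis $H<N$ is used — is the injectivity in (C): dropping $H\le D$ would allow $D\cap gHg^{-1}$ to be a proper subgroup of $H$, and restriction of characters onto a proper subgroup is not injective. Everything else is formal manipulation of the long exact sequences, together with the descriptions of restriction on the cohomology of induced modules furnished by Propositions \ref{prop:htch} and \ref{prop:rsch}.
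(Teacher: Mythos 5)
Your proposal is correct and follows essentially the same route as the paper: both arguments use the ladder of long exact sequences from $0\to\Z\to\bigoplus_{H\in\cH}\Ind_{H}^{G}\Z\to J_{G/\cH}\to 0$, the vanishing of $H^{3}$ of a cyclic group, the surjectivity of $G^{\vee}\to D^{\vee}$, and the identification via Proposition \ref{prop:rsch} of the restriction on $H^{2}(G,\Ind_{H}^{G}\Z)$ with a diagonal embedding (which is where $H<N\le D$ enters). The only cosmetic difference is that the paper first reduces to the case $D=N$, whereas you run the diagram chase directly for a general $D$ containing $N$.
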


\begin{proof}
We may assume $D=N$. Consider a commutative diagram
\begin{equation*}
\xymatrix{
H^{2}(G,\Z)\ar[r]^{\varepsilon_{G/H*}\hspace{30pt}}\ar[d]&\bigoplus_{H\in \cH}H^{2}(G,\Ind_{H}^{G}\Z)\ar[r]\ar[d]& H^{2}(G,J_{G/\cH})\ar[r]\ar[d]& H^{3}(G,\Z)\ar[d]\\
H^{2}(N,\Z)\ar[r]^{\varepsilon_{N}^{*}\hspace{30pt}}&\bigoplus_{H\in \cH}H^{2}(N,\Ind_{H}^{G}\Z)\ar[r]& H^{2}(N,J_{G/\cH})\ar[r]& H^{3}(N,\Z). 
}
\end{equation*}
We have $H^{3}(G,\Z)=0$ and $H^{3}(N,\Z)=0$. Hence, it suffices to prove the equality
\begin{equation*}
\Res_{G/N}^{-1}(\Ima(\varepsilon_{N}))=\Ima(\varepsilon). 
\end{equation*}
It is clear that $\Res_{G/N}^{-1}(\Ima(\varepsilon_{N}))$ contains $\Ima(\varepsilon)$. In the following, we give a proof of the reverse inclusion. Fix $H\in \cH$. Since $H$ is contained in $N$, Proposition \ref{prop:rsch} gives a commutative diagram
\begin{equation*}
\xymatrix@C=35pt{
H^{2}(G,\Ind_{H}^{G}\Z)\ar[r]^{\Res_{G/N}}\ar[d]^{\cong}&H^{2}(N,\Ind_{H}^{G}\Z)\ar[d]^{\cong}\\
H^{\vee}\ar[r]^{\diag}&\bigoplus_{G/N}H^{\vee}. 
}
\end{equation*}
In particular, the map $\Res_{G/N}\colon \bigoplus_{H\in \cH}H^{2}(G,\Ind_{H}^{G}\Z)\rightarrow \bigoplus_{H\in \cH}H^{2}(N,\Ind_{H}^{G}\Z)$ is injective. Moreover, since $G$ is abelian, Proposition \ref{prop:htch} implies that the map $\Res_{G/N}\colon H^{2}(G,\Z)\rightarrow H^{2}(D,\Z)$ is surjective. Therefore, $\Res_{G/N}^{-1}(\Ima(\varepsilon_{N}))$ is contained in $\Ima(\varepsilon)$. This completes the proof. 
\end{proof}

\begin{thm}\label{thm:upcj}
Let $G$ be a finite group, $\cH$ a finite multiset of its subgroups, and $\cD$ an admissible set of subgroups in $G$. Then, we have
\begin{equation*}
\Sha_{\cD}^{2}(G,J_{G/\cH})=\Sha_{\cD_{(\cH)}}^{2}(G,J_{G/\cH}),
\end{equation*}
where $\cD_{(\cH)}$ is defined as follows: 
\begin{equation*}
\cD_{(\cH)}:=\cD \setminus \{D \in \cC_{G} \mid D\not\subset H\text{ for any }H\in \cH\}. 
\end{equation*}
\end{thm}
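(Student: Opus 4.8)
The inclusion $\Sha_{\cD}^{2}(G,J_{G/\cH}) \subseteq \Sha_{\cD_{(\cH)}}^{2}(G,J_{G/\cH})$ is automatic from $\cD_{(\cH)}\subseteq\cD$, so the content is the reverse inclusion. The plan is to fix a class $\xi\in\Sha_{\cD_{(\cH)}}^{2}(G,J_{G/\cH})$ and an arbitrary $D\in\cD$, and prove $\Res_{G/D}(\xi)=0$. If $D\in\cD_{(\cH)}$ there is nothing to show, so we may assume $D\in\cD\setminus\cD_{(\cH)}$, which forces $D$ to be cyclic and not contained in any $H\in\cH$.

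First I would restrict everything to $D$. By Proposition~\ref{prop:hkor} there is an isomorphism of $D$-lattices $J_{G/\cH}|_{D}\cong J_{D/\cH_{D}}$, where $\cH_{D}$ is the finite multiset of subgroups $D\cap gHg^{-1}$ ($H\in\cH$, $g\in R(D,H)$). The key observation is that $\Res_{G/D}(\xi)$ automatically lies in $\Sha_{\cH_{D}^{\set}}^{2}(D,J_{D/\cH_{D}})$ under this identification: for $K\in\cH_{D}^{\set}$, write $K=D\cap gHg^{-1}$; then $g^{-1}Kg$ is a \emph{cyclic} subgroup of $G$ contained in $H\in\cH$, hence lies in $\cD$ by admissibility and is \emph{not} removed in passing to $\cD_{(\cH)}$, so $\Res_{G/g^{-1}Kg}(\xi)=0$; conjugation-invariance of restriction (as in Lemma~\ref{lem:gpac}) then gives $\Res_{D/K}\Res_{G/D}(\xi)=0$. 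This is exactly the step that reconciles the asymmetric definition of $\cD_{(\cH)}$ (removal by containment in some $H\in\cH$, not in a conjugate) with the conjugation-stable set $\cD$.

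Thus the whole statement reduces to the purely group-theoretic claim that \emph{for a finite cyclic group $D$ and a finite set $\cL$ of subgroups of $D$ one has $\Sha_{\cL}^{2}(D,J_{D/\cL})=0$}. To get there I would first split off the permutation part of $J_{D/\cH_{D}}$ via Proposition~\ref{prop:endo}, writing $J_{D/\cH_{D}}\cong J_{D/\cH_{D}^{\red}}\oplus P$ with $P$ a sum of modules $\Ind_{K}^{D}\Z$ ($K\in\cH_{D}^{\set}$); on each such summand $\Res_{D/K}$ is already injective on $H^{2}$ (Mackey together with Proposition~\ref{prop:rsch}, using that $D$ is abelian), so $\Sha_{\cH_{D}^{\set}}^{2}(D,P)=0$, leaving $J_{D/\cH_{D}^{\red}}$. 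For the reduced case I would use that $D$ is cyclic, so $H^{3}(D',\Z)=0$ for every $D'\le D$; combined with Lemma~\ref{lem:mnex}, Proposition~\ref{prop:htch}, Proposition~\ref{prop:hkor} and Proposition~\ref{prop:rsch}, this realizes $H^{2}(D',J_{D/\cL})$ as the cokernel of a restriction-of-characters map $(D')^{\vee}\to\bigoplus_{L}(D'\cap L)^{\vee}$, compatibly with restriction along $D'\le D$. Chasing this, a class $[(\chi_{L})_{L\in\cL}]$ lies in $\Sha_{\cL}^{2}(D,J_{D/\cL})$ precisely when the family $(\chi_{L})_{L}$ is compatible on pairwise intersections (taking the index $L_{0}$ itself in the vanishing condition at $L_{0}$ pins the would-be extension down to $\chi_{L_{0}}$). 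Finally I would glue: writing $D=\bigoplus_{p}D_{p}$ into its Sylow subgroups, the subgroups $L\cap D_{p}$ ($L\in\cL$) form a chain inside the cyclic $p$-group $D_{p}$, so one dominates the rest; extend the corresponding character to $D_{p}$ using injectivity of $\Q/\Z$, and reassemble over $p$ to obtain $\chi\in D^{\vee}$ with $\chi|_{L}=\chi_{L}$ for all $L$, whence $[(\chi_{L})_{L}]=0$.

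I expect the main obstacle to be the middle step: correctly assembling the cited compatibility statements (Propositions~\ref{prop:hkor},~\ref{prop:htch},~\ref{prop:rsch}) into the cokernel description of $H^{2}(D',J_{D/\cL})$ and of its restriction maps, and verifying that the resulting membership condition for $\Sha_{\cL}^{2}$ is exactly "compatible family of characters". Once that is in place, the gluing is routine (chain structure of subgroups of cyclic $p$-groups, the Chinese Remainder decomposition $D=\bigoplus_{p}D_{p}$, and injectivity of $\Q/\Z$), and the permutation-summand bookkeeping is immediate from Proposition~\ref{prop:endo} and the diagonal-restriction computation above.
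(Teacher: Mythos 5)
Your argument is correct, and it reaches the conclusion by a recognizably different organization than the paper, even though both rest on the same computational core (the Mackey--Shapiro identification of $H^{2}(\,\cdot\,,\Ind\Z)$ with character groups, the vanishing of $H^{3}$ for cyclic groups, and the chain structure of subgroups of a cyclic $p$-group). The paper first reduces to $D$ of prime power order via Proposition~\ref{prop:tsan}~(ii), then replaces $D$ by a conjugate $D'=g_{0}Dg_{0}^{-1}$ chosen to maximize $\#(gDg^{-1}\cap H)$ over all $H\in\cH$ and $g\in G$; the chain structure then forces every member of $\cH_{D'}$ to lie inside the single subgroup $N:=D'\cap H_{0}$, and Lemma~\ref{lem:cyij} (injectivity of $\Res_{D'/N}$ on $H^{2}(D',J_{D'/\cH_{D'}})$ whenever $N$ contains all members of the multiset) gives $\Ker(\Res_{G/D})=\Ker(\Res_{G/N})$ with $N$ a cyclic subgroup of $H_{0}$, hence surviving in $\cD_{(\cH)}$. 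You instead bypass the conjugate-maximization entirely: your observation that $\Res_{G/D}(\xi)$ automatically lies in $\Sha_{\cH_{D}^{\set}}^{2}(D,J_{D/\cH_{D}})$ (via admissibility of $\cD$ plus conjugation-invariance of restriction) reduces everything to the symmetric statement $\Sha_{\cL}^{2}(D,J_{D/\cL})=0$ for $D$ cyclic, which you prove by the compatible-family-of-characters gluing. Your intermediate statement is cleaner and reusable, and the "compatibility on pairwise intersections" characterization you extract is correct (the copy of $(L_{0}\cap L_{0})^{\vee}=L_{0}^{\vee}$ indexed by the trivial double coset does pin the witness down to $\chi_{L_{0}}$, and the extra double-coset multiplicities you gloss over impose no further conditions since $D$ is abelian and the restricted classes land diagonally). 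The paper's route, by isolating the single-restriction injectivity of Lemma~\ref{lem:cyij}, gets a statement it can reuse twice within the same proof and avoids having to compute the full $\Sha$ group over $D$; your route proves slightly more about cyclic groups at the cost of a somewhat longer character-theoretic verification.
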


\begin{proof}
Pick $D\in \cC_{G}$ that satisfies $D\not\subset H$ for any $H\in \cH$. It suffices to prove 
\begin{equation*}
\Ker(\Res_{G/D})=\bigcap_{i=1}^{m}\Ker(\Res_{G/D^{(i)}})
\end{equation*}
for some $D^{(1)},\ldots,D^{(m)}\in \bigcup_{H\in \cH}\cC_{H}$. By Proposition \ref{prop:tsan} (ii), the homomorphism
\begin{equation*}
(\Res_{D/D_{p}})_{p\mid \#D}\colon H^{2}(D,J_{G/H})\rightarrow \bigoplus_{p\mid \#D}H^{2}(D_{p},J_{G/H})
\end{equation*}
is injective. This implies an equality
\begin{equation*}
\Ker(\Res_{G/D})=\bigcap_{p\mid \#D}\Ker(\Res_{G/D_{p}}). 
\end{equation*}
Hence, we may assume that $D$ has prime power order. Pick $H_{0}\in \cH$ and $g_{0}\in G$ so that
\begin{equation*}
\#(g_{0}Dg_{0}^{-1}\cap H_{0})=\max\{\#(gDg^{-1}\cap H)\in \Zpn \mid H\in \cH,g\in G\}. 
\end{equation*}
Put $D':=g_{0}Dg_{0}^{-1}$. Then, Lemma \ref{lem:cyij} implies an equality 
\begin{equation*}
\Ker(\Res_{G/D})=\Ker(\Res_{G/D'}). 
\end{equation*}
On the other hand, Proposition \ref{prop:hkor} gives an isomorphism of $D'$-lattices
\begin{equation*}
J_{G/H}\cong J_{G/\cH_{D'}}. 
\end{equation*}
Now, $\cH_{D'}$ consists of subgroups of $D'$ of the form $D'\cap gHg^{-1}$ with $g\in R(D',H)\subset G$. In particular, all elements of $\cH_{D'}$ are contained in $D'\cap H$, since $D$ has prime power order. Hence, by Lemma \ref{lem:cyij}, the map $\Res_{D'/D'\cap H}$ is injective. This gives an equality
\begin{equation*}
\Ker(\Res_{G/D'})=\Ker(\Res_{G/D'\cap H}), 
\end{equation*}
and hence the proof is complete. 
\end{proof}

\begin{thm}\label{thm:ppup}
Let $G$ be a finite group, $H$ its subgroup, and $\cD$ an admissible set of subgroups in $G$. Then, for any prime divisor $p$ of $(G:H)$, we have
\begin{equation*}
\Sha_{\cD}^{2}(G,J_{G/H})[p^{\infty}]=\Sha_{\cD_{(H,p)}}^{2}(G,J_{G/H})[p^{\infty}],
\end{equation*}
where $\cD_{(H,p)}$ is the set of $p$-Sylow subgroups of all elements of $\cD_{(H)}:=\cC_{H}\cup (\cD\setminus \cC_{G})$. In particular, one has an equality
\begin{equation*}
\Sha_{\omega}^{2}(G,J_{G/H})[p^{\infty}]=\Sha_{\cC_{H,p}}^{2}(G,J_{G/H})[p^{\infty}],
\end{equation*}
where $\cC_{H,p}$ is the set of all cyclic groups of order powers of $p$ in $H$. 
\end{thm}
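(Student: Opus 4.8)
The plan is to deduce the statement from Theorem \ref{thm:upcj} together with the elementary fact that restriction to a Sylow $p$-subgroup is injective on $p$-primary torsion (a form of which is recorded in Proposition \ref{prop:tsan} (ii)).

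First I would check that, for the multiset $\cH=\{H\}$, the set $\cD_{(\cH)}$ of Theorem \ref{thm:upcj} is exactly $\cD_{(H)}=\cC_{H}\cup(\cD\setminus\cC_{G})$: since $\cD$ is admissible it contains $\cC_{G}$, and discarding those cyclic subgroups of $G$ not contained in $H$ leaves precisely $\cC_{H}$ together with the non-cyclic members of $\cD$. Hence Theorem \ref{thm:upcj} gives $\Sha_{\cD}^{2}(G,J_{G/H})=\Sha_{\cD_{(H)}}^{2}(G,J_{G/H})$, and a fortiori the equality of their $p$-primary parts. It therefore remains to prove
\begin{equation*}
\Sha_{\cD_{(H)}}^{2}(G,J_{G/H})[p^{\infty}]=\Sha_{\cD_{(H,p)}}^{2}(G,J_{G/H})[p^{\infty}].
\end{equation*}

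For the inclusion $\subseteq$ I would argue formally: for $D\in\cD_{(H)}$ and any $p$-Sylow $D_{p}$ of $D$ one has $\Res_{G/D_{p}}=\Res_{D/D_{p}}\circ\Res_{G/D}$, so $\Ker(\Res_{G/D})\subseteq\Ker(\Res_{G/D_{p}})$; intersecting over all $D\in\cD_{(H)}$ and all their $p$-Sylow subgroups gives $\Sha_{\cD_{(H)}}^{2}(G,J_{G/H})\subseteq\Sha_{\cD_{(H,p)}}^{2}(G,J_{G/H})$. For the reverse inclusion on $p$-primary parts, take $x\in\Sha_{\cD_{(H,p)}}^{2}(G,J_{G/H})[p^{\infty}]$ and fix $D\in\cD_{(H)}$ together with a $p$-Sylow subgroup $D_{p}\in\cD_{(H,p)}$ of $D$. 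Then $\Res_{G/D}(x)\in H^{2}(D,J_{G/H})[p^{\infty}]$ and its further restriction to $H^{2}(D_{p},J_{G/H})$ is $\Res_{G/D_{p}}(x)=0$. Since $p\nmid(D:D_{p})$, Proposition \ref{prop:tsan} (ii) applied to the pair $D_{p}<D$ (with an empty family of subgroups, so that the $\Sha^{2}$'s are the full $H^{2}$'s) shows $\Res_{D/D_{p}}$ is injective on $H^{2}(D,J_{G/H})[p^{\infty}]$; hence $\Res_{G/D}(x)=0$. As $D$ was arbitrary, $x\in\Sha_{\cD_{(H)}}^{2}(G,J_{G/H})$, giving the displayed equality.

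Finally, the ``in particular'' statement is the case $\cD=\cC_{G}$, for which $\Sha_{\cD}^{2}=\Sha_{\omega}^{2}$, $\cD_{(H)}=\cC_{H}$, and the $p$-Sylow subgroups of the cyclic groups in $H$ are exactly the cyclic $p$-groups in $H$, i.e. $\cD_{(H,p)}=\cC_{H,p}$. I do not anticipate a real obstacle; the only thing to watch is that passing from each $D$ to its $p$-Sylow $D_{p}$ enlarges every individual kernel yet, on $p$-primary torsion, leaves the intersection unchanged, which is exactly what the injectivity of restriction to Sylow subgroups provides.
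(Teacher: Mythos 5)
Your proposal is correct and matches the paper's own argument: the paper likewise invokes Theorem \ref{thm:upcj} (with $\cH=\{H\}$, so that $\cD_{(\cH)}=\cD_{(H)}$) and then uses the restriction--corestriction injectivity of Proposition \ref{prop:tsan} (ii) for $D_{p}<D$ to identify $\Ker(\Res_{G/D})[p^{\infty}]$ with $\Ker(\Res_{G/D_{p}})[p^{\infty}]$ for each $D\in\cD_{(H)}$. Your write-up is just a slightly more explicit version (separating the two inclusions and spelling out the specialization $\cD=\cC_{G}$) of the same proof.
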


\begin{proof}
Take $D\in \cD_{(H)}$, and choose a $p$-Sylow subgroup $D_{p}$ of $G$. Then, the composite 
\begin{equation*}
H^{2}(D,J_{G/H})[p^{\infty}]\hookrightarrow H^{2}(D,J_{G/H}) \xrightarrow{\Res_{D/D_{p}}}H^{2}(D_{p},J_{G/H})
\end{equation*}
is injective, which follows from Proposition \ref{prop:tsan} (ii). This implies
\begin{equation*}
\Ker(\Res_{G/D})[p^{\infty}]=\Ker(\Res_{G/D_{p}})[p^{\infty}]. 
\end{equation*}
Hence, we obtain an equality
\begin{equation*}
\Sha_{\cD_{(H)}}^{2}(G,J_{G/H})[p^{\infty}]=\Sha_{\cD_{(H,p)}}^{2}(G,J_{G/H})[p^{\infty}]. 
\end{equation*}
Therefore, the assertion follows from Theorem \ref{thm:upcj}. 
\end{proof}

\subsection{Cohomology groups: some particular cases}

If $G$ and $\cH$ are of particular type, the structure of $\Sha_{\omega}^{2}(G,J_{G/\cH})$ is determined. 

\begin{thm}[{cf.~\cite[Theorem 8.3]{BayerFluckiger2019}}]\label{thm:blpl}
Let $G$ be a finite group, and $H_1,\ldots,H_r$ normal subgroups of $G$ of index a prime number $p$ that satisfy $H_{i}\not\subset H_{j}$ for distinct $i,j\in \{1,\ldots,r\}$. Put $N:=\bigcap_{i=1}^{r}H_{i}$ and write $(G:N)=p^{m}$ with $m\in \Zpn$. Take $e_1,\ldots,e_r\in \Zpn$. Then there is an isomorphism
\begin{equation*}
\Sha_{\omega}^{2}(G,J_{G/\cH})\cong 
\begin{cases}
(\Z/p)^{\oplus r-2}&\text{if $m=2$ and $\#\cH^{\red}\geq 3$},\\
0&\text{otherwise}. 
\end{cases}
\end{equation*}
\end{thm}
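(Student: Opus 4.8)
The plan is to carry out two reductions and then invoke the explicit multinorm computation for elementary abelian groups. First, by Lemma~\ref{lem:stts} I may pass to the reduced set $\cH^{\red}=\{H_{1},\ldots,H_{r}\}$, so it suffices to treat $e_{1}=\cdots=e_{r}=1$ (and $\#\cH^{\red}=r$). Since each $H_{i}$ is normal and contains $N:=\bigcap_{i}H_{i}$, the group $N$ acts trivially on every $\Ind_{H_{i}}^{G}\Z$, hence on $J_{G/\cH}$, so $J_{G/\cH}$ is inflated from a $G/N$-lattice, and Proposition~\ref{prop:ifts} gives $\Sha_{\omega}^{2}(G,J_{G/\cH})\cong\Sha_{\omega}^{2}(G/N,J_{(G/N)/\overline{\cH}})$ with $\overline{\cH}$ the image multiset. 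The injection $G/N\hookrightarrow\prod_{i}G/H_{i}\cong(\Z/p)^{r}$ shows $G/N\cong(\Z/p)^{m}$, and the images of the $H_{i}$ are $r$ distinct hyperplanes with trivial intersection (so $r\ge m$). Thus I may assume $G=(\Z/p)^{m}$ and that $H_{1},\ldots,H_{r}$ are distinct hyperplanes with $\bigcap_{i}H_{i}=\{1\}$. If $m=1$ this forces $r=1$, and then $\Sha_{\omega}^{2}=0$ holds since $G$ is cyclic (Corollary~\ref{cor:cyan}); this matches the formula. So assume $m\ge 2$.

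Next I would cut down the set of test subgroups. By Theorem~\ref{thm:upcj} only cyclic subgroups contained in some $H_{i}$ contribute. By Proposition~\ref{prop:hkor}, $J_{G/\cH}|_{H_{i}}\cong J_{H_{i}/\cH_{H_{i}}}$, where, the $H_{j}$ being normal of index $p$, $\cH_{H_{i}}$ consists of $H_{i}$ with multiplicity $p$ together with each $H_{i}\cap H_{j}$ $(j\ne i)$ with multiplicity $1$; hence $\cH_{H_{i}}^{\red}=\{H_{i}\}$, so by Proposition~\ref{prop:endo} the lattice $J_{G/\cH}|_{H_{i}}$ is a permutation $H_{i}$-lattice and $\Sha_{\omega}^{2}(H_{i},J_{G/\cH}|_{H_{i}})=0$ by Lemma~\ref{lem:stts}. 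Using transitivity of restriction (for $m\ge 3$ the cyclic subgroups of $H_{i}$ are exactly the cyclic subgroups of $G$ contained in $H_{i}$; for $m=2$ the relevant cyclic subgroups are the $H_{i}$ themselves), this identifies $\Sha_{\omega}^{2}(G,J_{G/\cH})$ with the kernel of $H^{2}(G,J_{G/\cH})\to\bigoplus_{i}H^{2}(H_{i},J_{G/\cH}|_{H_{i}})$. Feeding $0\to\Z\to\bigoplus_{i=1}^{r}\Ind_{H_{i}}^{G}\Z\to J_{G/\cH}\to 0$ into Lemma~\ref{lem:mnex} and using $H^{1}(G,\Z)=0$, Shapiro's lemma and Proposition~\ref{prop:htch}, I obtain a short exact sequence
\begin{equation*}
0\to\Coker\bigl(G^{\vee}\xrightarrow{\alpha}\textstyle\bigoplus_{i}H_{i}^{\vee}\bigr)\to H^{2}(G,J_{G/\cH})\to\Ker\bigl(H^{3}(G,\Z)\xrightarrow{\beta}\textstyle\bigoplus_{i}H^{3}(H_{i},\Z)\bigr)\to 0,
\end{equation*}
where $\alpha(f)=(f|_{H_{i}})_{i}$, which is injective since $\sum_{i}H_{i}=G$, and $\beta=(\Res_{G/H_{i}})_{i}$.

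The core of the argument — which I would carry out using the analogous sequences over the $H_{i}$ and Proposition~\ref{prop:rsch} to identify the restriction maps on permutation summands, or simply quote from \cite[Theorem~8.3]{BayerFluckiger2019} — is the following detection statement: the restriction map $H^{2}(G,J_{G/\cH})\to\bigoplus_{i}H^{2}(H_{i},J_{G/\cH}|_{H_{i}})$ kills the subgroup $\Coker(\alpha)$ when $m=2$ (because then $H_{i}\cap H_{j}=\{1\}$ for $i\ne j$, so the relevant components vanish and the diagonal part lands in the image of $\alpha_{H_{i}}$), is injective on $\Coker(\alpha)$ when $m\ge 3$, and is injective on the quotient $\Ker(\beta)$ for every $m\ge 2$. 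Granting this, the displayed kernel is $0$ when $m\ge 3$ and equals $\Coker(\alpha)$ when $m=2$. Since $\dim_{\F_{p}}\Coker(\alpha)=r(m-1)-m$, which equals $r-2$ for $m=2$, and $H^{2}(G,J_{G/\cH})$ is an $\F_{p}$-vector space, we obtain $\Sha_{\omega}^{2}(G,J_{G/\cH})\cong(\Z/p)^{\oplus r-2}$ for $m=2$ (which is trivial exactly when $r=\#\cH^{\red}\le 2$) and $\Sha_{\omega}^{2}(G,J_{G/\cH})=0$ otherwise, which is the assertion.

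The main obstacle is this detection statement, and within it the injectivity on $\Coker(\alpha)$ for $m\ge 3$: concretely, one must show that a family $\xi_{i}\in H_{i}^{\vee}$ with $\xi_{i}|_{H_{i}\cap H_{j}}=\xi_{j}|_{H_{i}\cap H_{j}}$ for all $i\ne j$ is the restriction of a single character of $G$, together with the parallel control of the $H^{3}$-part. A naive ``extend one $H_{i}$ at a time'' approach is not enough, since correcting the restriction to $H_{i+1}$ may spoil the earlier ones, so I would instead argue through a cleaner cohomological reformulation of this gluing, or rely on \cite[Theorem~8.3]{BayerFluckiger2019}. The $m=2$ half of the detection is easier but is the genuinely new-looking ingredient: there $\Ker(\beta)=H^{3}((\Z/p)^{2},\Z)=H^{2}((\Z/p)^{2},\Q/\Z)\cong\Hom(\Lambda^{2}(\Z/p)^{2},\Q/\Z)\cong\Z/p$, and one checks that this class is detected already upon restriction to any two of the $H_{i}$.
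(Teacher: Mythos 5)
Your opening reductions are correct and the first two coincide with the paper's: pass to $\cH^{\red}$ via Lemma \ref{lem:stts}, kill $N$ via Proposition \ref{prop:ifts} so that $G\cong (C_{p})^{m}$ and the $H_{i}$ become distinct hyperplanes with trivial intersection, and observe via Proposition \ref{prop:hkor}, Proposition \ref{prop:endo} and Lemma \ref{lem:qtts} that $J_{G/\cH}$ is a permutation $H_{i}$-lattice, so that $\Sha_{\omega}^{2}(G,J_{G/\cH})=\bigcap_{i}\Ker(\Res_{G/H_{i}})$. After that, however, the argument is not complete: the entire content of the theorem sits in your ``detection statement'' (restriction kills $\Coker(\alpha)$ when $m=2$, is injective on $\Coker(\alpha)$ when $m\geq 3$, and detects $\Ker(\beta)$), and this is asserted rather than proved --- you flag it yourself as the main obstacle. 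Two further points: (1) the fallback of ``simply quoting'' \cite[Theorem 8.3]{BayerFluckiger2019} for this statement does not typecheck, because that theorem computes the arithmetic obstruction $\Sha(\bK/k)$ for an \'etale algebra over a global field, not the group-cohomological kernel $\bigcap_{i}\Ker(\Res_{G/H_{i}})$; a bridge is required. (2) Treating $\Coker(\alpha)$ and $\Ker(\beta)$ separately presupposes that the restriction maps interact with the extension $0\to\Coker(\alpha)\to H^{2}(G,J_{G/\cH})\to\Ker(\beta)\to 0$ piece by piece; ``injective on the quotient $\Ker(\beta)$'' must be formulated for the induced map on $H^{2}(G,J_{G/\cH})/\Coker(\alpha)$ only after $\Coker(\alpha)$ has been shown to lie in the kernel, and for $m\geq 3$ the two pieces have to be controlled jointly.

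The paper avoids the computation altogether, and its route is exactly the bridge your fallback is missing. After the same two reductions it realizes the data arithmetically: Proposition \ref{prop:shf1} produces a Galois extension $\widetilde{K}/k$ of global fields with group $(C_{p})^{m}$ in which all decomposition groups are cyclic; then $\Sha_{\omega}^{2}(G,J_{G/\cH})\cong\Sha^{2}(k,X^{*}(T_{\bK/k}))$ by Proposition \ref{prop:almt} (cyclicity of the decomposition groups is what turns $\Sha_{\cD}^{2}$ into $\Sha_{\omega}^{2}$), and $\Sha^{2}(k,X^{*}(T_{\bK/k}))\cong\Sha(\bK/k)$ by \cite[Lemma 3.1, Proposition 5.16]{BayerFluckiger2019}, at which point \cite[Theorem 8.3]{BayerFluckiger2019} applies verbatim. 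If you want a purely group-cohomological proof you must establish the detection statement yourself; your dimension count $\dim_{\F_{p}}\Coker(\alpha)=r(m-1)-m$ is correct and the $m=2$ case is tractable along the lines you sketch, but the character-gluing for $m\geq 3$ and the control of the $H^{3}$-part are precisely where the cited work does its real labor, so as it stands the proposal has a genuine gap at its core step.
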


\begin{proof}
We may assume $N=\{1\}$, which is a consequence of Proposition \ref{prop:ifts}. Moreover, by virtue of Lemma \ref{lem:stts}, we may assume $\cH=\cH^{\red}$. Take a finite abelian extension $K/\Q$ with Galois group $(C_{p})^{m}$ in which all decomposition groups are cyclic. This is possible by Proposition \ref{prop:shf1}. Let $K_{i}/k$ be the subextension of $K/k$ corresponding to $H_{i}$, and let $\bK:=K_{1}\times \cdots \times K_{r}$ and $\cH:=\{H_{1},\ldots,H_{r}\}$. Then, by Proposition \ref{prop:almt}, one has an isomorphism
\begin{equation*}
\Sha^{2}(k,X^{*}(T_{K/k}))\cong \Sha_{\omega}^{2}(G,J_{G/\cH}). 
\end{equation*}
Furthermore, \cite[Lemma 3.1, Proposition 5.16]{BayerFluckiger2019} gives an isomorphism 
\begin{equation*}
\Sha^{2}(k,X^{*}(T_{K/k}))\cong \Sha(\bK/k). 
\end{equation*}
Hence, the assertion follows from \cite[Theorem 8.3]{BayerFluckiger2019}. 
\end{proof}

\begin{rem}
One can prove Theorem \ref{thm:blpl} for $m=1$ by using Corollary \ref{cor:cyan}. Moreover, Theorem \ref{thm:blpl} in the case $m=r=2$ is known by H{\"u}rlimann (\cite[Proposition 3.3]{Hurlimann1984}). 
\end{rem}

\begin{prop}[{cf.~\cite[Lemma 4]{Bartels1981a}}]\label{prop:bart}
Let $G$ be a finite group, and $H$ a subgroup of prime index in $G$. Then
\begin{equation*}
\Sha_{\omega}^{2}(G,J_{G/H})=0. 
\end{equation*}
\end{prop}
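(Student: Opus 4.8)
The plan is to prove $\Sha_{\omega}^{2}(G,J_{G/H})=0$ when $H$ has prime index $p$ in $G$ by reducing to the structure theory already available for $p$-groups and cyclic groups. First I would apply Proposition~\ref{prop:tsan}~(i): it suffices to find a subgroup $L\leq G$ with $\Sha_{\omega}^{2}(L,J_{G/H})=0$ and with $(G:L)$ coprime to... wait, that annihilates by $(G:L)$, not zero. So instead the natural move is to combine two instances of Proposition~\ref{prop:tsan}. Pick a $p$-Sylow subgroup $S_{p}$ of $G$. By Theorem~\ref{thm:ppup} (or directly Proposition~\ref{prop:tsan}~(ii)), the $p$-primary part $\Sha_{\omega}^{2}(G,J_{G/H})[p^{\infty}]$ injects into $\Sha_{\omega}^{2}(S_{p},J_{G/H})$, computed via the restriction of the $G$-lattice to $S_{p}$. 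For the prime-to-$p$ part, note that $H$ has index $p$, so by Proposition~\ref{prop:tsan}~(i) applied with the roles of $H$ and the relevant bound, $\Sha_{\omega}^{2}(G,J_{G/H})$ is annihilated by $(G:H)=p$; hence $\Sha_{\omega}^{2}(G,J_{G/H})=\Sha_{\omega}^{2}(G,J_{G/H})[p^{\infty}]$ and the prime-to-$p$ part vanishes for free.

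So everything reduces to showing $\Sha_{\omega}^{2}(S_{p},J_{G/H})=0$. Here I would use Proposition~\ref{prop:hkor}: restricting the $G$-lattice $J_{G/H}$ to the subgroup $S_{p}$ gives $J_{G/H}\cong J_{S_{p}/\cH_{S_{p}}}$ as $S_{p}$-lattices, where $\cH_{S_{p}}$ is the multiset of subgroups $S_{p}\cap gHg^{-1}$ for $g$ ranging over $S_{p}\backslash G/H$. Since $(G:H)=p$ and $S_{p}$ is a $p$-Sylow, a counting argument on the double cosets $S_{p}\backslash G/H$ shows that the double coset space has a single element (because $|S_{p}|\cdot|H|/|G|$ considerations, together with $S_{p}\not\subseteq H$ when $p\mid (G:H)$ forces $S_{p}H=G$), so in fact there is exactly one double coset, and the corresponding intersection $S_{p}\cap H$ has index $p$ in $S_{p}$. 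Thus $\cH_{S_{p}}$ (after passing to $\cH^{\red}$, using Lemma~\ref{lem:stts}) is a single subgroup of $S_{p}$, so $J_{G/H}\cong J_{S_{p}/(S_{p}\cap H)}$ as $S_{p}$-lattices, i.e.\ it is a Chevalley module for the $p$-group $S_{p}$.

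Now I invoke Theorem~\ref{thm:blpl} (the Bayer-Fluckiger--Lee--Parimala computation) with $r=1$: a single subgroup of index $p$ that is normal in the $p$-group $S_{p}$. Since $\#\cH^{\red}=1<3$, the theorem gives $\Sha_{\omega}^{2}(S_{p},J_{S_{p}/(S_{p}\cap H)})=0$. Tracing back, $\Sha_{\omega}^{2}(G,J_{G/H})[p^{\infty}]=0$, and combined with the annihilation by $p$ from Proposition~\ref{prop:tsan}~(i), this yields $\Sha_{\omega}^{2}(G,J_{G/H})=0$.

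The step I expect to be the main obstacle is the double-coset count for $S_{p}\backslash G/H$: one needs that $S_{p}H=G$, which holds since $(G:H)=p$ is the full $p$-part dividing $(G:H)$ (as $p\mid (G:H)$ and $(G:H)$ is prime, the $p$-Sylow of $G$ is not contained in $H$, forcing $S_{p}H=G$ because $S_{p}H/H$ is a nontrivial $p$-subgroup of the $p$-element set $G/H$). One must also handle the subtlety that $S_{p}\cap H$ need not be normal in $G$, but it \emph{is} automatically normal in $S_{p}$ since it has index $p$ in the $p$-group $S_{p}$, which is exactly the hypothesis format Theorem~\ref{thm:blpl} requires. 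Alternatively, since $S_{p}\cap H$ has index $p$ in $S_{p}$, one could sidestep Theorem~\ref{thm:blpl} entirely and use Corollary~\ref{cor:cyan} after reducing to a cyclic quotient, or simply note $\Sha_{\omega}^{2}$ of a Chevalley module for a subgroup of prime index is classically zero; but citing Theorem~\ref{thm:blpl} with $r=1$ is the cleanest route given what the excerpt has already established.
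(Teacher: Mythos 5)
Your proof is correct, and its outer skeleton coincides with the paper's: split off the $p$-primary part, kill it by restricting to a $p$-Sylow subgroup $S_{p}$, and kill the prime-to-$p$ part by the annihilation of $\Sha_{\omega}^{2}(G,J_{G/H})$ by $(G:H)=p$ (for that last step you gesture at Proposition~\ref{prop:tsan}~(i), whose hypothesis you would still need to check via Mackey; the cleaner citation is Proposition~\ref{prop:andg}, which states exactly the annihilation you want). Where you genuinely diverge is in how the vanishing of $\Sha_{\omega}^{2}(S_{p},J_{G/H})$ is obtained. The paper first inflates along $G\twoheadrightarrow G/N^{G}(H)$ (Proposition~\ref{prop:ifts}) to assume $N^{G}(H)=\{1\}$, so that $G$ becomes a transitive group of degree $p$; then $\#G\in p\Z\setminus p^{2}\Z$, the Sylow subgroup is cyclic of order $p$, and Corollary~\ref{cor:cyan} finishes. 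You instead keep $S_{p}$ arbitrary, run the Mackey/double-coset computation (your count of $S_{p}\backslash G/H$ is right: the $S_{p}$-orbits on the $p$-element set $G/H$ have $p$-power size and cannot all be singletons since $S_{p}\not\subset H$, so there is a single double coset and $(S_{p}:S_{p}\cap H)=p$; no reduction to $\cH^{\red}$ is even needed), and then quote Theorem~\ref{thm:blpl} with $r=1$, $m=1$. That works, and in fact the case $m=1$ of Theorem~\ref{thm:blpl} is the elementary one (as the paper remarks, it reduces to Corollary~\ref{cor:cyan} after inflating along $S_{p}\twoheadrightarrow S_{p}/(S_{p}\cap H)\cong C_{p}$, since $S_{p}\cap H$ is normal of index $p$ in the $p$-group $S_{p}$), so you are not secretly importing the full strength of the Bayer-Fluckiger--Lee--Parimala result. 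The trade-off: the paper's normal-core trick avoids any double-coset analysis at the cost of the inflation step, while your route is more computational but stays entirely inside the fixed group $G$.
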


\begin{proof}
Put $p:=(G:H)$, which is a prime number. By Proposition \ref{prop:ifts}, we may assume $N^{G}(H)=\{1\}$. Then, Proposition \ref{prop:tggp} (ii) implies that $G$ is a transitive group of degree $p$. In particular, a $p$-Sylow subgroup $S_{p}$ of $G$ is isomorphic to $C_{p}$ by Lemma \ref{lem:trdp}. This concludes $\Sha_{\omega}^{2}(S_{p},J_{G/H})=0$ by Corollary \ref{cor:cyan}. Combining this equality with Proposition \ref{prop:tsan} (i), we obtain that $(G:S_{p})$ annihilates $\Sha_{\omega}^{2}(G,J_{G/H})$. On the other hand, Proposition \ref{prop:andg} implies that $\Sha_{\omega}^{2}(G,J_{G/H})$ is a $p$-group, and hence the assertion holds. 
\end{proof}

\begin{prop}\label{prop:abts}
Put $G:=C_{n_{1}} \times C_{n_{2}}$, where $n_{1},n_{2}\in \Zpn$. Then there is an isomorphism
\begin{equation*}
\Sha_{\omega}^{2}(G,J_{G})\cong \Z/\gcd(n_{1},n_{2}). 
\end{equation*}
\end{prop}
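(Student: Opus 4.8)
The plan is to compute the finite group $H^2(G,J_G)$ outright and then to verify that each restriction map occurring in the definition of $\Sha_\omega^2(G,J_G)$ kills nothing, so that $\Sha_\omega^2(G,J_G)=H^2(G,J_G)$. For the first step, apply Lemma~\ref{lem:mnex} to the one-element multiset $\cH=\{\{1\}\}$, for which $J_{G/\cH}=J_G$ and $\Ind_{\{1\}}^G\Z=\Z[G]$. In the resulting long exact sequence, $H^2(G,J_G)$ sits between $H^2(G,\Z[G])$ and $H^3(G,\Z)$: the former is $0$ by Shapiro's lemma, since $H^2(G,\Z[G])\cong H^2(\{1\},\Z)$, so $H^2(G,J_G)\to H^3(G,\Z)$ is injective, and the subsequent term $H^3(\{1\},\Z)$ is $0$, so this map is also surjective. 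Thus $H^2(G,J_G)\cong H^3(G,\Z)$; the same reasoning applied to any finite group $\Gamma$ gives $H^2(\Gamma,J_\Gamma)\cong H^3(\Gamma,\Z)$, which I reuse below.

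Next I would compute $H^3(C_{n_1}\times C_{n_2},\Z)$. Using $H^0(C_m,\Z)\cong\Z$, $H^{2i}(C_m,\Z)\cong\Z/m$ for $i\ge 1$, and $H^{2i+1}(C_m,\Z)=0$ for $i\ge 0$, the Künneth formula for group cohomology over $\Z$ shows that every tensor-product contribution to $H^3(G,\Z)$ vanishes---in total degree $3$ one of the two degrees is always odd---and that the only surviving $\Tor$-contribution, coming from total degree $4$, is $\Tor_1^{\Z}(H^2(C_{n_1},\Z),H^2(C_{n_2},\Z))\cong\Tor_1^{\Z}(\Z/n_1,\Z/n_2)\cong\Z/\gcd(n_1,n_2)$. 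Hence $H^2(G,J_G)\cong H^3(G,\Z)\cong\Z/\gcd(n_1,n_2)$. (Alternatively, $H^3(G,\Z)\cong H^2(G,\Q/\Z)\cong\Hom(H_2(G,\Z),\Q/\Z)$, the Pontryagin dual of the Schur multiplier $H_2(G,\Z)$ of $C_{n_1}\times C_{n_2}$, which is $\Z/\gcd(n_1,n_2)$.)

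Finally I would show $H^2(D,J_G)=0$ for every cyclic subgroup $D$ of $G$; since $\cC_G$ is exactly the set of cyclic subgroups, this forces $\Sha_\omega^2(G,J_G)=\Ker\bigl(H^2(G,J_G)\to\bigoplus_{D\in\cC_G}H^2(D,J_G)\bigr)=H^2(G,J_G)$, proving the claim. By Proposition~\ref{prop:hkor}, the restriction of the $G$-lattice $J_G$ to $D$ is isomorphic to $J_{D/\cH_D}$, where $\cH_D$ is the multiset of $(G:D)$ copies of $\{1\}$ (the intersections $D\cap g\{1\}g^{-1}$ for $g$ in $D\backslash G$); by Proposition~\ref{prop:endo} this $D$-lattice is $J_D$ together with a permutation summand, so $H^2(D,J_G)\cong H^2(D,J_D)$, which by the fact noted above is isomorphic to $H^3(D,\Z)=0$ since $D$ is cyclic. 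The only step with genuine content is the computation $H^3(C_{n_1}\times C_{n_2},\Z)\cong\Z/\gcd(n_1,n_2)$---where the $\gcd$ materializes precisely as $\Tor_1^{\Z}(\Z/n_1,\Z/n_2)$, i.e.\ as the Schur multiplier---while everything else is formal manipulation of $0\to\Z\to\Z[G]\to J_G\to 0$ via Shapiro's lemma together with the vanishing of odd $\Z$-cohomology of cyclic groups, the same mechanism behind Corollary~\ref{cor:cyan}.
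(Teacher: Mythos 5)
Your proof is correct, and its skeleton coincides with the paper's: both reduce to showing $\Sha_{\omega}^{2}(G,J_{G})=H^{2}(G,J_{G})\cong H^{3}(G,\Z)$ and then identifying $H^{3}(G,\Z)$ with $\Z/\gcd(n_{1},n_{2})$. The differences lie in how the two outer steps are discharged. For the equality $\Sha_{\omega}^{2}(G,J_{G})=H^{2}(G,J_{G})$, the paper invokes the general Theorem \ref{thm:upcj} (with $\cH=\{\{1\}\}$ the set $\cD_{(\cH)}$ retains only the trivial subgroup, whose $H^{2}$ vanishes), whereas you prove directly that every restriction target $H^{2}(D,J_{G})$ is zero for $D$ cyclic, via Mackey (Proposition \ref{prop:hkor}), the splitting of Proposition \ref{prop:endo} into $J_{D}$ plus a permutation summand, and the vanishing of $H^{3}(D,\Z)$ for cyclic $D$; this is slightly longer but entirely self-contained and makes the mechanism behind Corollary \ref{cor:cyan} explicit. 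For the computation $H^{3}(C_{n_{1}}\times C_{n_{2}},\Z)\cong\Z/\gcd(n_{1},n_{2})$, the paper simply cites Frei--Loughran--Newton, while you derive it from the K\"unneth formula (the only surviving term being $\Tor_{1}^{\Z}(\Z/n_{1},\Z/n_{2})$) or, equivalently, from the Schur multiplier of $C_{n_{1}}\times C_{n_{2}}$ via $H^{3}(G,\Z)\cong\Hom(H_{2}(G,\Z),\Q/\Z)$. Both of your substitutions are sound, so the net effect is a proof that trades two external inputs for short direct arguments.
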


\begin{proof}
By Therem \ref{thm:upcj}, we have an equality
\begin{equation*}
\Sha_{\omega}^{2}(G,J_{G/H})=H^{2}(G,J_{G}). 
\end{equation*}
On the other hand, Lemma \ref{lem:mnex} gives an isomorphism
\begin{equation*}
H^{2}(G,J_{G})\cong H^{3}(G,\Z). 
\end{equation*}
Hence, the assertion follows from \cite[Lemma 6.4, Lemma 6.5]{Frei2018}. 
\end{proof}

\section{Computation of second cohomology groups}\label{sect:thts}

Let $G$ be a group, and $H$ a subgroup of $G$. Then, we consider subgroups of $G$ as follows: 
\begin{equation*}
N_{G}(H):=\{g\in G\mid gHg^{-1}=H\},\quad Z_{G}(H):=\{g\in G\mid gh=hg\text{ for any }h\in H\}. 
\end{equation*}
Moreover, for another subgroup $D$ of $G$, let 
\begin{equation*}
[H,D]:=\langle hdh^{-1}d^{-1}\in G \mid h\in H,d\in D\rangle. 
\end{equation*}
In particular, if $H=D$, then we write $[H,H]$ for $H^{\der}$. 

\subsection{Exact sequences}\label{ssec:fcex}

\begin{lem}\label{lem:jgcg}
Let $G$ be a finite group, and $H\subset H'$ subgroups of $G$. Then there is an exact sequence
\begin{equation*}
\xymatrix{
0\ar[r] & \Ind_{H'}^{G}\Z \ar[d] \ar[r]& \Ind_{H}^{G}\Z \ar[d] \ar[r]& \Ind_{H'}^{G}J_{H'/H} \ar@{=}[d] \ar[r]& 0\\
0\ar[r] & J_{G/H'} \ar[r]& J_{G/H} \ar[r]& \Ind_{H'}^{G}J_{H'/H} \ar[r]& 0, }
\end{equation*}
where the horizontal sequences are exact. 
\end{lem}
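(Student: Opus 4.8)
The statement is purely homological: we must produce a commutative diagram with exact rows relating the two short exact sequences that define the Chevalley modules $J_{G/H}$ and $J_{G/H'}$. The natural starting point is the inclusion $H \subset H'$, which induces a surjection $\Ind_{H}^{G}\Z \twoheadrightarrow \Ind_{H'}^{G}\Z$ (dually to the surjection on coset spaces $G/H \twoheadrightarrow G/H'$) — but in fact I want the arrow the \emph{other} way in the top row, namely an injection $\Ind_{H'}^{G}\Z \hookrightarrow \Ind_{H}^{G}\Z$. Using the $\Hom$-description $\Ind_{H}^{G}\Z = \Hom_{\Z[H]}(\Z[G],\Z)$, a function on $G$ that is left-$H'$-invariant is a fortiori left-$H$-invariant, so there is an obvious inclusion $\Ind_{H'}^{G}\Z \hookrightarrow \Ind_{H}^{G}\Z$ of $G$-modules. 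The cokernel is what I will identify with $\Ind_{H'}^{G}J_{H'/H}$: by Mackey/Shapiro-type bookkeeping (or directly), a function on $G$ modulo the $H'$-invariant ones records, for each right coset, its values on $H'/H$ up to the constants, which is exactly $\Ind_{H'}^{G}$ applied to $J_{H'/H}$. Concretely, apply the exact functor $\Ind_{H'}^{G}(-)$ to the defining sequence $0 \to \Z \to \Ind_{H}^{H'}\Z \to J_{H'/H} \to 0$ of $H'$-modules, and use $\Ind_{H'}^{G}\Ind_{H}^{H'}\Z \cong \Ind_{H}^{G}\Z$ (transitivity of induction) together with $\Ind_{H'}^{G}\Z$ being what it is; this yields the top row directly.

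First I would write down the top row as $\Ind_{H'}^{G}$ applied to the defining sequence of $J_{H'/H}$, getting exactness for free since $\Ind_{H'}^{G}(-) = \Hom_{\Z[H']}(\Z[G],-)$ is exact on $G$-lattices (the relevant $\Z[G]$-module is free over $\Z[H']$). Next I would recall the defining sequences $0 \to \Z \xrightarrow{\varepsilon_{G/H}} \Ind_{H}^{G}\Z \to J_{G/H} \to 0$ and likewise for $H'$, where $\varepsilon_{G/H}(1)$ is the constant function $1$. The key compatibility is that $\varepsilon_{G/H} = \varepsilon_{G/H'}$ after composing the latter with the inclusion $\Ind_{H'}^{G}\Z \hookrightarrow \Ind_{H}^{G}\Z$ — both send $1$ to the constant function $1$ on $G$. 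This gives the left square of the asserted diagram (with vertical map $\Z$-identity suppressed, i.e.\ the two outer vertical maps are the canonical quotient maps $\Ind_{H'}^{G}\Z \to J_{G/H'}$ and $\Ind_{H}^{G}\Z \to J_{G/H}$), and the right vertical map is the identity on $\Ind_{H'}^{G}J_{H'/H}$.

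Then the bottom row is obtained by the snake lemma / $3\times 3$ lemma: we have a commutative square
\[
\xymatrix{
\Ind_{H'}^{G}\Z \ar[r] \ar[d] & \Ind_{H}^{G}\Z \ar[d] \\
J_{G/H'} \ar@{.>}[r] & J_{G/H}
}
\]
where the vertical maps are surjections with kernels $\Z$ (the same $\Z$, via $\varepsilon$). Since the two copies of $\Z$ are identified compatibly, the induced map $J_{G/H'}\to J_{G/H}$ is injective, and applying the $3\times 3$ lemma to the array with columns $0 \to \Z \xrightarrow{=} \Z \to 0 \to 0$, middle row the top row of our diagram, and top row $0 \to \Ind_{H'}^{G}\Z \to \Ind_{H}^{G}\Z \to \Ind_{H'}^{G}J_{H'/H} \to 0$, forces the bottom row $0 \to J_{G/H'} \to J_{G/H} \to \Ind_{H'}^{G}J_{H'/H} \to 0$ to be exact with the square commuting.

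\textbf{Main obstacle.} None of this is deep; the only thing requiring a moment's care is the identification of $\Coker(\Ind_{H'}^{G}\Z \hookrightarrow \Ind_{H}^{G}\Z)$ with $\Ind_{H'}^{G}J_{H'/H}$ — i.e.\ checking that induction from $H'$ of the sequence $0 \to \Z \to \Ind_H^{H'}\Z \to J_{H'/H}\to 0$ genuinely recovers $\Ind_H^G\Z$ in the middle, which is transitivity of $\Ind$ ($\Ind_{H'}^{G}\Ind_{H}^{H'}\Z \cong \Ind_{H}^{G}\Z$ as $G$-modules), and that the first map matches $\varepsilon_{G/H'}$. This is routine but is the one spot where one must be precise about the $G$-action conventions fixed in \S\ref{ssec:chfg}; everything else is formal diagram chasing, so I would simply say "the assertion follows by applying the exact functor $\Ind_{H'}^{G}(-)$ to the defining sequence of $J_{H'/H}$ and the $3\times 3$ lemma," perhaps spelling out the $\varepsilon$-compatibility in one line.
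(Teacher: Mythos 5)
Your argument is correct and follows essentially the same route as the paper: the top row is obtained by applying the exact functor $\Ind_{H'}^{G}(-)$ to the defining sequence of $J_{H'/H}$ together with transitivity of induction, and the bottom row is then induced by comparing with the defining sequences of $J_{G/H'}$ and $J_{G/H}$ via the $\varepsilon$-compatibility. The paper phrases the last step as the vanishing of the composite $\Z\rightarrow \Ind_{H}^{G}\Z\rightarrow \Ind_{H'}^{G}J_{H'/H}$ rather than invoking the $3\times 3$ lemma explicitly, but this is the same diagram chase.
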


\begin{proof}
We define the upper horizontal sequence by taking $\Ind_{H'}^{G}$ to the canonical exact sequence
\begin{equation*}
0\rightarrow \Z \rightarrow \Ind_{H}^{H'}\Z \rightarrow J_{H'/H}\rightarrow 0. 
\end{equation*}
On the other hand, by definition, one has a commutative diagram of $G$-lattices
\begin{equation*}
\xymatrix{
0\ar[r] & \Z \ar@{=}[d] \ar[r]& \Ind_{H'}^{G}\Z \ar[d] \ar[r]& J_{G/H'} \ar[d] \ar[r]& 0\\
0\ar[r] & \Z \ar[r]& \Ind_{H}^{G}\Z \ar[r]& J_{G/H} \ar[r]& 0. }
\end{equation*}
This implies that the composite $\Z \rightarrow \Ind_{H}^{G}\Z \rightarrow \Ind_{H'}^{G}J_{H'/H}$ is zero, and hence the upper horizontal sequence induces a lower exact sequence. 
\end{proof}

\begin{rem}
The exact sequence in Lemma \ref{lem:jgcg} can be interpreted as a sequence of tori. Let $k$ be a field, $K/k$ a finite field extension, and $K'/k$ a subextension of $K/k$. Put $G:=\Gal(\widetilde{K}/k)$, $H:=\Gal(\widetilde{K}/K)$ and $H':=\Gal(\widetilde{K}/K')$. Then we obtain a commutative diagram of $k$-tori
\begin{equation*}
\xymatrix@C=30pt{
1\ar[r]& \Res_{K'/k_0}T_{K/K'} \ar@{=}[d] \ar[r]& T_{K/k} \ar[d] \ar[r]^{\N_{K/K'}}& T_{K'/k} \ar[d] \ar[r]& 1\\
1\ar[r]& \Res_{K'/k_0}T_{K/K'} \ar[r]& \Res_{K/k}\G_m \ar[r]^{\N_{K/K'}}& \Res_{K'/k}\G_m \ar[r]& 1. 
}
\end{equation*}
Here the horizontal sequences are exact. Then, the exact sequence of $G$-lattices induced by Proposition \ref{prop:mnjg} is the desired one. 
\end{rem}

For a prime number $p$, we denote by $\ord_{p}\colon \Q^{\times}\rightarrow \Z$ the homomorphism defined as follows for each prime number $\ell$: 
\begin{equation*}
\ord_{p}(\ell)=
\begin{cases}
1&\text{if }\ell=p;\\
0&\text{if }\ell \neq p. 
\end{cases}
\end{equation*}

\begin{lem}\label{lem:sbsd}
Let $p$ be a prime number, and $G$ a finite group of order a multiple of $p$ of which a $p$-Sylow subgroup $S_{p}$ is normal in $G$. 
\begin{enumerate}
\item There exist a subgroup $G'$ of $G$ and an isomorphism $G\cong S_{p}\rtimes G'$. 
\item Let $D$ be a subgroup of $G$. Then the subgroup $S_{p}\cap gDg^{-1}$ of $S_{p}$ has index $p^{\ord_{p}(G:D)}$ for any $g\in G$. 
\item We further assume that $S_{p}$ is abelian. Let $D$ be a subgroup of $G$. Then there exist a subgroup $D'$ of $G'$ and $s\in S_{p}$ such that the isomorphism in \emph{(i)} induces
\begin{equation*}
sDs^{-1}=(S_{p}\cap D)\rtimes D'. 
\end{equation*}
\item We further assume that there is a subgroup $H$ of $G$ with $N^{G}(H)=\{1\}$. 
\begin{itemize}
\item[(a)] Let $R(S_{p},H)$ be a complete representative of $S_{p}\backslash G/H$ in $G$. Then
\begin{equation*}
\bigcap_{g\in R(S_{p},H)}(S_{p}\cap gHg^{-1})=\bigcap_{g\in G}(S_{p}\cap gHg^{-1})=N^{G}(S_{p}\cap H)=\{1\}. 
\end{equation*}
\item[(b)] If $S_{p}$ is abelian, then the exponent of $S_{p}$ is equal to that of $S_{p}/(S_{p}\cap H)$. 
\end{itemize}
\item We further assume that there is a subgroup $H$ of $G$ such that $\ord_{p}(G:H)=1$ and $N^{G}(H)=\{1\}$. Then there is an isomorphism $S_{p}\cong (C_{p})^{m}$ for some $m\in \Zpn$. 
\end{enumerate}
\end{lem}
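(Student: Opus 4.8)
The plan is to handle the five parts in order, relying on the Schur--Zassenhaus theorem (both its existence and conjugacy halves) together with elementary facts about normal Sylow subgroups.

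For (i) I would simply invoke Schur--Zassenhaus: since $S_{p}$ is a Sylow $p$-subgroup, $\gcd(\#S_{p},(G:S_{p}))=1$, so the extension $1\to S_{p}\to G\to G/S_{p}\to 1$ splits and gives $G\cong S_{p}\rtimes G'$. For (ii) the point is that a normal Sylow $p$-subgroup is the \emph{unique} Sylow $p$-subgroup of $G$: any Sylow $p$-subgroup of a subgroup $D$ is a $p$-subgroup of $G$, hence lies in $S_{p}$ and so in $S_{p}\cap D$, while $S_{p}\cap D$ is itself a $p$-subgroup of $D$; therefore $S_{p}\cap D$ is the Sylow $p$-subgroup of $D$ and $(S_{p}:S_{p}\cap D)=p^{\ord_{p}(G:D)}$. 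Passing to $gDg^{-1}$ changes nothing, since $gS_{p}g^{-1}=S_{p}$ forces $S_{p}\cap gDg^{-1}=g(S_{p}\cap D)g^{-1}$, of the same order, and $(G:gDg^{-1})=(G:D)$.

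Part (iii) is where I expect the real work, and it is the main obstacle. I would argue in two steps. First, by (ii) the group $S_{p}\cap D$ is a normal Sylow $p$-subgroup of $D$, so Schur--Zassenhaus \emph{inside $D$} yields a complement $D_{1}$ with $D=(S_{p}\cap D)\rtimes D_{1}$ and $p\nmid\#D_{1}$. Second, $S_{p}D_{1}$ is a subgroup of $G$ equal to $S_{p}\rtimes D_{1}$, and $G'\cap S_{p}D_{1}$ is \emph{also} a complement of $S_{p}$ in $S_{p}D_{1}$: it meets $S_{p}$ trivially, and any $ug'\in S_{p}D_{1}$ with $u\in S_{p}$, $g'\in G'$ has $g'=u^{-1}(ug')\in G'\cap S_{p}D_{1}$, so $S_{p}(G'\cap S_{p}D_{1})=S_{p}D_{1}$. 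Since the $p$-group $S_{p}$ is solvable, the conjugacy half of Schur--Zassenhaus makes $D_{1}$ and $G'\cap S_{p}D_{1}$ conjugate by some $t\in S_{p}D_{1}$; writing $t=sw$ with $s\in S_{p}$ and $w\in D_{1}$ gives $sD_{1}s^{-1}=tD_{1}t^{-1}=G'\cap S_{p}D_{1}\subset G'$. Finally, abelianness of $S_{p}$ gives $s(S_{p}\cap D)s^{-1}=S_{p}\cap D$, so $sDs^{-1}=(S_{p}\cap D)\rtimes D'$ with $D':=sD_{1}s^{-1}\subset G'$. The delicate point is forcing the conjugator into $S_{p}$ — this is exactly what abelianness of $S_{p}$ (so that $S_{p}$-conjugation fixes subgroups of $S_{p}$) provides.

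For (iv) and (v) I would use the identity $S_{p}\cap gHg^{-1}=g(S_{p}\cap H)g^{-1}$ coming from normality of $S_{p}$. It gives $\bigcap_{g\in G}(S_{p}\cap gHg^{-1})=S_{p}\cap\bigcap_{g}gHg^{-1}=S_{p}\cap N^{G}(H)=\{1\}$, and the same intersection equals $\bigcap_{g}g(S_{p}\cap H)g^{-1}=N^{G}(S_{p}\cap H)$. For the double-coset intersection in (a), note that changing the representative of a double coset $S_{p}gH$ replaces $S_{p}\cap gHg^{-1}$ by an $S_{p}$-conjugate of itself; since (by (ii), using $\ord_{p}(G:H)=1$) this subgroup has index $p$ in the $p$-group $S_{p}$ it is normal in $S_{p}$, hence unchanged, so $\bigcap_{g\in R(S_{p},H)}(S_{p}\cap gHg^{-1})=\bigcap_{g\in G}(S_{p}\cap gHg^{-1})=\{1\}$. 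For (b), with $S_{p}$ abelian let $p^{d}$ be the exponent of $S_{p}/(S_{p}\cap H)$; then $S_{p}^{\,p^{d}}\subset S_{p}\cap H\subset H$, and $S_{p}^{\,p^{d}}$, being characteristic in $S_{p}$, is normal in $G$, hence $S_{p}^{\,p^{d}}\subset N^{G}(H)=\{1\}$; thus $\exp(S_{p})\mid p^{d}$, while $\exp(S_{p}/(S_{p}\cap H))\mid\exp(S_{p})$ automatically, so the two exponents coincide. Finally (v) follows by combining (ii) and (a): write $R(S_{p},H)=\{g_{1},\dots,g_{r}\}$ and $M_{i}:=S_{p}\cap g_{i}Hg_{i}^{-1}$, so each $M_{i}$ has index $p$ in $S_{p}$ and $\bigcap_{i}M_{i}=\{1\}$; then the diagonal map $S_{p}\hookrightarrow\prod_{i=1}^{r}S_{p}/M_{i}\cong(C_{p})^{r}$ is injective, so $S_{p}$ is a subgroup of an elementary abelian $p$-group, i.e.\ $S_{p}\cong(C_{p})^{m}$ for some $m\in\Zpn$ (with $m\ge 1$ since $p\mid\#G$).
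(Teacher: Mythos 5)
Your treatments of (i), (ii) and (v) are correct and essentially the same as the paper's (Schur--Zassenhaus for the splitting; uniqueness of the normal Sylow subgroup for the index computation; embedding $S_{p}$ diagonally into a product of copies of $C_{p}$ for elementary abelianness). For (iii) you take a genuinely different route: the paper writes down an explicit $1$-cocycle $D'\to S_{p}/(S_{p}\cap D)$ and kills it using $H^{1}(D',S_{p}/(S_{p}\cap D))=0$, whereas you first split $D=(S_{p}\cap D)\rtimes D_{1}$ inside $D$ and then conjugate $D_{1}$ into $G'$ by the conjugacy half of Schur--Zassenhaus applied in $S_{p}D_{1}$. Since that conjugacy statement for an abelian normal subgroup is itself proved by the same $H^{1}$-vanishing, the two arguments share the same engine, but yours outsources the computation to a standard theorem; your care in forcing the conjugator into $S_{p}$ (write $t=sw$, note $tD_{1}t^{-1}=sD_{1}s^{-1}$, and use abelianness of $S_{p}$ to keep $S_{p}\cap D$ fixed) is exactly the point that needs attention, and it is handled correctly. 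Your proof of (iv)(b), pushing the characteristic subgroup $S_{p}^{\,p^{d}}$ into $N^{G}(H)=\{1\}$, is valid and more direct than the paper's reduction to (iv)(a).

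The one genuine problem is the first equality in (iv)(a). You justify passing from $\bigcap_{g\in R(S_{p},H)}$ to $\bigcap_{g\in G}$ by observing that $S_{p}\cap gHg^{-1}$ has index $p$ in $S_{p}$, ``using $\ord_{p}(G:H)=1$'' --- but that hypothesis belongs to part (v), not part (iv), so as written your argument does not cover the stated generality. Your instinct that something extra is needed is nonetheless sound. Replacing a representative $g$ by $ugh$ with $u\in S_{p}$, $h\in H$ replaces $S_{p}\cap gHg^{-1}=g(S_{p}\cap H)g^{-1}$ by its conjugate under $u$, so the intersection over a set of representatives equals the full normal core only when these subgroups are normal in $S_{p}$ (e.g.\ when $S_{p}$ is abelian, or when they have index $p$). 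Without such a condition the equality can fail: take $G=D_{4}\times C_{3}$ and $H=\langle (\tau_{4},1)\rangle$. Then $S_{2}=D_{4}\times\{1\}$ is normal, $N^{G}(H)=\langle \tau_{4}\rangle\cap\langle\sigma_{4}^{2}\tau_{4}\rangle=\{1\}$, and since $H\subset S_{2}$ the double cosets $S_{2}\backslash G/H$ are just the cosets of $S_{2}$, represented by $(1,c_{3}^{i})$ for $i=0,1,2$; each term $S_{2}\cap (1,c_{3}^{i})H(1,c_{3}^{i})^{-1}$ equals $\langle \tau_{4}\rangle\neq\{1\}$, so the left-hand intersection is not trivial even though $N^{G}(S_{2}\cap H)=\{1\}$. (The paper's own one-line appeal to Lemma \ref{lem:doub} (ii) glosses over the same point; in every application of (iv)(a) in the paper $S_{p}$ is abelian, where the equality is immediate because $S_{p}$-conjugation fixes every subgroup of $S_{p}$.) You should therefore either add ``$S_{p}$ abelian or $\ord_{p}(G:H)=1$'' as a hypothesis for the first equality, or prove in (iv) only the unconditionally valid equalities $\bigcap_{g\in G}(S_{p}\cap gHg^{-1})=N^{G}(S_{p}\cap H)=\{1\}$ and defer the representative version to the settings where it is actually used.
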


\begin{proof}
(i): This is contained in the Schur--Zassenhaus theorem. 

(ii): Since the normality of $S_{p}$ in $G$ implies $S_{p}\cap gDg^{-1}=g^{-1}(S_{p}\cap D)g$ for every $g\in G$, we may assume $g=1$. Put $r:=\ord_{p}(G:D)$, and take a $p$-Sylow subgroup $S'_{p}$ of $H$, which has order $\#S_{p}/p^{r}$. Using the assumption $S_{p}\triangleleft G$ and Sylow's theorem, we have $S'_{p}\subset S_{p}$. Hence $S'_{p}$ is contained in $S_{p}\cap D$. This inclusion is an equality by the assumption on $S'_{p}$, which implies the desired equality $(S_{p}:S_{p}\cap D)=p^{r}$. 

(iii): Let $D'$ be the subgroup of $G'$ corresponding to $S_{p}D/S_{p}$ under the canonical isomorphism $G'\cong G/S_{p}$. Then, for each $d \in D'$ there is $\widetilde{f}(d)\in S_{p}$ such that $\widetilde{f}(d)d \in D$. Moreover, if $\widetilde{f'}(d)\in S_{p}$ also satisfies $\widetilde{f'}(d)d \in D$, then one has
\begin{equation*}
\widetilde{f}(d)\widetilde{f'}(d)^{-1}=(\widetilde{f}(d)d)(\widetilde{f'}(d)d)^{-1}\in S_{p}\cap D. 
\end{equation*}
Hence we obtain a map
\begin{equation*}
f\colon D'\rightarrow S_{p}/(S_{p}\cap D);d \mapsto \widetilde{f}(d)\bmod S_{p}\cap D. 
\end{equation*}
By direct computation, it turns out that $f$ is a $1$-cocycle. On the other hand, we have
\begin{equation*}
H^{1}(D',S_{p}/(S_{p}\cap D))=0
\end{equation*}
since $\#D'$ is coprime to $p$. Hence, there is $s\in S_{p}$ such that $f(d)=s^{-1}d(s)\bmod S_{p}\cap D$ for all $d\in D'$. Then we obtain that the subgroup $sDs^{-1}$ is equal to $(S_{p}\cap D)\rtimes D'$. 

(iv): First, we give a proof of (a). By Lemma \ref{lem:doub} (ii) and the normality of $S_{p}$ in $G$, we have
\begin{equation*}
\bigcap_{g\in R(S_{p},H)}(S_{p}\cap gHg^{-1})=\bigcap_{g\in G}(S_{p}\cap gHg^{-1})=\bigcap_{g\in G}g(S_{p}\cap H)g^{-1}=N^{G}(S_{p}\cap H). 
\end{equation*}
Furthermore, since $N^{G}(S_{p}\cap H)$ is contained in $N^{G}(H)$, the triviality of $N^{G}(S_{p}\cap H)$ follows from $N^{G}(H)=\{1\}$. This completes the proof of (a). On the other hand, for a proof of (b), recall that the subgroup $S_{p}\cap H$ of $S_{p}$ has index $p^{\ord_{p}(G:H)}$, which is a consequence of (ii) for $D=H$. This implies that the quotient $S_{p}/N^{G}(S_{p}\cap H)$ is an abelian group whose exponent coincides with that of $S_{p}/(S_{p}\cap H)$. Hence (b) follows from (a). 

(v): This follows from the same argument as (iv) (b) since all subgroups of $S_{p}$ of index $p$ are normal, and hence they contain $S_{p}^{\der}$. 
\end{proof}

\begin{cor}\label{cor:cdaj}
Let $p$ be a prime number, and $G$ a finite group of order a multiple of $p$ of which $p$-Sylow subgroup $S_{p}$ is abelian and normal in $G$. Take a subgroup $G'$ of $G$ satisfying $G=S_{p}\rtimes G'$. Consider an admissible set of subgroups $\cD$ of $G$. For any $\overline{D}\in {\cD}_{/S_{p}}$, there exists a subgroup $N$ of $S_{p}$ such that $N\rtimes D'\in \cD$. Here $D'$ is the subgroup of $G'$ corresponding to $\overline{D}$ under the canonical isomorphism $G'\cong G/S_{p}$. 
\end{cor}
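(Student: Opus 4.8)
The plan is to deduce the statement directly from Lemma~\ref{lem:sbsd}~(iii), using the conjugation-stability that is part of the definition of an admissible set.

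First I would unwind the hypotheses. By definition of $\cD_{/S_{p}}$ (see Lemma~\ref{lem:adsq}~(ii)), an element $\overline{D}\in \cD_{/S_{p}}$ is of the form $\overline{D}=DS_{p}/S_{p}$ for some $D\in \cD$; fix such a $D$. Since $S_{p}$ is normal in $G$ we have $DS_{p}/S_{p}=S_{p}D/S_{p}$, so the subgroup $D'$ of $G'$ that occurs in the statement — namely the one corresponding to $\overline{D}$ under the canonical isomorphism $G'\cong G/S_{p}$ — is exactly the subgroup of $G'$ corresponding to $S_{p}D/S_{p}$. This is precisely the subgroup $D'$ that Lemma~\ref{lem:sbsd}~(iii) attaches to $D$. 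Note that the hypothesis that $S_{p}$ is abelian (assumed in the corollary) is what makes part~(iii) of that lemma applicable.

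Next I would apply Lemma~\ref{lem:sbsd}~(iii) to $D$: it yields an element $s\in S_{p}$ with
\begin{equation*}
sDs^{-1}=(S_{p}\cap D)\rtimes D'.
\end{equation*}
Set $N:=S_{p}\cap D$, which is a subgroup of $S_{p}$. It then remains only to observe that $N\rtimes D'=sDs^{-1}$ lies in $\cD$, and this is immediate: $\cD$ is admissible, hence stable under conjugation by every element of $G$, in particular by $s\in S_{p}\subset G$. Therefore $N\rtimes D'\in \cD$, as required. (It is harmless that we chose a particular representative $D$ of $\overline{D}$: conjugating $D$ by $s\in S_{p}$ does not change the image $DS_{p}/S_{p}$, so $D'$ is unambiguous.)

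I do not expect any genuine obstacle: all the real work is already encapsulated in Lemma~\ref{lem:sbsd}~(iii), and the only things to be checked carefully are that the two descriptions of $D'$ agree and that admissibility of $\cD$ supplies exactly the closure-under-conjugation property needed to conclude $sDs^{-1}\in\cD$.
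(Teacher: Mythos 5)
Your proof is correct and is essentially the paper's own argument: choose a representative $D\in\cD$ of $\overline{D}$, apply Lemma \ref{lem:sbsd} (iii) to get $s\in S_{p}$ with $sDs^{-1}=(S_{p}\cap D)\rtimes D'$, and conclude via the conjugation-stability in the definition of admissibility. Your added remarks (that the two descriptions of $D'$ agree and that the abelian hypothesis is what makes Lemma \ref{lem:sbsd} (iii) applicable) are accurate and merely make explicit what the paper leaves implicit.
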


\begin{proof}
By the definition of $\cD_{/S_{p}}$, we can take $D\in \cD$ whose image in $G/S_{p}$ coincides with $\overline{D}$. Then it follows from Lemma \ref{lem:sbsd} (iii) that there is $s\in S_{p}$ so that $sDs^{-1}\cong (D\cap S_{p})\rtimes D'$. Hence the assertion holds. 
\end{proof}

Let $p$ be a prime number, $G$ a finite group, and $H$ a subgroup of $G$ satisfying $(G:H)\in p\Z$. We further assume that a $p$-Sylow subgroup $S_{p}$ of $G$ is normal. Then the exact sequence
\begin{equation*}
0\rightarrow J_{G/HS_{p}}\rightarrow J_{G/H}\rightarrow \Ind_{HS_{p}}^{G}J_{HS_{p}/H}\rightarrow 0,
\end{equation*}
which follows from Lemma \ref{lem:jgcg}, induces the diagram as follows for any subgroup $D$ of $G$: 
\begin{equation}\label{res1}
\xymatrix{
H^{1}(G,\Ind_{HS_{p}}^{G}J_{HS_{p}/H})\ar[r]^{\hspace{7mm}\delta_{G}} \ar[d]^{\Res_{G/D}}& H^{2}(G,J_{G/HS_{p}})\ar[d]^{\Res_{G/D}}\\
H^{1}(D,\Ind_{HS_{p}}^{G}J_{HS_{p}/H})\ar[r]^{\hspace{7mm}\delta_{D}} & H^{2}(D,J_{G/HS_{p}}). 
}
\end{equation}

\begin{dfn}[{cf.~\cite[(4.23)]{Liang2024a}}]\label{dfn:h2ly}
Let $p$, $G$, $H$ and $S_{p}$ be as above. For a set of subgroups $\cD$ of $G$, put
\begin{equation*}
H^{2}(G,J_{G/HS_{p}})^{(\cD)}:=\{f\in H^{2}(G,J_{G/HS_{p}})\mid \Res_{G/D}(f)\in \Ima(\delta_{D})\text{ for any }D\in \cD \}. 
\end{equation*}
\end{dfn}

\begin{prop}\label{prop:h2s2}
Let $p$ be a prime number, $G$ a finite group, and $H$ a subgroup of $G$ satisfying $(G:H)\in p\Z$. We further assume that a $p$-Sylow subgroup $S_{p}$ of $G$ is normal. Take an admissible set $\cD$ of subgroups of $G$. There is an exact sequence
\begin{equation*}
H^{1}(G,\Ind_{HS_{p}}^{G}J_{HS_{p}/H})\xrightarrow{\delta_{G}} H^{2}(G,J_{G/HS_{p}})^{(\cD)}\xrightarrow{\widehat{\N}_{G}} \Sha_{\cD}^{2}(G,J_{G/H}). 
\end{equation*}
Moreover, the homomorphism $\widehat{\N}_{G}$ is surjective if $(G:H)\notin p^{2}\Z$. 
\end{prop}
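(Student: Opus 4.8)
The plan is to extract everything from the long exact cohomology sequence attached to the short exact sequence of $G$-lattices
\[
0\rightarrow J_{G/HS_{p}}\xrightarrow{\iota} J_{G/H}\xrightarrow{q} \Ind_{HS_{p}}^{G}J_{HS_{p}/H}\rightarrow 0
\]
furnished by Lemma \ref{lem:jgcg}, in which $\widehat{\N}_{G}=\iota_{*}$ denotes the map induced on $H^{2}(G,-)$, and from the naturality of this sequence in the subgroup $D\leq G$ recorded in \eqref{res1}. First I would verify that the three-term sequence in the statement is well defined. The commutative square \eqref{res1} gives $\Res_{G/D}\circ\delta_{G}=\delta_{D}\circ\Res_{G/D}$, so $\delta_{G}$ already lands in $H^{2}(G,J_{G/HS_{p}})^{(\cD)}$. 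For $f\in H^{2}(G,J_{G/HS_{p}})^{(\cD)}$ and $D\in\cD$ one has $\Res_{G/D}(\widehat{\N}_{G}f)=\widehat{\N}_{D}(\Res_{G/D}f)$, and $\Res_{G/D}f\in\Ima(\delta_{D})=\Ker(\widehat{\N}_{D})$ by exactness of the long exact sequence for $D$; hence $\Res_{G/D}(\widehat{\N}_{G}f)=0$, i.e.\ $\widehat{\N}_{G}$ carries $H^{2}(G,J_{G/HS_{p}})^{(\cD)}$ into $\Sha_{\cD}^{2}(G,J_{G/H})$.

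Exactness at the middle term is then purely formal: $\widehat{\N}_{G}\circ\delta_{G}=0$ is part of the long exact sequence for $G$, and conversely if $f\in H^{2}(G,J_{G/HS_{p}})^{(\cD)}$ satisfies $\widehat{\N}_{G}f=\iota_{*}f=0$, then the same long exact sequence puts $f$ in $\Ima(\delta_{G})$.

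For the surjectivity of $\widehat{\N}_{G}$ under the extra hypothesis $(G:H)\notin p^{2}\Z$, I would first observe that $(HS_{p}:H)=(S_{p}:S_{p}\cap H)=p^{\ord_{p}(G:H)}=p$ by Lemma \ref{lem:sbsd} (ii), so $H$ has prime index in $HS_{p}$. Given $g\in\Sha_{\cD}^{2}(G,J_{G/H})$, admissibility of $\cD$ (it contains every cyclic subgroup) together with $\Res_{G/D}\circ q_{*}=q_{*}\circ\Res_{G/D}$ shows $\Res_{G/D}(q_{*}g)=0$ for all cyclic $D$, i.e.\ $q_{*}g\in\Sha_{\omega}^{2}(G,\Ind_{HS_{p}}^{G}J_{HS_{p}/H})$. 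By Corollary \ref{lem:ids2} this group is isomorphic to $\Sha_{\omega}^{2}(HS_{p},J_{HS_{p}/H})$, which vanishes by Proposition \ref{prop:bart} since $(HS_{p}:H)$ is prime. Hence $q_{*}g=0$, so the long exact sequence for $G$ provides $f\in H^{2}(G,J_{G/HS_{p}})$ with $\widehat{\N}_{G}f=g$; and this $f$ automatically lies in $H^{2}(G,J_{G/HS_{p}})^{(\cD)}$, because for $D\in\cD$ we get $\widehat{\N}_{D}(\Res_{G/D}f)=\Res_{G/D}(g)=0$, whence $\Res_{G/D}f\in\Ker(\widehat{\N}_{D})=\Ima(\delta_{D})$.

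The only step that is not a diagram chase is this surjectivity argument, and its crux is the chain $q_{*}g\in\Sha_{\omega}^{2}(G,\Ind_{HS_{p}}^{G}J_{HS_{p}/H})\cong\Sha_{\omega}^{2}(HS_{p},J_{HS_{p}/H})=0$: this is exactly where the hypothesis $(G:H)\notin p^{2}\Z$ is needed (to force $(HS_{p}:H)$ to be the prime $p$) and where Bartels' vanishing and Shapiro's lemma for $\Sha$ are invoked. Everything else follows from the naturality in $D$ of the long exact sequence of $0\rightarrow J_{G/HS_{p}}\rightarrow J_{G/H}\rightarrow\Ind_{HS_{p}}^{G}J_{HS_{p}/H}\rightarrow 0$.
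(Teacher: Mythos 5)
Your proposal is correct and follows essentially the same route as the paper: both derive the three-term sequence from the long exact cohomology sequence of $0\rightarrow J_{G/HS_{p}}\rightarrow J_{G/H}\rightarrow \Ind_{HS_{p}}^{G}J_{HS_{p}/H}\rightarrow 0$ together with its naturality in $D$, and both reduce the surjectivity of $\widehat{\N}_{G}$ to the vanishing of $\Sha_{\omega}^{2}(G,\Ind_{HS_{p}}^{G}J_{HS_{p}/H})$ via Corollary \ref{lem:ids2} and Proposition \ref{prop:bart}. Your explicit identification $(HS_{p}:H)=(S_{p}:S_{p}\cap H)=p$ via Lemma \ref{lem:sbsd} (ii) is exactly the point where $(G:H)\notin p^{2}\Z$ enters, as in the paper.
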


\begin{proof}
By the definition of $H^{2}(G,J_{G/HS_{p}})^{(\cD)}$, the exact sequence
\begin{equation*}
H^{1}(G,\Ind_{HS_{p}}^{G}J_{HS_{p}/H})\xrightarrow{\delta_{G}} H^{2}(G,J_{G/HS_{p}})\xrightarrow{\widehat{\N}_{G}} H^{2}(G,J_{G/H})\rightarrow H^{2}(G,\Ind_{HS_{p}}^{G}J_{HS_{p}/H})
\end{equation*}
induces an exact sequence
\begin{equation*}
H^{1}(G,\Ind_{HS_{p}}^{G}J_{HS_{p}/H})\xrightarrow{\delta_{G}} H^{2}(G,J_{G/HS_{p}})^{(\cD)}\xrightarrow{\widehat{\N}_{G}} \Sha_{\cD}^{2}(G,J_{G/H})\rightarrow \Sha_{\cD}^{2}(G,\Ind_{HS_{p}}^{G}J_{HS_{p}/H}), 
\end{equation*}
which implies the desired exactness. 

In the following, assume $(G:H)\notin p^{2}\Z$. It suffices to prove $\Sha_{\omega}^{2}(G,\Ind_{HS_{p}}^{G}J_{HS_{p}/H})=0$. By Corollary \ref{lem:ids2}, one has an isomorphism
\begin{equation*}
\Sha_{\omega}^{2}(G,\Ind_{HS_{p}}^{G}J_{HS_{p}/H}) \cong \Sha_{\omega}^{2}(HS_{p},J_{HS_{p}/H})
\end{equation*}
Since $(HS_{p}:S_{p})=p$ is a prime, Proposition \ref{prop:bart} implies $\Sha_{\omega}^{2}(HS_{p},J_{HS_{p}/H})=0$. This completes the proof. 
\end{proof}

\subsection{The prime-to-$p$ torsion parts}

\begin{lem}\label{lem:exh1}
Let $p$ be a prime number, $G$ a finite group, and $H$ a subgroup of $G$ satisfying $(G:H)\in p\Z$. We further assume that a $p$-Sylow subgroup $S_{p}$ of $G$ is normal. 
\begin{enumerate}
\item There is a commutative diagram
\begin{equation*}
\xymatrix@C=35pt{
0\ar[r]& H^{1}(G,J_{G/HS_{p}})\ar[d]^{\cong} \ar[r]& H^{1}(G,J_{G/H})\ar[d]^{\cong}\ar[r]& 
H^{1}(G,\Ind_{HS_{p}}^{G}J_{HS_{p}/H}) \ar[d]^{\cong} \\
0\ar[r]& \Ker(G^{\vee}\rightarrow (HS_{p})^{\vee}) \ar[r]& \Ker(G^{\vee}\rightarrow H^{\vee}) \ar[r]^{f\mapsto f\mid_{HS_{p}}\hspace{5mm}}& 
\Ker((HS_{p})^{\vee}\rightarrow H^{\vee}). }
\end{equation*}
\item The homomorphism $\delta_{G}\colon H^{1}(G,\Ind_{HS_{p}}^{G}J_{HS_{p}/H})\rightarrow H^{2}(G,J_{G/HS_{p}})$ is injective if $S_{p}$ is generated by $[S_{p},G]$ and $S_{p}\cap H$. 
\item The inclusion $S_{p}\subset G$ induces is an isomorphism
\begin{equation*}
\Ker((HS_{p})^{\vee}\rightarrow H^{\vee})\cong (S_{p}/[S_{p},HS_{p}]\cdot (S_{p}\cap H))^{\vee}. 
\end{equation*}
In particular, the abelian group $H^{1}(G,\Ind_{HS_{p}}^{G}J_{HS_{p}/H})$ is annihilated by the exponent of $S_{p}$. 
\end{enumerate}
\end{lem}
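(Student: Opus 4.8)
The plan is to extract all three parts from the two compatible short exact sequences supplied by Lemma~\ref{lem:jgcg} for the pair $H\subseteq HS_{p}$, combined with the sequences $0\to\Z\to\Ind_{L}^{G}\Z\to J_{G/L}\to 0$ for $L\in\{H,HS_{p}\}$ and Shapiro's lemma. For part (i), I would first note that for any subgroup $L$ of $G$ one has $H^{1}(G,\Z)=\Hom(G,\Z)=0$ and, by Lemma~\ref{lem:shpr}, $H^{1}(G,\Ind_{L}^{G}\Z)\cong H^{1}(L,\Z)=0$; feeding this into the long exact sequence of $0\to\Z\to\Ind_{L}^{G}\Z\to J_{G/L}\to 0$ and applying Proposition~\ref{prop:htch} identifies $H^{1}(G,J_{G/L})$ with $\Ker(G^{\vee}\to L^{\vee})$, the restriction map. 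Taking $L=HS_{p}$ and $L=H$ gives the outer two vertical isomorphisms, and applying Shapiro's lemma once more to $HS_{p}$ together with the same identification for the pair $(HS_{p},H)$ gives $H^{1}(G,\Ind_{HS_{p}}^{G}J_{HS_{p}/H})\cong H^{1}(HS_{p},J_{HS_{p}/H})\cong\Ker((HS_{p})^{\vee}\to H^{\vee})$. The top row of the asserted diagram is the initial segment of the long exact sequence of the lower sequence of Lemma~\ref{lem:jgcg}; it starts with $0$ because $H^{0}(G,\Ind_{HS_{p}}^{G}J_{HS_{p}/H})\cong(J_{HS_{p}/H})^{HS_{p}}$, which vanishes (take $HS_{p}$-invariants in $0\to\Z\to\Ind_{H}^{HS_{p}}\Z\to J_{HS_{p}/H}\to 0$). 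Compatibility of the connecting maps of the two sequences of Lemma~\ref{lem:jgcg} with those of $0\to\Z\to\Ind_{L}^{G}\Z\to J_{G/L}\to 0$ then shows that, under these isomorphisms, the two horizontal maps become the inclusion $\Ker(G^{\vee}\to(HS_{p})^{\vee})\hookrightarrow\Ker(G^{\vee}\to H^{\vee})$ and $f\mapsto f|_{HS_{p}}$; since the bottom row is visibly exact, the diagram commutes.

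For part (ii), I would use that the same long exact sequence gives $\Ker(\delta_{G})$ equal to the image of $H^{1}(G,J_{G/H})\to H^{1}(G,\Ind_{HS_{p}}^{G}J_{HS_{p}/H})$, which by part (i) equals $\{\,f|_{HS_{p}}\mid f\in G^{\vee},\ f|_{H}=0\,\}$. If $S_{p}$ is generated by $[S_{p},G]$ and $S_{p}\cap H$, then any $f\in G^{\vee}$ with $f|_{H}=0$ kills $[S_{p},G]$ (it factors through $G^{\ab}$) and kills $S_{p}\cap H$, hence kills $S_{p}$; as $HS_{p}=\langle H,S_{p}\rangle$, such an $f$ restricts to $0$ on $HS_{p}$, so $\Ker(\delta_{G})=0$ and $\delta_{G}$ is injective.

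For part (iii), I would analyse the restriction-of-characters map $\rho\colon\{f\in(HS_{p})^{\vee}\mid f|_{H}=0\}\to S_{p}^{\vee}$. Its image lies in $(S_{p}/[S_{p},HS_{p}](S_{p}\cap H))^{\vee}$ because such $f$ factors through $(HS_{p})^{\ab}$ (so kills $[S_{p},HS_{p}]$) and kills $S_{p}\cap H\subseteq H$; and $\rho$ is injective because $HS_{p}=\langle H,S_{p}\rangle$. For surjectivity, given $\chi\colon S_{p}\to\Q/\Z$ trivial on $[S_{p},HS_{p}]$ and on $S_{p}\cap H$, I would observe that $\chi$ is invariant under $HS_{p}$-conjugation (since $s^{-1}qsq^{-1}\in[S_{p},HS_{p}]$ for $s\in S_{p}$, $q\in HS_{p}$), hence factors through the subgroup $\overline{S_{p}}:=S_{p}/[S_{p},HS_{p}]$ of $Q:=HS_{p}/[S_{p},HS_{p}]$, which is central in $Q$ with quotient $\cong HS_{p}/S_{p}$ of order prime to $p$. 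By the Schur--Zassenhaus theorem and centrality, $Q=\overline{S_{p}}\times C$ with $C\cong HS_{p}/S_{p}$; since $\Hom(HS_{p}/S_{p},\overline{S_{p}})=0$, the image of $H$ in $Q$ equals $\overline{S_{p}\cap H}\times C$, where $\overline{S_{p}\cap H}$ is the image of $S_{p}\cap H$. Defining $f$ on $Q$ to be $\chi$ on $\overline{S_{p}}$ and $0$ on $C$, and pulling back to $HS_{p}$, produces an element of $\{f\in(HS_{p})^{\vee}\mid f|_{H}=0\}$ with $\rho(f)=\chi$. The final assertion then follows: combining part (i) with this isomorphism shows $H^{1}(G,\Ind_{HS_{p}}^{G}J_{HS_{p}/H})$ is the Pontryagin dual of a quotient of $S_{p}$, hence is annihilated by the exponent of $S_{p}$.

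I expect the surjectivity step in part (iii) to be the main obstacle: extending $\chi$ from $S_{p}$ to $HS_{p}$ while keeping it trivial on $H$. This is precisely where normality of $S_{p}$ enters (so that $[S_{p},HS_{p}]$ is normal and $\overline{S_{p}}$ becomes central after quotienting by it) together with the coprimality of $|HS_{p}/S_{p}|$ to $p$ (to split off the complement $C$ and to force the image of $H$ into $\overline{S_{p}\cap H}\times C$); by contrast, parts (i) and (ii) are essentially long-exact-sequence bookkeeping resting on Lemmas~\ref{lem:jgcg} and~\ref{lem:shpr} and Proposition~\ref{prop:htch}.
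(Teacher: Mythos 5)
Your argument is correct. Part (i) follows essentially the paper's own route: the long exact sequence attached to the bottom row of Lemma \ref{lem:jgcg}, combined with the identifications $H^{1}(G,J_{G/L})\cong\Ker(G^{\vee}\rightarrow L^{\vee})$ coming from Lemma \ref{lem:mnex}, Proposition \ref{prop:htch} and Shapiro's lemma, and your verification that $H^{0}(G,\Ind_{HS_{p}}^{G}J_{HS_{p}/H})=0$ supplies the initial zero. For (ii) and (iii) you take a genuinely different path: the paper first invokes Lemma \ref{lem:sbsd} (iii) to replace $H$ by a conjugate of the form $(S_{p}\cap H)\rtimes H'$ inside $G\cong S_{p}\rtimes G'$ and then computes $G^{\der}H$ and $(HS_{p})^{\der}H$ explicitly in those coordinates, whereas you argue directly with characters: for (ii), any $f\in G^{\vee}$ killing $H$ also kills $G^{\der}\supseteq[S_{p},G]$ and $S_{p}\cap H$, hence kills $S_{p}=[S_{p},G]\cdot(S_{p}\cap H)$ and therefore all of $HS_{p}$; for (iii), you pass to $Q=HS_{p}/[S_{p},HS_{p}]$, where $\overline{S_{p}}$ is central, split off a prime-to-$p$ complement $C$ by Schur--Zassenhaus, and extend $\chi$ by zero on $C$. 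Both computations land on the same subgroup $[S_{p},HS_{p}]\cdot(S_{p}\cap H)$, but your version buys something: it nowhere needs $S_{p}$ to be abelian, while the paper's appeal to Lemma \ref{lem:sbsd} (iii) formally requires that hypothesis (harmless in the paper's applications, where $S_{p}\cong(C_{p})^{2}$, but your proof covers the lemma exactly as stated). The one place you are terser than you should be is the claim that the image $\overline{H}$ of $H$ in $Q=\overline{S_{p}}\times C$ lies in $\overline{S_{p}\cap H}\times C$; the clean justification is that $\overline{H}\cap\overline{S_{p}}=\overline{S_{p}\cap H}$, so the composite $\overline{H}\rightarrow\overline{S_{p}}\rightarrow\overline{S_{p}}/\overline{S_{p}\cap H}$ factors through $\overline{H}/(\overline{H}\cap\overline{S_{p}})\cong C$, a group of order prime to $p$ mapping to a $p$-group, hence vanishes --- which is what your ``$\Hom(HS_{p}/S_{p},\overline{S_{p}})=0$'' is standing in for and is exactly what is needed to conclude $f\!\mid_{H}=0$.
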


\begin{proof}
(i): By Lemma \ref{lem:jgcg}, we have an exact sequence of abelian groups
\begin{align*}
H^{0}(G,\Ind_{HS_{p}}^{G}J_{HS_{p}/H})&\rightarrow H^{1}(G,J_{G/HS_{p}})\\
&\rightarrow H^{1}(G,J_{G/H})\rightarrow H^{1}(G,\Ind_{HS_{p}}^{G}J_{HS_{p}/H}). 
\end{align*}
The assertion follows from Lemma \ref{lem:mnex}. 

In the following, we give proofs of (ii) and (iii). By Lemma \ref{lem:sbsd} (iii), we may assume
\begin{equation*}
H=(S_{p}\cap H)\rtimes H', 
\end{equation*}
where $H'$ is a subgroup of $G'$. Note that we have $HS_{p}=S_{p}\rtimes H'$. 

(ii): It suffices to prove the isomorphy of the homomorphism
\begin{equation*}
\Ker(G^{\vee}\rightarrow (HS_{p})^{\vee})\rightarrow \Ker(G^{\vee}\rightarrow H^{\vee}), 
\end{equation*}
which is a consequence of (i) and Proposition \ref{prop:h2s2}. By definition, we have isomorphisms
\begin{equation*}
\Ker(G^{\vee}\rightarrow (HS_{p})^{\vee})\cong \Ker(G/G^{\der}HS_{p})^{\vee},\quad \Ker(G^{\vee}\rightarrow H^{\vee})\cong \Ker(G/G^{\der}H)^{\vee}. 
\end{equation*}
Moreover, direct computation implies the following: 
\begin{equation*}
G^{\der}HS_{p}=S_{p}\rtimes ((G')^{\der}H'),\quad G^{\der}H=([S_{p},G]\cdot (S_{p}\cap H))\rtimes ((G')^{\der}H'). 
\end{equation*}
Hence, the assumption $S_{p}=[S_{p},G]\cdot (S_{p}\cap H)$ follows the desired isomorphy. 

(iii): Since the derived group of $HS_{p}$ equals $[S_{p},H]\rtimes (H')^{\der}$, one has an isomorphism
\begin{align*}
\Ker((HS_{p})^{\vee}\rightarrow H^{\vee})&\cong ((S_{p}\rtimes H')/(([S_{p},H]\rtimes (H')^{\der})\cdot ((S_{p}\cap H)\rtimes H')))^{\vee}\\
&\cong (S_{p}/([S_{p},HS_{p}]\cdot (S_{p}\cap H)))^{\vee}. 
\end{align*}
Hence the assertion holds. 
\end{proof}

\begin{lem}\label{lem:shpp}
Let $p$ be a prime number, $G$ a finite group, and $H$ a subgroup of $G$ satisfying $(G:H)\in p\Z$. We further assume that a $p$-Sylow subgroup $S_{p}$ of $G$ is abelian and normal. Take an admissible set $\cD$ of subgroups of $G$. 
\begin{enumerate}
\item The inflation map $H^{2}(G/S_{p},J_{G/HS_{p}})\hookrightarrow H^{2}(G,J_{G/HS_{p}})$ induces an injection
\begin{equation*}
\Sha_{\cD_{/S_{p}}}^{2}(G/S_{p},J_{G/HS_{p}})\hookrightarrow H^{2}(G,J_{G/HS_{p}})^{(\cD)}. 
\end{equation*}
\item The composite
\begin{equation*}
\Sha_{\cD_{/S_{p}}}^{2}(G/S_{p},J_{G/HS_{p}})\hookrightarrow H^{2}(G,J_{G/HS_{p}})^{(\cD)}\xrightarrow{\widehat{\N}_{G}}\Sha_{\cD}^{2}(G,J_{G/H})
\end{equation*}
is injective, and factors through $\Sha_{\cD}^{2}(G,J_{G/H})^{(p)}$. 
\end{enumerate}
\end{lem}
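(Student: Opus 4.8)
The plan is to read off both statements from the exact sequence of Proposition \ref{prop:h2s2}, combined with the inflation isomorphism of Proposition \ref{prop:ifts} and the fact, from Lemma \ref{lem:exh1} (iii), that $H^{1}(G,\Ind_{HS_{p}}^{G}J_{HS_{p}/H})$ is annihilated by a power of $p$.

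First I would record the basic observation that $S_{p}$ acts trivially on $J_{G/HS_{p}}$: since $S_{p}$ is normal and contained in $HS_{p}$, the $G$-set $G/HS_{p}$ carries the trivial $S_{p}$-action, hence so do $\Ind_{HS_{p}}^{G}\Z$ and its quotient $J_{G/HS_{p}}$. Thus $J_{G/HS_{p}}$ is a $\Z$-torsion-free $G/S_{p}$-module, and by Lemma \ref{lem:adsq} the set $\cD_{/S_{p}}=\{DS_{p}/S_{p}\mid D\in \cD\}$ is admissible in $G/S_{p}$. Applying Proposition \ref{prop:ifts} with $N=S_{p}$ and $M=J_{G/HS_{p}}$ then shows that inflation identifies $\Sha_{\cD_{/S_{p}}}^{2}(G/S_{p},J_{G/HS_{p}})$ with $\Sha_{\cD}^{2}(G,J_{G/HS_{p}})$. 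For (i) it now suffices to note that $\Sha_{\cD}^{2}(G,J_{G/HS_{p}})\subseteq H^{2}(G,J_{G/HS_{p}})^{(\cD)}$, which is immediate from Definition \ref{dfn:h2ly} because $0\in \Ima(\delta_{D})$ for every $D\in \cD$; hence the composite of this inflation with the inclusion is the desired injection, which I will call $\iota$.

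For (ii) there are two things to check: that $\widehat{\N}_{G}\circ \iota$ is injective, and that its image is prime-to-$p$ torsion. For the factorization, since $\#(G/S_{p})$ is coprime to $p$ and $J_{G/HS_{p}}$ is a $G/S_{p}$-lattice, the group $H^{2}(G/S_{p},J_{G/HS_{p}})$ — hence also its subgroup $\Sha_{\cD_{/S_{p}}}^{2}(G/S_{p},J_{G/HS_{p}})$ — is annihilated by $\#(G/S_{p})$, so every element $f$ of the source has order prime to $p$; consequently $\widehat{\N}_{G}(\iota(f))$ has order dividing that of $f$, hence prime to $p$, so it lies in $\Sha_{\cD}^{2}(G,J_{G/H})^{(p)}$. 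For injectivity, take $f$ in the source with $\widehat{\N}_{G}(\iota(f))=0$. By the exactness in Proposition \ref{prop:h2s2}, $\iota(f)\in \Ima(\delta_{G})$, where $\delta_{G}\colon H^{1}(G,\Ind_{HS_{p}}^{G}J_{HS_{p}/H})\to H^{2}(G,J_{G/HS_{p}})^{(\cD)}$ (that $\delta_{G}$ lands in the superscript-$(\cD)$ subgroup follows from the commutativity of \eqref{res1}). By Lemma \ref{lem:exh1} (iii) the source of $\delta_{G}$ is killed by the exponent of $S_{p}$, a power of $p$, so $\iota(f)$ is $p$-power torsion; but by the previous paragraph $\iota(f)$ also has order prime to $p$. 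Therefore $\iota(f)=0$, and injectivity of $\iota$ from (i) gives $f=0$.

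There is no serious obstacle here: the content lies entirely in the earlier results, and the argument is essentially bookkeeping. The one point demanding care is the complementary-torsion dichotomy — one must be sure both that $\Sha_{\cD_{/S_{p}}}^{2}(G/S_{p},J_{G/HS_{p}})$ is prime-to-$p$ (because $G/S_{p}$ has order prime to $p$ and the coefficients are $\Z$-torsion-free) and that $\Ima(\delta_{G})$ is $p$-primary (Lemma \ref{lem:exh1} (iii)), so that these two subgroups meet trivially inside the finite group $H^{2}(G,J_{G/HS_{p}})$.
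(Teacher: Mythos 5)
Your argument is correct and follows essentially the same route as the paper: part (i) via the inflation isomorphism of Proposition \ref{prop:ifts} together with the containment $\Sha_{\cD}^{2}(G,J_{G/HS_{p}})\subseteq H^{2}(G,J_{G/HS_{p}})^{(\cD)}$, and part (ii) by playing the $p$-power torsion of $\Ima(\delta_{G})$ (Lemma \ref{lem:exh1} (iii) plus the exactness in Proposition \ref{prop:h2s2}) against the prime-to-$p$ annihilation of the source. The only cosmetic difference is that you annihilate the source by $\#(G/S_{p})$ directly, whereas the paper cites Proposition \ref{prop:andg} to annihilate it by $(G:HS_{p})$; both suffice.
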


\begin{proof}
(i): By definition, the group $H^{2}(G,J_{G/HS_{p}})^{(\cD)}$ contains $\Sha_{\cD}^{2}(G,J_{G/HS_{p}})$. Hence the assertion follows from Lemma \ref{lem:qtts}. 

(ii): By Lemma \ref{lem:exh1} (iii), the abelian group $H^{1}(G,\Ind_{HS_{p}}^{G}J_{HS_{p}/H})$ is annihilated by a power of $p$. Hence so is the kernel of the homomorphism $\widehat{\N}_{G}\colon H^{2}(G,J_{G/HS_{p}})^{(\cD)}\rightarrow \Sha_{\cD}^{2}(G,J_{G/H})$ by Proposition \ref{prop:h2s2}. Since Proposition \ref{prop:andg} implies that $\Sha_{\cD}^{2}(G,J_{G/HS_{p}})$ is annihilated by $(G:HS_{p})\notin p\Z$, the assertion holds. 
\end{proof}

\begin{thm}\label{thm:upts}
Let $p$ be a prime number, $G$ a finite group, and $H$ a subgroup of $G$ satisfying $(G:H)\in p\Z$. We further assume that a $p$-Sylow subgroup $S_{p}$ of $G$ is abelian and normal. Take an admissible set of subgroups $\cD$ of $G$. Then there is an isomorphism
\begin{equation*}
\Sha_{\cD}^{2}(G,J_{G/H})^{(p)} \cong \Sha_{\cD_{/S_{p}}}^{2}(G/S_{p},J_{G/HS_{p}}). 
\end{equation*}
\end{thm}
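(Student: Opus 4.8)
The plan is to produce injective homomorphisms in both directions between $\Sha_{\cD}^{2}(G,J_{G/H})^{(p)}$ and $\Sha_{\cD_{/S_{p}}}^{2}(G/S_{p},J_{G/HS_{p}})$; since both groups are finite (being subgroups of the second cohomology of a finite group with finitely generated coefficients), such a pair of injections forces an isomorphism. One injection
\begin{equation*}
\iota\colon \Sha_{\cD_{/S_{p}}}^{2}(G/S_{p},J_{G/HS_{p}})\hookrightarrow \Sha_{\cD}^{2}(G,J_{G/H})^{(p)}
\end{equation*}
is already provided by Lemma~\ref{lem:shpp}~(ii) (the composite of the inflation map of Lemma~\ref{lem:shpp}~(i) with $\widehat{\N}_{G}$), so the remaining task is to construct an injection going the other way, and for this I would restrict everything to a complement of $S_{p}$.

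Fix, by the Schur--Zassenhaus theorem (Lemma~\ref{lem:sbsd}~(i)), a subgroup $G'$ of $G$ with $G=S_{p}\rtimes G'$. Since $(G:G')=\#S_{p}$ is a power of $p$, Proposition~\ref{prop:tsan}~(iii) gives an injection $\Res_{G/G'}\colon \Sha_{\cD}^{2}(G,J_{G/H})^{(p)}\hookrightarrow \Sha_{\cD_{\cap G'}}^{2}(G',J_{G/H})$, where $\cD_{\cap G'}$ is admissible in $G'$ by Lemma~\ref{lem:adsq}~(i). The heart of the argument is to decompose $J_{G/H}$ as a $G'$-lattice. By Lemma~\ref{lem:sbsd}~(iii) we may replace $H$ by a $G$-conjugate and assume $H=(S_{p}\cap H)\rtimes H'$ with $H'\leq G'$; this alters neither side of the claimed isomorphism. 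In the coordinates of $G=S_{p}\rtimes G'$, every double coset in $G'\backslash G/H$ has a representative of the form $s\in S_{p}$, and a direct computation identifies the corresponding isotropy group as $G'\cap sHs^{-1}=\{y\in H'\mid s^{-1}ysy^{-1}\in S_{p}\cap H\}$, which is a subgroup of $H'$ and equals $H'$ itself for $s=1$. Hence, in the notation of Propositions~\ref{prop:hkor} and \ref{prop:endo}, the reduced multiset $(\cH_{G'})^{\red}$ equals $\{H'\}$, so there is an isomorphism of $G'$-lattices $J_{G/H}\cong J_{G'/H'}\oplus P'$ with $P'$ permutation. Then Lemma~\ref{lem:qtts} gives $\Sha_{\cD_{\cap G'}}^{2}(G',P')=0$, whence $\Sha_{\cD_{\cap G'}}^{2}(G',J_{G/H})\cong \Sha_{\cD_{\cap G'}}^{2}(G',J_{G'/H'})$.

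Next I would match the latter group with the right-hand side of the theorem. Since $S_{p}\triangleleft G$ is contained in $HS_{p}$, it acts trivially on $\Ind_{HS_{p}}^{G}\Z$, so $J_{G/HS_{p}}$ is a $G/S_{p}$-lattice, carried by the canonical isomorphism $G'\xrightarrow{\ \sim\ }G/S_{p}$ to $J_{G'/H'}$ (using $HS_{p}=S_{p}\rtimes H'$). For the families of subgroups, given $D\in\cD$, Lemma~\ref{lem:sbsd}~(iii) (cf.\ Corollary~\ref{cor:cdaj}) furnishes $s\in S_{p}$ with $sDs^{-1}=(D\cap S_{p})\rtimes D'$ for some $D'\leq G'$; admissibility of $\cD$ gives $(D\cap S_{p})\rtimes D'\in\cD$, its image in $G/S_{p}$ is $DS_{p}/S_{p}$ and corresponds to $D'$, and $D\cap G'\subseteq D'$. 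It follows that the family $\{D'\}_{D\in\cD}$ is contained in $\cD_{\cap G'}$ while its restriction conditions are no weaker than those imposed by $\cD_{\cap G'}$, so the subgroup of $H^{2}(G',J_{G'/H'})$ it cuts out is exactly $\Sha_{\cD_{\cap G'}}^{2}(G',J_{G'/H'})$; transporting back along $G'\cong G/S_{p}$ identifies this with $\Sha_{\cD_{/S_{p}}}^{2}(G/S_{p},J_{G/HS_{p}})$. Composing with $\Res_{G/G'}$ yields the required injection $\Sha_{\cD}^{2}(G,J_{G/H})^{(p)}\hookrightarrow \Sha_{\cD_{/S_{p}}}^{2}(G/S_{p},J_{G/HS_{p}})$, which together with $\iota$ completes the proof.

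The step I expect to be the main obstacle is the Mackey computation of $J_{G/H}|_{G'}$: one must verify carefully that, once $H$ is put in the form $(S_{p}\cap H)\rtimes H'$ and double-coset representatives are chosen inside $S_{p}$, \emph{every} isotropy subgroup $G'\cap sHs^{-1}$ lies inside the single subgroup $H'$, so that $(\cH_{G'})^{\red}=\{H'\}$ and the permutation part $P'$ genuinely splits off; the accompanying bookkeeping in the last paragraph, reconciling $\cD_{\cap G'}$ with $\cD_{/S_{p}}$ through the admissibility hypothesis, is the other delicate point.
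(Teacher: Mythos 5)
Your proposal is correct and follows essentially the same route as the paper's proof: the injection from Lemma \ref{lem:shpp} (ii), restriction to a Schur--Zassenhaus complement $G'$ via Proposition \ref{prop:tsan} (iii), the Mackey/reduction argument showing $\cH_{G'}^{\red}=\{H'\}$ so that $J_{G/H}\cong J_{G'/H'}\oplus(\text{permutation})$ over $G'$, and the matching of $\cD_{\cap G'}$ with $\cD_{/S_{p}}$ via Corollary \ref{cor:cdaj}. The only cosmetic difference is that the paper organizes these maps into a single chain of injections whose source and target are canonically identified (forcing every link to be an isomorphism), whereas you phrase it as two injections in opposite directions between finite groups; the content is identical.
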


\begin{proof}
By Lemma \ref{lem:shpp} (ii), we obtain an injection
\begin{equation}\label{eq:prm1}
\widehat{\N}_{G}\colon \Sha_{\cD_{/S_{p}}}^{2}(G/S_{p},J_{G/HS_{p}})\hookrightarrow \Sha_{\cD}^{2}(G,J_{G/H})^{(p)}. 
\end{equation}
In the following, we prove that this map is surjective. By Lemma \ref{lem:sbsd} (iii), we may assume that the isomorphism $G\cong S_{p}\rtimes G'$ induces an isomorphism $H\cong (S_{p}\cap H)\rtimes H'$ for some subgroup $H'$ of $G'$. Since $\#S_{p}$ is a power of $p$, Proposition \ref{prop:tsan} (iii) gives an injection
\begin{equation}\label{eq:prm2}
\Res_{G/G'}\colon \Sha_{\cD}^{2}(G,J_{G/H})^{(p)}\hookrightarrow \Sha_{\cD_{G'}}^{2}(G',J_{G/H}). 
\end{equation}
Take a complete representative of $G'\backslash G/H$ in $G$. Since $G/S_{p}\cong G'$, we may assume $1\in R(G',H)\subset S_{p}\rtimes \{1\}$. Consider an isomorphism of $G'$-lattices given by Proposition \ref{prop:hkor}: 
\begin{equation*}
J_{G/H} \cong J_{G'/\cH_{G'}}. 
\end{equation*}
Here, $\cH_{G'}$ consists of $G'\cap sHs^{-1}$ with $s\in R(G',H)$. Then, $H'$ is the maximal element of $\cH_{G'}^{\set}$ with respect to inclusion. Hence, Lemma \ref{lem:stts} gives an isomorphism
\begin{equation}\label{eq:prm3}
\Sha_{\cD_{G'}}^{2}(G',J_{G/H})\cong \Sha_{\cD_{G'}}^{2}(G',J_{G'/H'}). 
\end{equation}
Composing the injections \eqref{eq:prm1}, \eqref{eq:prm2} and \eqref{eq:prm3}, we get an injection
\begin{equation}\label{eq:prm4}
\Sha_{\cD_{/S_{p}}}^{2}(G/S_{p},J_{G/HS_{p}})\hookrightarrow \Sha_{\cD_{G'}}^{2}(G',J_{G'/H'}). 
\end{equation}
On the other hand, by Corollary \ref{cor:cdaj}, the canonical isomorphism $G'\cong G/S_{p}$ induces an isomorphism $J_{G/HS_{p}}\cong J_{G'/H'}$ and a bijection $\cD_{G'}\cong \cD_{/S_{p}}$. This implies that \eqref{eq:prm4} is an isomorphism, and therefore we obtain the desired assertion. 
\end{proof}

\subsection{The $p$-primary torsion parts}

Let $G$ be a transitive group of degree $\in p\Z \setminus p^{2}\Z$, and $H$ a corresponding subgroup of $G$. Then we have $N^{G}(H)=\{1\}$ by Proposition \ref{prop:tggp} (i). Moreover, if a $p$-Sylow subgroup of $G$ is normal, then it is isomorphic to $(C_{p})^{m}$ for some $m\in \Zpn$ by Lemma \ref{lem:sbsd} (v). We discuss the structure of $\Sha_{\cD}^{2}(G,J_{G/H})[p^{\infty}]$, where $\cD$ is an admissible set of subgroups of $G$, in two separate cases, $m\neq 2$ and $m=2$. 

\begin{thm}\label{thm:pptt}
Let $p$ be a prime number, $G$ a transitive group of degree $\in p\Z \setminus p^{2}\Z$, and $H$ a corresponding subgroup of $G$. We further assume that
\begin{enumerate}
\item a $p$-Sylow subgroup $S_{p}$ of $G$ is normal; and 
\item $S_{p}\cong (C_{p})^{m}$ for some $m\neq 2$. 
\end{enumerate}
Then we have $\Sha^{2}_{\cD}(G,J_{G/H})[p^{\infty}]=0$ for any admissible subset of subgroups $\cD$ of $G$. 
\end{thm}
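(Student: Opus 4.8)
The plan is to reduce the statement to the $p$-Sylow subgroup $S_{p}$ and then quote Theorem~\ref{thm:blpl}. Since every admissible set $\cD$ contains all cyclic subgroups of $G$, we have $\Sha_{\cD}^{2}(G,J_{G/H})\subseteq \Sha_{\omega}^{2}(G,J_{G/H})$, so it suffices to prove $\Sha_{\omega}^{2}(G,J_{G/H})[p^{\infty}]=0$. As $(G:S_{p})$ is prime to $p$, Proposition~\ref{prop:tsan}(ii), applied with the subgroup $S_{p}$ in place of $H$, gives an injection $\Sha_{\omega}^{2}(G,J_{G/H})[p^{\infty}]\hookrightarrow \Sha_{(\cC_{G})_{\cap S_{p}}}^{2}(S_{p},J_{G/H})$; and $(\cC_{G})_{\cap S_{p}}$ is precisely the set of all cyclic subgroups of $S_{p}$, so the target is $\Sha_{\omega}^{2}(S_{p},J_{G/H})$, with $J_{G/H}$ regarded as an $S_{p}$-lattice. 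Thus everything is reduced to the vanishing of $\Sha_{\omega}^{2}(S_{p},J_{G/H})$.

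Next I would make this $S_{p}$-lattice explicit. Proposition~\ref{prop:hkor} gives an isomorphism of $S_{p}$-lattices $J_{G/H}\cong J_{S_{p}/\cH_{S_{p}}}$, where $\cH_{S_{p}}$ is the multiset of the subgroups $S_{p}\cap gHg^{-1}$ with $g$ running over a complete set of representatives $R(S_{p},H)$ of $S_{p}\backslash G/H$. Because the degree of $G$ lies in $p\Z\setminus p^{2}\Z$, so that $\ord_{p}(G:H)=1$, Lemma~\ref{lem:sbsd}(ii) with $D=H$ shows that each member of $\cH_{S_{p}}$ has index $p$ in $S_{p}$; and since $H$ is a corresponding subgroup we have $N^{G}(H)=\{1\}$ by Proposition~\ref{prop:tggp}(i), whence Lemma~\ref{lem:sbsd}(iv)(a) gives $\bigcap_{g\in R(S_{p},H)}(S_{p}\cap gHg^{-1})=\{1\}$. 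Passing to the reduced underlying set by Lemma~\ref{lem:stts}, we obtain a finite set of pairwise distinct index-$p$ subgroups of the elementary abelian group $S_{p}\cong (C_{p})^{m}$ whose intersection $N$ is trivial, so $(S_{p}:N)=p^{m}$.

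Finally I would apply Theorem~\ref{thm:blpl}: these index-$p$ subgroups are automatically normal in the abelian group $S_{p}$, and since $m\neq 2$ we land in the ``otherwise'' branch, giving $\Sha_{\omega}^{2}(S_{p},J_{S_{p}/\cH_{S_{p}}})=0$. Together with the two reductions above, this yields $\Sha_{\cD}^{2}(G,J_{G/H})[p^{\infty}]=0$, as required. I expect the only point needing care to be the bookkeeping of the middle paragraph: confirming that the reduced form of $\cH_{S_{p}}$ satisfies exactly the hypotheses of Theorem~\ref{thm:blpl} (pairwise incomparability and trivial total intersection) and that its members remain normal in $S_{p}$; the extreme case in which $\cH_{S_{p}}$ has fewer than three distinct members is in any event covered directly by Theorem~\ref{thm:blpl} (or, when $\cH_{S_{p}}$ is a single subgroup, forces $m=1$ and follows already from Proposition~\ref{prop:bart}), so no separate treatment is needed.
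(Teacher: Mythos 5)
Your proposal is correct and follows essentially the same route as the paper: reduce via Proposition \ref{prop:tsan} (ii) to the vanishing of $\Sha_{\omega}^{2}(S_{p},J_{G/H})$, identify $J_{G/H}$ as the multinorm lattice $J_{S_{p}/\cH_{S_{p}}}$ via Proposition \ref{prop:hkor}, verify the index-$p$ and trivial-intersection hypotheses using Lemma \ref{lem:sbsd} (ii) and (iv)(a), and conclude by Theorem \ref{thm:blpl} since $m\neq 2$. Your extra bookkeeping about the reduced multiset and the degenerate small cases is harmless but not needed, as Theorem \ref{thm:blpl} is already stated for multisets and its ``otherwise'' branch covers those cases.
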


\begin{proof}
It suffices to prove $\Sha_{\omega}^{2}(S_{p},J_{G/H})=0$. Take a complete representative $R(S_{p},H)$ of $S_{p}\backslash G/H$ in $G$. By Proposition \ref{prop:hkor}, one has an isomorphism of $S_{p}$-lattices 
\begin{equation*}
J_{G/H} \cong J_{S_{p}/\cH_{S_{p}}}. 
\end{equation*}
Here, $\cH_{S_{p}}$ consists of $S_{p}\cap gHg^{-1}$ with $g\in R(S_{p},H)$. Now, Lemma \ref{lem:sbsd} (ii) implies
\begin{equation*}
(S_{p}:S_{p}\cap gHg^{-1})=p
\end{equation*}
for any $g\in R(S_{p},H)$. Moreover, Lemma \ref{lem:sbsd} (iv) (a) gives an equality
\begin{equation*}
\bigcap_{g\in R(S_{p},H)}(S_{p}\cap gHg^{-1})=\{1\}. 
\end{equation*}
Consequently, the assertion follows from Theorem \ref{thm:blpl} since $m\neq 2$. 
\end{proof}

In the following, we consider the case $m\neq 2$, which requires a more careful analysis. 

\begin{lem}\label{lem:rdsp}
Let $p$ be a prime number, $G$ a finite group of order a multiple of $p$, and $H$ a subgroup of $G$ satisfying $(G:H)\in p\Z$. We further assume that a $p$-Sylow subgroup $S_{p}$ of $G$ is normal. 
Take an admissible set $\cD$ of subgroups of $G$ such that every $D\in \cD$ does not contain $S_{p}$. Then, $H^{2}(G,J_{G/HS_{p}})^{(\cD)}[p^{\infty}]$ coincides with the subgroup of $H^{2}(G,J_{G/HS_{p}})$ as follows: 
\begin{equation*}
M_{HS_{p}}:=\!\left\{f\in H^{2}(G,J_{G/HS_{p}})[p^{\infty}] \,\middle|\, \Res_{G/D}(f)\in \Ima(\delta_{D})\text{ for any }D\in \cD_{S_{p},H}\right\}\!.
\end{equation*}
Here $\cD_{S_{p},H}:=\cC_{S_{p}\cap H}\cup \{D\cap S_{p}<S_{p}\mid D\in \cD\setminus \cC_{G}\}$. 
\end{lem}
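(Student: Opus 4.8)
The plan is to transport both subgroups of $H^{2}(G,J_{G/HS_{p}})$ appearing in the statement to kernels of restriction maps on $H^{2}(G,J_{G/H})$ via the norm map $\widehat{\N}_{G}$, and then to invoke Theorem \ref{thm:ppup} for the $G$-lattice $J_{G/H}$. First I would record a reformulation: restricting the exact sequence $0\rightarrow J_{G/HS_{p}}\rightarrow J_{G/H}\rightarrow \Ind_{HS_{p}}^{G}J_{HS_{p}/H}\rightarrow 0$ of Lemma \ref{lem:jgcg} to any subgroup $D$ of $G$ yields an exact sequence $H^{1}(D,\Ind_{HS_{p}}^{G}J_{HS_{p}/H})\xrightarrow{\delta_{D}}H^{2}(D,J_{G/HS_{p}})\xrightarrow{\widehat{\N}_{D}}H^{2}(D,J_{G/H})$, so that $\Ima(\delta_{D})=\Ker(\widehat{\N}_{D})$. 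Combining this with naturality of the connecting maps, $\widehat{\N}_{D}\circ \Res_{G/D}=\Res_{G/D}\circ \widehat{\N}_{G}$, I obtain that for every $f\in H^{2}(G,J_{G/HS_{p}})$ and every subgroup $D$ the condition $\Res_{G/D}(f)\in \Ima(\delta_{D})$ is equivalent to $\Res_{G/D}(\widehat{\N}_{G}(f))=0$. Letting $D$ run over $\cD$ and over $\cD_{S_{p},H}$ respectively then identifies $H^{2}(G,J_{G/HS_{p}})^{(\cD)}[p^{\infty}]$ with $\{f\in H^{2}(G,J_{G/HS_{p}})[p^{\infty}]\mid \widehat{\N}_{G}(f)\in \Sha_{\cD}^{2}(G,J_{G/H})\}$, and $M_{HS_{p}}$ with the analogous set for $\cD_{S_{p},H}$.

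Next, since $\widehat{\N}_{G}(f)$ is $p$-primary torsion whenever $f$ is, only the $p$-primary parts $\Sha_{\cD}^{2}(G,J_{G/H})[p^{\infty}]$ and $\Sha_{\cD_{S_{p},H}}^{2}(G,J_{G/H})[p^{\infty}]$ matter in the two conditions above. By Theorem \ref{thm:ppup} applied to $H\subset G$ and to $p$ (note $p\mid (G:H)$), one has $\Sha_{\cD}^{2}(G,J_{G/H})[p^{\infty}]=\Sha_{\cD_{(H,p)}}^{2}(G,J_{G/H})[p^{\infty}]$, where $\cD_{(H,p)}$ is the set of $p$-Sylow subgroups of the elements of $\cC_{H}\cup(\cD\setminus \cC_{G})$. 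So it remains to prove the set-theoretic identity $\cD_{(H,p)}=\cD_{S_{p},H}$. Here I would use that $S_{p}$, being the unique $p$-Sylow subgroup of $G$, is normal; hence for any subgroup $D$ of $G$ the group $D\cap S_{p}$ is normal in $D$ and $D/(D\cap S_{p})\hookrightarrow G/S_{p}$ has order prime to $p$, so $D\cap S_{p}$ is the unique $p$-Sylow subgroup of $D$. Applying this to $D\in \cD\setminus \cC_{G}$ gives the family $\{D\cap S_{p}\mid D\in \cD\setminus \cC_{G}\}$, while applying it to $D\in \cC_{H}$ gives $\{D\cap S_{p}\mid D\in \cC_{H}\}$; and this last family is exactly $\cC_{S_{p}\cap H}$, because every cyclic subgroup of the $p$-group $S_{p}\cap H$ is its own $p$-Sylow subgroup and lies in $\cC_{H}$. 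Therefore $\cD_{(H,p)}=\cC_{S_{p}\cap H}\cup\{D\cap S_{p}\mid D\in \cD\setminus \cC_{G}\}=\cD_{S_{p},H}$, and the three steps together give the claim.

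I expect the only subtle point to be this comparison of test sets $\cD_{(H,p)}=\cD_{S_{p},H}$: once one notes that normality of $S_{p}$ forces the $p$-Sylow subgroup of every subgroup $D\le G$ to be $D\cap S_{p}$, the remainder is the formal bookkeeping above. I remark in passing that the hypothesis that no $D\in \cD$ contains $S_{p}$ is not actually needed for this particular statement; it is inherited from the setting in which the lemma will subsequently be applied.
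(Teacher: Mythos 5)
Your argument is correct and follows the same route as the paper's own proof: the paper likewise combines the exact sequence of Proposition \ref{prop:h2s2} (equivalently, your reformulation $\Ima(\delta_{D})=\Ker(\widehat{\N}_{D})$ together with naturality of restriction) with Theorem \ref{thm:ppup} and the identification $\cD_{(H,p)}=\cD_{S_{p},H}$, which it asserts from the normality of $S_{p}$ exactly as you justify. Your write-up merely makes explicit the preimage description under $\widehat{\N}_{G}$ that the paper leaves implicit, and your closing observation that the hypothesis ``no $D\in\cD$ contains $S_{p}$'' is not used here is accurate.
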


\begin{proof}
Recall Proposition \ref{prop:h2s2} that one has an exact sequence
\begin{equation*}
H^{1}(G,\Ind_{HS_{p}}^{G}J_{HS_{p}/H})\rightarrow H^{2}(G,J_{G/HS_{p}})^{(\cD)}[p^{\infty}]\rightarrow \Sha_{\cD}^{2}(G,J_{G/H})[p^{\infty}] \rightarrow 0
\end{equation*}
Moreover, Theorem \ref{thm:ppup} gives an equality
\begin{equation*}
\Sha_{\cD}^{2}(G,J_{G/H})[p^{\infty}]=\Sha_{\cD_{(H,p)}}^{2}(G,J_{G/H})[p^{\infty}]. 
\end{equation*}
Then, one has $\cD_{(H,p)}=\cD_{H,S_{p}}$ since $S_{p}$ is normal in $G$. Therefore, we obtain the desired assertion. 
\end{proof}

We prepare some lemmas to investigate $M_{H,S_{p}}^{(\cD)}$ in Lemma \ref{lem:rdsp}. 

\begin{lem}\label{lem:csds}
Let $p$ be a prime number, $G$ a finite group of order a multiple of $p$, and $H$ a subgroup of $G$ satisfying $(G:H)\in p\Z$. We further assume that a $p$-Sylow subgroup $S_{p}$ of $G$ is normal. 
Then, there is a commutative diagram
\begin{equation*}
\xymatrix{
H^{2}(G,\Ind_{HS_{p}}^{G}J_{HS_{p}/H}) \ar[d]^{\cong} \ar[r]& H^{2}(G,\Ind_{HS_{p}}^{G}\Z)[p^{\infty}] \ar[d]^{\cong}\\
(S_{p}/[S_{p},HS_{p}]\cdot (S_{p}\cap H))^{\vee} \ar[r]& (S_{p}/[S_{p},HS_{p}])^{\vee}. }
\end{equation*}
\end{lem}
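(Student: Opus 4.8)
The group in the upper-left corner should be read as $H^{1}(G,\Ind_{HS_{p}}^{G}J_{HS_{p}/H})$, consistently with Lemma \ref{lem:exh1}; I will prove the square with that group. The upper horizontal arrow I take to be the connecting map $\delta_{G}'$ of the short exact sequence $0\to\Ind_{HS_{p}}^{G}\Z\to\Ind_{H}^{G}\Z\to\Ind_{HS_{p}}^{G}J_{HS_{p}/H}\to 0$, i.e.\ the upper row of Lemma \ref{lem:jgcg} with $H':=HS_{p}$, and the lower horizontal arrow to be the injection dual to the canonical surjection $S_{p}/[S_{p},HS_{p}]\twoheadrightarrow S_{p}/[S_{p},HS_{p}]\cdot(S_{p}\cap H)$. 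So the plan is: obtain the left vertical isomorphism from Lemma \ref{lem:exh1}, produce the right vertical isomorphism by a direct computation with characters, and then check that both composites around the square are ``restrict a character of $HS_{p}$ to $S_{p}$''.

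First I would treat the right vertical arrow. Shapiro's lemma together with Proposition \ref{prop:htch} identifies $H^{2}(G,\Ind_{HS_{p}}^{G}\Z)\cong (HS_{p})^{\vee}$, and, applying this for $G$ and for $S_{p}$ and using Lemmas \ref{lem:shpr} and \ref{lem:gpac}, the map $\Res_{G/S_{p}}$ becomes restriction of characters $(HS_{p})^{\vee}\to S_{p}^{\vee}$. Since every homomorphism $HS_{p}\to\Q/\Z$ kills $[HS_{p},HS_{p}]\supseteq[S_{p},HS_{p}]$, this factors through $(S_{p}/[S_{p},HS_{p}])^{\vee}$. Writing $HS_{p}=S_{p}\rtimes H'$ with $\#H'$ prime to $p$ (Lemma \ref{lem:sbsd}(i)), the conjugation action of $H'$ on $S_{p}/[S_{p},HS_{p}]$ is trivial, so any character of $S_{p}/[S_{p},HS_{p}]$ extends (by $0$ on $H'$) to $HS_{p}$; hence $(HS_{p})^{\vee}\to(S_{p}/[S_{p},HS_{p}])^{\vee}$ is onto with kernel $(HS_{p}/S_{p})^{\vee}$ of order prime to $p$, and therefore restricts to an isomorphism $(HS_{p})^{\vee}[p^{\infty}]\xrightarrow{\sim}(S_{p}/[S_{p},HS_{p}])^{\vee}$, which is the right vertical arrow. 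For the left vertical arrow I would simply compose the isomorphism $H^{1}(G,\Ind_{HS_{p}}^{G}J_{HS_{p}/H})\cong\Ker((HS_{p})^{\vee}\to H^{\vee})$ of Lemma \ref{lem:exh1}(i) with the isomorphism $\Ker((HS_{p})^{\vee}\to H^{\vee})\cong(S_{p}/[S_{p},HS_{p}]\cdot(S_{p}\cap H))^{\vee}$ of Lemma \ref{lem:exh1}(iii); in particular the source is a $p$-group, so $\Ima(\delta_{G}')\subseteq H^{2}(G,\Ind_{HS_{p}}^{G}\Z)[p^{\infty}]$ and the square is well posed.

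For commutativity, the point I would have to unwind is that the isomorphism of Lemma \ref{lem:exh1}(i) is exactly $\delta_{G}'$ followed by the Shapiro identification $H^{2}(G,\Ind_{HS_{p}}^{G}\Z)\cong(HS_{p})^{\vee}$: that isomorphism comes (through Shapiro, plus the long exact sequence of $0\to\Z\to\Ind_{H}^{HS_{p}}\Z\to J_{HS_{p}/H}\to 0$ and the vanishing $H^{1}(HS_{p},\Z)=H^{1}(H,\Z)=0$) from the connecting map of that $HS_{p}$-sequence, and applying the exact functor $\Ind_{HS_{p}}^{G}(-)$ to it returns precisely the sequence defining $\delta_{G}'$, so the two connecting maps correspond under Shapiro by naturality in the module. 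Granting this, a class $u$ is carried by (right vertical)$\circ$(top) to the character $\delta_{G}'(u)\!\mid_{S_{p}}$ of $S_{p}/[S_{p},HS_{p}]$, while (bottom)$\circ$(left vertical) sends it to the same $\delta_{G}'(u)\!\mid_{S_{p}}$, which by Lemma \ref{lem:exh1}(iii) a priori factors through $S_{p}/[S_{p},HS_{p}]\cdot(S_{p}\cap H)$ because $\delta_{G}'(u)$ vanishes on $H$, and is then included back into $(S_{p}/[S_{p},HS_{p}])^{\vee}$; the two agree. The only genuine obstacle here is the bookkeeping identifying the Lemma \ref{lem:exh1}(i) isomorphism with the connecting map $\delta_{G}'$ transported by Shapiro's lemma; everything else is a routine character computation.
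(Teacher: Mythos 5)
Your reading of the upper-left entry as $H^{1}(G,\Ind_{HS_{p}}^{G}J_{HS_{p}/H})$ (a typo in the statement) is the right one — it is forced by Lemma \ref{lem:exh1} and by how the diagram is used in Proposition \ref{prop:lciv} — and your argument is correct. It follows essentially the same route as the paper's (very terse) proof, namely Lemma \ref{lem:exh1} (i), (iii) for the left vertical isomorphism and the character-theoretic identification $(HS_{p})^{\vee}[p^{\infty}]\cong (S_{p}/[S_{p},HS_{p}])^{\vee}$ for the right one; your added value is only in supplying the details the paper omits, i.e.\ the proof of that identification via $HS_{p}=S_{p}\rtimes H'$ and the commutativity check through the compatibility of Shapiro's lemma with connecting homomorphisms.
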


\begin{proof}
The left vertical isomorphism is a consequence of Lemma \ref{lem:exh1} (iii). The right isomorphism follows from the fact that the natural inclusion $S_{p}<HS_{p}$ induces an isomorphism
\begin{equation*}\label{eq:hspv}
(HS_{p})^{\vee}[p^{\infty}]\cong (S_{p}/[S_{p},HS_{p}])^{\vee}. 
\end{equation*}
Here, we use the assumption that $S_{p}$ is normal. 
\end{proof}

\begin{lem}\label{lem:dlgc}
Let $p$ be a prime number, $G$ a finite group of order a multiple of $p$, and $H$ a subgroup of $G$ satisfying $(G:H)\in p\Z$. We further assume that a $p$-Sylow subgroup $S_{p}$ of $G$ is normal. Take a $p$-subgroup $D$ of $G$. 
\begin{enumerate}
\item There is a commutative diagram
\begin{equation*}
\xymatrix{
H^{1}(D,\Ind_{HS_{p}}^{G}J_{HS_{p}/H}) \ar[d]^{\cong}\ar[r]^{\hspace{15pt}\delta_{D}} & H^{2}(D,\Ind_{HS_{p}}^{G}\Z) \ar[d]^{\cong}\\
\bigoplus_{g\in R(D,HS_{p})}C_{D,g} \ar@{^{(}->}[r]& C_{D,HS_{p}}. }
\end{equation*}
Here we use the notations as follows: 
\begin{itemize}
\item $R(D,HS_{p})$ is a complete representative of $D\backslash G/HS_{p}$ in $G$,
\item $C_{D,HS_{p}}:=\bigoplus_{g\in R(D,HS_{p})}D^{\vee}$,
\item for each $g\in R(D,HS_{p})$, a subset $R(D,g)$ of $HS_{p}$ is a complete representative of $g^{-1}Dg \backslash HS_{p}/H$, and $C_{D,g}$ is the kernel of the homomorphism
\begin{equation*}
D^{\vee}\rightarrow \bigoplus_{h \in R(D,g)}(D\cap \Ad(gh)H)^{\vee};f\mapsto (f\!\mid_{D\cap \Ad(gh)H})_{h}. 
\end{equation*}
\end{itemize}
Furthermore, the composite of the upper horizontal maps coincides with $\delta_{D}$. 
\item There is a commutative diagram
\begin{equation*}
\xymatrix@C=40pt{
H^{2}(G,\Ind_{HS_{p}}^{G}\Z)[p^{\infty}] \ar[d]^{\cong}\ar[r]^{\hspace{5pt}\Res_{G/D}} & H^{2}(D,\Ind_{HS_{p}}^{G}\Z) \ar[d]^{\cong} \\
(S_{p}/[S_{p},HS_{p}])^{\vee} \ar[r]^{\hspace{20pt}f\mapsto f\circ (\Ad(g^{-1})\mid_{D})_{g}}& C_{D,HS_{p}}. }
\end{equation*}
\end{enumerate}
\end{lem}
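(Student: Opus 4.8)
The plan is to derive both parts from two successive applications of Mackey's decomposition (Proposition \ref{prop:mcky}), combined with Shapiro's lemma and Proposition \ref{prop:htch}. The key structural fact is that, since $S_{p}$ is the unique $p$-Sylow subgroup of $G$, the $p$-subgroup $D$ lies inside $S_{p}$; hence $D\subset S_{p}\subset g(HS_{p})g^{-1}$ and $g^{-1}Dg\subset S_{p}\subset HS_{p}$ for every $g\in G$. This collapses the ``outer'' Mackey decomposition along $D\backslash G/HS_{p}$ to a diagonal one, which is what allows the bookkeeping below to close.

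For (i), I would start from the exact sequence $0\to\Ind_{HS_{p}}^{G}\Z\to\Ind_{H}^{G}\Z\to\Ind_{HS_{p}}^{G}J_{HS_{p}/H}\to 0$ of Lemma \ref{lem:jgcg}, which is $\Ind_{HS_{p}}^{G}(-)$ applied to $0\to\Z\to\Ind_{H}^{HS_{p}}\Z\to J_{HS_{p}/H}\to 0$. Restricting to $D$ and using Proposition \ref{prop:mcky} with $D\cap g(HS_{p})g^{-1}=D$, this sequence decomposes, naturally in the module, as the direct sum over $g\in R(D,HS_{p})$ of the twisted sequences $0\to\Z\to(\Ind_{H}^{HS_{p}}\Z)^{g}\to(J_{HS_{p}/H})^{g}\to 0$ of $D$-modules, the copy of $\Z$ carrying the trivial action. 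Hence $\delta_{D}=\bigoplus_{g}\delta_{D}^{(g)}$, and Shapiro gives $H^{2}(D,\Ind_{HS_{p}}^{G}\Z)\cong\bigoplus_{g}H^{2}(D,\Z)=C_{D,HS_{p}}$. Fixing $g$ and transporting the $g$-th summand through $\Ad(g^{-1})\colon D\xrightarrow{\sim}g^{-1}Dg$ identifies it with the restriction to $g^{-1}Dg\subset HS_{p}$ of $0\to\Z\to\Ind_{H}^{HS_{p}}\Z\to J_{HS_{p}/H}\to 0$; a second application of Mackey's decomposition to $\Ind_{H}^{HS_{p}}\Z|_{g^{-1}Dg}$ over $g^{-1}Dg\backslash HS_{p}/H$, followed by Shapiro, yields $H^{1}(D,(\Ind_{H}^{HS_{p}}\Z)^{g})=0$ and $H^{2}(D,(\Ind_{H}^{HS_{p}}\Z)^{g})\cong\bigoplus_{h\in R(D,g)}(D\cap\Ad(gh)H)^{\vee}$, the map out of $H^{2}(D,\Z)=D^{\vee}$ being $f\mapsto(f|_{D\cap\Ad(gh)H})_{h}$ by Proposition \ref{prop:htch}. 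The long exact sequence then makes $\delta_{D}^{(g)}$ injective with image $\Ker(D^{\vee}\to\bigoplus_{h}(D\cap\Ad(gh)H)^{\vee})=C_{D,g}$, and assembling over $g$ gives the commutative square, $\delta_{D}$ corresponding under the vertical isomorphisms to the inclusion $\bigoplus_{g}C_{D,g}\hookrightarrow C_{D,HS_{p}}$; composing further with the natural map $H^{2}(D,\Ind_{HS_{p}}^{G}\Z)\to H^{2}(D,J_{G/HS_{p}})$ recovers the $\delta_{D}$ of \eqref{res1}, by the commutative square of Lemma \ref{lem:jgcg}.

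For (ii), the content is Proposition \ref{prop:rsch} plus a passage to $p$-primary parts. By Shapiro and Proposition \ref{prop:htch} one has $H^{2}(G,\Ind_{HS_{p}}^{G}\Z)\cong(HS_{p})^{\vee}$, and Proposition \ref{prop:rsch} (applied with $HS_{p}$ in place of $H$) turns $\Res_{G/D}$ into $f\mapsto\big((f\circ\Ad(g^{-1}))|_{D\cap g(HS_{p})g^{-1}}\big)_{g}=\big((f\circ\Ad(g^{-1}))|_{D}\big)_{g}\in\bigoplus_{g}D^{\vee}=C_{D,HS_{p}}$, again using $D\subset g(HS_{p})g^{-1}$. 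Passing to $p$-torsion, $S_{p}$ is a normal Sylow $p$-subgroup of $HS_{p}$ with $(HS_{p}:S_{p})$ prime to $p$, so restriction to $S_{p}$ realizes the left vertical isomorphism $(HS_{p})^{\vee}[p^{\infty}]\cong(S_{p}/[S_{p},HS_{p}])^{\vee}$ of Lemma \ref{lem:csds}; for $f$ in this group, $D\subset S_{p}$ and $\Ad(g^{-1})(D)\subset g^{-1}Dg\subset S_{p}$ make $f\mapsto\big(f\circ(\Ad(g^{-1})|_{D})\big)_{g}$ well defined and compatible with the previous formula. Since $H^{2}(D,-)$ is $p$-primary ($D$ being a $p$-group), no truncation is needed on the right, and the square commutes.

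The step I expect to be the main obstacle is the bookkeeping of the two nested Mackey decompositions in (i): one must carry the double-coset index sets $R(D,HS_{p})$ and $R(D,g)$ along simultaneously and verify that the conjugation twists $(-)^{g}$, the Shapiro isomorphisms, and Proposition \ref{prop:htch} combine to produce exactly the restriction maps $f\mapsto f|_{D\cap\Ad(gh)H}$ with no stray inner automorphism surviving. Everything else — exactness of the twisting functors, vanishing of $H^{1}$ of the permutation summands $\Ind_{D\cap\Ad(gh)H}^{D}\Z$, and the reduction of (ii) to Proposition \ref{prop:rsch} — is routine once $D\subset S_{p}$ is used to trivialize the outer decomposition.
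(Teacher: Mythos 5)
Your proposal is correct and follows essentially the same route as the paper: use $D\subset S_{p}\subset g(HS_{p})g^{-1}$ to collapse the outer Mackey decomposition along $D\backslash G/HS_{p}$ into a direct sum of twisted sequences, then apply a second Mackey decomposition over $g^{-1}Dg\backslash HS_{p}/H$ together with Shapiro's lemma and Proposition \ref{prop:htch} to identify $\delta_{D}^{(g)}$ as the inclusion of $C_{D,g}$ into $D^{\vee}$, and deduce (ii) from Proposition \ref{prop:rsch}. Your added remarks (vanishing of $H^{1}$ of the permutation summands, and the compatibility with the $\delta_{D}$ of \eqref{res1} via Lemma \ref{lem:jgcg}) are correct and, if anything, slightly more explicit than the paper's write-up.
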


\begin{proof}
(i): Since $S_{p}$ is normal in $G$, we obtain $D<S_{p}$. Hence, Proposition \ref{prop:mcky} gives isomorphisms of $D$-lattices
\begin{gather*}
\Ind_{HS_{p}}^{G}\Z \cong \bigoplus_{g\in R(D,HS_{p})}\Z, \\
\Ind_{HS_{p}}^{G}J_{HS_{p}/H}\cong \bigoplus_{g\in R(D,HS_{p})}(J_{HS_{p}/H})^{g}\cong \bigoplus_{g\in R(D,HS_{p})}J_{g(HS_{p})g^{-1}/gHg^{-1}}. 
\end{gather*}
Combining these with Proposition \ref{prop:mcky}, we obtain a commutative diagram
\begin{equation*}
\xymatrix{
H^{1}(D,\Ind_{HS_{p}}^{G}J_{HS_{p}/H})\ar[r] \ar[d]^{\cong} & H^{2}(D,\Ind_{HS_{p}}^{G}\Z)\ar[d]^{\cong}\\
\bigoplus_{g\in R(D,HS_{p})}H^{1}(D,J_{g(HS_{p})g^{-1}/gHg^{-1}})\ar[r] & \bigoplus_{g\in R(D,HS_{p})}H^{2}(D,\Z). }
\end{equation*}
Here the lower homomorphism is the direct sum of the connecting homomorphism induced by the canonical exact sequence of $D$-lattices
\begin{equation*}
0 \rightarrow \Z \rightarrow \Ind_{gHg^{-1}}^{g(HS_{p})g^{-1}}\Z \rightarrow J_{g(HS_{p})g^{-1}/gHg^{-1}} \rightarrow 0. 
\end{equation*}
Hence it suffices to prove the commutativity of the following diagram for any $g\in R(D,HS_{p})$: 
\begin{equation}\label{eq:dcf1}
\xymatrix{
H^{1}(D,J_{g(HS_{p})g^{-1}/gHg^{-1}})\ar[r] \ar[d]^{\cong} & H^{2}(D,\Z)\ar[d]^{\cong}\\
C_{D,g}\ar[r] & D^{\vee}. }
\end{equation}
Fix $g\in R(D,HS_{p})$. Since every $h\in R(D,g)$ satisfies
\begin{equation*}
\Ad(ghg^{-1})(gHg^{-1})=\Ad(gh)H<g(HS_{p})g^{-1},
\end{equation*}
Proposition \ref{prop:mcky} implies an isomorphism of $D$-lattices
\begin{equation*}
\Ind_{gHg^{-1}}^{g(HS_{p})g^{-1}}\Z \cong \bigoplus_{h \in R(D,g)}\Ind_{D\cap \Ad(gh)H}^{D}\Z. 
\end{equation*}
Hence, Proposition \ref{prop:htch} induces the desired commutative diagram \eqref{eq:dcf1}. 

(ii): This follows from Proposition \ref{prop:rsch}. 
\end{proof}

\begin{prop}\label{prop:lciv}
Let $p$ be a prime number, $G$ a finite group of order a multiple of $p$, and $H$ a subgroup of $G$ satisfying $(G:H)\in p\Z$. We further assume that a $p$-Sylow subgroup $S_{p}$ of $G$ is normal. 
Denote by $\widetilde{M}_{HS_{p}}^{(\cD)}$ the preimage of $H^{2}(G,J_{G/HS_{p}})^{(\cD)}[p^{\infty}]$ under the homomorphism $H^{2}(G,\Ind_{HS_{p}}^{G}\Z)[p^{\infty}]\rightarrow H^{2}(G,J_{G/HS_{p}})[p^{\infty}]$ in Lemma \ref{lem:csds}. 
\begin{enumerate}
\item There is an isomorphism of abelian groups
\begin{equation*}
H^{2}(G,J_{G/HS_{p}})^{(\cD)}[p^{\infty}]\cong \widetilde{M}_{HS_{p}}^{(\cD)}/(S_{p}/[S_{p},G])^{\vee}. 
\end{equation*}
\item If $S_{p}$ is abelian, then $H^{2}(G,J_{G/H})[p^{\infty}]\cong (S_{p}/[S_{p},HS_{p}])^{\vee}$ in Lemma \ref{lem:csds} induces an isomorphism
\begin{equation*}
\widetilde{M}_{HS_{p}}^{(\cD)} \cong \{f\in (S_{p}/[S_{p},HS_{p}])^{\vee}\mid ((f\circ \Ad(g^{-1}))\!\mid_{D})_{g\in R(D,HS_{p})}\in \Delta_{D}+I_{D}\text{ for all }D\in \cD_{S_{p},H}\}. 
\end{equation*}
Here, $R(D,HS_{p})$ is a complete representative of $D\backslash G/HS_{p}$ in $G$, $\Delta_{D}$ is the diagonal subset in $\bigoplus_{g\in R(D,HS_{p})}D^{\vee}$, and
\begin{equation*}
I_{D}:=\bigoplus_{g\in R(D,HS_{p})}(D/(D\cap gHg^{-1}))^{\vee}. 
\end{equation*}
\item If $S_{p} \cong (C_{p})^{2}$, $S_{p}\cap H\cong C_{p}$ and $\cD_{S_{p},H}=\{S_{p}\cap H\}$, then we have
\begin{equation*}
\widetilde{M}_{HS_{p}}^{(\cD)}=(S_{p}/[S_{p},HS_{p}]\cdot [S_{p}\cap H,N_{G}(S_{p}\cap H)])^{\vee}. 
\end{equation*}
\end{enumerate}
\end{prop}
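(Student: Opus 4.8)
The plan is to treat the three parts in turn, reading everything off the two short exact sequences of Lemma~\ref{lem:jgcg} applied with $H'=HS_{p}$: the permutation resolution $0\to\Z\to\Ind_{HS_{p}}^{G}\Z\xrightarrow{\pi} J_{G/HS_{p}}\to 0$ and the sequence $0\to J_{G/HS_{p}}\to J_{G/H}\to\Ind_{HS_{p}}^{G}J_{HS_{p}/H}\to 0$. For (i), I want to show that the connecting map $\partial\colon H^{2}(G,J_{G/HS_{p}})\to H^{3}(G,\Z)$ of the first sequence kills $p$-primary torsion, so that $\pi_{*}\colon H^{2}(G,\Ind_{HS_{p}}^{G}\Z)[p^{\infty}]\to H^{2}(G,J_{G/HS_{p}})[p^{\infty}]$ is surjective. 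The key point is that restricting the first sequence to $S_{p}$ makes it split: since $S_{p}\subset HS_{p}$, Mackey's decomposition (Proposition~\ref{prop:mcky}) together with Proposition~\ref{prop:hkor} identifies $\Ind_{HS_{p}}^{G}\Z|_{S_{p}}$ with the trivial $S_{p}$-module $\Z^{(G:HS_{p})}$ and $J_{G/HS_{p}}|_{S_{p}}$ with the trivial $S_{p}$-module $\Z^{(G:HS_{p})-1}$, so $\partial_{S_{p}}=0$; as $p\nmid(G:S_{p})$, Proposition~\ref{prop:tsan}~(ii) makes $\Res_{G/S_{p}}$ injective on $p$-primary parts, whence $\partial=0$ there. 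The kernel of $\pi_{*}$ is the image of $H^{2}(G,\Z)[p^{\infty}]$, which by Proposition~\ref{prop:htch} is the restriction $G^{\vee}[p^{\infty}]\to(HS_{p})^{\vee}[p^{\infty}]$; using $G=S_{p}\rtimes G'$, $HS_{p}=S_{p}\rtimes H'$ (Schur--Zassenhaus, Lemma~\ref{lem:sbsd}~(i)) this becomes the injection $(S_{p}/[S_{p},G])^{\vee}\hookrightarrow(S_{p}/[S_{p},HS_{p}])^{\vee}$ dual to $S_{p}/[S_{p},HS_{p}]\twoheadrightarrow S_{p}/[S_{p},G]$. Since $\widetilde{M}_{HS_{p}}^{(\cD)}$ is by definition $\pi_{*}^{-1}$ of $H^{2}(G,J_{G/HS_{p}})^{(\cD)}[p^{\infty}]$, surjectivity of $\pi_{*}$ then yields $H^{2}(G,J_{G/HS_{p}})^{(\cD)}[p^{\infty}]\cong\widetilde{M}_{HS_{p}}^{(\cD)}/(S_{p}/[S_{p},G])^{\vee}$.

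For (ii), Lemma~\ref{lem:rdsp} lets me replace the condition ``$\pi_{*}(f)\in H^{2}(G,J_{G/HS_{p}})^{(\cD)}[p^{\infty}]$'' by ``$\Res_{G/D}\pi_{*}(f)\in\Ima(\delta_{D})$ for all $D$ in the finite family $\cD_{S_{p},H}$ of $p$-subgroups of $S_{p}$''. Fixing such a $D$ and feeding in Lemma~\ref{lem:dlgc}: under the Mackey identification $H^{2}(D,\Ind_{HS_{p}}^{G}\Z)\cong\bigoplus_{g\in R(D,HS_{p})}D^{\vee}$, the map $\Res_{G/D}$ on $(S_{p}/[S_{p},HS_{p}])^{\vee}$ is $f\mapsto((f\circ\Ad(g^{-1}))|_{D})_{g}$, the kernel of $H^{2}(D,\Ind_{HS_{p}}^{G}\Z)\to H^{2}(D,J_{G/HS_{p}})$ is the diagonal $\Delta_{D}$, and $\Ima(\delta_{D})$ is $\bigoplus_{g}C_{D,g}$. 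Thus the condition at $D$ reads $((f\circ\Ad(g^{-1}))|_{D})_{g}\in\Delta_{D}+\bigoplus_{g}C_{D,g}$; I then observe that since $f$ kills $[S_{p},HS_{p}]$, conjugation of $S_{p}$ by elements of $HS_{p}$ fixes $f$, so the extra constraints that the $C_{D,g}$ impose beyond those of the $I_{D}$ are automatically met by such an $f$, and the condition is equivalent to $((f\circ\Ad(g^{-1}))|_{D})_{g}\in\Delta_{D}+I_{D}$. This is exactly the description of $\widetilde{M}_{HS_{p}}^{(\cD)}$ claimed in (ii).

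For (iii), the hypotheses $S_{p}\cong(C_{p})^{2}$, $S_{p}\cap H\cong C_{p}$ and $\cD_{S_{p},H}=\{S_{p}\cap H\}$ leave a single subgroup $D:=S_{p}\cap H$ to test, and $D^{\vee}\cong\Fp$ makes the test of (ii) elementary: $((f\circ\Ad(g^{-1}))|_{D})_{g}\in\Delta_{D}+\bigoplus_{g}C_{D,g}$ holds if and only if the characters $(f\circ\Ad(g^{-1}))|_{D}$ agree for all $g$ with $C_{D,g}=0$. By Lemma~\ref{lem:dlgc}, $C_{D,g}=0$ precisely when $g^{-1}Dg$ is $HS_{p}$-conjugate into $H$; since $g^{-1}Dg\subset S_{p}$ and $S_{p}\cap H=D$ this forces $g^{-1}Dg$ to be conjugate to $D$ itself, i.e.\ $g\in N_{G}(D)\cdot HS_{p}$, and for $g=nh$ with $n\in N_{G}(D)$, $h\in HS_{p}$ one gets $(f\circ\Ad(g^{-1}))|_{D}=(f\circ\Ad(n^{-1}))|_{D}$ because $f$ kills $[S_{p},HS_{p}]$. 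Taking $n=1$, the test becomes $(f\circ\Ad(n^{-1}))|_{D}=f|_{D}$ for all $n\in N_{G}(D)$, equivalently $f$ is trivial on $[S_{p}\cap H,N_{G}(S_{p}\cap H)]$; together with $f\in(S_{p}/[S_{p},HS_{p}])^{\vee}$ this gives $\widetilde{M}_{HS_{p}}^{(\cD)}=(S_{p}/[S_{p},HS_{p}]\cdot[S_{p}\cap H,N_{G}(S_{p}\cap H)])^{\vee}$.

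The main obstacle is the bookkeeping in (ii): keeping track simultaneously, inside $\bigoplus_{g\in R(D,HS_{p})}D^{\vee}$, of the images of $\Res_{G/D}$, of $\Ima(\delta_{D})$, and of the kernel of $\pi_{D*}$ through the Mackey double-coset combinatorics, and in particular verifying that the $C_{D,g}$-description of $\Ima(\delta_{D})$ and the cleaner $I_{D}$-description cut out the same set of characters $f$ once one restricts to $f$ trivial on $[S_{p},HS_{p}]$. Once that is in place, (iii) is a direct specialization ($D$ cyclic of order $p$), and (i) rests only on the observation that the permutation resolution of $J_{G/HS_{p}}$ splits after restriction to $S_{p}$.
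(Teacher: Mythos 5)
Your proposal is correct and follows essentially the same route as the paper: the same two exact sequences from Lemma \ref{lem:jgcg}, the same identification of the kernel of $\pi_{*}$ with $(S_{p}/[S_{p},G])^{\vee}$ in (i) (the paper gets surjectivity of $\pi_{*}$ on $p$-primary parts from $(G:HS_{p})$ being coprime to $p$ rather than by splitting over $S_{p}$, but these are interchangeable), the reduction to the test family $\cD_{S_{p},H}$ via Lemma \ref{lem:rdsp} together with the Mackey bookkeeping of Lemma \ref{lem:dlgc} and Proposition \ref{prop:rsch} in (ii), and the specialization to the single cyclic group $D=S_{p}\cap H$ in (iii). The one place where your justification is aimed at the wrong object is the step in (ii) identifying $\Ima(\delta_{D})=\bigoplus_{g}C_{D,g}$ with $I_{D}$: this is a statement about the subgroups $D\cap \Ad(gh)H$ that cut out $C_{D,g}$, not about the character $f$. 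What is actually needed (and what the paper proves) is that, because $S_{p}$ is abelian and normal, one has $D\cap \Ad(gh)H=D\cap gHg^{-1}$ for every $h\in HS_{p}$, so that $C_{D,g}=(D/(D\cap gHg^{-1}))^{\vee}$ on the nose and there are no ``extra constraints'' to dispose of. Your observation that $f\circ\Ad(h)=f$ on $S_{p}$ for $h\in HS_{p}$ does not by itself upgrade a decomposition $\Res_{G/D}(f)=d+i$ with $i\in I_{D}$ to one with $i\in\bigoplus_{g}C_{D,g}$; the subgroup identity does, and it is a one-line computation from hypotheses you already have, so the gap closes immediately. Everything else, including the dichotomy $C_{D,g}\in\{0,D^{\vee}\}$ according to whether $g\in N_{G}(S_{p}\cap H)$ in part (iii), matches the paper's argument.
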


\begin{proof}
(i): Since $(G:HS_{p})$ is coprime to $p$, Proposition \ref{prop:tsan} (ii) gives an exact sequence
\begin{equation*}
0\rightarrow H^{2}(G,\Z)[p^{\infty}]\rightarrow H^{2}(G,\Ind_{HS_{p}}^{G}\Z)[p^{\infty}]\rightarrow H^{2}(G,J_{G/HS_{p}})[p^{\infty}] \rightarrow 0. 
\end{equation*}
We have $H^{2}(G,\Z)[p^{\infty}]\cong (S_{p}/[S_{p},G])^{\vee}$ by Lemma \ref{lem:csds} in the case $H<G$ with $\#H\cong (G:S_{p})$. Hence, the assertion follows from the definition of $\widetilde{M}_{HS_{p}}$. 

(ii): Recall that one has a commutative diagram as follows for each $D\in \cD_{H,S_{p}}$: 
\begin{equation*}
\xymatrix{
&H^{2}(G,\Ind_{HS_{p}}^{G}\Z)[p^{\infty}]\ar[r]\ar[d]^{\Res_{G/D}}&H^{2}(G,J_{G/HS_{p}})[p^{\infty}]\ar[r]\ar[d]^{\Res_{G/D}}&0\\
H^{2}(D,\Z)\ar[r]^{\varepsilon_{D}\hspace{20pt}}&H^{2}(D,\Ind_{HS_{p}}^{G}\Z)\ar[r]&H^{2}(D,J_{G/HS_{p}})\ar[r]&0. 
}
\end{equation*}
Moreover, by Lemma \ref{lem:jgcg}, the homomorphism $\delta_{D}$ is factored as
\begin{equation*}
H^{1}(D,\Ind_{HS_{p}}^{G}J_{HS_{p}/H})\rightarrow H^{2}(D,\Ind_{HS_{p}}^{G}\Z)\rightarrow H^{2}(D,J_{G/HS_{p}}). 
\end{equation*}
Hence, we have
\begin{equation*}
\widetilde{M}_{HS_{p}}=\{f\in H^{2}(G,\Ind_{HS_{p}}^{G}\Z)[p^{\infty}]\mid \Res_{G/D}(f)\in \Ima(\delta_{D})+\Ima(\varepsilon_{D})\text{ for any }D\in \cD_{H,S_{p}}\},
\end{equation*}
where $\delta_{D}$ is as in Lemma \ref{lem:dlgc} (i). In particular, we have
\begin{equation*}
\Ima(\delta_{D})\cong \bigoplus_{g\in R(D,HS_{p})}C_{D,g}=I_{D}. 
\end{equation*}
Furthermore, since $S_{p}$ is abelian, we have $D\cap \Ad(gh)H=D\cap gHg^{-1}$ for any $D\in \cD_{H,S_{p}}$, $g\in G$ and $h\in H$. This implies an equality
\begin{equation*}
\bigoplus_{g\in R(D,HS_{p})}C_{D,g}=I_{D}. 
\end{equation*}
On the other hand, by Proposition \ref{prop:rsch}, $H^{2}(D,\Ind_{HS_{p}}^{G}\Z)\cong C_{D,HS_{p}}$ in Lemma \ref{lem:dlgc} (i) induces an isomorphism
\begin{equation*}
\Ima(\varepsilon_{D})\cong \Delta_{D}. 
\end{equation*}
Hence, applying Lemma \ref{lem:dlgc} (ii), we obtain the desired isomorphism. 

(iii): Since $S_{p}\cong (C_{p})^{2}$ and $S_{p}\cap H \cong C_{p}$, one has the following for $g\in R(S_{p}\cap H,HS_{p})$: 
\begin{equation*}
C_{S_{p}\cap H,g}=
\begin{cases}
(S_{p}\cap H)^{\vee}&\text{if }g\in N_{G}(S_{p}\cap H),\\
0&\text{if }g\notin N_{G}(S_{p}\cap H). 
\end{cases}
\end{equation*}
Note that $N_{G}(S_{p}\cap H)$ contains $HS_{p}$. we have
\begin{equation*}
\Delta_{S_{p}\cap H}+I_{S_{p}\cap H}=\left\{(f_{g})\in \bigoplus_{g\in R(S_{p}\cap H,HS_{p})}(S_{p}\cap H)^{\vee}\,\middle|\, f_{g}=f_{g'}\text{ if }g,g'\in N_{G}(S_{p}\cap H)\right\}. 
\end{equation*}
Using this equality, we obtain the following: 
\begin{equation}\label{eq:mtsp}
\widetilde{M}_{HS_{p}}=\{f\in (S_{p}/[S_{p},HS_{p}])^{\vee}\mid f\circ \Ad(g^{-1})\mid_{S_{p}\cap H}=f\!\mid_{S_{p}\cap H}\text{ for any }g\in N_{G}(S_{p}\cap H)\}. 
\end{equation}
The right-hand side of \eqref{eq:mtsp} coincides with $(S_{p}/([S_{p},HS_{p}]\cdot [S_{p}\cap H,N_{G}(S_{p}\cap H)])^{\vee}$. Therefore, the assertion holds. 
\end{proof}

\begin{lem}\label{lem:msck}
Let $p$ be a prime number, $G$ a finite group of order a multiple of $p$, and $H$ a subgroup of $G$ satisfying $(G:H)\in p\Z$. We further assume that a $p$-Sylow subgroup $S_{p}$ of $G$ is elementary $p$-abelian and normal in $G$. If $[S_{p},HS_{p}]$ contains $S_{p}\cap H$, then we have 
\begin{equation*}
N_{G}(S_{p}\cap H)\neq Z_{G}(S_{p}\cap H). 
\end{equation*}
\end{lem}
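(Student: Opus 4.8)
The plan is to reformulate the claim as a statement about modular representation theory. Write $V:=S_{p}$ and regard it as an $\Fp$-vector space, which is legitimate since $S_{p}$ is elementary abelian. Conjugation makes $V$ a $G$-module, and since $S_{p}$ is abelian it acts trivially on $V$, so this action factors through $G/S_{p}$; let $\overline{H}\subseteq\GL(V)$ be the image of $H$ (equivalently, of $HS_{p}$) under this action, and put $W:=S_{p}\cap H$. The key preliminary observation is that $\overline{H}$ has order prime to $p$. Indeed, since $S_{p}$ is normal it is the unique $p$-Sylow subgroup of $G$, so every $p$-subgroup of $H$ is contained in $S_{p}$, whence the $p$-Sylow subgroup of $H$ equals $H\cap S_{p}=W$; moreover $hWh^{-1}=hS_{p}h^{-1}\cap hHh^{-1}=S_{p}\cap H=W$ for all $h\in H$, so $W$ is normal in $H$ and $H=W\rtimes H'$ with $\gcd(\#H',p)=1$ by Schur--Zassenhaus. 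Because $W\subseteq S_{p}$ acts trivially on $V$, the group $\overline{H}$ is a quotient of $H/W\cong H'$, hence a $p'$-group. The same computation $hWh^{-1}=W$ also shows $H\subseteq N_{G}(W)$.

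Next I would invoke Maschke's theorem: since $\overline{H}$ is a $p'$-group acting on the $\Fp$-vector space $V$, the module $V$ is semisimple, and the averaging idempotent gives a decomposition $V=V^{\overline{H}}\oplus[V,\overline{H}]$, where $V^{\overline{H}}$ is the fixed subspace and $[V,\overline{H}]:=\sum_{h\in\overline{H}}(h-1)V$ is the augmentation submodule; in particular $V^{\overline{H}}\cap[V,\overline{H}]=\{0\}$. It then remains only to match this with the group-theoretic data. In additive notation the commutators $sxs^{-1}x^{-1}$ with $s\in S_{p}$ and $x\in HS_{p}$ span $\sum_{x\in HS_{p}}(x-1)V$, and since $S_{p}\subseteq HS_{p}$ acts trivially this is exactly $[V,\overline{H}]$; thus $[S_{p},HS_{p}]=[V,\overline{H}]$ as subgroups of $S_{p}$. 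Likewise $V^{\overline{H}}$ is precisely the set of elements of $S_{p}$ centralized by $H$.

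The conclusion now follows by contradiction. Suppose $N_{G}(W)=Z_{G}(W)$. Since $H\subseteq N_{G}(W)$, this forces $H\subseteq Z_{G}(W)$, i.e. $H$ centralizes $W$, which is to say $W\subseteq V^{\overline{H}}$. On the other hand, the hypothesis of the lemma gives $W=S_{p}\cap H\subseteq[S_{p},HS_{p}]=[V,\overline{H}]$. Hence $W\subseteq V^{\overline{H}}\cap[V,\overline{H}]=\{0\}$, so $W=\{1\}$. But in every situation in which the lemma is applied $(G:H)$ is exactly divisible by $p$, so by Lemma \ref{lem:sbsd} (ii) the subgroup $W=S_{p}\cap H$ has index $p$ in $S_{p}\cong(C_{p})^{m}$ with $m\ge 2$ and is therefore nontrivial; this contradiction proves $N_{G}(W)\ne Z_{G}(W)$.

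The only step with genuine content is the observation that the conjugation action of $H$ on $S_{p}$ factors through a group of order prime to $p$; once that is in place, Maschke's decomposition $V=V^{\overline{H}}\oplus[V,\overline{H}]$ does all the work and the remainder is routine bookkeeping. The one point to be slightly careful about is the degenerate case $S_{p}\cap H=\{1\}$, which is ruled out by the hypotheses present wherever the lemma is used (there $(G:H)$ has $\ord_{p}$ equal to one and $S_{p}\cong(C_{p})^{2}$, so $S_{p}\cap H\cong C_{p}$), so it poses no obstacle.
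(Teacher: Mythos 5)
Your argument is correct and is essentially the paper's own proof: both rest on Maschke's theorem for the prime-to-$p$ group through which $H$ acts on $S_{p}$, the paper phrasing it as an $H'$-stable complement $S_{p}=(S_{p}\cap H)\times S'_{p}$ with $[S_{p},HS_{p}]=[S_{p}\cap H,H]\times[S'_{p},H]$, and you phrasing it via the canonical decomposition $V=V^{\overline{H}}\oplus[V,\overline{H}]$; the two are interchangeable. The degenerate possibility $S_{p}\cap H=\{1\}$ that you flag (in which case the statement as written would actually fail, since $N_{G}(\{1\})=Z_{G}(\{1\})=G$) is equally present in the paper's proof, which tacitly assumes a nontrivial commutator exists in $S_{p}\cap H$; your explicit acknowledgment that this case is excluded wherever the lemma is invoked is the honest way to handle it.
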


\begin{proof}
Recall that there is an isomorphism $G\cong S_{p}\rtimes G'$ with $p\nmid \#G'$. Moreover, there is an isomorphism $S_{p}\cong (C_{p})^{m}$ for some $m\in \Zpn$ by Lemma \ref{lem:sbsd} (v). We regard $S_{p}$ as a representation of $G'$ over $\Fp$. By Lemma \ref{lem:sbsd} (iii), we may assume $H\cong (S_{p}\cap H)\rtimes H'$, where $H'$ is a subgroup of $G'$. Since $\#H'\notin p\Z$, Maschke's theorem (cf.~\cite[Chap.~1, Theorem 1]{Serre1977}) implies that there is a subgroup $S'_{p}$ of $S_{p}$ which is stable under $H'$ such that $S_{p}=(S_{p}\cap H)\times S'_{p}$. Under this decomposition, we have the following: 
\begin{equation}\label{cidc}
[S_{p},HS_{p}]=[S_{p}\cap H,H]\times [S'_{p},H]. 
\end{equation}

Assume $S_{p}\cap H\subset [S_{p},H]$. By \eqref{cidc}, one has an equality $[S_{p}\cap H,H]=S_{p}\cap H$. This implies the existence of $h\in H$ and $s\in S_{p}\cap H$ such that $hsh^{-1}s^{-1}\neq 1$. Hence we have $h\in H \setminus Z_{G}(S_{p}\cap H)$, in particular the desired assertion holds. 
\end{proof}

Now, we obtain the structure of $\Sha_{\cD}^{2}(G,J_{G/H})$ in the case $m=2$. 

\begin{thm}\label{thm:2dpt}
Let $p$ be a prime number, $G$ a transitive group of degree $\in p\Z \setminus p^{2}\Z$, and $H$ a corresponding subgroup of $G$. We further assume that
\begin{itemize}
\item a $p$-Sylow subgroup $S_{p}$ of $G$ is normal in $G$; and
\item $S_{p}\cong (C_{p})^{2}$. 
\end{itemize}
\begin{enumerate}
\item The abelian group $\Sha_{\omega}^{2}(G,J_{G/H})[p^{\infty}]$ is trivial or isomorphic to $\Z/p$. Moreover, it is non-zero if and only if
\begin{itemize}
\item[(b)] $[S_{p},G]=S_{p}$; and
\item[(c)] $N_{G}(S_{p}\cap H)=Z_{G}(S_{p}\cap H)$. 
\end{itemize}
\item Assume that (b) and (c) in (i) are satisfied. Let $\cD$ be an admissible set of subgroups of $G$. Then there is an isomorphism
\begin{equation*}
\Sha_{\cD}^{2}(G,J_{G/H})[p^{\infty}] \cong 
\begin{cases}
0&\text{if there is $D \in \cD$ which contains $S_{p}$,}\\
\Z/p&\text{otherwise. }
\end{cases}
\end{equation*}
\end{enumerate}
\end{thm}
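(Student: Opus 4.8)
The plan is to prove (i) first and deduce (ii). Put $L:=S_{p}\cap H$. Since $(G:H)\in p\Z\setminus p^{2}\Z$, Lemma \ref{lem:sbsd} (ii) gives $(S_{p}:L)=p$, so $L\cong C_{p}$; moreover $G$ being transitive, Proposition \ref{prop:tggp} (i) gives $N^{G}(H)=\{1\}$, hence $N^{G}(L)=\{1\}$ by Lemma \ref{lem:sbsd} (iv)(a), and in particular $L$ is not normal in $G$. I would then take $\cD=\cC_{G}$ in the exact sequence of Proposition \ref{prop:h2s2} (whose last arrow is surjective because $(G:H)\notin p^{2}\Z$) and pass to $p$-primary parts; since $H^{1}(G,\Ind_{HS_{p}}^{G}J_{HS_{p}/H})$ is killed by the exponent $p$ of $S_{p}$ (Lemma \ref{lem:exh1} (iii)), this yields
\[
\Sha_{\omega}^{2}(G,J_{G/H})[p^{\infty}]\cong\Coker\!\left(\delta_{G}\colon H^{1}(G,\Ind_{HS_{p}}^{G}J_{HS_{p}/H})\longrightarrow H^{2}(G,J_{G/HS_{p}})^{(\cC_{G})}[p^{\infty}]\right).
\]

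To evaluate this cokernel I would make both sides explicit inside the $\F_{p}$-plane $S_{p}$. As $S_{p}\cong(C_{p})^{2}$ is not cyclic, no cyclic subgroup of $G$ contains $S_{p}$, so Lemma \ref{lem:rdsp} applies with the relevant set of $p$-subgroups reducing to $\{L\}$ (the trivial subgroup imposes no condition); combined with Proposition \ref{prop:lciv} (i) and (iii) this identifies $H^{2}(G,J_{G/HS_{p}})^{(\cC_{G})}[p^{\infty}]$ with $(S_{p}/N_{2})^{\vee}/(S_{p}/N_{3})^{\vee}$, where $N_{1}:=[S_{p},HS_{p}]\cdot L$, $N_{2}:=[S_{p},HS_{p}]\cdot[L,N_{G}(L)]$ and $N_{3}:=[S_{p},G]$, while Lemma \ref{lem:exh1} (iii) identifies the source with $(S_{p}/N_{1})^{\vee}$. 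By Lemma \ref{lem:csds}, $\delta_{G}$ is compatible with the natural inclusion $(S_{p}/N_{1})^{\vee}\hookrightarrow(S_{p}/[S_{p},HS_{p}])^{\vee}$; since $N_{2}\subseteq N_{1}\cap N_{3}$, taking Pontryagin duals and using $(S_{p}/N_{1})^{\vee}+(S_{p}/N_{3})^{\vee}=(S_{p}/(N_{1}\cap N_{3}))^{\vee}$ in $S_{p}^{\vee}$ gives $\Sha_{\omega}^{2}(G,J_{G/H})[p^{\infty}]\cong((N_{1}\cap N_{3})/N_{2})^{\vee}$. Now the case analysis: if (c) fails then $[L,N_{G}(L)]=L$ (it is a subgroup of the order-$p$ group $L$, trivial exactly when (c) holds), so $N_{2}=N_{1}$ and the group vanishes; if (b) fails then $\dim_{\F_{p}}N_{3}\le1$ and $[S_{p},HS_{p}]\subseteq N_{3}$, and since $L$ is not normal in $G$ while $N_{3}$ is (whence $L\ne N_{3}$) a short check over $\F_{p}$ shows the group vanishes; if both (b) and (c) hold then $N_{3}=S_{p}$ and $N_{2}=[S_{p},HS_{p}]$, so the group is $(N_{1}/[S_{p},HS_{p}])^{\vee}\cong(L/(L\cap[S_{p},HS_{p}]))^{\vee}\cong\Z/p$, using the contrapositive of Lemma \ref{lem:msck} that (c) forces $L\not\subseteq[S_{p},HS_{p}]$. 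Thus $\Sha_{\omega}^{2}(G,J_{G/H})[p^{\infty}]$ is $0$ or $\Z/p$ and is non-zero precisely when (b) and (c) hold; this proves (i).

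For (ii), let $\cD$ be admissible. If some $D\in\cD$ contains $S_{p}$, then $(G:S_{p})$ being prime to $p$, Proposition \ref{prop:tsan} (ii) embeds $\Sha_{\cD}^{2}(G,J_{G/H})[p^{\infty}]$ into $\Sha_{\cD_{\cap S_{p}}}^{2}(S_{p},J_{G/H})$, which is $0$ since $S_{p}\in\cD_{\cap S_{p}}$ and $\Res_{S_{p}/S_{p}}$ is the identity. If no $D\in\cD$ contains $S_{p}$, then (b) and (c) hold by hypothesis, so $\Sha_{\omega}^{2}(G,J_{G/H})[p^{\infty}]\cong\Z/p$ by (i), and $\Sha_{\cD}^{2}(G,J_{G/H})[p^{\infty}]\subseteq\Sha_{\omega}^{2}(G,J_{G/H})[p^{\infty}]$ because $\cC_{G}\subseteq\cD$. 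For the reverse inclusion, fix $D\in\cD$ and put $D_{p}:=D\cap S_{p}$, a $p$-Sylow of $D$; since $D_{p}\subsetneq S_{p}\cong(C_{p})^{2}$, the group $D_{p}$ is cyclic. On $p$-primary parts, $\Res_{G/D}$ factors through the injection $H^{2}(D,J_{G/H})[p^{\infty}]\hookrightarrow H^{2}(D_{p},J_{G/H})$ of Proposition \ref{prop:tsan} (ii), and the composite equals $\Res_{S_{p}/D_{p}}\circ\Res_{G/S_{p}}$; as $\Res_{G/C}=\Res_{S_{p}/C}\circ\Res_{G/S_{p}}$ for every $C\in\cC_{S_{p}}\subseteq\cC_{G}$, the map $\Res_{G/S_{p}}$ carries $\Sha_{\omega}^{2}(G,J_{G/H})$ into $\Sha_{\omega}^{2}(S_{p},J_{G/H})$, which $\Res_{S_{p}/D_{p}}$ annihilates because $D_{p}\in\cC_{S_{p}}$. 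Hence $\Res_{G/D}$ kills $\Sha_{\omega}^{2}(G,J_{G/H})[p^{\infty}]$ for all $D\in\cD$, so $\Sha_{\omega}^{2}(G,J_{G/H})[p^{\infty}]\subseteq\Sha_{\cD}^{2}(G,J_{G/H})$ and therefore $\Sha_{\cD}^{2}(G,J_{G/H})[p^{\infty}]\cong\Z/p$.

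I expect the main obstacle to be the explicit identification in the second paragraph: reconciling the descriptions of $H^{1}(G,\Ind_{HS_{p}}^{G}J_{HS_{p}/H})$, of $H^{2}(G,J_{G/HS_{p}})^{(\cC_{G})}[p^{\infty}]$, and of $\delta_{G}$ as mutually compatible subquotients of $S_{p}^{\vee}$, so that the cokernel becomes $((N_{1}\cap N_{3})/N_{2})^{\vee}$; this requires careful tracking of the Pontryagin duals and of the commutative diagrams in Lemmas \ref{lem:csds} and \ref{lem:dlgc} and Proposition \ref{prop:lciv}. Once that is in place, the remainder of (i) is elementary $\F_{p}$-linear algebra powered by Lemma \ref{lem:msck}, and (ii) is a short restriction computation.
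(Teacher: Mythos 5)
Your proof is correct. For part (i) you follow essentially the same route as the paper: the exact sequence of Proposition \ref{prop:h2s2} with $\cD=\cC_{G}$, the reduction of the local conditions to $\{S_{p}\cap H\}$ via Lemma \ref{lem:rdsp}, the identifications of Lemma \ref{lem:csds} and Proposition \ref{prop:lciv}, and Lemma \ref{lem:msck} at the end. What you do differently there is to package the entire case analysis into the single formula $\Sha_{\omega}^{2}(G,J_{G/H})[p^{\infty}]\cong\bigl((N_{1}\cap N_{3})/N_{2}\bigr)^{\vee}$ using the annihilator identity $(S_{p}/N_{1})^{\vee}+(S_{p}/N_{3})^{\vee}=(S_{p}/(N_{1}\cap N_{3}))^{\vee}$; the paper instead splits into Cases 1 and 2 (and a further subdivision on whether $[S_{p},H]$ is trivial) and compares orders term by term, and it also invokes injectivity of $\delta_{G}$ (Lemma \ref{lem:exh1} (ii)), which your cokernel computation does not need. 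The more substantial divergence is in part (ii): the paper reruns the whole Selmer-group computation for an arbitrary admissible $\cD$ with no $D\supseteq S_{p}$, asserting that the relevant condition set is still $\{S_{p}\cap H\}$, whereas you reduce to the $\cD=\cC_{G}$ case by showing directly that $\Sha_{\cD}^{2}(G,J_{G/H})[p^{\infty}]=\Sha_{\omega}^{2}(G,J_{G/H})[p^{\infty}]$: for each $D\in\cD$ the $p$-group $D\cap S_{p}$ is a proper, hence cyclic, subgroup of $(C_{p})^{2}$, and $\Res_{D/D\cap S_{p}}$ is injective on $p$-primary parts. This restriction argument is cleaner and buys you something concrete: it sidesteps having to check that the subgroups $D\cap S_{p}$ arising from non-cyclic $D\in\cD$ impose no conditions beyond those coming from $\cC_{S_{p}\cap H}$, a point the paper's proof of step (II) passes over rather quickly when it claims $\cD_{S_{p},H}=\{S_{p}\cap H\}$ ``by assumption.''
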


\begin{proof}
It suffices to prove the following: 
\begin{itemize}
\item[(I)] if $S_{p}\subset D$ for some $D\in \cD$, then $\Sha_{\cD}^{2}(G,J_{G/H})[p^{\infty}]=0$. 
\item[(I\hspace{-1pt}I)] if $\cD$ does not satisfy the assumption in (I), then one has
\begin{equation*}
\Sha_{\cD}^{2}(G,J_{G/H})[p^{\infty}] \cong 
\begin{cases}
\Z/p &\text{if (b) and (c) hold, }\\
0 &\text{otherwise. }
\end{cases}
\end{equation*}
\end{itemize}

\textbf{Proof of (I): }Assume that $D\in \cD$ contains $S_{p}$. It suffices to prove that the composite
\begin{equation*}
H^{2}(G,J_{G/HS_{p}})[p^{\infty}] \hookrightarrow H^{2}(G,J_{G/HS_{p}})\xrightarrow{\Res_{G/S_{p}}} 
H^{2}(S_{p},J_{G/HS_{p}})
\end{equation*}
is injective. This follows from Proposition \ref{prop:tsan} (ii).

\textbf{Proof of (I\hspace{-1pt}I): }By Lemma \ref{lem:sbsd} (iv) (a) and $S_{p}\cong (\Z/p)^{2}$, the group $[S_{p},G]$ is non-trivial and $S_{p}=[S_{p},G]\cdot (S_{p}\cap H)$. Hence, Proposition \ref{prop:h2s2} and Lemma \ref{lem:exh1} (ii) imply an exact sequence
\begin{equation*}
0\rightarrow H^{1}(G,\Ind_{HS_{p}}^{G}J_{HS_{p}/H})[p^{\infty}]\xrightarrow{\delta_{G}} H^{2}(G,J_{G/HS_{p}})^{(\cD)}[p^{\infty}] \xrightarrow{\widehat{\N}_{G}} \Sha_{\cD}^{2}(G,J_{G/H})[p^{\infty}] \rightarrow 0. 
\end{equation*}
This sequence can be written as follows by Lemma \ref{lem:exh1} (iii) and Proposition \ref{prop:lciv} (i): 
\begin{equation*}
0\rightarrow (S_{p}/[S_{p},HS_{p}]\cdot (S_{p}\cap H))^{\vee} \rightarrow \widetilde{M}_{HS_{p}}/(S_{p}/[S_{p},G])^{\vee} \rightarrow \Sha_{\cD}^{2}(G,J_{G/H})[p^{\infty}] \rightarrow 0. 
\end{equation*}
Furthermore, since we have $\cD_{HS_{p}}=\{S_{p}\cap H\}$ by assumption, one has an isomorphism by Proposition \ref{prop:lciv} (iii): 
\begin{equation*}
\widetilde{M}_{HS_{p}}\cong (S_{p}/[S_{p},HS_{p}]\cdot [S_{p}\cap H,N_{G}(S_{p}\cap H)])^{\vee}. 
\end{equation*}

\textbf{Case 1.~$N_{G}(S_{p}\cap H)\neq Z_{G}(S_{p}\cap H)$. }

Since $S_{p}\cap H\cong \Z/p$, the assumption implies an equality $[S_{p}\cap H,N_{G}(S_{p}\cap H)]=S_{p}\cap H$. Therefore, the homomorphism $\delta_{G}$ is an isomorphism. This follows the desired assertion
\begin{equation*}
\Sha_{\cD}^{2}(G,J_{G/H})[p^{\infty}]=0. 
\end{equation*}

\textbf{Case 2.~$N_{G}(S_{p}\cap H)=Z_{G}(S_{p}\cap H)$. }

In this case, the group $[S_{p}\cap H,N_{G}(S_{p}\cap H)]$ is trivial. Moreover, the non-triviality of $[S_{p},G]$ induces an isomorphism
\begin{equation*}
(S_{p}/[S_{p},G])^{\vee} \cong 
\begin{cases}
0&\text{if }[S_{p},G]=S_{p},\\
\Z/p&\text{if }[S_{p},G]\neq S_{p},
\end{cases}
\end{equation*}

First, suppose that $[S_{p},H]$ is trivial. Then there is an isomorphism
\begin{equation*}
(S_{p}/[S_{p},HS_{p}]\cdot (S_{p}\cap H))^{\vee}\cong \Z/p. 
\end{equation*}
On the other hand, the following holds: 
\begin{equation*}
\widetilde{M}_{HS_{p}}/(S_{p}/[S_{p},G])^{\vee}\cong S_{p}^{\vee}/(S_{p}/[S_{p},G])^{\vee} \cong 
\begin{cases}
(\Z/p)^{2} &\text{if }[S_{p},G]=S_{p},\\
\Z/p &\text{if }[S_{p},G]\neq S_{p}. 
\end{cases}
\end{equation*}
Therefore, we obtain the desired isomorphism on $\Sha_{\cD}^{2}(G,J_{G/H})$. 

Second, assume that $[S_{p},H]$ is non-trivial. By Lemma \ref{lem:msck}, the group $[S_{p},H]$ does not contain $S_{p}\cap H$. In particular, the group $[S_{p},HS_{p}]$ has order $p$ and $S_{p}/([S_{p},H]\cdot (S_{p}\cap H))$ is trivial. Moreover, one has an isomorphism
\begin{equation*}
\widetilde{M}_{HS_{p}}/(S_{p}/[S_{p},G])^{\vee}\cong 
(S_{p}/[S_{p},HS_{p}])^{\vee}/(S_{p}/[S_{p},G])^{\vee}
\cong 
\begin{cases}
\Z/p&\text{if }[S_{p},G]=S_{p},\\
0&\text{if }[S_{p},G]\neq S_{p}. 
\end{cases}
\end{equation*}
This induces the desired assertion in Case 2. 
\end{proof}

As a summarization of Theorems \ref{thm:upts}, \ref{thm:pptt} and \ref{thm:2dpt}, we obtain the full structure of the abelian group $\Sha_{\cD}^{2}(G,J_{G/H})$ under the normality of a $p$-Sylow subgroup of $G$. 

\begin{thm}\label{thm:tspt}
Let $p$ be a prime number, $G$ a transitive group of degree $\in p\Z \setminus p^{2}\Z$, and $H$ a corresponding subgroup of $G$. We further assume that a $p$-Sylow subgroup $S_{p}$ of $G$ is normal in $G$.  
\begin{enumerate}
\item One has $\Sha_{\omega}^{2}(G,J_{G/H})[p^{\infty}]\neq 0$ if and only if
\begin{itemize}
\item[(a)] there is an isomorphism $S_{p}\cong (C_{p})^{2}$; 
\item[(b)] $[S_{p},G]=S_{p}$; and
\item[(c)] $N_{G}(S_{p}\cap H)=Z_{G}(S_{p}\cap H)$. 
\end{itemize}
Conversely, assume that \emph{(a)}, \emph{(b)} and \emph{(c)} hold. Then, for an admissible set $\cD$ of subgroups of $G$, there is an isomorphism 
\begin{equation*}
\Sha_{\cD}^{2}(G,J_{G/H})[p^{\infty}] \cong 
\begin{cases}
0&\text{if there is $D \in \cD$ which contains $S_{p}$;}\\
\Z/p &\text{otherwise. }
\end{cases}
\end{equation*}
\item For any admissible set $\cD$ of subgroups of $G$, there is an isomorphism
\begin{equation*}
\Sha_{\cD}^{2}(G,J_{G/H})^{(p)} \cong \Sha_{\cD_{/S_{p}}}^{2}(G/S_{p},J_{G/HS_{p}}). 
\end{equation*}
Here $\cD_{/S_{p}}$ is the set of subgroups of the form $DS_{p}/S_{p}$ where $D\in \cD$, which is an admissible set of subgroups of $G/S_{p}$. 
\end{enumerate}
\end{thm}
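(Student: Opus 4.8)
The plan is to obtain Theorem \ref{thm:tspt} as a direct consolidation of the three structure results proved above, namely Theorems \ref{thm:upts}, \ref{thm:pptt} and \ref{thm:2dpt}; the only genuine work is to set up a dichotomy on the shape of $S_p$ and to check that the hypotheses of each cited theorem are in force.

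First I would record the preliminary reductions. Since $G$ is transitive of degree $(G:H)\in p\Z\setminus p^2\Z$ and $H$ is a corresponding subgroup, Proposition \ref{prop:tggp} (i) gives $N^{G}(H)=\{1\}$, so Lemma \ref{lem:sbsd} (v) furnishes an isomorphism $S_p\cong (C_p)^{m}$ for some $m\in\Zpn$; in particular $S_p$ is abelian, which is exactly the standing hypothesis needed to invoke Theorems \ref{thm:upts} and \ref{thm:2dpt}. Moreover, by Lemma \ref{lem:adsq} (ii), the set $\cD_{/S_p}$ of subgroups $DS_p/S_p$, $D\in\cD$, is an admissible set of subgroups of $G/S_p$. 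With this in hand, part (ii) is nothing other than Theorem \ref{thm:upts} applied to the admissible set $\cD$, so there is nothing further to do there. For part (i) I would split according to whether $m=2$. If $m\neq 2$, then condition (a) fails, and Theorem \ref{thm:pptt} gives $\Sha_{\cD}^{2}(G,J_{G/H})[p^{\infty}]=0$ for every admissible $\cD$; taking $\cD=\cC_G$ shows in particular $\Sha_{\omega}^{2}(G,J_{G/H})[p^{\infty}]=0$, so both sides of the asserted equivalence are false and the statement holds vacuously in this range, the ``conversely'' clause having empty hypothesis. If $m=2$, condition (a) holds automatically, and Theorem \ref{thm:2dpt} applies: part (i) of that theorem states precisely that $\Sha_{\omega}^{2}(G,J_{G/H})[p^{\infty}]$ is trivial or isomorphic to $\Z/p$ and is non-trivial exactly when (b) and (c) hold, which together with the already-verified necessity of (a) gives the ``if and only if'' in (i); and once (b) and (c) are assumed, part (ii) of Theorem \ref{thm:2dpt} gives the claimed value of $\Sha_{\cD}^{2}(G,J_{G/H})[p^{\infty}]$ for an arbitrary admissible $\cD$, namely $0$ when some $D\in\cD$ contains $S_p$ and $\Z/p$ otherwise. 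Assembling the two cases yields (i).

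There is essentially no obstacle here, as the theorem is a bookkeeping consolidation. The only points requiring a moment of care are that $S_p$ is automatically abelian — so that the abelianness hypotheses of Theorems \ref{thm:upts} and \ref{thm:2dpt} come for free via Lemma \ref{lem:sbsd} (v) — and that the degenerate range $m\neq 2$ is correctly paired with the failure of condition (a), so that in that range the equivalence in (i) is not an assertion about a nonempty situation, and the ``conversely'' clause of (i), being claimed only under (a), (b), (c), is only in play when $m=2$, where Theorem \ref{thm:2dpt} supplies it directly.
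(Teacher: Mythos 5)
Your proposal is correct and coincides with the paper's own treatment: the paper presents Theorem \ref{thm:tspt} explicitly as a summarization of Theorems \ref{thm:upts}, \ref{thm:pptt} and \ref{thm:2dpt}, with the same preliminary reductions ($N^{G}(H)=\{1\}$ via Proposition \ref{prop:tggp}, hence $S_{p}\cong (C_{p})^{m}$ via Lemma \ref{lem:sbsd} (v)) and the same dichotomy on $m$. Your care about the vacuous case $m\neq 2$ and the admissibility of $\cD_{/S_{p}}$ is exactly the bookkeeping required.
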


\section{Representations of finite groups over finite prime fields}\label{sect:fnrp}

Let $q$ be a power of a prime number. For an $\F_{q}$-representation of a finite group $G'$, we mean a pair $(V,\rho)$, where $V$ is an $\F_{q}$-vector space of \emph{finite dimension} and $\rho$ is a homomorphism from $G'$ to $\GL(V)$. We simply denote $(V,\rho)$ by $V$ if we need not express $\rho$ explicitly.  

\subsection{Extremal representations over finite prime fields}

\begin{lem}\label{lem:frrp}
Let $q$ be a power of a prime number, $G'$ a finite group, and $V$ an irreducible $\F_{q}$-representation of $G'$. Then there is a surjection $\F_{q}[G'] \twoheadrightarrow V$. 
\end{lem}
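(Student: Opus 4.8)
The plan is to pass to the module-theoretic reformulation: an $\F_q$-representation of $G'$ is the same datum as a left module over the group algebra $\F_q[G']$, and under this dictionary the irreducibility of $(V,\rho)$ says precisely that $V$ is a nonzero simple $\F_q[G']$-module. So it suffices to exhibit a surjective homomorphism of $\F_q[G']$-modules from the regular module $\F_q[G']$ onto $V$.

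First I would fix a nonzero vector $v\in V$, which is possible since by convention an irreducible representation is nonzero. Then I would consider the evaluation map
\begin{equation*}
\ev_{v}\colon \F_q[G'] \to V;\quad \sum_{g\in G'}a_{g}g \mapsto \sum_{g\in G'}a_{g}\rho(g)(v).
\end{equation*}
This map is $\F_q$-linear by construction, and a direct check shows $\ev_{v}(h\cdot x)=\rho(h)(\ev_{v}(x))$ for all $h\in G'$ and $x\in \F_q[G']$, so $\ev_{v}$ is a homomorphism of left $\F_q[G']$-modules.

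Finally I would observe that the image $\ev_{v}(\F_q[G'])$ is an $\F_q[G']$-submodule of $V$ containing $\ev_{v}(1)=v\neq 0$, hence a nonzero subrepresentation of $V$. Since $V$ is irreducible, this forces $\ev_{v}(\F_q[G'])=V$, so $\ev_{v}$ is the desired surjection. There is essentially no obstacle here; the only points requiring (minimal) care are the convention that an irreducible representation is nonzero and keeping the left-module conventions consistent so that $\ev_{v}$ is genuinely $G'$-equivariant.
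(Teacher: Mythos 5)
Your proof is correct. The paper disposes of this lemma in one line by citing Frobenius reciprocity: writing $\F_{q}[G']=\Ind_{\{1\}}^{G'}\F_{q}$, one has $\Hom_{\F_{q}[G']}(\F_{q}[G'],V)\cong V\neq 0$, and the image of any nonzero homomorphism is a nonzero submodule of the irreducible $V$, hence all of $V$. Your map $\ev_{v}$ is exactly the homomorphism corresponding to $v$ under that adjunction, so the two arguments coincide in substance; yours simply makes the construction explicit and avoids invoking any named theorem, which is arguably preferable for so elementary a statement. The equivariance check and the appeal to irreducibility are both carried out correctly, and no gap remains.
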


\begin{proof}
This follows from the Frobenius reciprocity. See \cite[p.~63]{Neukirch2000}. 
\end{proof}

Let $p$ be a prime number, and $V$ an $\Fp$-representation of a finite group $G'$. For a subspace $W$ of $V$, we denote by $\Stab_{G'}(W)$ and $\Fix_{G'}(W)$ the stabilizer and the pointwise stabilizer of $L$ in $G'$ respectively. 

\begin{dfn}\label{cdbc}
Let $p$ be a prime number, $G'$ a finite group of order coprime to $p$, and $H'$ its subgroup. We say that an $\Fp$-representation $V$ of $G'$ is \emph{$H'$-extremal} if
\begin{itemize}
\item[(A)] $\dim_{\Fp}(V)=2$; 
\item[(B)] $V^{G'}=\{0\}$; and
\item[(C)] there is a $1$-dimensional subspace $L$ of $V$ such that $H'\subset \Stab_{G'}(L)=\Fix_{G'}(L)$. 
\end{itemize}
Moreover, we call $L$ in (C) an \emph{$H'$-special line} in $V$. 
\end{dfn}

We discuss for $\Fp$-representations to be extremal, which will be used in Section \ref{sect:pfmt}. 

\begin{lem}\label{lem:ttst}
Let $\overline{G}$ be a subgroup of $\GL_{2}(\F_{2})$ of odd order. Then $\overline{G}$ is trivial or the subgroup $S_{3}$ generated by $
\begin{pmatrix}
0&1\\
1&1
\end{pmatrix}
$. Moreover, $S_{3}$ is the unique $3$-Sylow subgroup of $\GL_{2}(\F_{2})$, and has order $3$. 
\end{lem}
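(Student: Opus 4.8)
The plan is to reduce everything to a count of $\#\GL_2(\F_2)$ together with Sylow's theorems. First I would record that $\#\GL_2(\F_2) = (2^2-1)(2^2-2) = 6$. Since $\#\overline{G}$ divides $6$ and is odd by hypothesis, it is either $1$ or $3$. In the first case $\overline{G}$ is trivial and there is nothing further to prove, so the content is entirely in the case $\#\overline{G} = 3$.

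Next I would invoke the Sylow theorems for $\GL_2(\F_2)$: the number $n_3$ of its $3$-Sylow subgroups satisfies $n_3 \equiv 1 \pmod 3$ and $n_3 \mid 2$, which forces $n_3 = 1$. Hence $\GL_2(\F_2)$ has a unique subgroup of order $3$, and any subgroup of order $3$ — in particular $\overline{G}$ when $\#\overline{G} = 3$ — must coincide with it.

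Then I would identify this unique subgroup concretely. Put $A := \begin{pmatrix} 0 & 1 \\ 1 & 1 \end{pmatrix}$; its determinant over $\F_2$ is $0 \cdot 1 - 1 \cdot 1 = 1 \neq 0$, so $A \in \GL_2(\F_2)$. A direct computation modulo $2$ gives $A^2 = \begin{pmatrix} 1 & 1 \\ 1 & 0 \end{pmatrix}$ and $A^3 = I$, while $A \neq I$, so $\langle A \rangle$ has order exactly $3$. By the uniqueness established in the previous step, $\langle A \rangle$ is the $3$-Sylow subgroup of $\GL_2(\F_2)$, which is what the statement denotes $S_3$. Combining the three steps yields all three assertions: an odd-order subgroup of $\GL_2(\F_2)$ is trivial or equals $S_3 = \langle A \rangle$, the subgroup $S_3$ is the unique $3$-Sylow subgroup, and $\#S_3 = 3$.

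I do not expect a genuine obstacle here; the only points requiring (minimal) care are checking that $A$ is invertible over $\F_2$ and verifying $A^3 = I$ with $A \neq I$, so that $\langle A\rangle$ really has order $3$ — everything else is the order count and a one-line Sylow argument.
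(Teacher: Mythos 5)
Your proof is correct: the order count $\#\GL_2(\F_2)=(2^2-1)(2^2-2)=6$ forces an odd-order subgroup to have order $1$ or $3$, the Sylow count $n_3\equiv 1\bmod 3$, $n_3\mid 2$ gives uniqueness of the $3$-Sylow subgroup, and your matrix computations ($\det A=1$, $A^3=I$, $A\neq I$) are all accurate. The paper disposes of the lemma in one line by invoking the isomorphism $\GL_2(\F_2)\cong \fS_3$ and reading off the subgroup structure of the symmetric group on three letters; you instead work directly inside $\GL_2(\F_2)$ with the order formula, Sylow's theorems, and an explicit generator. Both arguments ultimately rest on the same fact that the group has order $6$, so the difference is one of packaging: the paper's route is shorter if one takes the isomorphism with $\fS_3$ and its consequences as known, while yours is fully self-contained, avoids identifying the group abstractly, and exhibits the generator concretely --- which is arguably closer to how the lemma is used later (the generator reappears implicitly in the degree-$6$ analysis). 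No gap either way.
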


\begin{proof}
This follows from the isomorphism $\GL_{2}(\F_{2})\cong \fS_{3}$. 
\end{proof}

\begin{lem}[{\cite[I, pp.~142--143]{Carter1964}}]\label{cf2s}
Let $p>2$ be an odd prime number. 
\begin{enumerate}
\item Assume $p\equiv 1\bmod 4$. Put $s:=\ord_{2}(p-1)\geq 2$, and let $\zeta_{2^s}$ be a primitive $2^{s}$-th root of unity. Put
\begin{equation*}
X:=\begin{pmatrix}
\zeta_{2^s}&0\\
0&1
\end{pmatrix},\quad
Y:=\begin{pmatrix}
1&0\\
0&\zeta_{2^s}
\end{pmatrix},\quad
Z:=\begin{pmatrix}
0&1\\
1&0
\end{pmatrix}. 
\end{equation*}
Then the subgroup $S_{p,2}$ generated by $X$, $Y$ and $Z$ is a $2$-Sylow subgroup of $\GL_{2}(\Fp)$. 
\item Suppose $p\equiv 3\bmod 4$. Put $s:=\ord_{2}(p+1)$. Let $\zeta_{2^{s+1}}$ be a $2^{s+1}$-th root of unity, which is contained in $\F_{p^2}$. Put
\begin{equation*}
X:=\begin{pmatrix}
0&1\\
1&\zeta_{2^{s+1}}+\zeta_{2^{s+1}}^{p}
\end{pmatrix},\quad Y:=
\begin{pmatrix}
0&1\\
-1&0
\end{pmatrix}
\begin{pmatrix}
0&1\\
1&\zeta_{2^{s+1}}+\zeta_{2^{s+1}}^{p}
\end{pmatrix}. 
\end{equation*}
Then the subgroup $S_{p,2}$ generated by matrices $X$ and $Y$ is a $2$-Sylow subgroup of $\GL_{2}(\Fp)$. Moreover, there are relations
\begin{equation*}
X^{2^{s}}=\diag(-1,-1),\quad Y^{2}=1,\quad YXY^{-1}=X^{2^{s}-1}. 
\end{equation*}
\end{enumerate}
\end{lem}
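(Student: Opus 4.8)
The plan is to verify in each case that the explicitly given $S_{p,2}$ is a $2$-subgroup of $\GL_2(\Fp)$ whose order equals the full $2$-part of $\#\GL_2(\Fp)$. Since $\#\GL_2(\Fp)=(p^2-1)(p^2-p)=p(p-1)^2(p+1)$ and $p$ is odd, we have $\ord_2(\#\GL_2(\Fp))=2\ord_2(p-1)+\ord_2(p+1)$. If $p\equiv 1\bmod 4$ then $\ord_2(p+1)=1$, so this equals $2s+1$; if $p\equiv 3\bmod 4$ then $\ord_2(p-1)=1$, so this equals $s+2$. Hence it suffices to exhibit $S_{p,2}$ as a $2$-subgroup of order $2^{2s+1}$ in case (i) and of order $2^{s+2}$ in case (ii).

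For (i): the element $\zeta_{2^s}$ exists in $\Fp^{\times}$ because $\Fp^{\times}$ is cyclic of order $p-1$ and $2^s\mid p-1$. I would observe that $\langle X,Y\rangle$ is precisely the group of diagonal matrices with both entries in $\langle\zeta_{2^s}\rangle$, hence isomorphic to $(\Z/2^s)^2$ and of order $2^{2s}$. Conjugation by $Z$ interchanges the two diagonal entries, so $Z$ normalizes $\langle X,Y\rangle$; since $Z^2=1$ and $Z\notin\langle X,Y\rangle$ (it is not diagonal), the subgroup $\langle X,Y\rangle$ has index $2$ in $S_{p,2}=\langle X,Y,Z\rangle$, giving $\#S_{p,2}=2^{2s+1}$ as needed.

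For (ii): the crux is to compute the order of $X$. First $\ord_2(p^2-1)=\ord_2(p-1)+\ord_2(p+1)=s+1$, so $\zeta:=\zeta_{2^{s+1}}\in\Fps^{\times}$ exists, and $t:=\zeta+\zeta^p\in\Fp$ because it is Frobenius-fixed (using $\zeta^{p^2}=\zeta$, which holds as $2^{s+1}\mid p^2-1$). The characteristic polynomial of $X$ is $\lambda^2-t\lambda-1$; substituting $\zeta$ gives $\zeta^2-t\zeta-1=-\zeta^{p+1}-1$, and writing $p+1=2^s m$ with $m$ odd yields $\zeta^{p+1}=(\zeta^{2^s})^m=(-1)^m=-1$, so $\zeta$ (and likewise $\zeta^p$) is a root. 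This identity $\zeta^{p+1}=-1$ is exactly where the hypothesis $p\equiv 3\bmod 4$ enters. Because $\ord_2(p-1)=1<s+1$ we have $\zeta\neq\zeta^p$, so over $\Fps$ the matrix $X$ is conjugate to $\diag(\zeta,\zeta^p)$; both eigenvalues have order $2^{s+1}$ (as $p$ is odd), so $X$ has order $2^{s+1}$ and $X^{2^s}$ is conjugate to the scalar matrix $-I$, hence $X^{2^s}=\diag(-1,-1)$. A direct $2\times 2$ multiplication gives $Y=\begin{pmatrix}0&1\\-1&0\end{pmatrix}X=\begin{pmatrix}1&t\\0&-1\end{pmatrix}$, whence $Y^2=1$ (so $Y^{-1}=Y$); another short product gives $YXY^{-1}=YXY=\begin{pmatrix}t&-1\\-1&0\end{pmatrix}=-X^{-1}=X^{2^s}X^{-1}=X^{2^s-1}$, which simultaneously establishes all three relations claimed in the ``moreover'' part. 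Finally $\langle X\rangle$ is normal in $\langle X,Y\rangle$ since $YXY^{-1}\in\langle X\rangle$, and $Y\notin\langle X\rangle$ because the unique involution of the cyclic group $\langle X\rangle$ is $X^{2^s}=-I\neq Y$; therefore $\#S_{p,2}=2\cdot 2^{s+1}=2^{s+2}$, matching the $2$-part.

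Everything is elementary and self-contained. I expect the only genuine obstacles to be keeping the $2$-adic valuations straight across the two congruence cases, verifying the eigenvalue identity $\zeta^{p+1}=-1$, and carrying out the handful of explicit $2\times 2$ matrix products accurately; none of these is deep, so the proof should be short once those computations are recorded.
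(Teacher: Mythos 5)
Your proof is correct. Note that the paper offers no proof of this lemma at all: it is quoted directly from Carter--Fong \cite[I, pp.~142--143]{Carter1964}, so your self-contained verification is genuinely different in kind from what the paper does. Your strategy --- compute $\ord_{2}(\#\GL_{2}(\Fp))=2\ord_{2}(p-1)+\ord_{2}(p+1)$, which is $2s+1$ when $p\equiv 1\bmod 4$ and $s+2$ when $p\equiv 3\bmod 4$, and then show the exhibited $2$-subgroup has exactly that order --- is the standard and most economical route, and all of your computations check out: in case (i), $\langle X,Y\rangle\cong(\Z/2^{s})^{2}$ is normalized by the involution $Z\notin\langle X,Y\rangle$, giving order $2^{2s+1}$; in case (ii), the identity $\zeta^{p+1}=(\zeta^{2^{s}})^{m}=-1$ (with $p+1=2^{s}m$, $m$ odd) shows $\zeta,\zeta^{p}$ are the distinct eigenvalues of $X$, so $X$ has order $2^{s+1}$ and $X^{2^{s}}=\diag(-1,-1)$, while the explicit products $Y=\bigl(\begin{smallmatrix}1&t\\0&-1\end{smallmatrix}\bigr)$ and $YXY=-X^{-1}=X^{2^{s}-1}$ give the stated relations and the order count $2^{s+2}$. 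What your approach buys is independence from the (somewhat hard-to-access) Carter--Fong reference; what it costs is a page of elementary matrix bookkeeping that the paper avoids by citation. Either way the lemma stands.
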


\begin{cor}\label{cor:-1id}
Let $p>2$ be an odd prime number, and $\overline{G}$ a non-trivial $2$-subgroup of $\GL_{2}(\Fp)$. Then at least one of the following is satisfied: 
\begin{enumerate}
\item $\overline{G}$ contains the matrix $\diag(-1,-1)$; 
\item $\overline{G} \hookrightarrow C_{2^{s}}$ and $\overline{G}$ contains a conjugate of $\diag(1,-1)$, where $s:=\ord_{2}(p-1)$. 
\end{enumerate}
\end{cor}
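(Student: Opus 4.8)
The plan is to reduce everything to the two explicit descriptions of the $2$-Sylow subgroups of $\GL_{2}(\Fp)$ given in Lemma~\ref{cf2s}. Since $\overline{G}$ is a $2$-group it lies in a conjugate of such a Sylow subgroup, and both alternatives in the statement are invariant under conjugation in $\GL_{2}(\Fp)$ (note $\diag(-1,-1)$ is central), so I may assume $\overline{G}\subset S_{p,2}$. Writing $z_{0}:=\diag(-1,-1)$ and $s:=\ord_{2}(p-1)$, I may also assume $z_{0}\notin\overline{G}$ (otherwise alternative (1) holds), and the task becomes to show that $\overline{G}$ is cyclic with $\overline{G}\hookrightarrow C_{2^{s}}$ and that $\overline{G}$ contains a matrix conjugate to $\diag(1,-1)$. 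Throughout I will use the elementary fact that an order-$2$ matrix in $\GL_{2}(\Fp)$ that is not $\pm\mathrm{id}$ --- equivalently, an order-$2$ matrix of determinant $-1$ --- is diagonalizable with eigenvalues $1,-1$, hence conjugate to $\diag(1,-1)$.

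For $p\equiv 1\bmod 4$ (so $s\geq 2$), Lemma~\ref{cf2s}(i) gives $S_{p,2}=A\rtimes\langle Z\rangle$, where $A:=\langle X,Y\rangle$ is the group of diagonal matrices with entries in the group $\mu$ of $2^{s}$-th roots of unity in $\Fp^{\times}$ (so $A\cong C_{2^{s}}\times C_{2^{s}}$) and conjugation by $Z$ acts on $A$ as the coordinate swap $\sigma\colon\diag(a,b)\mapsto\diag(b,a)$. The structural point I will isolate is that the only $\sigma$-stable subgroups of the $2$-torsion $A[2]=\{\diag(\pm1,\pm1)\}$ are $\{\mathrm{id}\}$, $\langle z_{0}\rangle$ and $A[2]$ (since $\sigma$ fixes $z_{0}$ and exchanges $\diag(1,-1)\leftrightarrow\diag(-1,1)$); as every nontrivial subgroup of the $2$-group $A$ meets $A[2]$ nontrivially, it follows that every nontrivial $\sigma$-stable subgroup of $A$ contains $z_{0}$. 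Then I split into two sub-cases. If $\overline{G}\subset A$: since $z_{0}\notin\overline{G}$ and $A[2]\ni z_{0}$ is the unique copy of $(\Z/2)^{2}$ in $A$, the group $\overline{G}$ is cyclic, hence $\overline{G}\hookrightarrow C_{2^{s}}$, and its unique involution lies in $A[2]\setminus\{\mathrm{id},z_{0}\}$, giving a conjugate of $\diag(1,-1)$ inside $\overline{G}$. If $\overline{G}\not\subset A$: then $\overline{G}\cap A$ is normal of index $2$ in $\overline{G}$ and is stable under conjugation by any $g\in\overline{G}\setminus A\subset AZ$, which acts on $A$ as $\sigma$; being $\sigma$-stable and not containing $z_{0}$ it must be trivial, so $\overline{G}=\langle g\rangle$ has order $2$. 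A short computation with $g=\left(\begin{smallmatrix}0&a\\ b&0\end{smallmatrix}\right)$ gives $g^{2}=\diag(ab,ab)$; if $ab\neq 1$ then $ab$ has order $2^{j}$ for some $j\geq 1$ in the cyclic group $\mu$ and $g^{2^{j}}=\diag\!\big((ab)^{2^{j-1}},(ab)^{2^{j-1}}\big)=z_{0}$, a contradiction, so $ab=1$, whence $\det g=-1$ and $g$ is conjugate to $\diag(1,-1)$, with $\overline{G}\cong C_{2}\hookrightarrow C_{2^{s}}$.

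For $p\equiv 3\bmod 4$ one has $s=\ord_{2}(p-1)=1$, and by Lemma~\ref{cf2s}(ii) $S_{p,2}=\langle X,Y\rangle$ is semidihedral, with $\langle X\rangle$ cyclic of order $2^{\ord_{2}(p+1)+1}$, normal of index $2$, and with unique involution $z_{0}=X^{2^{\ord_{2}(p+1)}}$. Since $z_{0}\notin\overline{G}$, the normal subgroup $\overline{G}\cap\langle X\rangle$ is trivial, so $\#\overline{G}\le 2$ and $\overline{G}=\langle g\rangle$ with $g$ an involution outside $\langle X\rangle$, say $g=X^{i}Y$; the relation $(X^{i}Y)^{2}=X^{i\cdot 2^{\ord_{2}(p+1)}}$ forces $i$ even, so $\det g=\det(X)^{i}\det(Y)=(-1)^{i}(-1)=-1$ using $\det X=\det Y=-1$ from the explicit matrices, and again $g$ is conjugate to $\diag(1,-1)$ with $\overline{G}\cong C_{2}=C_{2^{s}}$. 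The main obstacle is the sub-case $p\equiv 1\bmod 4$ with $\overline{G}\not\subset A$, where one must exclude larger cyclic subgroups and nonabelian ($2$-)subgroups of $S_{p,2}$; the device that makes it routine is precisely the observation above that a nontrivial $\sigma$-stable subgroup of $A$ always contains $z_{0}$, which collapses $\overline{G}\cap A$ and forces $\#\overline{G}=2$. Everything else is a direct check with the generators of $S_{p,2}$ supplied by Lemma~\ref{cf2s}.
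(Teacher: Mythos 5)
Your proof is correct and follows essentially the same route as the paper's: reduce to the explicit $2$-Sylow subgroups of Lemma \ref{cf2s} via Sylow's theorem and argue case-by-case on $p \bmod 4$ by direct computation inside $S_{p,2}$. The only difference is organizational: for $p\equiv 1\bmod 4$ you isolate the observation that a nontrivial swap-stable subgroup of the diagonal part $\langle X,Y\rangle$ must contain $\diag(-1,-1)$, whereas the paper splits on whether $\overline{G}\cap\langle X,Y\rangle$ is trivial and carries out the corresponding matrix checks by hand.
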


\begin{proof}
By Sylow's theorem, we may assume that $\overline{G}$ is contained in $S_{p,2}$ in Lemma \ref{cf2s}. Recall that $g\in \GL_{2}(\Fp)$ is conjugate to $\diag(1,-1)$ if and only if the characteristic polynomial of $g$ is $x^{2}-1$. 

\textbf{Case 1.~$p\equiv 1\bmod 4$. }

Since the exponent of $S_{p,2}$ equals $2^{s}$, it suffices to prove the following. 
\begin{claim}
If $\overline{G}$ does not contain $\diag(-1,-1)$, then it is cyclic and contains an element $S$ whose characteristic polynomial equals $x^{2}-1$. 
\end{claim}
We shall prove Claim in the following. First, assume $\overline{G}\cap \langle X,Y\rangle=\{1\}$. Take $g\in \overline{G}\setminus \langle X,Y\rangle$ and write $g=
\begin{pmatrix}
0&b\\
c&0
\end{pmatrix}$. Then one has $\diag(bc,bc)=g^{2}=1$, and hence $bc=1$ since $\overline{G}\cap \langle X,Y\rangle=\{1\}$. In particular, the characteristic polynomial of $g$ equals $x^{2}-1$. Therefore Claim holds in this case. 

Second, assume that $\overline{G}\cap \langle X,Y\rangle$ is non-trivial. Then it contains $\diag(1,-1)$ or $\diag(-1,1)$. Since $\diag(-1,1)=Z\diag(1,-1)Z^{-1}$, we may assume $\diag(1,-1)\in \overline{G}$, which implies $\diag(-1,1)\notin \overline{G}$. Now we prove $\overline{G}\subset \langle X,Y\rangle$, which gives the cyclicity of $\overline{G}$ since $\langle X,Y\rangle \cong (C_{2^{s}})^{2}$. Suppose not, that is, there is a matrix $g\in \overline{G}\setminus \langle X,Y\rangle$. Write $g=
\begin{pmatrix}
0&b\\
c&0
\end{pmatrix}$. Then we have $bc=1$ since $\overline{G}$ contains no non-trivial scalar matrix. However, one has
\begin{equation*}
\begin{pmatrix}
0&b\\
-c&0
\end{pmatrix}=\diag(1,-1)g \in \overline{G}. 
\end{equation*}
This implies $\diag(-1,-1)\in \overline{G}$ since $bc=1$, which is a contradiction. Therefore, we obtain $\overline{G}\subset \langle X,Y\rangle$ as desired. 

\textbf{Case 2.~$p\equiv 3\bmod 4$. }

Let $i\in \Z$ and $j\in \{0,1\}$. By Lemma \ref{cf2s} (ii), we have
\begin{equation*}
(X^{i}Y^{j})^{2}=
\begin{cases}
X^{2i}&\text{if }n=0,\\
X^{2^{s}i}&\text{if }n=1. 
\end{cases}
\end{equation*}
Therefore, $\langle X^{i}Y^{j} \rangle$ does not contain $\diag(-1,-1)$ if and only if $i$ is even and $j=1$. In particular, one has 
\begin{itemize}
\item[(1)] $\diag(-1,-1)\in \overline{G}$; or
\item[(2)] $\overline{G}=\langle X^{i}Y\rangle$ for some $i\in 2\Z$. 
\end{itemize}

Then (1) gives the validity of (i). On the other hand, since $X^{i}Y$ is not a scalar matrix, the characteristic polynomial of $Y$ equals $x^{2}-1$. Consequently, (2) gives the validity of (ii). This completes the proof of Corollary \ref{cor:-1id}. 
\end{proof}

For a prime number $p$, consider two subgroups of $\GL_{2}(\Fp)$ as follows: 
\begin{equation*}
\bT(\Fp):=\{\diag(a,b)\in \GL_{2}(\Fp)\mid a,b\in \F_{p}^{\times}\},\quad \bM(\Fp):=\{\diag(1,b)\in \GL_{2}(\Fp)\mid b\in \F_{p}^{\times}\}. 
\end{equation*}
Note that we have $\bT(\F_{2})=\bM(\F_{2})=\{1\}$. 

\begin{lem}\label{cmtv}
Let $p>2$ be an odd prime number. Take $g\in \GL_{2}(\Fp)$ and $h=\diag(1,e)\in \bM(\Fp)$, where $e \neq 1$. If $ghg^{-1}\in \bM(\Fp)$, then $g$ is contained in $\bT(\Fp)$. 
\end{lem}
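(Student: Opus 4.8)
The plan is to reduce everything to the elementary fact that the centralizer of a diagonal matrix with distinct diagonal entries consists of diagonal matrices. First I would observe that since $ghg^{-1}$ is conjugate to $h$, the two matrices have the same characteristic polynomial, namely $(x-1)(x-e)$. On the other hand, by hypothesis $ghg^{-1}=\diag(1,e')$ for some $e'\in\F_{p}^{\times}$, whose characteristic polynomial is $(x-1)(x-e')$. Comparing these and using $e\neq 1$ (so that the factorization is unique up to order only in the expected way), I would conclude $e'=e$, hence $ghg^{-1}=h$; that is, $g$ commutes with $h$.

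Next I would exploit the commutation relation directly. Writing $g=\begin{pmatrix} a & b \\ c & d\end{pmatrix}$, the equalities $gh=hg$ force $b(e-1)=0$ and $c(e-1)=0$. Since $e\neq 1$ and $\F_{p}$ is a field, $e-1\in\F_{p}^{\times}$, so $b=c=0$; thus $g=\diag(a,d)$, and as $g\in\GL_{2}(\Fp)$ we get $a,d\in\F_{p}^{\times}$, i.e.\ $g\in\bT(\Fp)$. This finishes the argument.

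The statement is essentially a one-line linear-algebra computation, so there is no serious obstacle; the only point requiring a moment of care is the passage from $ghg^{-1}\in\bM(\Fp)$ to $ghg^{-1}=h$, which is exactly where the hypothesis $e\neq 1$ is used (without it, $h$ could be the identity and the conclusion would fail). I would state this step via the characteristic-polynomial comparison rather than by a direct matrix computation, since it makes transparent why distinctness of the eigenvalues $1$ and $e$ is the crucial input.
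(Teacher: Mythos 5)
Your proposal is correct and follows essentially the same route as the paper: both first deduce $ghg^{-1}=h$ from the coincidence of characteristic polynomials (using $e\neq 1$ to pin down the diagonal entry), and then read off $b=c=0$ from the commutation relation $gh=hg$. Your write-up is if anything slightly more explicit about why the eigenvalue comparison forces $e'=e$.
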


\begin{proof}
Since the characteristic polynomials of $h$ and $ghg^{-1}$ coincide, we obtain an equality $ghg^{-1}=h$. Write $g=\begin{pmatrix}
a&b\\
c&d
\end{pmatrix}$, where $a,b,c,d\in \Fp$. Then the equality $gh=hg$ is equivalent to the following: 
\begin{equation*}
\begin{pmatrix}
a&eb\\
c&ed
\end{pmatrix}
=
\begin{pmatrix}
a&b\\
ec&ed
\end{pmatrix}. 
\end{equation*}
Hence $e\neq 1$ implies $b=c=0$, that is, $g$ is contained in $\bT(\Fp)$. 
\end{proof}

\begin{cor}\label{cor:diag}
Let $p>2$ be an odd prime number, $\overline{G}$ a subgroup of $\GL_{2}(\Fp)$ of order coprime to $p$, and $\overline{H}$ a subgroup of $\overline{G}\cap \bM(\Fp)$. 
\begin{enumerate}
\item If $\{1\}\neq N^{\overline{G}}(\overline{H})\subset \bM(\Fp)$, then $\overline{G}$ is contained in $\bT(\Fp)$. 
\item If $(\overline{G}:\overline{H})=2$, then
\begin{itemize}
\item $\overline{G}$ is contained in $\bT(\Fp)$; or
\item $\#\overline{G}=2$ and $(\F_{p}^{\oplus 2})^{\overline{G}}\neq \{0\}$. 
\end{itemize}
\end{enumerate}
\end{cor}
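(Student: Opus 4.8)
The plan is to reduce both assertions to Lemma \ref{cmtv} together with the elementary structure of order-two elements of $\GL_2(\Fp)$ for odd $p$. For (i), the first observation is that the normal core $N^{\overline{G}}(\overline{H})$ is, by definition, a normal subgroup of $\overline{G}$ contained in $\overline{H}$, hence contained in $\bM(\Fp)$; so the hypothesis $N^{\overline{G}}(\overline{H})\subset \bM(\Fp)$ is automatic and the real content of the assumption is its non-triviality. I would then pick $h=\diag(1,e)\in N^{\overline{G}}(\overline{H})$ with $h\neq 1$, so that $e\neq 1$. For an arbitrary $g\in \overline{G}$, normality of $N^{\overline{G}}(\overline{H})$ gives $ghg^{-1}\in N^{\overline{G}}(\overline{H})\subset \bM(\Fp)$, and Lemma \ref{cmtv} forces $g\in \bT(\Fp)$. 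Since $g$ was arbitrary, $\overline{G}\subset \bT(\Fp)$.

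For (ii), since $\overline{H}$ has index $2$ in $\overline{G}$ it is normal, so $N^{\overline{G}}(\overline{H})=\overline{H}$. If $\overline{H}\neq \{1\}$, then part (i) applies verbatim and yields $\overline{G}\subset \bT(\Fp)$. If $\overline{H}=\{1\}$, then $\#\overline{G}=2$; write $\overline{G}=\langle g\rangle$ with $g^{2}=1\neq g$. Because $p$ is odd, $(x-1)(x+1)$ is separable over $\Fp$, and as it annihilates $g$, the matrix $g$ is diagonalizable with eigenvalues in $\{1,-1\}$; equivalently $\F_{p}^{\oplus 2}=\ker(g-1)\oplus \ker(g+1)$ via the idempotents $\tfrac{1\pm g}{2}$. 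If $\ker(g-1)\neq \{0\}$, a corresponding eigenvector lies in $(\F_{p}^{\oplus 2})^{\overline{G}}$, giving the second alternative. Otherwise $\F_{p}^{\oplus 2}=\ker(g+1)$, i.e.\ $g=-I\in \bT(\Fp)$, giving the first alternative.

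I do not expect any genuinely hard step in this argument; the parts that require a little care are the bookkeeping that the normal core automatically sits inside $\bM(\Fp)$ (so that Lemma \ref{cmtv} is applicable in (i)), and the degenerate case $\overline{H}=\{1\}$ in (ii), where one must use that involutions over $\Fp$ for odd $p$ are diagonalizable in order to pin $g$ down as scalar or as $\diag(1,-1)$ up to conjugacy.
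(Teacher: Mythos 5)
Your proof is correct and follows essentially the same route as the paper: part (i) is the paper's argument verbatim (non-trivial $h=\diag(1,e)$ in the normal core, conjugates stay in $\bM(\Fp)$, apply Lemma \ref{cmtv}), and part (ii) reduces to (i) via $N^{\overline{G}}(\overline{H})=\overline{H}$ exactly as in the paper. The only divergence is the degenerate subcase $\overline{H}=\{1\}$, where the paper cites Corollary \ref{cor:-1id} while you give a direct (and equally valid) diagonalization argument for an involution over $\Fp$ with $p$ odd.
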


\begin{proof}
(i): Take a non-trivial element $h$ of $N^{\overline{G}}(\overline{H})$. Write $h=\diag(1,e)$, where $e\neq 1$. Take any $g\in \overline{G}$. Then $ghg^{-1}$ is contained in $N^{\overline{G}}(\overline{H})\subset \bM(\Fp)$. Hence the assertion follows from Lemma \ref{cmtv}. 

(ii): Note that we have $N^{\overline{G}}(\overline{H})=\overline{H}$ since $(\overline{G}:\overline{H})=2$. If $\overline{H}\neq \{1\}$, then Lemma \ref{cmtv} implies that $\overline{G}$ is contained in $\bT(\Fp)$. On the other hand, if $\overline{H}$ is trivial, that is, $\#\overline{G}=2$, then the assertion follows from Corollary \ref{cor:-1id}. 
\end{proof}

\begin{prop}\label{prop:trtr}
Let $p$ be a prime number, $G'$ a finite group of order coprime to $p$, $H'$ a subgroup of $G'$, and $(V,\rho)$ an $H'$-extremal $\Fp$-representation of $G'$. Take an $H'$-special line $L$ of $V$. Put $\overline{G}:=\rho(G')$ and $\overline{H}:=\rho(H')$. Then one has $(\overline{G}:\overline{H})\geq 3$ and $N^{\overline{G}}(\overline{H})=\{1\}$. In particular, the action of $N^{G'}(H')$ on $V$ is trivial. 
\end{prop}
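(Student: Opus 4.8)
The plan is to transport the whole problem to the image group. Write $\overline G=\rho(G')\subset \GL_2(\Fp)$ (this uses (A)) and $\overline H=\rho(H')$; since $\rho$ is surjective onto $\overline G$, the order of $\overline G$ divides $\#G'$ and is therefore coprime to $p$, and so is the index $(\overline G:\overline H)$. The first concrete step is to choose coordinates: fix $0\neq e_1\in L$, note that $\overline H$ fixes $L$ pointwise because $H'\subset \Fix_{G'}(L)$, and apply Maschke's theorem (cf.\ \cite[Chap.~1, Theorem 1]{Serre1977}) to get an $\overline H$-stable complement $\Fp e_2$ of $L$; in the basis $(e_1,e_2)$ this places $\overline H$ inside $\bM(\Fp)$. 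The second preliminary step is to record, by applying $\rho$ to the hypothesis $\Stab_{G'}(L)=\Fix_{G'}(L)$ and using $\Stab_{G'}(L)=\rho^{-1}(\Stab_{\overline G}(L))$ (and likewise for $\Fix$), the equality $\Stab_{\overline G}(L)=\Fix_{\overline G}(L)$ inside $\overline G$.

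The key observation, from which both assertions fall out, is that $\overline G\not\subset \bT(\Fp)$: were $\overline G$ diagonal in the basis $(e_1,e_2)$, every element would stabilize $\Fp e_1$, so $\Stab_{\overline G}(L)=\overline G$ while $\Fix_{\overline G}(L)=\overline G\cap \bM(\Fp)$; the equality just recorded would then force $\overline G\subset \bM(\Fp)$, hence $0\neq e_1\in V^{\overline G}$, contradicting (B). Granting this: for $N^{\overline G}(\overline H)=\{1\}$, note that this normal core lies in $\overline H\subset \bM(\Fp)$, so if it were nontrivial then $p>2$ (as $\bM(\F_{2})=\{1\}$) and Corollary \ref{cor:diag}(i) would give $\overline G\subset \bT(\Fp)$, which is impossible. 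For $(\overline G:\overline H)\geq 3$, the index $1$ case yields $\overline G=\overline H\subset \bM(\Fp)$ and again $e_1\in V^{\overline G}=\{0\}$, a contradiction; the index $2$ case forces $p>2$ since the index is prime to $p$, so Corollary \ref{cor:diag}(ii) applies and gives either $\overline G\subset \bT(\Fp)$, excluded above, or $\#\overline G=2$ with $V^{\overline G}\neq\{0\}$, excluded by (B). Finally, from $N^{G'}(H')\subset g'H'g'^{-1}$ for every $g'\in G'$ and surjectivity of $\rho$ one gets $\rho(N^{G'}(H'))\subset \bigcap_{x\in\overline G}x\overline Hx^{-1}=N^{\overline G}(\overline H)=\{1\}$, so $N^{G'}(H')\subset\ker\rho$ acts trivially on $V$.

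I expect the main obstacle to be bookkeeping rather than any deep point: making sure the delicate equality $\Stab_{G'}(L)=\Fix_{G'}(L)$ is correctly pushed forward to $\overline G$, and checking that the two exceptional outcomes in Corollary \ref{cor:diag}(ii) (namely $\overline G$ diagonal, and $\#\overline G=2$ with a global invariant vector) are precisely the two configurations prohibited by conditions (C) and (B). The characteristic two case needs only a remark: there $\bM(\F_{2})=\{1\}$ collapses $\overline H$ and rules out the index $2$ possibility by parity, so Corollary \ref{cor:diag} is not even invoked.
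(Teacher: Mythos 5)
Your proof is correct and follows essentially the same route as the paper: fix a basis with first vector spanning $L$ so that $\overline{H}\subset \bM(\Fp)$, observe that $\overline{G}\subset\bT(\Fp)$ is incompatible with conditions (B) and (C), and then apply Corollary \ref{cor:diag} to exclude a nontrivial normal core and index $\leq 2$, with $p=2$ handled separately. Your explicit use of Maschke's theorem to choose an $\overline{H}$-stable complement is a small refinement (the paper takes an arbitrary $v'\in V\setminus L$ and asserts $\overline{H}\subset\bM(\Fp)$, which really does require such a choice), but the argument is otherwise the same.
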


\begin{proof}
Let $L$ be an $H'$-special line in $V$. Take $v\in L$ and $v'\in V\setminus L$, where $\{v,v'\}$ is a basis of $V$. We identify $\GL(V)$ and $\GL_{2}(\Fp)$ by the isomorphism induced by $\{v,v'\}$. Then one has $\overline{G}\neq \{1\}$ and $\overline{H} \subset \bM(\Fp)$. In particular, we have $N^{\overline{G}}(\overline{H})\subset \bM(\Fp)$. 

\textbf{Case 1.~$p=2$. }

The equality $N^{\overline{G}}(\overline{H})=\{1\}$ follows from $\bM(\F_{2})=\{1\}$. On the other hand, since $\overline{G}$ is non-trivial, Lemma \ref{lem:ttst} implies $\#\overline{G}=3$. Hence we obtain $(\overline{G}:\overline{H})=3$, which completes the proof in this case. 

\textbf{Case 2.~$p>2$. }

First, we prove the inequality $(\overline{G}:\overline{H})\geq 3$. Suppose $(\overline{G}:\overline{H})\leq 2$. Then Corollary \ref{cor:diag} implies that $\overline{G}\subset \bT(\Fp)$ or $V^{G'}\neq \{0\}$ holds. Moreover, the former condition implies the latter one since $G'=\Stab_{G'}(L)=\Fix_{G'}(L)$. This is absurd since $V^{G'}=\{0\}$. Therefore we obtain $(\overline{G}:\overline{H})\geq 3$ as desired. 

Second, we prove the equality $N^{\overline{G}}(\overline{H})=\{1\}$. Suppose not, then one has $\overline{G}\subset \bT(\Fp)$ by Corollary \ref{cor:diag} (i). In particular, one has $\Stab_{G'}(L)=G'$. Combining this equality with $\Stab_{G'}(L)=\Fix_{G'}(L)$, we obtain that $V^{G'}$ contains $L$. This contradicts the assumption $V^{G'}=\{0\}$, and hence the proof is complete. 
\end{proof}

Let $\bZ(\Fp):=\{\diag(a,a)\in \GL_{2}(\Fp)\mid a\in \F_{p}^{\times}\}$, which is the center of $\GL_{2}(\Fp)$. We denote by $\PGL_{2}(\Fp)$ the quotient of $\GL_{2}(\Fp)$ by $\bZ(\Fp)$. The classification of subgroups of $\PGL_{2}(\Fp)$ is known by \cite{Dickson1901}, which is called \emph{Dickson's classification}. 

\begin{prop}[{\cite[Chap.~XII]{Dickson1901}}]\label{prop:pgl2}
Let $p>2$ be an odd prime number, and $G^{\dagger}$ a subgroup of $\PGL_{2}(\Fp)$ of order coprime to $p$. Then $G^{\dagger}$ satisfies one of the following: 
\begin{enumerate}
\item $G^{\dagger}\cong C_{n}$ for some divisor of $p-1$ or $p+1$; 
\item $G^{\dagger}\cong D_{n}$ for some divisor of $p-1$ or $p+1$; 
\item $G^{\dagger}$ is isomorphic to $\fA_{4}$, $\fS_{4}$ or $\fA_{5}$. 
\end{enumerate}
\end{prop}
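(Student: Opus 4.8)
The plan is to invoke Dickson's original classification \cite[Chap.~XII]{Dickson1901} directly, or — if one prefers a self-contained argument — to run the classical fixed-point counting argument for finite subgroups of $\PGL_2$ over an algebraically closed field, which goes through verbatim in characteristic $p$ precisely because the hypothesis $p\nmid\#G^{\dagger}$ is imposed. First I would observe that, since $\#G^{\dagger}$ is coprime to $p$, every nontrivial element $\overline{g}\in G^{\dagger}$ lifts to a semisimple matrix $g\in\GL_2(\Fpbar)$ with two distinct eigenvalues: equal eigenvalues would force $g$ scalar, hence $\overline{g}=1$, while a non-semisimple lift would have order divisible by $p$. Consequently $\overline{g}$ has exactly two fixed points on $\P^1(\Fpbar)$. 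Viewing $G^{\dagger}\subset\PGL_2(\Fpbar)$ acting on $\P^1(\Fpbar)$, I would then carry out the Burnside double count of pairs $(\overline{g},x)$ with $\overline{g}\neq 1$ and $\overline{g}x=x$: if $O_1,\dots,O_k$ are the $G^{\dagger}$-orbits consisting of points with nontrivial stabilizer, with stabilizer orders $n_i>1$, this gives $\sum_i(1-1/n_i)=2-2/N$ where $N=\#G^{\dagger}$, and hence $k\in\{2,3\}$.

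Next I would solve this Diophantine constraint in the usual elementary way. The case $k=2$ forces $n_1=n_2=N$, so $G^{\dagger}$ fixes two points of $\P^1(\Fpbar)$, is therefore simultaneously diagonalizable modulo scalars, and hence cyclic. The case $k=3$ forces $(n_1,n_2,n_3)$ to be one of $(2,2,m)$ with $N=2m$, or $(2,3,3)$, $(2,3,4)$, $(2,3,5)$ with $N=12,24,60$; in each case I would identify the abstract group via its faithful action on a smallest orbit, exactly as in the classification of the finite polyhedral groups, obtaining $D_m$, $\fA_4$, $\fS_4$, $\fA_5$ respectively. To exclude the ``dicyclic'' alternative in the $(2,2,m)$ family I would note that when $m\ge3$ the group $G^{\dagger}$ must contain at least two distinct involutions (the point-stabilizers of the $m$ points of an $m$-element orbit cannot all coincide, since a nontrivial element fixes only two points), and I would treat the remaining cases $m\le 2$ by hand.

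Finally, for the cyclic case $G^{\dagger}=\langle\overline{g}\rangle$ of order $n$ — and hence, applying this to the rotation subgroup of a dihedral $G^{\dagger}\cong D_m$ — I would pin down the divisibility by splitting on whether the characteristic polynomial of a lift $g\in\GL_2(\Fp)$ splits over $\Fp$. If it does, the eigenvalues lie in $\Fp^{\times}$ and the order of $\overline{g}$, being the order of the eigenvalue ratio, divides $p-1$. If it is irreducible, the eigenvalues form a conjugate pair $\alpha,\alpha^{p}\in\Fps^{\times}$, and the order of $\overline{g}$ equals the order of $\alpha^{1-p}$, which divides $(p^2-1)/(p-1)=p+1$ since $\Fps^{\times}$ is cyclic of order $p^2-1$. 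The main obstacle is the middle step: one must check that the fixed-point count is genuinely valid in characteristic $p$ (it is, but only because $p\nmid\#G^{\dagger}$ guarantees that every element is semisimple with exactly two fixed points) and carry out cleanly the classical recognition of the polyhedral groups from their orbit data, including ruling out the dicyclic groups; everything else is routine linear algebra over $\Fp$ and $\Fps$.
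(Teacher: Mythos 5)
Your proposal is correct. Note first that the paper offers no argument at all for this proposition: it is stated purely as a citation of Dickson's classification \cite[Chap.~XII]{Dickson1901}, so your first option (invoke Dickson) is exactly what the paper does. Your self-contained alternative via the fixed-point count is a legitimate and classical route, and the key observation making it work in characteristic $p$ is precisely the one you isolate: since $p\nmid\#G^{\dagger}$, every nontrivial element lifts to a semisimple non-scalar matrix (a non-semisimple lift would map to a unipotent of order $p$ in $\PGL_{2}$), hence fixes exactly two points of $\P^{1}(\Fpbar)$, and the Riemann--Hurwitz-type identity $\sum_{i}(1-1/n_{i})=2-2/N$ goes through verbatim. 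The case analysis $k\in\{2,3\}$, the identification of the $(2,2,m)$, $(2,3,3)$, $(2,3,4)$, $(2,3,5)$ solutions with $D_{m}$, $\fA_{4}$, $\fS_{4}$, $\fA_{5}$, and the divisibility $n\mid p\pm1$ via the eigenvalue ratio $\alpha/\beta$ (split case) or $\alpha^{1-p}$ with $\alpha\in\Fps^{\times}$ (non-split case) are all sound. The only genuinely delicate steps are the two you already flag: excluding the dicyclic extension in the $(2,2,m)$ family (your count of distinct involutions among the stabilizers of an orbit of size $m\geq 3$ does this, since a dicyclic group has a unique involution) and the recognition of the polyhedral groups from their orbit data (standard, e.g.\ via the faithful transitive action on a small orbit, faithfulness coming from the fact that a nontrivial element fixes only two points). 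What your approach buys over the paper's bare citation is a characteristic-$p$-robust, self-contained proof; what it costs is about a page of classical group theory that Dickson already supplies.
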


Define a subgroup of $\GL_{2}(\Fp)$ as follows: 
\begin{equation*}
\bC(\Fp):=\left\{
\begin{pmatrix}
a&b\zeta_{p-1}\\
b&a
\end{pmatrix}
\in \GL_{2}(\Fp)\,\middle|\,a,b\in \F_{p},a^{2}-b^{2}\zeta_{p-1}\neq 0 \right\}. 
\end{equation*}
Note that a conjugate of $\bC(\Fp)$ is called a \emph{non-split Cartan subgroup}. 

\begin{prop}[{\cite[Chap.~XII]{Dickson1901}}]\label{prop:crtn}
Assume $p>2$, and let $\overline{G}$ be a maximal subgroup of $\GL_{2}(\Fp)$. We denote by $G^{\dagger}$ the image of $\overline{G}$ in $\PGL_{2}(\Fp)$. 
\begin{enumerate}
\item Assume that $G^{\dagger}$ is cyclic of order coprime to $p$. Then $\overline{G}$ is contained in a conjugate of $\bT(\Fp)$ or that of $\bC(\Fp)$. 
\item Assume $G^{\dagger}\cong D_{n}$, where $\gcd(n,p)=1$. Then $\overline{G}$ is contained in a conjugate of $\bT(\Fp)$ or that of $\bC(\Fp)$, where
\begin{equation*}
\bC(\Fp):=\left\{
\begin{pmatrix}
a&b\zeta_{p-1}\\
b&a
\end{pmatrix}
\in \GL_{2}(\Fp)\,\middle|\,a,b\in \F_{p},a^{2}-b^{2}\zeta_{p-1}\neq 0 \right\};
\end{equation*}
\end{enumerate}
\end{prop}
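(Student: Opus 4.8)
The plan is to use that, because $G^{\dagger}$ and the center $\bZ(\Fp)\cong\F_{p}^{\times}$ both have order prime to $p$, the group $\overline{G}$ has order prime to $p$; hence every element of $\overline{G}$ is semisimple, and a non-scalar semisimple element of $\GL_{2}(\Fp)$ has two distinct eigenvalues, so it is regular and its centralizer is a maximal torus. I would first record this as an auxiliary lemma: for non-scalar $g\in\GL_{2}(\Fp)$ of order prime to $p$, either both eigenvalues lie in $\Fp$, in which case $g$ is $\GL_{2}(\Fp)$-conjugate to a diagonal matrix and $Z_{\GL_{2}(\Fp)}(g)$ is a conjugate of $\bT(\Fp)$, or the eigenvalues lie in $\F_{p^{2}}\setminus\Fp$ and are interchanged by the Frobenius, in which case $g$ is conjugate into the non-split Cartan and $Z_{\GL_{2}(\Fp)}(g)$ is a conjugate of $\bC(\Fp)$. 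The proof of the lemma is the standard linear algebra over $\Fp$: the centralizer of $\diag(\lambda,\mu)$ with $\lambda\neq\mu$ is exactly $\bT(\Fp)$, and $\bC(\Fp)$ as defined is the image of the regular representation of the quadratic extension $\Fp(\sqrt{\zeta_{p-1}})$ (note $\zeta_{p-1}$ is a non-square), so it is a non-split Cartan.

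For part (i): if $G^{\dagger}=\{1\}$ then $\overline{G}\subseteq\bZ(\Fp)\subseteq\bT(\Fp)$. Otherwise, choose $g\in\overline{G}$ whose image generates $G^{\dagger}$; since $\overline{G}\twoheadrightarrow G^{\dagger}$ has kernel $\overline{G}\cap\bZ(\Fp)$, every element of $\overline{G}$ is a power of $g$ times a central element, so $\overline{G}=\langle g\rangle\cdot(\overline{G}\cap\bZ(\Fp))\subseteq Z_{\GL_{2}(\Fp)}(g)$. The image of $g$ in $\PGL_{2}(\Fp)$ is nontrivial, hence $g$ is non-scalar, and the auxiliary lemma identifies $Z_{\GL_{2}(\Fp)}(g)$ with a conjugate of $\bT(\Fp)$ or of $\bC(\Fp)$, giving the conclusion.

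For part (ii) I read the conclusion as: $\overline{G}$ is contained in a conjugate of the normalizer $N_{\GL_{2}(\Fp)}(\bT(\Fp))$ or of $N_{\GL_{2}(\Fp)}(\bC(\Fp))$ (for $n\geq3$ the group $D_{n}$ is non-abelian, hence cannot sit inside the abelian $\bT(\Fp)$ or $\bC(\Fp)$). The case $n=1$ is $D_{1}=C_{2}$, already covered by part (i), so assume $n\geq2$. Let $R\cong C_{n}$ be the rotation subgroup of $G^{\dagger}\cong D_{n}$, which is normal of index $2$, and let $\overline{G}_{0}\trianglelefteq\overline{G}$ be its preimage, of index $2$. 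The image of $\overline{G}_{0}$ is the nontrivial cyclic group $R$, so $\overline{G}_{0}$ contains a non-scalar (regular semisimple) element $g_{0}$, and the argument of part (i) gives $\overline{G}_{0}\subseteq C:=Z_{\GL_{2}(\Fp)}(g_{0})$, a conjugate of $\bT(\Fp)$ or of $\bC(\Fp)$. One then checks $C=Z_{\GL_{2}(\Fp)}(\overline{G}_{0})$: it centralizes $\overline{G}_{0}$ because $C$ is abelian and contains $\overline{G}_{0}$, and anything centralizing $\overline{G}_{0}$ in particular centralizes $g_{0}$, hence lies in $C$. Thus $C$ is canonically attached to the normal subgroup $\overline{G}_{0}$, so $\overline{G}$ normalizes $C$, which is the desired conclusion.

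The routine ingredients are the auxiliary lemma on centralizers and the verification that the explicit $\bC(\Fp)$ is a non-split Cartan. The only step needing genuine care is the passage in (ii) from ``$\overline{G}_{0}$ lies in some maximal torus'' to ``$\overline{G}$ normalizes that torus''; this is handled by describing the torus intrinsically as the centralizer of the normal subgroup $\overline{G}_{0}$ rather than of a single element. I do not expect a real obstacle beyond bookkeeping, since this is exactly the classical reduction behind Dickson's classification; the points to watch are the degenerate cases (trivial image, scalar elements, and $n\in\{1,2\}$), all dealt with above.
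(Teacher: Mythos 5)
Your proposal is correct. Note, though, that the paper does not prove this proposition at all: it is quoted from Dickson's classification \cite[Chap.~XII]{Dickson1901}, so there is no internal argument to compare against. What you supply is a short self-contained derivation, and it is sound: since $G^{\dagger}$ and $\bZ(\Fp)$ both have order prime to $p$, so does $\overline{G}$, hence every non-scalar element is regular semisimple and its centralizer is a maximal torus, conjugate to $\bT(\Fp)$ (split eigenvalues) or to $\bC(\Fp)$ (eigenvalues in $\F_{p^2}\setminus\Fp$, with $\bC(\Fp)$ realized as the regular representation of $\F_{p^2}=\Fp(\sqrt{\zeta_{p-1}})$). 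Part (i) then follows because $\overline{G}=\langle g\rangle\cdot(\overline{G}\cap\bZ(\Fp))$ for any $g$ lifting a generator of $G^{\dagger}$. Two further points in your favour: you correctly diagnose that the printed conclusion of (ii) must be read as containment in the \emph{normalizer} of a conjugate of $\bT(\Fp)$ or $\bC(\Fp)$ (this is exactly how the proposition is invoked in the proof of Theorem \ref{thm:zggv}, where $\overline{G}$ is assumed to lie in $N\bT(\Fp)$ or $N\bC(\Fp)$), and your device of identifying the torus intrinsically as $Z_{\GL_{2}(\Fp)}(\overline{G}_{0})$ for the normal index-two subgroup $\overline{G}_{0}$ is the right way to get $\overline{G}$ to normalize it. Your argument also never uses the hypothesis that $\overline{G}$ is a maximal subgroup of $\GL_{2}(\Fp)$, so it proves the statement for arbitrary subgroups with cyclic or dihedral image of order prime to $p$ --- which is in fact the generality in which the paper applies it.
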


\begin{thm}\label{thm:zggv}
Assume $p>2$. Let $G'$ be a finite group of order prime to $p$, and $H'$ a subgroup of $G'$. Consider an $H'$-extremal $\Fp$-representation $(V,\rho)$, and put $\overline{G}:=\rho(G')$ and $\overline{H}:=\rho(H')$. 
\begin{enumerate}
\item We have one of the following: 
\begin{itemize}
\item[(1)] $\overline{G}\cong C_{d}$ and $\overline{H}=1$, where $d\geq 3$ is a divisor of $p-1$ or $p+1$; 
\item[(2)] $\overline{G}\cong D_{d'}$ and $\#\overline{H}\leq 2$, where $d'\geq 3$ is an odd divisor of $p-1$ or $p+1$. 
\end{itemize}
\item Suppose that the order of $\overline{G}$ is a multiple of $4$, and put $s:=\ord_{2}\#\overline{G}\geq 2$. Then one has $p\equiv 1\bmod 2^{s}$ and $\overline{G}$ is cyclic. 
\end{enumerate}
\end{thm}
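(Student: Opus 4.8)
The plan is to read off the three defining conditions (A)--(C) of an $H'$-extremal representation as constraints on the image $\overline{G}:=\rho(G')\subset\GL_2(\Fp)$ and then run Dickson's classification. First I would fix a basis $\{v,v'\}$ of $V$ with $v$ spanning an $H'$-special line $L$ and identify $\GL(V)\cong\GL_2(\Fp)$; as in the proof of Proposition~\ref{prop:trtr} this puts $\overline{H}:=\rho(H')$ inside $\bM(\Fp)$, and that proposition gives $(\overline{G}:\overline{H})\geq 3$ and $N^{\overline{G}}(\overline{H})=\{1\}$. The key reformulation is that (C) says exactly that there is a line $L$ with $\Stab_{\overline{G}}(L)=\Fix_{\overline{G}}(L)$ --- equivalently $\Stab_{\overline{G}}(L)$ acts trivially on $L$ --- with $\overline{H}\subset\Stab_{\overline{G}}(L)$. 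The first consequence is $\overline{G}\cap\bZ(\Fp)=\{1\}$: a non-trivial scalar stabilizes every line but acts on it by a non-trivial scalar, contradicting (C). Hence $\overline{G}$ maps isomorphically onto its image $G^{\dagger}$ in $\PGL_2(\Fp)$, which by Proposition~\ref{prop:pgl2} is cyclic, dihedral, or one of $\fA_4,\fS_4,\fA_5$. The exceptional groups are ruled out since $p\nmid\#\overline{G}$, so a faithful two-dimensional $\Fp$-representation of $\overline{G}$ would yield a faithful two-dimensional representation in characteristic $0$; but none of $\fA_4,\fS_4,\fA_5$ admits one (their irreducible representations of degree $\leq 2$ all have non-trivial kernel, and $\fA_5$ has none of degree $2$). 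So $\overline{G}$ is cyclic or dihedral.

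For the cyclic case, $\overline{G}$ is abelian and semisimple of order prime to $p$, hence conjugate into a split Cartan $\bT(\Fp)$ or a non-split Cartan $\bC(\Fp)$ (not into the scalars, since $\overline{G}\cap\bZ(\Fp)=\{1\}$ and $\overline{G}\neq\{1\}$). Condition (B) forces each eigencharacter of $\overline{G}$ on $V$ to be non-trivial, so the line $L$ from (C) cannot be a common eigenline of $\overline{G}$ (and in the non-split case there are no rational eigenlines at all); therefore $\Stab_{\overline{G}}(L)=\overline{G}\cap\bZ(\Fp)=\{1\}$, whence $\overline{H}=\{1\}$, $(\overline{G}:\overline{H})=\#\overline{G}\geq 3$, and $\#\overline{G}$ divides $p-1$ in the split case, or divides $p+1$ and is odd in the non-split case (there $\overline{G}\cap\F_p^{\times}=\{1\}$ forces $\gcd(\#\overline{G},p-1)=1$). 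This is alternative (1).

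For the dihedral case, write $\overline{G}\cong D_n$. Then $n\geq 3$, since $D_2\cong (C_2)^2$ is abelian and diagonalizable and hence contains $-I$, contradicting $\overline{G}\cap\bZ(\Fp)=\{1\}$. Let $\overline{C}$ be the cyclic subgroup of index $2$; it is abelian and semisimple, so conjugate into $\bT(\Fp)$ or $\bC(\Fp)$, and $\overline{C}\cap\bZ(\Fp)\subset\overline{G}\cap\bZ(\Fp)=\{1\}$. Analysing the line $L$ of (C): in the split subcase $\overline{C}$ is precisely the group of diagonal elements of $\overline{G}$, the elements outside $\overline{C}$ are antidiagonal, and $L$ cannot be a $\overline{C}$-eigenline (else such an antidiagonal element would conjugate $\overline{C}$ into its transpose, forcing $\overline{C}=\{1\}$); in the non-split subcase $\overline{C}$ acts on $\mathbb{P}^1(\Fp)$ transitively with stabilizer $\bZ(\Fp)$. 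Either way $\Stab_{\overline{G}}(L)\cap\overline{C}=\overline{C}\cap\bZ(\Fp)=\{1\}$, so $\Stab_{\overline{G}}(L)$ injects into $\overline{G}/\overline{C}\cong C_2$, giving $\#\overline{H}\leq\#\Stab_{\overline{G}}(L)\leq 2$. Finally $n$ is odd: the centre of $D_n$ commutes with $\overline{C}$ (non-central in its Cartan), hence lies in that Cartan, and it also commutes with a reflection, which inverts $\overline{C}$; so it is a scalar, hence trivial. And $n\mid p-1$ (split) or $n\mid p+1$ with $n$ odd (non-split), as before. This is alternative (2), proving (i). Part (ii) is then immediate: if $4\mid\#\overline{G}$ with $s:=\ord_2(\#\overline{G})\geq 2$, then by (i) $\overline{G}$ cannot be dihedral of shape $D_{d'}$ with $d'$ odd (whose $2$-part has order $2$), so $\overline{G}$ is cyclic; its Sylow $2$-subgroup, cyclic of order $2^{s}$, does not contain $-I$ (otherwise $-I$ acts by $-1$ on every line, against (C)), so by Corollary~\ref{cor:-1id} it embeds into $C_{2^{\ord_2(p-1)}}$, i.e.\ $2^{s}\mid p-1$.

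I expect the main obstacle to be the bookkeeping in the dihedral case --- correctly identifying $\Stab_{\overline{G}}(L)$, treating the split and non-split Cartan subcases uniformly, and pinning down the parity of $n$ --- together with the care needed to verify that $\fA_4,\fS_4,\fA_5$ are genuinely impossible; the remaining steps are routine translations of (A)--(C) into $\GL_2(\Fp)$ via Proposition~\ref{prop:trtr}.
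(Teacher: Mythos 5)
Your proposal is correct, and it follows the same skeleton as the paper's proof: establish $\overline{G}\cap\bZ(\Fp)=\{1\}$, embed $\overline{G}$ into $\PGL_{2}(\Fp)$, invoke Dickson's classification (Proposition \ref{prop:pgl2}), and then analyse the split and non-split Cartan subgroups. The genuine divergence is in how the unwanted cases are excluded. The paper does this in one stroke via Corollary \ref{cor:-1id}: since $-I\notin\overline{G}$, every $2$-subgroup of $\overline{G}$ is cyclic, which simultaneously eliminates $\fA_{4}$, $\fS_{4}$, $\fA_{5}$ (non-cyclic $2$-Sylows) and forces $d'$ odd in the dihedral case. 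You instead rule out the exceptional groups by lifting the faithful $2$-dimensional $\Fp$-representation to characteristic zero (valid because $p\nmid\#\overline{G}$, so irreducible dimensions are preserved, and none of $\fA_{4},\fS_{4},\fA_{5}$ has a faithful $2$-dimensional complex representation), and you get the parity of $d'$ from a separate computation showing $Z(\overline{G})$ must be scalar, hence trivial. Likewise, where the paper deduces $\overline{H}=\{1\}$ (cyclic case) and $\#\overline{H}\leq 2$ (dihedral case) from the normal-core statement $N^{\overline{G}}(\overline{H})=\{1\}$ of Proposition \ref{prop:trtr}, you compute $\Stab_{\overline{G}}(L)$ directly and show it meets the rotation subgroup only in scalars. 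Both routes are sound; the paper's is more economical because Corollary \ref{cor:-1id} is already needed elsewhere (and you still use it for part (ii)), while yours is more self-contained at the level of linear algebra and makes the role of the special line $L$ explicit — it even yields the slightly sharper fact that in the non-split cyclic case $d$ is coprime to $p-1$.
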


\begin{proof}
(i): Take $v\in L\setminus \{0\}$ and $v'\in V\setminus L$, which forms a basis of $V$. We identify $\GL(V)$ and $\GL_{2}(\Fp)$ by the isomorphism induced by the basis $\{v,v'\}$. Then one has $\overline{G}\cap \bZ(\Fp)=\{1\}$ since $V$ is $H'$-extremal. In particular, the composite $\overline{G}\hookrightarrow \GL_{2}(\Fp)\rightarrow \PGL_{2}(\Fp)$ is injective. On the other hand, Corollary \ref{cor:-1id} implies that a $2$-Sylow subgroup of $\overline{G}$ is cyclic. Combining this result with Proposition \ref{prop:pgl2}, we obtain that $\overline{G}$ is isomorphic to a cyclic group or a dihedral group. 

We shall prove the validity of (1) or (2). 

\textbf{Case 1.~$\overline{G}\cong C_{n}$ for some $n\in \Zpn$. }

In this case, we may assume that $\overline{G}$ is contained in $\bT(\Fp)$ or $\bC(\Fp)$, which is a consequence of Proposition \ref{prop:crtn} (i). If $\overline{G}\subset \bT(\Fp)$, then $n$ is a divisor of $p-1$ since the exponent of $\bT(\Fp)$ equals $p-1$. If $\overline{G}\subset \bC(\Fp)$, then $n$ is a divisor of $p+1$, which is a consequence of $\overline{G}\cap \bZ(\Fp)=\{1\}$ and $\bZ(\Fp)\subset \bC(\Fp)$. Finally, the assertion $\overline{H}=\{1\}$ and $n \geq 3$ follow from Proposition \ref{prop:trtr}. 

\textbf{Case 2.~$\overline{G}\cong D_{n}$ for some $n\in \Zpn$. }

In this case, $n$ must be odd since a $2$-Sylow subgroup of $\overline{G}$ is cyclic. Furthermore, we may assume that $\overline{G}$ is contained in $NT(\Fp)$ or $NC(\Fp)$ by Proposition \ref{prop:crtn} (ii). Take the unique subgroup $\overline{N}$ of $\overline{G}$ of order $n$. Then Case 1 implies that $n$ is a divisor of $p-1$ or $p+1$. On the other hand, we have $N^{\overline{G}}(\overline{H})=\{1\}$ by Proposition \ref{prop:trtr}. This implies $\overline{H}\cap \overline{N}=\{1\}$, and hence $\#\overline{H}\leq 2$.  

(ii): By assumption, the order of $\overline{G}$ is a multiple of $4$. Hence $\overline{G}$ is cyclic by (i). Moreover, Corollary \ref{cor:-1id} implies $p\equiv 1\bmod 2^{s}$ since $\overline{G}\cap \bZ(\Fp)$ is trivial. This completes the proof. 
\end{proof}

\begin{cor}\label{cor:ndpi}
Let $p>2$ be an odd prime number, $G'$ a finite group of order coprime to $p$, and $H'$ a subgroup of $G'$. Assume the following: 
\begin{equation*}
\gcd((G':H'),p-1)\leq 2,\quad \gcd((G':H'),p+1)\in 2^{\Znn}. 
\end{equation*}
Then all $\Fp$-representations of $G'$ are not $H'$-extremal. 
\end{cor}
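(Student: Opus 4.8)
The plan is to argue by contradiction. Suppose, contrary to the assertion, that $(V,\rho)$ is an $H'$-extremal $\Fp$-representation of $G'$, and set $\overline{G}:=\rho(G')$ and $\overline{H}:=\rho(H')$. The whole point is that Theorem \ref{thm:zggv} already pins down the pair $(\overline{G},\overline{H})$ up to a very short list, and the hypotheses on $(G':H')$ will turn out to be incompatible with every entry on that list.

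The first step is the elementary remark that $(\overline{G}:\overline{H})$ divides $(G':H')$. Indeed, writing $K:=\ker\rho$, one has $\overline{G}\cong G'/K$ and, under this isomorphism, $\overline{H}$ corresponds to $H'K/K$; hence $(\overline{G}:\overline{H})=(G':H'K)$, which divides $(G':H')$ because $H'\subseteq H'K\subseteq G'$. Consequently every divisor of $(\overline{G}:\overline{H})$ divides $(G':H')$, and is therefore subject to the two hypotheses $\gcd((G':H'),p-1)\leq 2$ and $\gcd((G':H'),p+1)\in 2^{\Znn}$.

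The second step is to run through the two possibilities of Theorem \ref{thm:zggv} (i). In case (1), $\overline{G}\cong C_{d}$ with $\overline{H}=\{1\}$ and $d\geq 3$ a divisor of $p-1$ or of $p+1$; then $d=(\overline{G}:\overline{H})$ divides $(G':H')$. If $d\mid p-1$, then $d$ divides $\gcd((G':H'),p-1)\leq 2$, contradicting $d\geq 3$. If $d\mid p+1$, then $d$ divides $\gcd((G':H'),p+1)$, which is a power of $2$, so $d$ is a power of $2$ with $d\geq 3$, hence $d\geq 4$; then $\#\overline{G}=d$ is a multiple of $4$, so Theorem \ref{thm:zggv} (ii) forces $p\equiv 1\bmod d$, i.e.\ $d\mid p-1$, and therefore $d\mid \gcd(p-1,p+1)=2$, again absurd. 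In case (2), $\overline{G}\cong D_{d'}$ with $\#\overline{H}\leq 2$ and $d'\geq 3$ an odd divisor of $p-1$ or $p+1$; since $(\overline{G}:\overline{H})\in\{d',2d'\}$ is a multiple of $d'$, the integer $d'$ divides $(G':H')$, and the same dichotomy applies: $d'\mid p-1$ yields $d'\leq 2$, while $d'\mid p+1$ forces the odd number $d'\geq 3$ to divide a power of $2$. Either way we reach a contradiction, so no $H'$-extremal $\Fp$-representation of $G'$ exists.

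I expect no serious obstacle here: once Theorem \ref{thm:zggv} is available the argument is pure bookkeeping with divisibilities. The only point needing a little extra care is the sub-case in (1) where $d\mid p+1$ and $d$ is a power of $2$; ruling this out is exactly where one must feed in Theorem \ref{thm:zggv} (ii) (equivalently Corollary \ref{cor:-1id}), which prevents a $2$-power cyclic image from being forced to divide both $p-1$ and $p+1$.
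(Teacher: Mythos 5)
Your proof is correct and follows essentially the same route as the paper: both reduce to the classification of $(\overline{G},\overline{H})$ in Theorem \ref{thm:zggv} (i), eliminate the dihedral case and the case $d\mid p-1$ using the gcd hypotheses, and dispose of the one remaining case ($d$ a power of $2$ dividing $p+1$) via Theorem \ref{thm:zggv} (ii), which forces $4\mid\gcd((G':H'),p-1)$. Your write-up is in fact slightly more explicit than the paper's in recording that $(\overline{G}:\overline{H})$ divides $(G':H')$ and in separating the sub-cases.
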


\begin{proof}
Assume that there is an $H'$-extremal $\Fp$-representation $(V,\rho)$ of $G'$. Put $\overline{G}:=\rho(G')$ and $\overline{H}:=\rho(H')$. By assumption, $(\overline{G}:\overline{H})$ is a power of $2$. Hence one has $(\overline{G}:\overline{H})\in 4\Z$ by Theorem \ref{thm:zggv} (i). On the other hand, Theorem \ref{thm:zggv} (ii) implies $p\equiv 1\bmod 4$ and $\overline{G}$ is cyclic. Consequently we obtain that $4$ is a common divisor of $(G':H')$ and $p-1$, which contradicts to the assumption. Hence the proof is complete. 
\end{proof}

\subsection{Representations of particular groups}

Here we discuss the existence of an $H'$-extremal $\Fp$-representation of $G'$ under certain condition on $(G':H')$. 

\begin{prop}\label{prop:tgdp}
Let $G$ be a transitive group of degree $\ell$, where $\ell$ is a prime number distinct from $p$. Assume that $G'$ has order coprime to $p$. Take a corresponding subgroup $H'$ of $G'$. If there exists an $H'$-extremal $\Fp$-representation of $G'$, then $\ell$ is a divisor of $p^{2}-1$ and $G'$ is isomorphic to $C_{\ell}$ or $D_{\ell}$. 
\end{prop}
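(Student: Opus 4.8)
The plan is to study the image $\overline{G}:=\rho(G')$ and $\overline{H}:=\rho(H')$ of an $H'$-extremal $\Fp$-representation $(V,\rho)$, to identify $\overline{G}$ up to isomorphism via the classification results of the previous subsection, and finally to argue that $\rho$ must be faithful because $G'$ is transitive of prime degree. First I would collect the basic facts: since $G'$ is a transitive group of prime degree $\ell$, Lemma~\ref{lem:trdp} gives $\#G'\in \ell\Z\setminus \ell^{2}\Z$, while $(G':H')=\ell$ and $N^{G'}(H')=\{1\}$ by Proposition~\ref{prop:tggp}~(i). Note $\overline{G}\neq \{1\}$: otherwise $\rho$ is trivial and $V=V^{G'}$, contradicting condition (B) of Definition~\ref{cdbc}. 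Since the quotient map $G'\twoheadrightarrow \overline{G}$ carries $H'$ onto $\overline{H}$, the index $(\overline{G}:\overline{H})$ divides $(G':H')=\ell$; combined with the inequality $(\overline{G}:\overline{H})\geq 3$ from Proposition~\ref{prop:trtr}, this forces $\ell$ to be odd and $(\overline{G}:\overline{H})=\ell$. (In particular no $H'$-extremal representation exists when $\ell=2$, so the assertion is vacuous there.)

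Next I would determine $\overline{G}$ as a matrix group, splitting on $p$. If $p=2$, then $\overline{G}$ is a nontrivial odd-order subgroup of $\GL_{2}(\F_{2})$, hence $\overline{G}\cong C_{3}$ and $\overline{H}=\{1\}$ by Lemma~\ref{lem:ttst}; together with $(\overline{G}:\overline{H})=\ell$ this gives $\ell=3$, $\overline{G}\cong C_{\ell}$, and $3\mid p^{2}-1$. If $p>2$, I would invoke Theorem~\ref{thm:zggv}~(i): either $\overline{G}\cong C_{d}$ with $\overline{H}=\{1\}$ and $d$ dividing $p-1$ or $p+1$, or $\overline{G}\cong D_{d'}$ with $\#\overline{H}\leq 2$ and $d'$ an odd divisor of $p-1$ or $p+1$. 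In the first case $\ell=(\overline{G}:\overline{H})=d$, so $\overline{G}\cong C_{\ell}$ and $\ell\mid p^{2}-1$; in the second case $\ell=2d'/\#\overline{H}$ with $\ell$ odd forces $\#\overline{H}=2$, hence $\ell=d'$, $\overline{G}\cong D_{\ell}$, and again $\ell\mid p^{2}-1$. In all cases $\overline{G}\cong C_{\ell}$ or $D_{\ell}$ and $\ell\mid p^{2}-1$.

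It remains to promote this to a statement about $G'$, i.e.\ to show $N:=\ker\rho=\{1\}$. Since $\ell\mid \#\overline{G}$ and $\ell^{2}\nmid \#G'=\#N\cdot \#\overline{G}$, we get $\ell\nmid \#N$. Because $N$ is normal in the transitive group $G'$, the $N$-orbits on $\{1,\dots,\ell\}$ form a block system and hence all have a common size dividing the prime $\ell$; this size cannot be $\ell$, for then $N$ would be transitive and $\ell$ would divide $\#N$, so each $N$-orbit is a single point and $N\subseteq \bigcap_{g\in G'}gH'g^{-1}=N^{G'}(H')=\{1\}$. Therefore $\rho$ is faithful and $G'\cong \overline{G}\cong C_{\ell}$ or $D_{\ell}$. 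I expect the middle step to be the only real obstacle: identifying $\overline{G}$ as cyclic or dihedral rests on Theorem~\ref{thm:zggv}, which in turn relies on Dickson's classification (Proposition~\ref{prop:pgl2}) and the description of $2$-Sylow subgroups of $\GL_{2}(\Fp)$ (Corollary~\ref{cor:-1id}); the transitivity/faithfulness reduction in the last paragraph is routine.
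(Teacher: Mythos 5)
Your proof is correct and follows essentially the same route as the paper: identify $\overline{G}=\rho(G')$ as $C_{\ell}$ or $D_{\ell}$ with $\ell \mid p^{2}-1$ via Theorem \ref{thm:zggv}, then deduce faithfulness of $\rho$ from $N^{G'}(H')=\{1\}$. The only differences are minor: you treat $p=2$ separately through Lemma \ref{lem:ttst} (the paper cites Theorem \ref{thm:zggv}, which is stated only for $p>2$, so your extra care here is a small improvement), and your block-system argument for $\Ker(\rho)=\{1\}$ replaces the paper's shorter observation that $(\overline{G}:\overline{H})=(G':H')$ forces $H'=\rho^{-1}(\overline{H})\supseteq \Ker(\rho)$, whence $\Ker(\rho)\subseteq N^{G'}(H')$ by normality.
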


\begin{proof}
Let $(V,\rho)$ be an $H'$-extremal $\Fp$-representation of $G'$. Fix an isomorphism $V\cong \F_{p}^{\oplus 2}$, and we identify $\GL(V)$ with $\GL_{2}(\Fp)$. Put $\overline{G}:=\rho(G')$ and $\overline{H}:=\rho(H')$. Since $(\overline{G}:\overline{H})$ divides $(G':H')$, one has $(\overline{G}:\overline{H})\in \{1,\ell\}$. Combining this with Theorem \ref{thm:zggv} (i), we obtain 
\begin{itemize}
\item[(1)] $(\overline{G}:\overline{H})=(G':H')=\ell \mid p^{2}-1$; and
\item[(2)] $\overline{G}$ is isomorphic to $C_{\ell}$ or $D_{\ell}$. 
\end{itemize}
By, (1) one has $H'=\rho^{-1}(\overline{H})$, and hence $\Ker(\rho)\subset H'$. On the other hand, Proposition \ref{prop:tggp} (i) implies $N^{G'}(H')=\{1\}$.  Consequently, we have $\Ker(\rho)=\{1\}$. This implies $G'\cong \overline{G}$, which completes the proof. 
\end{proof}

\begin{prop}\label{prop:idx4}
Let $p>2$ be an odd prime number, $G'$ a transitive subgroup of degree $4$ whose order is coprime to $p$, and $H'$ a corresponding subgroup of $G'$. If there is an $H'$-extremal $\Fp$-representation of $G'$, then we have $p\equiv 1 \bmod 4$ and $G'\cong C_{4}$ (in particular, $H'=\{1\}$). 
\end{prop}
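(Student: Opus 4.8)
The plan is to reduce everything to Theorem~\ref{thm:zggv} and Proposition~\ref{prop:trtr}. Let $(V,\rho)$ be an $H'$-extremal $\Fp$-representation of $G'$, and set $\overline{G}:=\rho(G')$ and $\overline{H}:=\rho(H')$. Since $H'$ is a corresponding subgroup of the transitive group $G'$ of degree $4$, Proposition~\ref{prop:tggp}~(i) gives $(G':H')=4$ and $N^{G'}(H')=\{1\}$. As $\rho$ is surjective onto $\overline{G}$, the index $(\overline{G}:\overline{H})$ equals $[G':H'\Ker(\rho)]$, hence divides $(G':H')=4$; on the other hand Proposition~\ref{prop:trtr} gives $(\overline{G}:\overline{H})\geq 3$. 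Therefore $(\overline{G}:\overline{H})=4=(G':H')$, which forces $H'\Ker(\rho)=H'$, i.e.\ $\Ker(\rho)\subset H'$. Since $\Ker(\rho)$ is normal in $G'$ and contained in $H'$, it lies in $N^{G'}(H')=\{1\}$, so $\rho$ is faithful and $G'\cong \overline{G}$.

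Next I would invoke the structure of $\overline{G}$. By Theorem~\ref{thm:zggv}~(i), either $\overline{G}\cong C_{d}$ with $d\geq 3$ a divisor of $p-1$ or $p+1$, or $\overline{G}\cong D_{d'}$ with $d'\geq 3$ an odd divisor of $p-1$ or $p+1$. Now $G'$ is a transitive subgroup of $\fS_{4}$, so the orbit--stabilizer relation gives $4\mid \#G'$. If $\overline{G}\cong D_{d'}$ with $d'$ odd, then $\#G'=\#\overline{G}=2d'$ is not divisible by $4$, a contradiction; hence $\overline{G}\cong C_{d}$. A cyclic group embeds in $\fS_{4}$ only if its order is the order of some element of $\fS_{4}$, i.e.\ $d\in\{1,2,3,4\}$, and together with $4\mid d$ this forces $d=4$. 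Thus $G'\cong C_{4}$, and since $H'$ has index $4$ in a group of order $4$ we get $H'=\{1\}$.

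Finally, $\#\overline{G}=4$ is a multiple of $4$ with $\ord_{2}\#\overline{G}=2$, so Theorem~\ref{thm:zggv}~(ii) yields $p\equiv 1\bmod 4$, which completes the proof.

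I do not expect a serious obstacle here: the genuine work has already been done in Theorem~\ref{thm:zggv} (resting on Dickson's classification and the structure of $2$-Sylow subgroups of $\GL_{2}(\Fp)$) and in Proposition~\ref{prop:trtr}. The only mildly delicate points are the faithfulness step --- deducing $\Ker(\rho)=\{1\}$ from the index equality together with $N^{G'}(H')=\{1\}$ --- and the elementary bookkeeping that singles out $C_{4}$ among the transitive subgroups of $\fS_{4}$ (namely $C_{4}$, $V_{4}$, $D_{4}$, $\fA_{4}$, $\fS_{4}$) as the only one that can be cyclic or odd-dihedral.
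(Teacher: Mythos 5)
Your proof is correct and follows essentially the same route as the paper's: both reduce to Theorem \ref{thm:zggv} and use $N^{G'}(H')=\{1\}$ to upgrade $\overline{G}\cong C_{4}$ to $G'\cong C_{4}$. If anything, your ordering is slightly cleaner, since you first establish faithfulness of $\rho$ (via Proposition \ref{prop:trtr} and the index computation) and only then invoke Theorem \ref{thm:zggv} (ii), whose hypothesis $4\mid\#\overline{G}$ you have by then verified, whereas the paper's write-up invokes part (ii) before that divisibility is on the table.
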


\begin{proof}
Let $(V,\rho)$ be an $H'$-extremal representation of $G$. Put $\overline{G}:=\rho(G')$ and $\overline{H}:=\rho(H')$. Then $\overline{G}$ must be cyclic Theorem \ref{thm:zggv} (ii). Moreover, Theorem \ref{thm:zggv} (i) implies $\overline{H}=\{1\}$ and $\#\overline{G}=4$. In particular, we have $\overline{G}\cong C_{4}$ and $\Ker(\rho)=H'$. Combining this equality with $N^{G'}(H')=\{1\}$, we obtain that $\Ker(\rho)$ is trivial. Therefore one has $G'\cong \overline{G}\cong C_{4}$ and $H'=\{1\}$ as desired. 
\end{proof}

In the following, we construct $H'$-extremal $\Fp$-representations of $G'$, where $G'$ is a certain transitive group of degree a coprime to $p$, and $H'$ is a corresponding subgroup of $G'$. 

\begin{lem}\label{lem:chsm}
Let $q$ be a power of a prime number $p$, and $n$ a positive integer dividing $q-1$. Pick a primitive $n$-th root of unity $\zeta_{n}$ in $\F_{q}^{\times}$. For each $j \in \Z/n$, put
\begin{equation*}
\chi_{q,n}^{j}\colon C_{n} \rightarrow \F_{q}^{\times}; c_{n} \mapsto \zeta_{n}^{j}. 
\end{equation*}
We regard $\chi_{q,n}^{j}$ as a $1$-dimensional $\F_{q}$-representation of $C_{n}$. 
\begin{enumerate}
\item Let $j_{1},j_{2},j'_{1},j_{2}\in \Z/n$. Then there is an isomorphism $\chi_{q,n}$
\item Every $2$-dimensional $\F_{q}$-representation of $C_{n}$ is isomorphic to $\chi_{q,n}^{j_1}\oplus \chi_{q,n}^{j_2}$ for some $j_1,j_2\in \Z/n$. 
\end{enumerate}
\end{lem}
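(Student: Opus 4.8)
The plan is to reduce everything to the structure of the group algebra $\F_q[C_n]$. Since $n\mid q-1$ we have $\gcd(n,p)=1$, so $\F_q[C_n]\cong\F_q[t]/(t^n-1)$, and because $\F_q^{\times}$ contains the primitive $n$-th root of unity $\zeta_n$ the polynomial $t^n-1=\prod_{j\in\Z/n}(t-\zeta_n^{j})$ splits into pairwise distinct linear factors. By the Chinese remainder theorem this gives an isomorphism of $\F_q$-algebras $\F_q[C_n]\cong\prod_{j\in\Z/n}\F_q$. Consequently $\F_q[C_n]$ is semisimple, every finite-dimensional $\F_q$-representation of $C_n$ is a direct sum of one-dimensional ones, and the one-dimensional representations are exactly the $\chi_{q,n}^{j}$ for $j\in\Z/n$; moreover a nonzero $C_n$-equivariant map $\chi_{q,n}^{j}\to\chi_{q,n}^{j'}$ forces $\zeta_n^{j}=\zeta_n^{j'}$, hence $j=j'$ in $\Z/n$, so these are pairwise non-isomorphic. (Alternatively one may invoke Maschke's theorem, as in the proof of Lemma \ref{lem:msck}, to get semisimplicity.)

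For part (ii) I would apply the decomposition above to a two-dimensional $\F_q$-representation $V$ of $C_n$: it is a direct sum of copies of the $\chi_{q,n}^{j}$, and comparing dimensions forces $V\cong\chi_{q,n}^{j_1}\oplus\chi_{q,n}^{j_2}$ for some $j_1,j_2\in\Z/n$.

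For part (i), which I read as the assertion that $\chi_{q,n}^{j_1}\oplus\chi_{q,n}^{j_2}\cong\chi_{q,n}^{j_1'}\oplus\chi_{q,n}^{j_2'}$ holds if and only if $\{j_1,j_2\}=\{j_1',j_2'\}$ as multisets in $\Z/n$, the ``if'' direction is immediate, and the ``only if'' direction is the uniqueness part of the Jordan--H\"older (equivalently Krull--Schmidt) theorem: an isomorphism of semisimple modules preserves the multiplicity of each isomorphism class of simple summand, and by the first paragraph the simple summands of $\chi_{q,n}^{j_1}\oplus\chi_{q,n}^{j_2}$ are $\chi_{q,n}^{j_1}$ and $\chi_{q,n}^{j_2}$ with the stated multiplicities.

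I do not expect a genuine obstacle here; the statement is standard representation-theoretic bookkeeping. The only things to watch are keeping the exponents $j$ (elements of $\Z/n$) distinct from the scalars $\zeta_n^{j}\in\F_q^{\times}$, and citing the semisimplicity input (via the explicit product decomposition, or via Maschke as used elsewhere in the paper) consistently with the rest of the text.
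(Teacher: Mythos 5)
Your proof is correct and follows essentially the same route as the paper: decompose $\F_q[C_n]\cong\F_q[t]/(t^n-1)\cong\prod_{j\in\Z/n}\F_q$ using $n\mid q-1$, conclude that every irreducible is some $\chi_{q,n}^{j}$, and apply Maschke's theorem to split a two-dimensional representation. Your reading of the (incompletely stated) part (i) as uniqueness of the multiset $\{j_1,j_2\}$, settled by Krull--Schmidt, is a sensible completion that the paper's own proof leaves implicit.
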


\begin{proof}
Since $n \mid q-1$, the polynomial $X^{n}-1$ is decomposed as the product of $X-\zeta_{n}^{i}$ for all $i\in \Z/n$, the homomorphism $\F_{q}[x]\rightarrow \F_{q}[G']$, which sends $x$ to $1\bmod n$, induces an isomorphism
\begin{equation*}
\F_{q}[G']\cong \prod_{i\in \Z/n}\F_{q}[X]/(X-\zeta_{n}^{i}). 
\end{equation*}
Hence, Lemma \ref{lem:frrp} implies that any irreducible representation of $G'$ is isomorphic to $\chi_{q,n}^{i}$ for some $i\in \Z/n$. Combining this result with Maschke's theorem, we obtain the desired assertion. 
\end{proof}

\begin{prop}\label{prop:clrd}
Let $p$ be a prime number which is not a Fermat prime, and $\ell$ an odd prime divisor of $p-1$. For $j_{1},j_{2}\in \Z/\ell$, consider an $\Fp$-representation
\begin{equation*}
U_{p,\ell}^{(j_1,j_2)}:=\chi_{p,\ell}^{j_1}\oplus \chi_{p,\ell}^{j_2}. 
\end{equation*}
\begin{enumerate}
\item The $\Fp$-representation $U_{p,\ell}^{(j_1,j_2)}$ of $C_{\ell}$ is $\{1\}$-extremal if and only if $j_{1}$, $j_{2}$, $j_{1}-j_{2}$ are non-zero. 
\item Suppose that $j_{1}$, $j_{2}$, $j_{1}-j_{2}$ are non-zero. Then a $1$-dimensional subspace $L$ of $V$ is a $\{1\}$-special line if and only if $L\neq \chi_{p,\ell}^{j_{1}}\oplus \{0\}$ and $L\neq \{0\}\oplus \chi_{p,\ell}^{j_{2}}$. 
\end{enumerate}
\end{prop}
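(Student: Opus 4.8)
The plan is to work entirely inside $\GL_2(\Fp)$. Fix the basis $\{e_1,e_2\}$ of $\Fp^{\oplus 2}$ adapted to the decomposition $U_{p,\ell}^{(j_1,j_2)}=\chi_{p,\ell}^{j_1}\oplus \chi_{p,\ell}^{j_2}$, so that $c_\ell$ acts by $\diag(\zeta_\ell^{j_1},\zeta_\ell^{j_2})$. Since we are dealing with the group $G'=C_\ell$ and the subgroup $H'=\{1\}$, condition (C) of Definition \ref{cdbc} for $H'=\{1\}$ becomes the mere existence of \emph{some} $1$-dimensional subspace $L$; but note that $\Stab_{G'}(L)=\Fix_{G'}(L)$ is automatic when $H'=\{1\}$ only if there is a line fixed \emph{pointwise}, i.e. an eigenline on which $c_\ell$ acts trivially --- wait, that is too strong. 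I need to re-read: for $H'=\{1\}$, condition (C) asks for a line $L$ with $\{1\}\subset \Stab_{G'}(L)=\Fix_{G'}(L)$, and the displayed equality $\Stab_{G'}(L)=\Fix_{G'}(L)$ is the real content. So the first step is to determine, for each line $L$, whether $\Stab_{C_\ell}(L)=\Fix_{C_\ell}(L)$.

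**Key steps.**
First I would record that the lines in $\Fp^{\oplus 2}$ fixed (as sets) by $c_\ell$ are exactly the eigenlines of $\diag(\zeta_\ell^{j_1},\zeta_\ell^{j_2})$: if $j_1\neq j_2$ in $\Z/\ell$ these are precisely $L_1:=\Fp e_1$ and $L_2:=\Fp e_2$, and on neither of them does $c_\ell$ act trivially unless the corresponding $j_i=0$; if $j_1=j_2$ then either $c_\ell$ is scalar (every line is stable) or, when $j_1=j_2\neq 0$, every line is stable but $c_\ell$ acts by the nonzero scalar $\zeta_\ell^{j_1}$ on each, so $\Fix$ is never all of $C_\ell$ on any line. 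So for a line $L$: $\Stab_{C_\ell}(L)=C_\ell$ forces $L\in\{L_1,L_2\}$ (when $j_1\neq j_2$) and then $\Fix_{C_\ell}(L)=C_\ell$ iff the relevant exponent vanishes; if $\Stab_{C_\ell}(L)\subsetneq C_\ell$ then, since $C_\ell$ is simple cyclic of prime order, $\Stab_{C_\ell}(L)=\{1\}$ and the equality $\Stab=\Fix$ holds trivially. This dichotomy is the heart of both (i) and (ii). Second, I would assemble (i): condition (A), $\dim=2$, is built in; condition (B), $V^{C_\ell}=\{0\}$, holds iff neither $j_1$ nor $j_2$ is $0$ in $\Z/\ell$; condition (C) holds iff there is \emph{some} line $L$ with $\Stab_{C_\ell}(L)=\Fix_{C_\ell}(L)$ --- and by the dichotomy above, picking any line $L\notin\{L_1,L_2\}$ gives $\Stab_{C_\ell}(L)=\{1\}$ (using $j_1\neq j_2$, which I must show is forced), hence $\Stab=\Fix$. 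The point is that (C) is \emph{vacuously satisfiable} as long as there exists a line with trivial stabilizer, which happens exactly when $c_\ell$ is not scalar, i.e. $j_1\neq j_2$. Collecting: $U_{p,\ell}^{(j_1,j_2)}$ is $\{1\}$-extremal $\iff$ $j_1\neq 0$, $j_2\neq 0$, and $j_1\neq j_2$ (equivalently $j_1-j_2\neq 0$) in $\Z/\ell$. For (ii), under the hypothesis that $j_1,j_2,j_1-j_2$ are all nonzero, a line $L$ is a $\{1\}$-special line iff $\Stab_{C_\ell}(L)=\Fix_{C_\ell}(L)$; the only lines with nontrivial stabilizer are $L_1=\chi_{p,\ell}^{j_1}\oplus\{0\}$ and $L_2=\{0\}\oplus\chi_{p,\ell}^{j_2}$, and on each $c_\ell$ acts by a nonzero scalar (since $j_1,j_2\neq 0$), so $\Fix=\{1\}\neq C_\ell=\Stab$ there, while every other line has $\Stab=\{1\}=\Fix$. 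Hence $L$ is special iff $L\neq L_1$ and $L\neq L_2$, which is exactly the claimed description.

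**Main obstacle.**
The one subtlety I would be careful about is making sure I have the logical direction of condition (C) right in the degenerate cases --- i.e., that when $j_1=j_2\neq 0$ the representation genuinely fails to be $\{1\}$-extremal even though $V^{C_\ell}=\{0\}$. Here $c_\ell$ acts as the scalar $\zeta_\ell^{j_1}$, so \emph{every} line is stable but on no line is the action pointwise-trivial; thus $\Stab_{C_\ell}(L)=C_\ell\neq\{1\}=\Fix_{C_\ell}(L)$ for \emph{all} lines $L$, so (C) fails and the representation is not extremal --- consistent with ``$j_1-j_2\neq 0$'' being required. The other genuinely degenerate case $j_1=j_2=0$ is killed already by (B). Everything else is the clean eigenline bookkeeping above; the reliance on $\ell$ being prime (so $C_\ell$ has no intermediate subgroups) is what makes ``$\Stab\subsetneq C_\ell \Rightarrow \Stab=\{1\}$'' immediate, and the hypothesis that $\ell\mid p-1$ is what guarantees $\zeta_\ell\in\F_p^\times$ so the characters are genuinely defined over $\Fp$. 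No use of the ``not a Fermat prime'' hypothesis is needed for this particular statement --- it is relevant only to the ambient construction --- so I would simply not invoke it.
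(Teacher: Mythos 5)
Your proof is correct and follows essentially the same route as the paper: your eigenline analysis of $\diag(\zeta_{\ell}^{j_1},\zeta_{\ell}^{j_2})$ is exactly the paper's computation of the $C_{\ell}$-action on a line $\langle(1,a)\rangle_{\Fp}$, and you are also right that the ``not a Fermat prime'' hypothesis plays no role beyond guaranteeing that such an $\ell$ exists. If anything your treatment of (ii) is the cleaner one, since the paper's printed argument for (ii) accidentally reuses text from the $C_{4}$ case (Proposition \ref{prop:cfrd}) while your stabilizer-versus-pointwise-stabilizer dichotomy gives the intended statement directly.
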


\begin{proof}
By definition, (B) is equivalent to $j_{1},j_{2}\neq 0$. Moreover, (C) holds for $H'=\{1\}$ and $L=\langle(1,a)\rangle_{\Fp}$ if and only if $j_{1}\neq j_{2}$. These imply the desired assertions. On the other hand, we have
\begin{equation*}
(\chi_{n}^{j_1}\oplus \chi_{n}^{j_2})(d)\cdot (1,a)=(\zeta_{n}^{dj_1},\zeta_{n}^{dj_2})=\zeta_{n}^{j_1}(1,\zeta_{n}^{d(j_2-j_1)}a)
\end{equation*}
for any $d\in \Z/n$. 

(i): It suffices to prove (C) for $H'=\{0\}$ and $L=\langle(1,a)\rangle_{\Fp}$ under the assumption $j_1 \neq j_2$. Take $d\in \Stab_{G'}(L)$, that satisfies $d(j_2-j_1)=0$ in $\Z/\ell$. Hence we have $d=0$. This means the triviality of $\Stab_{G'}(L)$, which implies the condition (C). 

(ii): First, assume $j_1-j_2 \in 2\Z \setminus 4\Z$. We prove that $\chi_{4}^{i}\oplus \chi_{4}^{j}$ does not satisfy (B) or (C). In this case, the element $2\bmod 4$ in $\Z/4$ stabilizes $L$. Moreover, the assumption $j_1-j_2 \in 2\Z \setminus 4\Z$ implies that $(\chi_{4}^{j_1}\oplus \chi_{4}^{j_2})(d)$ is trivial if and only if $j_1,j_2\in 4\Z$. Hence we obtain the desired assertion. On the other hand, the same proof as (i) implies that (C) holds for $H'=\{1\}$ and $L=\langle(1,a)\rangle_{\Fp}$ if $j_{1}-j_{2}\notin 2\Z$. This completes the proof of (ii). 
\end{proof}

\begin{prop}\label{prop:cyir}
Let $p$ and $\ell$ be prime numbers that satisfy $2<\ell \mid p+1$. Put $G':=C_{\ell}$, and fix a primitive $\ell$-th root of unity $\zeta_{\ell}$, which is contained in $\F_{p^{2}}^{\times}\setminus \F_{p}^{\times}$. For each $j\in (\Z/\ell)^{\times}$, let $V_{p,\ell}^{j}$ be the $2$-dimensional $\Fp$-representation of $G'$ defined by the homomorphism
\begin{equation*}
C_{\ell} \rightarrow \GL_{2}(\Fp); c_{\ell} \mapsto 
\begin{pmatrix}
0&-1\\
1&\zeta_{\ell}^{j}+\zeta_{\ell}^{pj}
\end{pmatrix}. 
\end{equation*}
\begin{enumerate}
\item Let $i,j\in (\Z/\ell)^{\times}$. Then every $v\in V_{p,\ell}^{j}\setminus \{0\}$ satisfies $V_{p,\ell}^{j}=\langle v,c_{n}v\rangle_{\Fp}$ and a non-trivial linear relation
\begin{equation*}
c_{\ell}^{2}v=(\zeta_{\ell}^{ij}+\zeta_{\ell}^{-ij})c_{\ell}v-v. 
\end{equation*}
\item The $\Fp$-representation $V_{p,\ell}^{j}$ is irreducible and $\{1\}$-extremal for any $j\in (\Z/\ell)^{\times}$. Moreover, all $1$-dimensional subspaces of $V_{p,\ell}^{j}$ are $\{1\}$-special lines. 
\item Every non-trivial $2$-dimensional $\Fp$-representation of $C_{\ell}$ is isomorphic to $V_{p,\ell}^{j}$ for some $j\in (\Z/\ell)^{\times}$. 
\end{enumerate}
\end{prop}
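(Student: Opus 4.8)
The plan is to reduce all three parts to elementary linear algebra over $\Fp$ for the single matrix $A_{j}:=\begin{pmatrix}0&-1\\ 1&t_{j}\end{pmatrix}$, where $t_{j}:=\zeta_{\ell}^{j}+\zeta_{\ell}^{pj}$. First I would record the basic properties of $A_{j}$. Since $2<\ell\mid p+1$ we have $p\equiv-1\bmod\ell$, so $\zeta_{\ell}^{p}=\zeta_{\ell}^{-1}$; hence $t_{j}=\zeta_{\ell}^{j}+\zeta_{\ell}^{-j}$ is fixed by the Frobenius $x\mapsto x^{p}$ and therefore lies in $\Fp$, so that $\rho$ indeed takes values in $\GL_{2}(\Fp)$. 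The characteristic polynomial of $A_{j}$ is $x^{2}-t_{j}x+1=(x-\zeta_{\ell}^{j})(x-\zeta_{\ell}^{-j})$, so $A_{j}$ has the two \emph{distinct} eigenvalues $\zeta_{\ell}^{\pm j}\in\Fps^{\times}\setminus\Fp^{\times}$; in particular $A_{j}$ has order exactly $\ell$ (so $\rho$ is a well-defined faithful homomorphism), and $A_{j}$ — as well as every power $A_{j}^{i}$ with $i\in(\Z/\ell)^{\times}$ — has no eigenvector in $\Fp^{\oplus2}$. Part (i) is then immediate: for a generator $g=c_{\ell}^{i}$ of $C_{\ell}$ and $v\in V_{p,\ell}^{j}\setminus\{0\}$, linear dependence of $v$ and $\rho(g)v=A_{j}^{i}v$ would make $v$ an eigenvector of $A_{j}^{i}$ over $\Fp$, which is impossible; hence $\{v,gv\}$ is a basis, and the asserted relation is the Cayley–Hamilton identity for $A_{j}^{i}$, whose characteristic polynomial is $x^{2}-(\zeta_{\ell}^{ij}+\zeta_{\ell}^{-ij})x+1$ by the eigenvalue computation.

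For (ii), the absence of an $A_{j}$-stable line over $\Fp$ shows that $V_{p,\ell}^{j}$ has no proper nonzero subrepresentation, i.e.\ it is irreducible. Condition (A) of Definition \ref{cdbc} is clear; for (B), $1$ is not an eigenvalue of $A_{j}$ (as $j\not\equiv0$), so $(V_{p,\ell}^{j})^{G'}=\{0\}$. For (C) with $H'=\{1\}$: given any line $L$, the group $\Stab_{G'}(L)$ is a subgroup of $C_{\ell}$, hence — $\ell$ being prime — equals $\{1\}$ or $C_{\ell}$, and it cannot be $C_{\ell}$ since $L$ would then be $G'$-stable, contradicting irreducibility; therefore $\Stab_{G'}(L)=\{1\}=\Fix_{G'}(L)$. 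Thus every line $L$ is a $\{1\}$-special line and $V_{p,\ell}^{j}$ is $\{1\}$-extremal.

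Finally (iii): by Maschke's theorem (\cite[Chap.~1, Theorem 1]{Serre1977}) every $\Fp$-representation of $C_{\ell}$ is semisimple, and by Lemma \ref{lem:frrp} its irreducible constituents are quotients of $\Fp[C_{\ell}]\cong\Fp[x]/(x^{\ell}-1)$. Since $p$ has order exactly $2$ in $(\Z/\ell)^{\times}$, the cyclotomic polynomial $\Phi_{\ell}$ factors over $\Fp$ into $(\ell-1)/2$ irreducible quadratics, one for each Frobenius orbit $\{j,-j\}$ in $(\Z/\ell)^{\times}$, namely $x^{2}-t_{j}x+1$ (irreducible because $t_{j}^{2}-4=(\zeta_{\ell}^{j}-\zeta_{\ell}^{-j})^{2}$ is a nonsquare in $\Fp$, as $\zeta_{\ell}^{j}$ has multiplicative order $\ell\nmid p-1$ and hence lies outside $\Fp$). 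Thus the simple $\Fp[C_{\ell}]$-modules are the trivial module and the modules $V_{p,\ell}^{j}$ for $j$ running over a set of orbit representatives. There is no nontrivial one-dimensional $\Fp$-representation of $C_{\ell}$, because $\ell\nmid p-1=\#\Fp^{\times}$; so a semisimple $2$-dimensional representation is either a sum of two copies of the trivial module (the trivial representation) or a single $V_{p,\ell}^{j}$. Therefore every nontrivial $2$-dimensional $\Fp$-representation of $C_{\ell}$ is isomorphic to some $V_{p,\ell}^{j}$.

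The step that requires the most care is (iii): one must pin down that the quadratic factors of $\Phi_{\ell}$ over $\Fp$ are precisely the $x^{2}-t_{j}x+1$ and that these are irreducible — equivalently that $\zeta_{\ell}\notin\Fp$, i.e.\ the choice $\zeta_{\ell}\in\Fps^{\times}\setminus\Fp^{\times}$ in the hypothesis is forced — and then match these factors with the Frobenius orbits to obtain the count $(\ell-1)/2$ and the enumeration of the simple modules. The representation-theoretic input (Maschke, Lemma \ref{lem:frrp}, and the structure of $\Fp[x]/(x^{\ell}-1)$) is standard, and parts (i) and (ii) are formal consequences of the eigenvalue description of $A_{j}$.
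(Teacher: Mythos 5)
Your proof is correct and follows essentially the same route as the paper: parts (i) and (ii) by direct linear algebra on the companion matrix $A_{j}$ (the paper simply says ``direct computation'' and ``consequence of (i)''), and part (iii) via the splitting of $\Fp[C_{\ell}]$ into $\Fp[x]/(x-1)$ times $(\ell-1)/2$ quadratic field factors, combined with Maschke's theorem and Lemma \ref{lem:frrp}. One tiny caveat: the discriminant criterion ``$t_{j}^{2}-4=(\zeta_{\ell}^{j}-\zeta_{\ell}^{-j})^{2}$ is a nonsquare'' breaks down when $p=2$ (which the hypotheses allow, e.g.\ $\ell=3$), but the justification you give in the same parenthesis --- that the roots $\zeta_{\ell}^{\pm j}$ lie outside $\Fp$ because $\ell\nmid p-1$ --- is the correct and sufficient reason for irreducibility, so nothing is lost.
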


\begin{proof}
(i): This follows from direct computation. 

(ii): This is a consequence of (i). 

(iii): The homomorphism $\Fp[x] \rightarrow \Fp[G']$, which maps $x$ to $1\bmod \ell$, induces an isomorphism
\begin{equation*}
\Fp[G']\cong \Fp[x]/(x-1)\times \prod_{j\in (\Z/\ell)^{\times}/(j\sim pj)}\Fp[x]/(x^{2}-(\zeta_{\ell}^{j}+\zeta_{\ell}^{pj})x+1). 
\end{equation*}
Here, all factors of the right-hand side are fields. Hence, Lemma \ref{lem:frrp} implies that any irreducible representation of $G'$ is isomorphic to a trivial representation or $V_{p,\ell}^{j}$ for some $j\in (\Z/\ell)^{\times}$. Now let $V$ be a non-trivial $2$-dimensional $\Fp$-representation. Then Maschke's theorem implies that $V$ is irreducible. Hence the assertion holds. 
\end{proof}

For $n\in \Zpn$, we write $D_{n}$ for the dihedral group of order $2n$, that is, 
\begin{equation*}
D_{n}=\langle \sigma_{n},\tau_{n}\mid \sigma_{n}^{n}=\tau_{n}^{2}=1,\tau_{n}\sigma_{n}\tau_{n}^{-1}=\sigma_{n}^{-1}\rangle. 
\end{equation*}

\begin{prop}\label{prop:dlbc}
Let $p$ and $\ell$ be prime numbers that satisfy $p\geq 5$ and $2<\ell \mid p^{2}-1$. For $j\in \{1,\ldots,(\ell-1)/2\}$, let $W_{p,\ell}^{j}$ be the $\Fp$-representation defined by the homomorphism
\begin{equation*}
D_{\ell} \rightarrow \GL_{2}(\Fp); \sigma_{\ell}^{i}\tau_{\ell}^{i'}\mapsto 
\begin{pmatrix}
0&-1\\
1&\zeta_{\ell}^{j}+\zeta_{\ell}^{-j}
\end{pmatrix}^{i}
\begin{pmatrix}
0&1\\
1&0
\end{pmatrix}^{i'}
\end{equation*}
\begin{enumerate}
\item For any $j\in \{1,\ldots,(\ell-1)/2\}$, the $\Fp$-representation $W_{p,\ell}^{j}$ of $D_{\ell}$ is $\langle \tau_{\ell}\rangle$-extremal. Moreover, $\langle (1,1)\rangle$ is the unique $\langle \tau_{\ell}\rangle$-special line in $W_{p,\ell}^{j}$. 
\item All $\langle \tau_{\ell}\rangle$-extremal $\Fp$-representations of $D_{\ell}$ are isomorphic to $W_{p,\ell}^{j}$ for some $j\in \{1,\ldots,(\ell-1)/2\}$. 
\end{enumerate}
\end{prop}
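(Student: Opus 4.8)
The plan is to handle (i) by a direct computation with the two explicit generators, and (ii) by first normalizing $\rho(\sigma_\ell)$ to a companion matrix and then determining $\rho(\tau_\ell)$ up to conjugacy fixing that matrix. Before anything else I would record the following elementary facts: fixing a primitive $\ell$-th root of unity $\zeta_\ell$, since $2<\ell\mid p^{2}-1$ we have $\zeta_\ell\in\Fps$ and $\zeta_\ell^{p}\in\{\zeta_\ell,\zeta_\ell^{-1}\}$, so $\zeta_\ell^{j}+\zeta_\ell^{-j}\in\Fp$ and $\zeta_\ell^{j}\zeta_\ell^{-j}=1$ for all $j$; hence $A_{j}:=\left(\begin{smallmatrix}0&-1\\1&\zeta_\ell^{j}+\zeta_\ell^{-j}\end{smallmatrix}\right)$ has characteristic polynomial $(x-\zeta_\ell^{j})(x-\zeta_\ell^{-j})\in\Fp[x]$, so $A_{j}\in\GL_{2}(\Fp)$ and $A_{j}^{\ell}=1$; and with $B:=\left(\begin{smallmatrix}0&1\\1&0\end{smallmatrix}\right)$ one checks $B^{2}=1$ and $BA_{j}B^{-1}=A_{j}^{-1}$, which makes $W_{p,\ell}^{j}$ a bona fide representation.

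For (i) I would verify the conditions of Definition~\ref{cdbc} directly. Condition (A) is immediate, and (B) holds because $A_{j}$ has eigenvalues $\zeta_\ell^{\pm j}\neq 1$ (as $j\not\equiv 0$), so $(W_{p,\ell}^{j})^{\langle\sigma_\ell\rangle}=0$ and hence $(W_{p,\ell}^{j})^{D_\ell}=0$. For (C), put $L:=\langle(1,1)\rangle=\Ker(B-\mathrm{id})$, so that $\tau_\ell$ fixes $L$ pointwise; then $\sigma_\ell^{i}\tau_\ell^{i'}$ stabilizes $L$ iff $(1,1)$ is an eigenvector of $A_{j}^{i}$, and for $i\not\equiv 0$ the eigenvectors of $A_{j}^{i}$ are the spans of $(1,-\zeta_\ell^{\pm j})$, which differ from $L$ since an $\ell$-th root of unity of odd order is never $-1$; thus $\Stab_{D_\ell}(L)=\langle\tau_\ell\rangle=\Fix_{D_\ell}(L)$. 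This shows $W_{p,\ell}^{j}$ is $\langle\tau_\ell\rangle$-extremal with $L$ a $\langle\tau_\ell\rangle$-special line, and uniqueness is immediate because every $\langle\tau_\ell\rangle$-special line is contained in $\Ker(B-\mathrm{id})=L$, hence equals $L$.

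For (ii) I would start from a $\langle\tau_\ell\rangle$-extremal $(V,\rho)$, write $A:=\rho(\sigma_\ell)$, $T:=\rho(\tau_\ell)$, and first prove that $A$ has no eigenvalue $1$: if $A=\mathrm{id}$ then $\rho$ factors through $\langle\bar\tau_\ell\rangle\cong C_{2}$, and no $2$-dimensional $\Fp$-representation of $C_{2}$ has both a pointwise-fixed line and trivial fixed space; if $A\neq\mathrm{id}$ has eigenvalue $1$ then $L_{0}:=V^{\langle\sigma_\ell\rangle}$ is a line which $T$ stabilizes (as $T\sigma_\ell T^{-1}=\sigma_\ell^{-1}$ preserves $\sigma_\ell$-eigenspaces), and $T|_{L_{0}}=\mathrm{id}$ violates (B) while $T|_{L_{0}}=-\mathrm{id}$ forces $T$ to send the other eigenline of $A$ (eigenvalue $\lambda\neq 1$) into the $\lambda^{-1}$-eigenspace of $\sigma_\ell$, which is $0$ since $\lambda^{-1}\notin\{1,\lambda\}$. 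Hence the eigenvalues of $A$ are $\zeta_\ell^{a},\zeta_\ell^{b}$ with $a,b\not\equiv 0$, and $TAT^{-1}=A^{-1}$ together with $\ell$ odd forces $b\equiv -a$; so $A$ has distinct eigenvalues and $\Fp$-rational characteristic polynomial $x^{2}-(\zeta_\ell^{a}+\zeta_\ell^{-a})x+1$, whence it is $\GL_{2}(\Fp)$-conjugate to the companion matrix $A_{j}$ for the representative $j\in\{1,\dots,(\ell-1)/2\}$ of $\{a,-a\}$. (Alternatively one reaches this normal form by applying Lemma~\ref{lem:chsm}(ii) or Proposition~\ref{prop:cyir}(iii) to $\rho|_{\langle\sigma_\ell\rangle}$.) After a change of basis I may assume $A=A_{j}$.

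The remaining, and hardest, step is to conjugate $T$ to $B$ without moving $A_{j}$. Both $T$ and $B$ are involutions different from $\pm\mathrm{id}$ conjugating $A_{j}$ to $A_{j}^{-1}$ ($T\neq\pm\mathrm{id}$ because $A_{j}$ has order $\ell>2$, so $T$ is not central), so $z_{0}:=TB^{-1}$ lies in $Z:=C_{\GL_{2}(\Fp)}(A_{j})=\Fp[A_{j}]^{\times}$ (here $A_{j}$ has distinct eigenvalues); writing $\bar\gamma:=B\gamma B^{-1}$, i.e. the image of $\gamma$ under the $\Fp$-algebra automorphism of $\Fp[A_{j}]$ fixing $\Fp$ and inverting $A_{j}$, one has $\gamma B\gamma^{-1}=T$ iff $\gamma\bar\gamma^{-1}=z_{0}$. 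When $\ell\mid p+1$, $\Fp[A_{j}]\cong\Fps$ with $\gamma\mapsto\bar\gamma$ the Frobenius, so $\gamma\bar\gamma^{-1}=\gamma^{1-p}$ ranges over $\Ker(\N_{\Fps/\Fp})$; when $\ell\mid p-1$, $\Fp[A_{j}]\cong\Fp\times\Fp$ with $\gamma\mapsto\bar\gamma$ the coordinate swap, so $\gamma\bar\gamma^{-1}$ again ranges over the norm-one subgroup. In either case $z_{0}$ lies in that subgroup, since $\det z_{0}=(\det T)(\det B)=(-1)(-1)=1$, an involution $\neq\pm\mathrm{id}$ having determinant $-1$; hence the required $\gamma\in Z$ exists, and conjugating $\rho$ by it yields $\rho\cong W_{p,\ell}^{j}$. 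I expect this last coset computation to be the main obstacle — it comes down to the fact that $u\mapsto u^{1-p}$ on $\Fps^{\times}$, resp.\ $(x,y)\mapsto(x/y,y/x)$ on $\Fp^{\times}\times\Fp^{\times}$, surjects onto the kernel of the norm — while the rest is routine eigenvalue bookkeeping.
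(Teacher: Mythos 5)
Your argument is correct. For part (i) it is essentially the paper's own proof: a direct check of (A)--(C), identifying $\langle(1,1)\rangle$ and $\langle(1,-1)\rangle$ as the only $\tau_{\ell}$-stable lines and discarding the second because $\tau_{\ell}$ acts there by $-1$. For part (ii) you take a genuinely different route. The paper argues via modular representation theory: it invokes the isomorphism of Grothendieck groups $R_{\Qp}(D_{\ell})\cong R_{\Fp}(D_{\ell})$ and the injection $R_{\Qp}(D_{\ell})\hookrightarrow R_{\C}(D_{\ell})$ to conclude that every $2$-dimensional irreducible $\Fp$-representation of $D_{\ell}$ is some $W_{p,\ell}^{j}$ (leaving implicit the easy point that an extremal representation cannot be a sum of two characters of $D_{\ell}$, all of which kill $\sigma_{\ell}$). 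You instead classify the pair $(\rho(\sigma_{\ell}),\rho(\tau_{\ell}))$ by hand: ruling out the eigenvalue $1$ for $A=\rho(\sigma_{\ell})$, using $TAT^{-1}=A^{-1}$ and the oddness of $\ell$ to force eigenvalues $\zeta_{\ell}^{\pm a}$, normalizing $A$ to the companion matrix $A_{j}$, and then --- the genuinely nontrivial step --- moving the involution $T$ to $B$ inside the coset $ZB$ of the centralizer $Z=\Fp[A_{j}]^{\times}$ via Hilbert 90, the computation $\det(TB^{-1})=1$ identifying $TB^{-1}$ as a norm-one element of the \'etale algebra $\Fp[A_{j}]$ in both the split ($\ell\mid p-1$) and nonsplit ($\ell\mid p+1$) cases. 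Your route is longer but entirely elementary and self-contained (no lifting to characteristic $0$, no appeal to Serre's decomposition-map results), and it produces the isomorphism explicitly rather than only at the level of Grothendieck groups; the paper's route is shorter and generalizes immediately to other groups of order prime to $p$. All the steps you flag as delicate (surjectivity of $u\mapsto u^{1-p}$ onto $\Ker(\N_{\Fps/\Fp})$ and its split analogue, and the identification of the algebra norm on $\Fp[A_{j}]$ with the matrix determinant) are correct.
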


\begin{proof}
(i): Condition (B) follows from the fact that $W_{p,\ell}^{j}\otimes_{\Fp}\F_{p^2}$ has no non-trivial element that is stable under $D_{\ell}$. On the other hand, put $L_{c}:=\langle (1,c)\rangle$ for $c\in \{\pm 1\}$. By definition, $L_{1}$ and $L_{-1}$ are the $1$-dimensional suspace of $W_{p,\ell}^{j}$ which is stable under the action of $\tau_{\ell}$. Moreover, they are not stable under the action of $\sigma_{\ell}$. On the other hand, $\tau_{\ell}$ acts on $L_{c}$ by the $c$-multiple for each $c\in \{\pm 1\}$. Hence, $L_{1}$ satisfies (C), and $L_{1}$ is the unique $\langle \tau_{\ell}\rangle$-special line in $W_{p,\ell}^{j}$. 

(ii): For any field $F$ of characteristic not a divisor of $2\ell$, we denote by $R_{F}(D_{\ell})$ the Grothendieck group of finite-dimensional $F$-representations of $D_{\ell}$. Moreover, for each $j\in (\Z/\ell)^{\times}$, we denote by $W_{F,\ell}^{j}$ the $2$-dimensional $F$-representation of $D_{\ell}$ induced by the homomorphism
\begin{equation*}
D_{\ell}\rightarrow \GL_{2}(F);\,\sigma_{\ell}^{i}\tau_{\ell}^{i'}\mapsto 
\begin{pmatrix}
0&-1\\
1&\zeta_{\ell}^{j}+\zeta_{\ell}^{-j}
\end{pmatrix}^{i}
\begin{pmatrix}
0&1\\
1&0
\end{pmatrix}^{i'}. 
\end{equation*}
Recall that $W_{F,\ell}^{j}$ is absolutely irreducible. Moreover, the abelian group $R_{\C}(D_{\ell})$ is generated by characters and $W_{\C,\ell}^{j}$ for all $j\in \{1,\ldots,(\ell-1)/2\}$. This is a consequence of \cite[Section 5.3, pp.~37--38]{Serre1977} and \cite[Proposition 40, Section 14.1]{Serre1977}. Now fix an injection $\Qp \hookrightarrow \C$. As mentioned in \cite[Section 14.6, p.~122]{Serre1977}, the canonical homomorphism $R_{\Qp}(D_{\ell})\rightarrow R_{\C}(D_{\ell})$ is injective. Hence it is an isomorphism, which implies that $R_{\Qp}(D_{\ell})$ is generated by characters and $W_{\Qp,\ell}^{j}$ for $j\in \{1,\ldots,(\ell-1)/2\}$. Furthermore, by \cite[Proposition 43, Section 15.5]{Serre1977}, the homomoprhism $R_{\Qp}(D_{\ell})\rightarrow R_{\Fp}(D_{\ell})$ is an isomorphism. Consequently, $R_{\Fp}(D_{\ell})$ is generated by characters and $W_{\Fp,\ell}^{j}=W_{p,\ell}^{j}$ for $j\in \{1,\ldots,(\ell-1)/2\}$. This concludes the desired assertion. 
\end{proof}

\begin{prop}\label{prop:cfrd}
Let $p$ be a prime number with $p\equiv 1\bmod 4$. 
\begin{enumerate}
\item The representation $U_{p,4}^{(j_{1},j_{2})}:=\chi_{p,4}^{j_1}\oplus \chi_{p,4}^{j_2}$ of $C_{4}$ is $\{1\}$-extremal if and only if the subset $\{j_{1},j_{2}\}$ of $\Z/4$ coincides with $\{1,2\}$ or $\{-1,2\}$. 
\item If the above equivalent conditions hold, then a non-trivial subspace $L$ of $\chi_{p,4}^{j_{1}}\oplus \chi_{p,4}^{j_{2}}$ is a $\{1\}$-special line if and only if $L\neq \chi_{p,4}^{j_{1}}\oplus \{0\}$ and $L\neq \{0\}\oplus \chi_{p,4}^{j_{2}}$.  
\end{enumerate}
\end{prop}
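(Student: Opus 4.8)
The plan is to follow the strategy of the proof of Proposition \ref{prop:clrd}, verifying conditions (A), (B), (C) of Definition \ref{cdbc} for $G' = C_4$, $H' = \{1\}$ and $V = U_{p,4}^{(j_1,j_2)}$. Since $p \equiv 1 \bmod 4$ we have $4 \mid p-1$, so a primitive fourth root of unity $\zeta_4 \in \Fp^{\times}$ exists and the generator $c_4$ of $C_4$ acts on $V$ diagonally as $\diag(\zeta_4^{j_1},\zeta_4^{j_2})$. Condition (A) is automatic. For (B) one computes $V^{C_4} = \{(x,y)\in \Fp^{\oplus 2}\mid \zeta_4^{j_1}x = x,\ \zeta_4^{j_2}y = y\}$, which vanishes precisely when $\zeta_4^{j_1}\neq 1$ and $\zeta_4^{j_2}\neq 1$, i.e. $j_1\not\equiv 0$ and $j_2\not\equiv 0 \bmod 4$.

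Next I would examine the one-dimensional subspaces of $V$ to decide (C). The coordinate lines $L_1 := \chi_{p,4}^{j_1}\oplus\{0\}$ and $L_2 := \{0\}\oplus\chi_{p,4}^{j_2}$ are $c_4$-stable, so $\Stab_{C_4}(L_i) = C_4$, while $\Fix_{C_4}(L_1)$ is the kernel of $c_4\mapsto\zeta_4^{j_1}$ and $\Fix_{C_4}(L_2)$ the kernel of $c_4\mapsto\zeta_4^{j_2}$; under (B) both kernels are proper, so $\Stab_{C_4}(L_i)\neq\Fix_{C_4}(L_i)$ and neither $L_1$ nor $L_2$ is a $\{1\}$-special line. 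For a line $L_a := \langle(1,a)\rangle_{\Fp}$ with $a\neq 0$, the identity $c_4^k(1,a) = (\zeta_4^{kj_1},\zeta_4^{kj_2}a)$ shows $L_a$ is $c_4^k$-stable iff $k(j_1-j_2)\equiv 0\bmod 4$ and $c_4^k$-fixed iff $kj_1\equiv kj_2\equiv 0\bmod 4$; since $kj_2 = kj_1 - k(j_1-j_2)$, the stabilizer and pointwise stabilizer of $L_a$ in $C_4$ agree exactly when $k(j_1-j_2)\equiv 0\bmod 4$ forces $kj_1\equiv 0\bmod 4$.

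I would then split according to $j_1 - j_2 \bmod 4$: if $j_1-j_2$ is odd, the stabilizer of $L_a$ is trivial and so equals its pointwise stabilizer, making $L_a$ a $\{1\}$-special line; if $j_1-j_2\equiv 2$, then under (B) both $j_1,j_2$ are odd, so $c_4^2 = -I$ stabilizes $L_a$ without fixing it; if $j_1 = j_2$, then $c_4$ stabilizes $L_a$ without fixing it. Combined with the fact that $L_1,L_2$ are never special, this gives: a $\{1\}$-special line exists iff $j_1,j_2\not\equiv 0$ and $j_1-j_2$ is odd $\bmod 4$, i.e. exactly one of $j_1,j_2$ equals $2$ and the other is $1$ or $3\equiv -1$; that is, $\{j_1,j_2\} = \{1,2\}$ or $\{-1,2\}$, proving (i). For (ii), the same case analysis shows that under these conditions the $\{1\}$-special lines are precisely the $L_a$ with $a\neq 0$, and since every one-dimensional subspace of $V$ is either $L_2$ or some $L_a$ with $a\in\Fp$ (with $L_1 = L_0$), this is equivalent to $L\neq\chi_{p,4}^{j_1}\oplus\{0\}$ and $L\neq\{0\}\oplus\chi_{p,4}^{j_2}$. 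The only point requiring care, and the genuinely new feature relative to the prime-modulus situation of Proposition \ref{prop:clrd}, is the case $j_1-j_2\equiv 2\bmod 4$: there $L_a$ is stabilized by the nontrivial element $c_4^2$ without being fixed, so that, unlike when the modulus is prime, nonvanishing of $j_1-j_2$ alone does not yield extremality.
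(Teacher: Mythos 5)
Your argument is correct and takes essentially the same route as the paper's proof: both verify conditions (A), (B), (C) of Definition \ref{cdbc} directly from the diagonal action $c_4\mapsto\diag(\zeta_4^{j_1},\zeta_4^{j_2})$, with the paper compressing your case analysis into the single criterion that $L=\langle(1,a)\rangle_{\Fp}$ is special iff $j_1\neq j_2$ and $2j_1\neq 2j_2$ (equivalently, $j_1-j_2$ odd). Your explicit treatment of the case $j_1-j_2\equiv 2\bmod 4$, where $c_4^2=-I$ stabilizes every line without fixing it, is exactly the point the paper's condition $2j_1\neq 2j_2$ is designed to exclude.
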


\begin{proof}
By definition, (B) is equivalent to $j_{1},j_{2}\neq 0$. Moreover, (C) holds for $H'=\{1\}$ and $L=\langle(1,a)\rangle_{\Fp}$ if and only if $j_{1}\neq j_{2}$ and $2j_{1}\neq 2j_{2}$. Hence the assertions hold. 
\end{proof}

\subsection{Relation with semi-direct product groups}

Here we give a relation between extremal $\Fp$-representations and Theorem \ref{thm:tspt}. 

\begin{lem}\label{lem:ttch}
Let $G'$ be a finite group of coprime to $p$, and $V$ an $\Fp$-representation of $G'$. Then there is an isomorphism
\begin{equation*}
V/I_{G'}(V)\cong V^{G'},
\end{equation*}
where $I_{G'}(V)$ is the subspace of $V$ generated by $gv-v$ for all $g\in G'$ and $v\in V$. 
\end{lem}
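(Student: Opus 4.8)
The plan is to bring in the averaging (Reynolds) operator $e := \frac{1}{\#G'}\sum_{g\in G'} g \in \End_{\Fp}(V)$, which is well defined precisely because $p\nmid \#G'$. From the identities $g e = e = e g$ for all $g\in G'$ one reads off immediately that $e$ is an idempotent $G'$-module endomorphism, that $\Ima(e)=V^{G'}$, and that $e$ restricts to the identity on $V^{G'}$. So the whole statement comes down to identifying $\ker(e)$ with $I_{G'}(V)$.

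For the inclusion $I_{G'}(V)\subseteq \ker(e)$: since $eg=e$, we get $e(gv-v)=e(gv)-e(v)=0$ for every $g\in G'$ and $v\in V$, and such elements span $I_{G'}(V)$. For the reverse inclusion, note that for any $v\in V$ one has $v-e(v)=-\frac{1}{\#G'}\sum_{g\in G'}(gv-v)\in I_{G'}(V)$; hence $e(v)=0$ forces $v=v-e(v)\in I_{G'}(V)$. Thus $\ker(e)=I_{G'}(V)$. The same computation simultaneously shows $V=V^{G'}\oplus I_{G'}(V)$ — the Maschke decomposition attached to the subrepresentation $V^{G'}$ — and I would record in passing that $I_{G'}(V)$ is indeed a $G'$-subspace, via $g'(gv-v)=(g'\,g\,g'^{-1})(g'v)-g'v$.

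Finally, since $e\colon V\to V$ has image $V^{G'}$ and kernel $I_{G'}(V)$, the first isomorphism theorem yields an isomorphism $V/I_{G'}(V)\xrightarrow{\cong} V^{G'}$ sending $v+I_{G'}(V)$ to $e(v)$, with inverse the restriction to $V^{G'}$ of the quotient map $V\twoheadrightarrow V/I_{G'}(V)$; this is an isomorphism of $\Fp$-vector spaces, and automatically of $G'$-modules since both sides carry the trivial $G'$-action.

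I expect no real obstacle here: this is the standard fact that, over a field of characteristic prime to $\#G'$, invariants form a retract of coinvariants, and the only hypothesis used is $p\nmid\#G'$ (so that $e$ exists). The single point worth stating with care is the equality $\ker(e)=I_{G'}(V)$, since a priori the spanning family $\{gv-v\}$ of $I_{G'}(V)$ is larger than the family $\{v-e(v)\}$; the identity $v-e(v)=-\frac{1}{\#G'}\sum_{g}(gv-v)$ is exactly what closes that gap.
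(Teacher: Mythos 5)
Your proof is correct. Every step checks out: the operator $e=\frac{1}{\#G'}\sum_{g\in G'}g$ is well defined since $p\nmid \#G'$, the identities $ge=e=eg$ give $e^{2}=e$ and $\Ima(e)=V^{G'}$, the computation $v-e(v)=-\frac{1}{\#G'}\sum_{g}(gv-v)$ correctly closes the only delicate point (that $\Ker(e)$ is no larger than $I_{G'}(V)$), and the first isomorphism theorem finishes the argument. The route is different from the paper's, though both ultimately rest on the norm map. The paper invokes the exact sequence
\begin{equation*}
0\rightarrow \widehat{H}^{-1}(G',V)\rightarrow H_{0}(G',V)\rightarrow H^{0}(G',V)\rightarrow \widehat{H}^{0}(G',V)\rightarrow 0
\end{equation*}
from \cite[Chap.~VIII, \S 1]{Serre1979}, identifies $H_{0}(G',V)=V/I_{G'}(V)$ and $H^{0}(G',V)=V^{G'}$, and kills the two outer Tate groups because they are annihilated both by $\#G'$ and by $p$. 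Your argument is the same fact unwound by hand: the middle map of that sequence is induced by the (unnormalized) norm $\sum_{g}g$, and your averaging idempotent is its normalization. What the paper's approach buys is brevity and a statement that generalizes immediately to arbitrary $G'$-modules $M$ with $\gcd(\#G',\exp M)=1$; what yours buys is self-containedness, an explicit inverse to the isomorphism, and the stronger byproduct $V=V^{G'}\oplus I_{G'}(V)$, which the cohomological phrasing does not hand you directly. Either proof is acceptable here.
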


\begin{proof}
By \cite[Chap.~VIII, \S 1]{Serre1979}, one has an exact sequence
\begin{equation*}
0\rightarrow \widehat{H}^{-1}(G',V)\rightarrow H_{0}(G',V)\rightarrow H^{0}(G',V)\rightarrow \widehat{H}^{0}(G',V)\rightarrow 0. 
\end{equation*}
By definition, we have $H_{0}(G',V)=V/I_{G'}(V)$ and $H^{0}(G',V)=V^{G'}$. On the other hand, since $\#G'$ is coprime to $p$, the abelian groups $\widehat{H}^{-1}(G',V)$ and $\widehat{H}^{0}(G',V)$ vanish. Therefore the assertion holds. 
\end{proof}

\begin{prop}\label{prop:bcgr}
Let $p$ be a prime number, $G$ a group of order a multiple of $p$degree $n \in p\Z \setminus p^{2}\Z$, and $H$ a corresponding subgroup of $G$. We further assume that a $p$-Sylow subgroup $S_{p}$ of $G$ is normal in $G$. 
Take a subgroup $G'$ of $G$ that admits isomorphisms $G\cong S_{p}\rtimes G'$ and $H\cong (S_{p}\cap H)\rtimes H'$ for some subgroup $H'$ of $G'$. We regard $S_{p}$ as an $\Fp$-representation of $G'$. 
\begin{enumerate}
\item The following are equivalent: 
\begin{itemize}
\item \emph{(a)}, \emph{(b)} and \emph{(c)} in Theorem \ref{thm:tspt} are satisfied; 
\item $S_{p}$ is $H'$-extremal and $S_{p}\cap H$ is an $H'$-special line in $S_{p}$. 
\end{itemize}
\item We further assume that $G$ is a transitive group of degree $n\in p\Z$ and $H$ is a corresponding subgroup of $G$. If \emph{(a)}, \emph{(b)} and \emph{(c)} hold, then $G'$ is a transitive group of degree $n/p$, and $H'$ is a corresponding subgroup of $G'$. 
\end{enumerate}
\end{prop}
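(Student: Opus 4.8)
The plan is to translate each of the conditions (a), (b), (c) of Theorem \ref{thm:tspt} into a statement about the $\Fp[G']$-module $V:=S_p$, using the semidirect decomposition $G=S_p\rtimes G'$; I write $V$ additively and let $g'\cdot v$ denote the conjugation action of $g'\in G'$ on $v\in S_p$. As preliminaries: since $\ord_p(G:H)=1$ and $N^G(H)=\{1\}$ (the latter because $H$ is a corresponding subgroup, by Proposition \ref{prop:tggp} (i)), Lemma \ref{lem:sbsd} (v) gives $S_p\cong(C_p)^m$, so $V$ really is an $\Fp$-vector space, of dimension $m$; Lemma \ref{lem:sbsd} (ii) gives $(S_p:S_p\cap H)=p$, so $L:=S_p\cap H$ has codimension $1$ in $V$; and $\#G'$ is prime to $p$ because $G'\cong G/S_p$. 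Finally $L=S_p\cap H$ is normal in $H$ (as $S_p\triangleleft G$), so $H\subseteq N_G(L)$ and hence $H'=H\cap G'\subseteq\Stab_{G'}(L)$.

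For part (i) I would establish the three translations separately. First, (a) and (A) both simply assert $\dim_{\Fp}V=2$. Second, a commutator computation in $S_p\rtimes G'$ shows $[S_p,G]=[S_p,G']=I_{G'}(V)$ (the subspace generated by the elements $g'\cdot v-v$): since $S_p$ is abelian one has $[t,sg']=[t,g']$ for all $t,s\in S_p$, $g'\in G'$, and $[t,g']$ corresponds additively to $-(g'\cdot t-t)$; then Lemma \ref{lem:ttch} gives $V/I_{G'}(V)\cong V^{G'}$, so $[S_p,G]=S_p$ is equivalent to $V^{G'}=0$, i.e.\ to (B). Third, using that $S_p$ is abelian and normal, for $g=sg'$ one computes $gLg^{-1}=g'\cdot L$ and $glg^{-1}=g'\cdot l$ for $l\in L$, whence $N_G(L)=S_p\rtimes\Stab_{G'}(L)$ and $Z_G(L)=S_p\rtimes\Fix_{G'}(L)$; therefore $N_G(L)=Z_G(L)$ is equivalent to $\Stab_{G'}(L)=\Fix_{G'}(L)$. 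Since $H'\subseteq\Stab_{G'}(L)$ always, this is in turn equivalent to $H'\subseteq\Stab_{G'}(L)=\Fix_{G'}(L)$, which is exactly condition (C) realized by the line $L=S_p\cap H$. Combining the three, (a), (b), (c) hold simultaneously if and only if $V=S_p$ satisfies (A), (B) and (C) with the distinguished line $L=S_p\cap H$ — equivalently, $S_p$ is $H'$-extremal and $S_p\cap H$ is an $H'$-special line (note that under (A) one has $\dim L=\dim V-1=1$, so $L$ is genuinely a line).

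For part (ii), assume (a), (b), (c); by part (i), $V=S_p$ is then an $H'$-extremal $\Fp$-representation of $G'$. By Proposition \ref{prop:tggp} (ii) it suffices to prove $(G':H')=n/p$ and $N^{G'}(H')=\{1\}$. The index is immediate from $(G:H)=(S_p:S_p\cap H)\,(G':H')=p\,(G':H')$. For the normal core, Proposition \ref{prop:trtr} says that $N^{G'}(H')$ acts trivially on $V$, i.e.\ centralizes $S_p$ inside $G$. Since $N^{G'}(H')$ is normal in $G'$ and centralizes $S_p$, for every $g=sg'\in G$ and $n\in N^{G'}(H')$ we get $gng^{-1}=s(g'ng'^{-1})s^{-1}=g'ng'^{-1}\in N^{G'}(H')$, so $N^{G'}(H')$ is normal in $G$. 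As it is contained in $H'\subseteq H$, it lies in $N^G(H)=\{1\}$ by Proposition \ref{prop:tggp} (i). Hence $N^{G'}(H')=\{1\}$, and Proposition \ref{prop:tggp} (ii) then shows $G'$ is a transitive group of degree $n/p$ with $H'$ a corresponding subgroup.

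All the computations are elementary; the points requiring care are the identity $[S_p,G]=I_{G'}(V)$ and the two formulas $N_G(L)=S_p\rtimes\Stab_{G'}(L)$, $Z_G(L)=S_p\rtimes\Fix_{G'}(L)$, together with — in part (ii) — the observation that one must upgrade the immediate fact that $N^{G'}(H')$ centralizes $S_p\cap H$ (a restatement of (C)) to the statement that it centralizes all of $S_p$ before the normality argument applies. That upgrade is precisely what Proposition \ref{prop:trtr} provides, and I expect it to be the one genuinely non-formal input in the argument.
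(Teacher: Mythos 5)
Your proposal is correct and follows essentially the same route as the paper: translate (a), (b), (c) into conditions (A), (B), (C) via $\dim_{\Fp}S_p=2$, the identity $[S_p,G]=I_{G'}(S_p)$ together with Lemma \ref{lem:ttch}, and the formulas $N_G(S_p\cap H)=S_p\rtimes\Stab_{G'}(S_p\cap H)$, $Z_G(S_p\cap H)=S_p\rtimes\Fix_{G'}(S_p\cap H)$; then for (ii) use Proposition \ref{prop:trtr} to see that $N^{G'}(H')$ centralizes all of $S_p$, hence is normal in $G$ and therefore trivial, and conclude with Proposition \ref{prop:tggp}. You merely spell out the "direct computations" that the paper leaves implicit, which is fine.
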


\begin{proof}
(i): Condition (a) is equivalent to $\dim_{\Fp}(S_{p})=2$. Moreover, since $I_{G'}(S_{p})=[S_{p},G]$, Lemma \ref{lem:ttch} implies the equivalence between (b) and (B). On the other hand, the following hold, that can be confirmed by direct computation: 
\begin{equation*}
N_{G}(S_{p}\cap H)=S_{p} \rtimes \Stab_{G'}(S_{p}\cap H),\quad Z_{G}(S_{p}\cap H)=S_{p} \rtimes \Fix_{G'}(S_{p}\cap H). 
\end{equation*}
Therefore, conditions (c) and (C) for $L=S_{p}\cap H$ are equivalent. 

(ii): Assume that (a), (b) and (c) are valid. By Proposition \ref{prop:trtr}, the subgroup $N^{G'}(H')$ of $H$ is normal in $G$. This implies $N^{G'}(H')=\{1\}$ since $N^{G}(H)=\{1\}$. Moreover, we have $(G':H')=(G:H)/p$ by definition. Hence the assertion follows from Proposition \ref{prop:tggp} (ii). 
\end{proof}

\begin{cor}\label{cor:nabc}
Let $p>2$ be an odd prime number, $G$ a transitive group of degree $n \in p\Z \setminus p^{2}\Z$, and $H$ a corresponding subgroup of $G$. We further assume that a $p$-Sylow subgroup $S_{p}$ of $G$ is normal in $G$. Assume the following: 
\begin{equation*}
\gcd(n,p-1)\leq 2,\quad \gcd(n,p+1)\in 2^{\Znn}. 
\end{equation*}
Then $G$ does not satisfy one of (a), (b) and (c) in Theorem \ref{thm:tspt}. 
\end{cor}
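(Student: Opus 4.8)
The plan is to argue by contradiction. Suppose that \emph{all three} of (a), (b), (c) in Theorem \ref{thm:tspt} hold for the triple $(G,H,S_p)$; I will derive a contradiction with the arithmetic hypotheses on $n$ and $p$. The whole argument is a matter of translating (a)--(c) into the language of Section \ref{sect:fnrp} and then quoting Corollary \ref{cor:ndpi}; there is no substantially new content.

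First I would put everything in the semidirect-product picture. Since $S_p$ is a normal $p$-Sylow subgroup, Lemma \ref{lem:sbsd} (i) produces a subgroup $G'$ of $G$ with $G\cong S_p\rtimes G'$, and by Lemma \ref{lem:sbsd} (iii) — after replacing $H$ by a conjugate $sHs^{-1}$ with $s\in S_p$, which changes neither the standing hypotheses nor the conditions (a), (b), (c) (indeed (a), (b) do not involve $H$, and $S_p\cap sHs^{-1}=s(S_p\cap H)s^{-1}$, so $N_G$ and $Z_G$ transform equivariantly) — I may assume $H\cong(S_p\cap H)\rtimes H'$ for some $H'\le G'$. Note $\#G'=(G:S_p)$ is coprime to $p$ because $S_p$ is the full $p$-Sylow subgroup. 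By Lemma \ref{lem:sbsd} (ii) applied to $D=H$, we have $(S_p:S_p\cap H)=p^{\ord_p(G:H)}=p$, since $n=(G:H)\in p\Z\setminus p^2\Z$; hence $(G':H')=(G:H)/p=n/p$.

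Next I would invoke Proposition \ref{prop:bcgr} (i): regarding $S_p$ as an $\Fp$-representation of $G'$, conditions (a), (b), (c) of Theorem \ref{thm:tspt} are equivalent to $S_p$ being an $H'$-extremal $\Fp$-representation of $G'$ with $S_p\cap H$ an $H'$-special line. So the assumption that (a)--(c) all hold says precisely that an $H'$-extremal $\Fp$-representation of $G'$ exists. Now I check the hypotheses of Corollary \ref{cor:ndpi} for the pair $(G',H')$: since $(G':H')=n/p$ divides $n$, we get $\gcd(n/p,\,p-1)\mid\gcd(n,\,p-1)\le 2$ and $\gcd(n/p,\,p+1)\mid\gcd(n,\,p+1)\in 2^{\Znn}$ (a divisor of a power of $2$ is again a power of $2$). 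Therefore Corollary \ref{cor:ndpi} applies and asserts that \emph{no} $\Fp$-representation of $G'$ is $H'$-extremal — contradicting the previous sentence. Hence at least one of (a), (b), (c) must fail, which is the assertion.

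As for the main obstacle: there is essentially none of a conceptual nature, since Proposition \ref{prop:bcgr} and Corollary \ref{cor:ndpi} do all the work. The only point that needs care is the bookkeeping: confirming $\#G'$ is prime to $p$, that $(G':H')=n/p$, and that the two gcd conditions descend from $n$ to the divisor $n/p$. This elementary step — together with the harmlessness of conjugating $H$ by an element of $S_p$ — is the part I would write out in full, even though each verification is immediate.
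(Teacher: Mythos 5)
Your proof is correct and follows essentially the same route as the paper: pass to the semidirect-product presentation $G\cong S_p\rtimes G'$, $H\cong (S_p\cap H)\rtimes H'$ via Lemma \ref{lem:sbsd}, translate (a)--(c) into $H'$-extremality of $S_p$ via Proposition \ref{prop:bcgr} (i), and rule that out with Corollary \ref{cor:ndpi} using $(G':H')=n/p$. The only cosmetic difference is that you use divisibility of the gcd's where the paper records the equality $\gcd(n/p,p\pm 1)=\gcd(n,p\pm 1)$ (which holds since $\gcd(p,p\pm 1)=1$); either suffices.
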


\begin{proof}
Take a subgroup $G'$ of $G$ which admits isomorphisms $G\cong S_{p}\rtimes G'$ and $H\cong (S_{p}\cap H)\rtimes H'$ for some subgroup $H'$ of $G'$. Then we have the following: 
\begin{equation*}
\gcd((G':H'),p-1)=\gcd(n,p-1),\quad \gcd((G':H'),p+1)=\gcd(n,p+1). 
\end{equation*}
We regard $S_{p}$ as an $\Fp$-representation of $G'$. Then Corollary \ref{cor:ndpi} implies that $S_{p}$ is not $H'$-extremal. Hence the assertion follows from Proposition \ref{prop:bcgr} (i). 
\end{proof}

The following will be used in Section \ref{ssec:pfhn}. 

\begin{lem}\label{lem:abcp}
Let $p$ be a prime number, $G$ a transitive group of degree $\in p\Z \setminus p^{2}\Z$, and $H$ a corresponding subgroup of $G$. We further assume that 
\begin{itemize}
\item a $p$-Sylow subgroup $S_{p}$ of $G$ is normal; and
\item $G$, $H$ and $S_{p}$ satisfy \emph{(a)}, \emph{(b)} and \emph{(c)} in Theorem \ref{thm:tspt}. 
\end{itemize}
Consider an exact sequence of finite groups
\begin{equation*}
1\rightarrow N \rightarrow \widetilde{G} \xrightarrow{\pi} G \rightarrow 1,
\end{equation*}
where $N$ has order coprime to $p$, such that the action of $S_{p}$ on $N$ is trivial. Take a subgroup $\widetilde{H}$ of index coprime to $p$ in $\pi^{-1}(H)$, and denote by $\widetilde{S}_{p}$ a $p$-Sylow subgroup of $\pi^{-1}(S_{p})$. 
\begin{enumerate}
\item We have $(\widetilde{G}:\widetilde{H})\in p\Z \setminus p^{2}\Z$, and $\widetilde{S}_{p}$ a unique $p$-Sylow subgroup of $\widetilde{G}$. In particular,  $\widetilde{S}_{p}$ is normal in $\widetilde{G}$. 
\item Conditions \emph{(a)}, \emph{(b)} and \emph{(c)} in Theorem \ref{thm:tspt} are valid for $\widetilde{G},\widetilde{H}$ and $\widetilde{S}_{p}$. 
\end{enumerate}
\end{lem}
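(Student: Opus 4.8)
The plan is to establish (i) first and then obtain (ii) by transporting the hypotheses on $(G,H,S_{p})$ along $\pi$. The key structural fact — which I would extract immediately after proving (i) — is that $\pi$ restricts to an isomorphism $\pi|_{\widetilde{S}_{p}}\colon\widetilde{S}_{p}\xrightarrow{\sim}S_{p}$ which intertwines the conjugation actions of $\widetilde{G}$ on $\widetilde{S}_{p}$ and of $G$ on $S_{p}$; granting this, the three (purely group-theoretic) conditions (a), (b), (c) of Theorem~\ref{thm:tspt} for $(\widetilde{G},\widetilde{H},\widetilde{S}_{p})$ are pulled back more or less verbatim from those for $(G,H,S_{p})$. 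The main obstacle I anticipate is showing that $\widetilde{S}_{p}$ is normal in \emph{all} of $\widetilde{G}$, not merely in $\pi^{-1}(S_{p})$: a $p$-Sylow subgroup of $\pi^{-1}(S_{p})$ need not be normal in $\widetilde{G}$ in general, and it is exactly the hypothesis that $S_{p}$ acts trivially on $N$ that rescues the situation, by promoting the Schur--Zassenhaus decomposition of $\pi^{-1}(S_{p})$ to an internal \emph{direct} product.

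For part (i): since $S_{p}\cong(C_{p})^{2}$ is a normal Sylow subgroup of $G$, the $p$-part of $\#G$ equals $p^{2}$, hence so does that of $\#\widetilde{G}=\#N\cdot\#G$ because $p\nmid\#N$. The subgroup $\pi^{-1}(S_{p})$ is normal in $\widetilde{G}$ of order $\#N\cdot p^{2}$, so a $p$-Sylow subgroup $\widetilde{S}_{p}$ of it has order $p^{2}$, meets $N$ trivially, and satisfies $\widetilde{S}_{p}N=\pi^{-1}(S_{p})$; since moreover $\widetilde{S}_{p}$ centralizes $N$ — this is exactly the hypothesis that $S_{p}$ acts trivially on $N$ — we obtain $\pi^{-1}(S_{p})=N\times\widetilde{S}_{p}$. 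Thus $\widetilde{S}_{p}$ is the set of elements of $p$-power order of $\pi^{-1}(S_{p})$, hence characteristic in $\pi^{-1}(S_{p})$, hence normal in $\widetilde{G}$; being of order $p^{2}$, the full $p$-part of $\#\widetilde{G}$, it is the unique $p$-Sylow subgroup of $\widetilde{G}$. Finally $(\widetilde{G}:\widetilde{H})=(\widetilde{G}:\pi^{-1}(H))\cdot(\pi^{-1}(H):\widetilde{H})=(G:H)\cdot(\pi^{-1}(H):\widetilde{H})$, and since $(G:H)\in p\Z\setminus p^{2}\Z$ while $(\pi^{-1}(H):\widetilde{H})$ is prime to $p$, this index lies in $p\Z\setminus p^{2}\Z$.

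For part (ii): the map $\pi|_{\widetilde{S}_{p}}$ is injective, its kernel $\widetilde{S}_{p}\cap N$ being trivial, and $\#\widetilde{S}_{p}=\#S_{p}$, so it is an isomorphism onto $S_{p}$; normality of $\widetilde{S}_{p}$ in $\widetilde{G}$ makes it intertwine conjugation. Condition (a), $\widetilde{S}_{p}\cong(C_{p})^{2}$, is then immediate. For (b), normality gives $[\widetilde{S}_{p},\widetilde{G}]\subset\widetilde{S}_{p}$, while $\pi([\widetilde{S}_{p},\widetilde{G}])=[S_{p},G]=S_{p}$; since $\pi|_{\widetilde{S}_{p}}$ is injective and carries $[\widetilde{S}_{p},\widetilde{G}]$ onto $S_{p}=\pi(\widetilde{S}_{p})$, we conclude $[\widetilde{S}_{p},\widetilde{G}]=\widetilde{S}_{p}$. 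For (c): by (i) the $p$-part of $\#\widetilde{H}$ equals $p$, and $\widetilde{S}_{p}\cap\widetilde{H}$ — the intersection of the normal $p$-Sylow subgroup $\widetilde{S}_{p}$ of $\widetilde{G}$ with $\widetilde{H}$ — is a $p$-Sylow subgroup of $\widetilde{H}$, so $\#(\widetilde{S}_{p}\cap\widetilde{H})=p=\#(S_{p}\cap H)$, the last equality by Lemma~\ref{lem:sbsd}~(ii) applied to $G,H$ (using $(G:H)\in p\Z\setminus p^{2}\Z$); hence $\pi$ restricts to an isomorphism $\widetilde{S}_{p}\cap\widetilde{H}\xrightarrow{\sim}S_{p}\cap H=:L$. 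A short computation using the injectivity of $\pi|_{\widetilde{S}_{p}}$ and the inclusion $\widetilde{g}(\widetilde{S}_{p}\cap\widetilde{H})\widetilde{g}^{-1}\subset\widetilde{S}_{p}$ (valid for all $\widetilde{g}\in\widetilde{G}$ by normality) then shows $N_{\widetilde{G}}(\widetilde{S}_{p}\cap\widetilde{H})=\pi^{-1}(N_{G}(L))$ and $Z_{\widetilde{G}}(\widetilde{S}_{p}\cap\widetilde{H})=\pi^{-1}(Z_{G}(L))$. Since $N_{G}(L)=Z_{G}(L)$ by hypothesis (c), taking preimages gives $N_{\widetilde{G}}(\widetilde{S}_{p}\cap\widetilde{H})=Z_{\widetilde{G}}(\widetilde{S}_{p}\cap\widetilde{H})$, i.e.\ (c) holds for $(\widetilde{G},\widetilde{H},\widetilde{S}_{p})$. (Note that $\widetilde{G}$ need not be transitive for this: (a), (b), (c) are conditions on the pair $(\widetilde{G},\widetilde{S}_{p})$ and the subgroup $\widetilde{S}_{p}\cap\widetilde{H}$ alone.)
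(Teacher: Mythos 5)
Your argument is correct. Part (i) is essentially the paper's own proof: both use the triviality of the action of $S_{p}$ on $N$ to upgrade $\pi^{-1}(S_{p})=\widetilde{S}_{p}N$ to an internal direct product $N\times\widetilde{S}_{p}$, deduce that $\widetilde{S}_{p}$ is characteristic in the normal subgroup $\pi^{-1}(S_{p})$ and hence normal (and unique as a Sylow subgroup) in $\widetilde{G}$, and compute the index by multiplicativity. For part (ii) you take a genuinely different route. The paper first fixes Schur--Zassenhaus complements $G\cong S_{p}\rtimes G'$, $H\cong(S_{p}\cap H)\rtimes H'$, lifts them to $\widetilde{G}\cong S_{p}\rtimes\pi^{-1}(G')$, and then invokes Proposition \ref{prop:bcgr} to rephrase (a), (b), (c) as the statement that $S_{p}$ is an $\widetilde{H}'$-extremal $\F_{p}$-representation of $\pi^{-1}(G')$ with $S_{p}\cap H$ a special line, checking the three representation-theoretic conditions (the last one using that $N$ centralizes $\widetilde{S}_{p}$). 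You instead work directly with the group-theoretic conditions, observing that $\pi$ restricts to a $\widetilde{G}$-equivariant isomorphism $\widetilde{S}_{p}\xrightarrow{\sim}S_{p}$ (and $\widetilde{S}_{p}\cap\widetilde{H}\xrightarrow{\sim}S_{p}\cap H$, via the Sylow count and Lemma \ref{lem:sbsd}~(ii)), so that (a) transports trivially, (b) follows from $\pi([\widetilde{S}_{p},\widetilde{G}])=[S_{p},G]$ together with normality and injectivity of $\pi|_{\widetilde{S}_{p}}$, and (c) follows from the identities $N_{\widetilde{G}}(\widetilde{S}_{p}\cap\widetilde{H})=\pi^{-1}(N_{G}(S_{p}\cap H))$ and $Z_{\widetilde{G}}(\widetilde{S}_{p}\cap\widetilde{H})=\pi^{-1}(Z_{G}(S_{p}\cap H))$. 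Your version is more elementary and self-contained: it avoids both the choice of complements and the detour through extremal representations, and it isolates exactly where the hypothesis on the action of $S_{p}$ on $N$ enters (only in establishing normality of $\widetilde{S}_{p}$ in part (i)); the paper's version has the advantage of staying inside the representation-theoretic framework used throughout Section \ref{sect:fnrp}. Your closing remark that (a), (b), (c) depend only on $(\widetilde{G},\widetilde{S}_{p},\widetilde{S}_{p}\cap\widetilde{H})$ and not on transitivity is correct and worth keeping.
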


\begin{proof}
(i): The assertion $(\widetilde{G}:\widetilde{H})\in p\Z \setminus p^{2}\Z$ follows from $(G:H)\in p\Z\setminus p^{2}\Z$ and $\#N\in \Z\setminus p\Z$. On the other hand, since $\pi^{-1}(S_{p})$ acts trivially on $N$, we have $\pi^{-1}(S_{p})=\widetilde{S}_{p}\times N$. In particular, $\widetilde{S}_{p}$ is a characteristic subgroup of $\pi^{-1}(S_{p})$. This implies the normality of $\widetilde{S}_{p}$ in $\widetilde{G}$. 

(ii): Take a subgroup $G'$ of $G$ that admits isomorphisms $G\cong S_{p}\rtimes G'$ and $H\cong (S_{p}\cap H)\rtimes H'$ for some subgroup $H'$ of $G'$. Put $\widetilde{G}':=\pi^{-1}(G')$ and $\widetilde{H}'_{0}:=\pi^{-1}(H')$. Then we have $\widetilde{G}\cong S_{p}\rtimes \widetilde{G}'$ and $\pi^{-1}(H)\cong (S_{p}\cap H)\rtimes \widetilde{H}'_{0}$. Furthermore, $S_{p}\cap H$ is contained in $\widetilde{H}$ since $(\pi^{-1}(H):\widetilde{H})\notin p\Z$. Therefore, we have $\widetilde{H}=(S_{p}\cap H)\rtimes \widetilde{H}$ for some subgroup $\widetilde{H}$ of $\widetilde{H}_{0}$. 

By Proposition \ref{prop:bcgr} (i), it suffices to confirm that $S_{p}$ is $\widetilde{H}'$-extremal and $S_{p}\cap H$ is an $\widetilde{H}'$-special line in $S_{p}\cap H$. The equality $\dim_{\Fp}(S_{p})=2$ is clear. The condition $S_{p}^{\widetilde{G}'}=\{0\}$ is a consequence of the equality $S_{p}^{G'}=\{0\}$. Finally, condition $\widetilde{H}'\subset \Stab_{\widetilde{G}'}(S_{p}\cap H)=\Fix_{\widetilde{G}'}(S_{p}\cap H)$ follows from the fact that $N$ acts on $S_{p}$ trivially. 
\end{proof}

Let $G'$ be a finite group whose order is coprime to a prime number $p$, and $H'$ a subgroup of $G'$. In the following, for some specific pairs $(G',H')$, we discuss an equivalent condition on two $H'$-extremal $\Fp$-representations $V_{1}$ and $V_{2}$ of $G'$ for the existence of an isomorphism of groups $V_{1}\rtimes G'\cong V_{2}\rtimes G'$. 

\begin{prop}\label{prop:sdpb}
Let $p$ and $\ell$ be prime numbers satisfying $2<\ell \mid p-1$. Take two elements $\bj=(j_{1},j_{2})$ and $\bj'=(j'_{1},j'_{2})$ of $(\Z/\ell)^{2}$ where $j_{1},j_{2},j'_{1},j'_{2},j_{1}-j_{2},j'_{1}-j'_{2}$ are non-zero. Consider $\{1\}$-extremal $\Fp$-representations $U_{p,\ell}^{\bj}$ and $U_{p,\ell}^{\bj'}$ of $C_{\ell}$ in Proposition \ref{prop:clrd}, and let $G_{\bj}:=U_{p,\ell}^{\bj}\rtimes C_{\ell}$ and $G_{\bj'}:=U_{p,\ell}^{\bj'}\rtimes C_{\ell}$ be the semi-direct products associated to $U_{p,\ell}^{\bj}$ and $U_{p,\ell}^{\bj'}$ respectively. Then the following are equivalent: 
\begin{enumerate}
\item there is an isomorphism of groups $G_{\bj}\cong G_{\bj'}$; 
\item $\{j_{1},j_{2}\}=\{mj'_{1},mj'_{2}\}$ for some $m\in (\Z/\ell)^{\times}$. 
\end{enumerate}
\end{prop}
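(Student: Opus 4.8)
## Proof proposal for Proposition~\ref{prop:sdpb}

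\textbf{Overall strategy.} The implication (ii)$\Rightarrow$(i) is the easy direction: an isomorphism $m \colon \Z/\ell \to \Z/\ell$ of the acting group, together with a suitable linear isomorphism $U_{p,\ell}^{\bj'} \to U_{p,\ell}^{\bj}$ intertwining the two $C_\ell$-actions (after twisting by $m$), assembles into a group isomorphism of the semi-direct products. Concretely, if $\{j_1,j_2\} = \{mj_1', mj_2'\}$, then precomposing the action $\varphi_{\bj'}$ with the automorphism $c_\ell \mapsto c_\ell^{m}$ of $C_\ell$ produces a representation isomorphic to $U_{p,\ell}^{\bj}$ (by Lemma~\ref{lem:chsm}, since the multiset of characters is $\{mj_1', mj_2'\} = \{j_1,j_2\}$), and this is exactly what is needed to match the two semi-direct product structures. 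So the plan is: first dispose of (ii)$\Rightarrow$(i) by this explicit construction; then concentrate on (i)$\Rightarrow$(ii).

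\textbf{The hard direction (i)$\Rightarrow$(ii).} Suppose $\Theta \colon G_{\bj} \xrightarrow{\sim} G_{\bj'}$ is a group isomorphism. The key point is that $U_{p,\ell}^{\bj}$ is the \emph{unique} $p$-Sylow subgroup of $G_{\bj}$ (it is normal of order $p^2$, index $\ell$ prime to $p$; apply Lemma~\ref{lem:sbsd} or just Sylow), and likewise for $G_{\bj'}$. Hence $\Theta$ restricts to an isomorphism $U_{p,\ell}^{\bj} \xrightarrow{\sim} U_{p,\ell}^{\bj'}$ of the normal $p$-Sylow subgroups, and induces an isomorphism on the quotients $C_\ell \xrightarrow{\sim} C_\ell$, necessarily of the form $c_\ell \mapsto c_\ell^{m}$ for some $m \in (\Z/\ell)^\times$. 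Writing $\Theta(v,1) = (\psi(v),1)$ for the restriction $\psi$ to the $p$-Sylow part and tracking the conjugation relation in the semi-direct product, one gets that $\psi$ intertwines the $C_\ell$-action $\varphi_{\bj}$ on $U_{p,\ell}^{\bj}$ with the action $\varphi_{\bj'} \circ (\text{$m$-th power})$ on $U_{p,\ell}^{\bj'}$; that is, $U_{p,\ell}^{\bj} \cong (U_{p,\ell}^{\bj'})^{(m)}$ as $C_\ell$-representations, where the superscript denotes precomposition with $c_\ell \mapsto c_\ell^m$. By Lemma~\ref{lem:chsm}, two $2$-dimensional $\F_p$-representations of $C_\ell$ are isomorphic iff they have the same multiset of characters; the character multiset of $(U_{p,\ell}^{\bj'})^{(m)}$ is $\{mj_1', mj_2'\}$ and that of $U_{p,\ell}^{\bj}$ is $\{j_1, j_2\}$. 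Hence $\{j_1,j_2\} = \{mj_1', mj_2'\}$, which is (ii).

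\textbf{A subtlety to address.} One must be slightly careful that the isomorphism $\Theta$ need not send the chosen complement $C_\ell \hookrightarrow G_{\bj}$ to the chosen complement $C_\ell \hookrightarrow G_{\bj'}$; a priori it sends it to \emph{some} complement. But by the Schur--Zassenhaus theorem all complements to $U_{p,\ell}^{\bj'}$ in $G_{\bj'}$ are conjugate (by an element of the $p$-Sylow subgroup, which acts on the representation by a translation-type automorphism that does not change the isomorphism class of the action), so after composing $\Theta$ with an inner automorphism of $G_{\bj'}$ we may assume it is ``block-triangular'' in the above sense, and the argument goes through. I expect this conjugation bookkeeping — making precise that conjugating the complement does not alter the induced $C_\ell$-representation up to isomorphism — to be the main (though still routine) obstacle; everything else is a direct consequence of the uniqueness of the $p$-Sylow subgroup together with Lemma~\ref{lem:chsm}. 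I would also note in passing that the non-vanishing hypotheses on $j_1,j_2,j_1-j_2$ (and the primed versions) are used only to guarantee, via Proposition~\ref{prop:clrd}, that the representations are genuinely $\{1\}$-extremal, and play no further role in the isomorphism classification itself.
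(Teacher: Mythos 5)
Your proposal is correct and follows essentially the same route as the paper: the easy direction is the same explicit twist $c_\ell\mapsto c_\ell^m$, and the hard direction likewise restricts the isomorphism to the unique normal $p$-Sylow subgroup, uses that conjugation on the abelian normal subgroup only depends on the image in the quotient $C_\ell$ (the paper writes $\psi(1\rtimes c_\ell)=v_0\rtimes c_\ell^m$ and computes directly, which disposes of your ``complement'' subtlety without invoking Schur--Zassenhaus), and then compares the eigenvalue/character data of the two $C_\ell$-actions. The only cosmetic difference is that the paper concludes via the two $C_\ell$-stable lines rather than via the multiset-of-characters formulation of Lemma~\ref{lem:chsm}; both are equivalent here.
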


\begin{proof}
(i) $\Rightarrow$ (ii): Assume that there is an isomorphism $\psi \colon G_{\bj}\xrightarrow{\cong} G_{\bj'}$. Then $\varphi$ induces an isomorphism of $\Fp$-vector spaces
\begin{equation*}
\psi_{p}\colon U_{p,\ell}^{\bj}\xrightarrow{\cong}U_{p,\ell}^{\bj'}. 
\end{equation*}
For each $i\in \{1,2\}$, take $v_{i}\in \chi_{p,\ell}^{j_{i}}\setminus \{0\}$. Since $\ell$ divides $p-1$, $\zeta_{\ell}^{i}$ is contained in $\F_{\ell}^{\times}\cong (\Z/\ell)^{\times}$. Hence we have
\begin{equation*}
\psi(c_{\ell}v_{i}\rtimes 1)=\psi(\zeta_{\ell}^{j_{i}}v_{i}\rtimes 1)=\zeta_{\ell}^{j_{i}}\psi(v_{i}\rtimes 1)=\psi_{p}(\zeta_{\ell}^{j_{i}}(\psi_{p}(v_{i})\rtimes 1)). 
\end{equation*}
In particular, we obtain $\psi_{p}(c_{\ell}v_{i})=\zeta_{\ell}^{j_{i}}\psi_{p}(v_{i})$. Now, pick $v_{0}\in U_{p,\ell}^{\bj}$ and $m\in (\Z/\ell)^{\times}$ so that 
\begin{equation*}
\psi(1\rtimes c_{\ell})=(v_{0}\rtimes c_{\ell}^{m}). 
\end{equation*}
On the other hand, one has
\begin{equation}\label{eq:clpp}
\psi(c_{\ell}v_{i}\rtimes 1)=\psi(\Ad(1,c_{\ell})(v_{i}\rtimes 1))=\Ad(v_{0}\rtimes c_{\ell}^{m})(\psi_{p}(v_{i})\rtimes 1)=(c_{\ell}^{m}\psi_{p}(v_{i})\rtimes 1). 
\end{equation}
Therefore, one has an equality $c_{\ell}^{m}\psi_{p}(v_{i})=\zeta_{\ell}^{j_{i}}\psi_{p}(v_{i})$, which is equivalent to $c_{\ell}\psi_{p}(v_{i})=\zeta_{\ell}^{m^{-1}j_{i}}\psi_{p}(v_{i})$. However, there are exactly two $C_{\ell}$-invariant $1$-dimensional subspaces $U_{p,\ell}^{\bj'}$, and hence we obtain $\{m^{-1}j_{1},m^{-1}j_{2}\}=\{j'_{1},j'_{2}\}$. This is equivalent to $\{j_{1},j_{2}\}=\{mj'_{1},mj'_{2}\}$ as desired. 

(ii) $\Rightarrow$ (i): We may assume $j_{1}=mj'_{1}$ and $j_{2}=mj'_{2}$. We regard $\chi_{p,\ell}^{j_{i}}=\chi_{p,\ell}^{j'_{i}}=\Fp$ as abelian groups for each $i\in \{1,2\}$. Consider the map
\begin{equation*}
\psi \colon G_{\bj}\rightarrow G_{\bj'};\,(v,v')\rtimes c\mapsto (v,v')\rtimes c^{m}. 
\end{equation*}
Then it is a surjective homomorphism of groups. This implies that $\psi$ is an isomorphism since $G_{\bj}$ and $G_{\bj'}$ have the same order. 
\end{proof}

\begin{prop}\label{prop:sdpc}
Let $p$ and $\ell$ be prime numbers satisfying $2<\ell \mid p+1$. Take $j_{1},j_{2}\in (\Z/\ell)^{\times}$, and consider $\{1\}$-extremal $\Fp$-representations $V_{p,\ell}^{j_{1}}$ and $V_{p,\ell}^{j_{2}}$ of $C_{\ell}$ in Proposition \ref{prop:cyir}. For each $i\in \{1,2\}$, we denote by $G_{j_{i}}:=V_{p,\ell}^{j_{i}}\rtimes C_{\ell}$ the semi-direct product associated to $V_{p,\ell}^{j_{i}}$. Then there is an isomorphism of groups $G_{j_1}\cong G_{j_2}$. 
\end{prop}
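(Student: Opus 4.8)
The plan is to write down the isomorphism explicitly, reducing everything to two elementary observations: multiplication by a fixed unit $m\in(\Z/\ell)^{\times}$ is an automorphism of $C_{\ell}$, and precomposing the action in a semidirect product $N\rtimes C_{\ell}$ with an automorphism of $C_{\ell}$ leaves the isomorphism class of the group unchanged. Concretely, set $m\in(\Z/\ell)^{\times}$ with $mj_{1}\equiv j_{2}\pmod{\ell}$, and write $M_{i}=\left(\begin{smallmatrix}0&-1\\1&\zeta_{\ell}^{j_{i}}+\zeta_{\ell}^{-j_{i}}\end{smallmatrix}\right)$, so $V_{p,\ell}^{j_{i}}$ is the representation $c_{\ell}\mapsto M_{i}$ (using $\zeta_{\ell}^{pj_{i}}=\zeta_{\ell}^{-j_{i}}$, since $\ell\mid p+1$).

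First I would record that $M_{1}$ has eigenvalues $\zeta_{\ell}^{j_{1}},\zeta_{\ell}^{-j_{1}}$ in $\Fps$, which are distinct because $\ell$ is odd and $j_{1}\not\equiv 0$; hence $M_{1}^{m}$ has determinant $1$ and trace $\zeta_{\ell}^{j_{2}}+\zeta_{\ell}^{-j_{2}}\in\Fp$, i.e.\ characteristic polynomial $x^{2}-(\zeta_{\ell}^{j_{2}}+\zeta_{\ell}^{-j_{2}})x+1=(x-\zeta_{\ell}^{j_{2}})(x-\zeta_{\ell}^{-j_{2}})\in\Fp[x]$, which is separable. A $2\times 2$ matrix over a field with separable characteristic polynomial is conjugate to its companion matrix, and the companion matrix of this polynomial is exactly $M_{2}$; so there is $P\in\GL_{2}(\Fp)$ with $PM_{2}P^{-1}=M_{1}^{m}$. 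I would then define $\psi\colon G_{j_{2}}=V_{p,\ell}^{j_{2}}\rtimes C_{\ell}\to G_{j_{1}}=V_{p,\ell}^{j_{1}}\rtimes C_{\ell}$ by $\psi(v\rtimes c_{\ell}^{i})=(Pv)\rtimes c_{\ell}^{mi}$. Verifying that $\psi$ is a homomorphism comes down to the identity $PM_{2}^{i}=M_{1}^{mi}P$ for all $i$, which is immediate from $PM_{2}P^{-1}=M_{1}^{m}$; and $\psi$ is bijective since $P$ is invertible and $m$ is a unit mod $\ell$ (an inverse is $w\rtimes c_{\ell}^{i'}\mapsto (P^{-1}w)\rtimes c_{\ell}^{m^{-1}i'}$). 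Thus $\psi$ is the desired isomorphism.

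Conceptually this says that $V_{p,\ell}^{j_{2}}$ is isomorphic, as an $\Fp$-representation, to $V_{p,\ell}^{j_{1}}$ precomposed with the automorphism $c_{\ell}\mapsto c_{\ell}^{m}$ of $C_{\ell}$ --- reflecting the transitivity of $(\Z/\ell)^{\times}$ acting on itself, in contrast with the split case $\ell\mid p-1$ of Proposition~\ref{prop:sdpb}, where the representation is a sum of two characters and the unordered pair $\{j_{1},j_{2}\}$ up to scaling is a genuine invariant. I do not expect a real obstacle: the only steps needing a word of justification are the conjugacy of a $2\times 2$ matrix to its companion matrix when the characteristic polynomial is separable (elementary, via rational canonical form), and the routine check that $\psi$ respects the group law, carried out with the chosen conjugating matrix $P$ as above.
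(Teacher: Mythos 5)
Your proof is correct and follows essentially the same route as the paper: both realize $V_{p,\ell}^{j_{2}}$ as the pullback of $V_{p,\ell}^{j_{1}}$ along the automorphism $c_{\ell}\mapsto c_{\ell}^{m}$ of $C_{\ell}$ and transport the semidirect product structure accordingly. The only difference is that you make the conjugating matrix $P$ explicit via the rational canonical form (companion matrix) argument, whereas the paper's proof suppresses it by identifying the two representations ``as abelian groups'' and appealing to Proposition \ref{prop:cyir}; your version is, if anything, the more carefully justified of the two.
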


\begin{proof}
We may assume $j_{2}=1$. We regard $V_{p,\ell}^{j_{1}}=V_{p,\ell}^{1}=\F_{p}^{\oplus 2}$ as abelian groups. Consider a map
\begin{equation*}
\psi \colon G_{j_{1}}\rightarrow G_{1};\,v\rtimes c\mapsto v\rtimes c^{j_{1}}. 
\end{equation*}
Then it is a homomorphism of groups by Proposition \ref{prop:cyir} (iii). Moreover, $\psi$ is surjective, and hence it is an isomorphism since $G_{j_1}$ and $G_{j_2}$ have the same order. 
\end{proof}

\begin{prop}\label{prop:sdpd}
Let $p$ and $\ell$ be prime numbers satisfying $2<\ell \mid p^{2}-1$. Take $j_{1},j_{2}\in (\Z/\ell)^{\times}$, and consider $\langle \tau_{\ell}\rangle$-extremal $\Fp$-representation $W_{p,\ell}^{j_{1}}$ and $W_{p,j}^{j_{2}}$ of $D_{\ell}$ in Proposition \ref{prop:dlbc}. We denote by $G_{j_{i}}:=W_{p,\ell}^{j_{i}}\rtimes D_{\ell}$ the semi-direct product associated to $W_{p,\ell}^{j_{i}}$ for each $i\in \{1,2\}$. Then there is an isomorphism of groups $G_{j_{1}}\cong G_{j_{2}}$. 
\end{prop}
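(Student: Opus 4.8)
The plan is to build the isomorphism as a ``twist'': keep the normal subgroup essentially fixed but precompose the quotient with an automorphism of $D_\ell$. Write $\rho_i\colon D_\ell\to\GL_2(\Fp)$ for the homomorphism defining $W_{p,\ell}^{j_i}$, so that $G_{j_i}=\F_p^{\oplus 2}\rtimes_{\rho_i}D_\ell$ with product $(v\rtimes g)(v'\rtimes g')=(v+\rho_i(g)v')\rtimes gg'$. If we can produce $\alpha\in\Aut(D_\ell)$ and $\phi\in\GL_2(\Fp)$ with
\begin{equation*}
\phi\circ\rho_1(g)=\rho_2(\alpha(g))\circ\phi\qquad\text{for all }g\in D_\ell,
\end{equation*}
then $\Psi\colon G_{j_1}\to G_{j_2}$, $\Psi(v\rtimes g):=\phi(v)\rtimes\alpha(g)$, is a bijection, and the displayed relation makes the routine check that $\Psi$ is a group homomorphism go through; hence $G_{j_1}\cong G_{j_2}$. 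Thus everything reduces to showing that $\rho_1$ and $\rho_2\circ\alpha$ are isomorphic $\Fp[D_\ell]$-modules for a suitable $\alpha$.

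Choose $n\in\Z$ with $\gcd(n,\ell)=1$ and $nj_2\equiv j_1\pmod\ell$, and let $\alpha\in\Aut(D_\ell)$ be determined by $\sigma_\ell\mapsto\sigma_\ell^n$, $\tau_\ell\mapsto\tau_\ell$; this is well defined and bijective since $\tau_\ell\sigma_\ell^n\tau_\ell^{-1}=(\sigma_\ell^n)^{-1}$ and $n$ is invertible modulo $\ell$. The representation $\rho_2\circ\alpha$ is $2$-dimensional, has no nonzero $D_\ell$-fixed vector (because $\alpha$ is surjective and $W_{p,\ell}^{j_2}$ has none), and sends $\tau_\ell$ to $\rho_2(\tau_\ell)=\left(\begin{smallmatrix}0&1\\1&0\end{smallmatrix}\right)$, which fixes the line $\langle(1,1)\rangle$ pointwise; hence $\rho_2\circ\alpha$ is $\langle\tau_\ell\rangle$-extremal in the sense of Definition \ref{cdbc}. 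By Proposition \ref{prop:dlbc}(ii) it is isomorphic to $W_{p,\ell}^{j}$ for some $j$, and a comparison of characters forces $W_{p,\ell}^{j}\cong W_{p,\ell}^{j_1}$: on $\sigma_\ell^i$ the character of $\rho_2\circ\alpha$ is $\tr(\rho_2(\sigma_\ell)^{ni})=\zeta_\ell^{nij_2}+\zeta_\ell^{-nij_2}=\zeta_\ell^{ij_1}+\zeta_\ell^{-ij_1}$, which is the value of the character of $\rho_1=W_{p,\ell}^{j_1}$, while on each reflection $\sigma_\ell^{i}\tau_\ell$ both characters vanish (such an element has image of order dividing $2$ and determinant $-1$, hence eigenvalues $\{1,-1\}$ and trace $0$). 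Since the proof of Proposition \ref{prop:dlbc}(ii) identifies $R_{\Fp}(D_\ell)$ with $R_{\C}(D_\ell)$, an $\Fp$-representation of $D_\ell$ is determined up to isomorphism by its character, so $\rho_2\circ\alpha\cong\rho_1$; any intertwining isomorphism provides the desired $\phi$.

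Assembling the two steps gives $\Psi$ and finishes the proof. The one real point is the middle step — identifying $\rho_1$ with $\rho_2\circ\alpha$ as $\Fp[D_\ell]$-modules, i.e.\ finding a matrix conjugating $\rho_2(\sigma_\ell)^n$ to $\rho_1(\sigma_\ell)$ while commuting with the reflection matrix $\left(\begin{smallmatrix}0&1\\1&0\end{smallmatrix}\right)$. One could argue this by hand (both $\rho_2(\sigma_\ell)^n$ and $\rho_1(\sigma_\ell)$ have characteristic polynomial $X^2-(\zeta_\ell^{j_1}+\zeta_\ell^{-j_1})X+1$ over $\Fp$, hence are conjugate, after which one corrects within the centralizer of the reflection), but invoking the representation-ring statement already established in Proposition \ref{prop:dlbc} is shorter. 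For $p\le 3$ there is nothing to prove (for $p=2$ one has $\ell=3$, so $j_1=j_2$, and $p=3$ is excluded since $3^2-1$ has no odd prime factor $>2$), so one may assume $p\ge 5$, matching the hypothesis of Proposition \ref{prop:dlbc}.
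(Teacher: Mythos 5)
Your proof is correct and follows essentially the same route as the paper: twist the $D_{\ell}$-coordinate by the automorphism $\sigma_{\ell}\mapsto\sigma_{\ell}^{n}$ (with $nj_{2}\equiv j_{1}\bmod \ell$), keep $\tau_{\ell}$ fixed, and identify the two representations. You are in fact more careful than the paper's argument, which takes $\phi=\mathrm{id}$ and tacitly treats $\rho_{2}(\sigma_{\ell})^{j_{1}}$ as equal to the companion-type matrix $\rho_{1}(\sigma_{\ell})$ when they are in general only conjugate; your explicit intertwiner $\phi$, produced via the character/representation-ring comparison from Proposition \ref{prop:dlbc}, supplies exactly that missing conjugation.
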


\begin{proof}
We may assume $j_{2}=1$. We regard $W_{p,\ell}^{j_{1}}=W_{p,\ell}^{1}=\F_{p}^{\oplus 2}$ as abelian groups. Consider the map
\begin{equation*}
\psi \colon G_{j_{1}}\rightarrow G_{j_{2}};\,w\rtimes \sigma_{\ell}^{i}\tau_{\ell}^{i'}\mapsto w\rtimes \sigma_{\ell}^{ij_{1}}\tau_{\ell}^{i'}. 
\end{equation*}
Then the same argument as Proposition \ref{prop:sdpb}, (ii) $\Rightarrow$ (i) and Proposition \ref{prop:sdpc} imply that $\psi$ is a group homomorphism. Moreover, $\varphi$ is surjective, and hence it is an isomorphism since $G_{j_{1}}$ and $G_{j_{2}}$ have the same order. 
\end{proof}

\begin{cor}\label{cor:plcd}
Let $G$ be a transitive group of $p\ell$, where $p$ and $\ell$ are two distinct prime numbers, $H$ a corresponding subgroup of $G$. We further assume that a $p$-Sylow subgroup $S_{p}$ of $G$ is normal. 
Then \emph{(a)}, \emph{(b)} and \emph{(c)} in Theorem \ref{thm:tspt} are satisfied if and only if one of the following is valid: 
\begin{itemize}
\item[($\alpha$)] $2<\ell \mid p-1$, $G\cong (C_{p})^{2}\rtimes_{\varphi_{0,m}} C_{\ell}$ and $H\cong \Delta(C_{p}) \rtimes_{\varphi_{0,m}} \{1\}$, where $\varphi_{0,m}$ is defined by
\begin{equation*}
C_{\ell} \rightarrow \GL_{2}(\Fp); c_{\ell} \mapsto \diag(\zeta_{\ell},\zeta_{\ell}^{m}),
\end{equation*}
$\zeta_{\ell}\in \F_{p}^{\times}$ is a primitive $\ell$-th root of unity, and $m\in \{2,\ldots,\ell-1\}$ satisfies $mm'\not\equiv 1 \bmod \ell$ for any $m'\in \{1,\ldots,m-1\}$;
\item[($\beta$)] $2<\ell \mid p+1$, $G\cong (C_{p})^{2}\rtimes_{\varphi_{1}} C_{\ell}$ and $H\cong \Delta(C_{p}) \rtimes_{\varphi_{1}} \{1\}$, where $\varphi_{1}$ is defined by
\begin{equation*}
C_{\ell} \rightarrow \GL_{2}(\Fp); c_{\ell} \mapsto 
\begin{pmatrix}
0&-1\\
1&\zeta_{\ell}+\zeta_{\ell}^{-1}
\end{pmatrix}
\end{equation*}
and $\zeta_{\ell}\in \F_{p^{2}}^{\times}\setminus \F_{p}^{\times}$ is a primitive $\ell$-th root of unity. 
\item[($\gamma$)] $p\geq 5$, $2<\ell \mid p^{2}-1$, $G \cong (C_{p})^{2}\rtimes_{\varphi_{2}} D_{\ell}$ and $H\cong \Delta(C_{p}) \rtimes_{\varphi_{2}} \langle \tau_{\ell} \rangle$, where $\varphi_{2}$ is defined by
\begin{equation*}
D_{\ell} \rightarrow \GL_{2}(\Fp); \sigma_{\ell}^{i}\tau_{\ell}^{i'}\mapsto 
\begin{pmatrix}
0&-1\\
1&\zeta_{\ell}+\zeta_{\ell}^{p}
\end{pmatrix}^{i}
\begin{pmatrix}
0&1\\
1&0
\end{pmatrix}^{i'}
\end{equation*}
and $\zeta_{\ell} \in \F_{\ell^{2}}^{\times}$ is a primitive $\ell$-th root of unity. 
\end{itemize}
\end{cor}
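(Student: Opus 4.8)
The plan is to translate conditions (a), (b), (c) of Theorem~\ref{thm:tspt} into a statement about $2$-dimensional $\Fp$-representations and then quote the classification of Section~\ref{sect:fnrp}. First I would use Lemma~\ref{lem:sbsd}(i),(iii) to fix a complement $G'$ with $G=S_p\rtimes G'$ and $H=(S_p\cap H)\rtimes H'$ for some $H'<G'$; since $(G:H)=p\ell\in p\Z\setminus p^2\Z$, Lemma~\ref{lem:sbsd}(ii) gives $(S_p:S_p\cap H)=p$, hence $(G':H')=\ell$, and Lemma~\ref{lem:sbsd}(v) gives $S_p\cong(C_p)^m$. Regarding $S_p$ as an $\Fp$-representation of $G'$ by conjugation, Proposition~\ref{prop:bcgr}(i) shows that (a), (b), (c) hold if and only if $S_p$ is $H'$-extremal with $S_p\cap H$ an $H'$-special line; and when they hold Proposition~\ref{prop:bcgr}(ii) shows $G'$ is a transitive group of degree $\ell$ with $H'$ a corresponding subgroup, so $N^{G'}(H')=\{1\}$.

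For the forward implication, assume (a)--(c). Then $S_p\cong(C_p)^2$, and Proposition~\ref{prop:tgdp} gives $\ell\mid p^2-1$ and $G'\cong C_\ell$ or $G'\cong D_\ell$. If $G'\cong C_\ell$ then $H'=\{1\}$, since $(G':H')=\ell=\#G'$. When $p=2$, Lemma~\ref{lem:ttst} forces the image of $C_\ell$ in $\GL_2(\F_2)$ to be the order-$3$ subgroup, so $\ell=3$ and we land in case ($\beta$). When $p>2$ and $\ell\mid p-1$, Lemma~\ref{lem:chsm} gives $S_p\cong\chi_{p,\ell}^{j_1}\oplus\chi_{p,\ell}^{j_2}$, and Proposition~\ref{prop:clrd} shows extremality plus the existence of a special line forces $j_1,j_2,j_1-j_2\neq 0$; replacing the chosen generator of $C_\ell$ normalises $(j_1,j_2)$ to $(1,m)$, and Proposition~\ref{prop:sdpb} shows the resulting group depends only on $m$ up to $m\mapsto m^{-1}$, which is exactly the range of $m$ in ($\alpha$). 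When $p>2$ and $\ell\mid p+1$ (so $\ell\nmid p-1$, as $\ell>2$), $S_p$ is irreducible, hence $\cong V_{p,\ell}^j$ by Proposition~\ref{prop:cyir}(iii), and Proposition~\ref{prop:sdpc} identifies $G$ with the group in ($\beta$). If instead $G'\cong D_\ell$, then $\#G'=2\ell$ is coprime to $p$, so $p\neq 2$; and for $p=3$ no odd prime $\ell>2$ divides $p^2-1=8$, so $p\geq 5$. Here $\#H'=2$, so $H'$ is a reflection subgroup, conjugate to $\langle\tau_\ell\rangle$, and Proposition~\ref{prop:dlbc}(ii) gives $S_p\cong W_{p,\ell}^j$; Proposition~\ref{prop:sdpd} identifies $G$ with the group in ($\gamma$). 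In all three cases the special lines are explicitly known (Propositions~\ref{prop:clrd}(ii), \ref{prop:cyir}(ii), \ref{prop:dlbc}(i)), and since $S_p\cap H$ is one of them, composing with a $G'$-equivariant automorphism of $S_p$ (a diagonal change of basis for ($\alpha$); an element of $\Aut_{C_\ell}(S_p)\cong\F_{p^2}^{\times}$ for ($\beta$); nothing for ($\gamma$)) arranges $S_p\cap H=\Delta(C_p)$, giving the asserted isomorphism of pairs.

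For the converse, suppose ($\alpha$), ($\beta$) or ($\gamma$) holds. Then (a) is immediate; the displayed representations have no nonzero $G'$-invariant vector (Propositions~\ref{prop:clrd}, \ref{prop:cyir}(ii), \ref{prop:dlbc}(i)), so $[S_p,G]=I_{G'}(S_p)=S_p$ by Lemma~\ref{lem:ttch}, giving (b); and $\Delta(C_p)=\langle(1,1)\rangle$ is a special line in each case, so Proposition~\ref{prop:bcgr}(i) yields (c). One also checks that such a $(G,H)$ really is a transitive group of degree $p\ell$ with $H$ corresponding: $(G:H)=p\ell$ is clear, and $N^G(H)=\{1\}$ because $N^G(H)\cap S_p$ is a $G'$-submodule of the non-$G'$-stable line $\Delta(C_p)$, hence $\{1\}$, while the image of $N^G(H)$ in $G'$ is a normal subgroup of $C_\ell$ (resp.\ $D_\ell$) contained in $\{1\}$ (resp.\ in a reflection subgroup), hence $\{1\}$; then Proposition~\ref{prop:tggp}(ii) applies. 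Finally ($\alpha$), ($\beta$), ($\gamma$) are pairwise disjoint: the first two cannot both occur since $\ell>2$ divides at most one of $p\pm 1$, and ($\gamma$) has $\#G=2p^2\ell$ whereas ($\alpha$), ($\beta$) have $\#G=p^2\ell$.

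The main obstacle is the bookkeeping at the end of the forward direction: passing from the bare data ``$S_p$ is $H'$-extremal with special line $S_p\cap H$'' to a genuine isomorphism of pairs $(G,H)$ with one of the listed normal forms. This requires choosing the normalising automorphism of $S_p$ so that it simultaneously preserves the chosen presentation of the representation and carries $S_p\cap H$ onto $\Delta(C_p)$, and --- in case ($\alpha$) --- pinning down the precise range of $m$ so that the list has no repetitions; both steps rely on the semidirect-product isomorphism criteria of Propositions~\ref{prop:sdpb}--\ref{prop:sdpd} and on their compatibility with the distinguished line.
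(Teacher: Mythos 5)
Your proposal is correct and follows essentially the same route as the paper: decompose $G=S_{p}\rtimes G'$ via the Schur--Zassenhaus complement, translate (a)--(c) into $H'$-extremality of $S_{p}$ with distinguished line $S_{p}\cap H$ via Proposition \ref{prop:bcgr}, invoke Proposition \ref{prop:tgdp} to restrict $G'$ to $C_{\ell}$ or $D_{\ell}$, and then normalise the representation and the line case by case using Propositions \ref{prop:clrd}, \ref{prop:cyir}, \ref{prop:dlbc} and the semidirect-product isomorphism criteria of Propositions \ref{prop:sdpb}--\ref{prop:sdpd}. The only cosmetic differences are that you single out $p=2$ explicitly (the paper absorbs it into case ($\beta'$)) and spell out the easy converse and disjointness checks that the paper leaves to the reader.
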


\begin{proof}
It is not difficult to confirm that the groups in ($\alpha$), ($\beta$), ($\gamma$) satisfy (a), (b) and (c). In the following, suppose that (a), (b) and (c) are satisfied. Take a subgroup $G'$ of $G$ that admits isomorphisms $G=S_{p}\rtimes G'$ and $H=(S_{p}\cap H)\rtimes H'$ for some subgroup $H'$ of $G'$. Then Proposition \ref{prop:bcgr} (i) implies that (a), (b) and (c) are satisfied if and only if the $\Fp$-representation $S_{p}$ of $G'$ is $H'$-extremal. Moreover, by Proposition \ref{prop:tgdp}, if $S_{p}$ is $H'$-extremal, then one of the following is valid: 
\begin{itemize}
\item[($\alpha'$)] $\ell \mid p-1$ and $G'\cong C_{\ell}$; 
\item[($\beta'$)] $\ell \mid p+1$ and $G'\cong C_{\ell}$; or
\item[($\gamma'$)] $p\geq 5$, $\ell \mid p^{2}-1$ and $G'\cong D_{\ell}$. 
\end{itemize}
Therefore, we may assume the validity of ($\alpha'$), ($\beta'$) or ($\gamma'$). 

\textbf{Case 1.~}($\alpha'$) \textbf{holds. }

In this case, we have $H'=\{1\}$. By Lemma \ref{lem:chsm} (ii), there is an isomorphism of $\Fp[G']$-modules $S_{p}\cong U_{p,\ell}^{(j_{1},j_{2})}$ for some $j_{1},j_{2}\in \Z/\ell$. Here we use the assumption $\ell \mid p-1$. Moreover, by Proposition \ref{prop:clrd} (i), the $\Fp$-representation $S_{p}$ of $G'$ is $\{1\}$-extremal if and only if $j_{1},j_{2},j_{1}-j_{2}\in (\Z/\ell)^{\times}$. Now, take $m_{1},m_{2}\in \{1,\ldots,\ell-1\}$ that satisfy $m_{1}\equiv j_{1}^{-1}j_{2}$ and $m_{2}\equiv j_{1}j_{2}^{-1}$ modulo $\ell$. Then we have $m_{1}m_{2}\equiv 1 \bmod \ell$. Put $m:=\min\{m_{1},m_{2}\}$, which satisfies $mm'\not\equiv 1\bmod \ell$ for any $m'\in \{1,\ldots,m-1\}$. Then one has an isomorphism of groups
\begin{equation*}
\psi_{0} \colon G\xrightarrow{\cong} U_{p,\ell}^{(1,m)}\rtimes G'. 
\end{equation*}
Note that the right-hand side coincides with $(C_{p})^{2}\rtimes_{\varphi_{0,m}}C_{\ell}$ in ($\alpha$). On the other hand, by assumption, the groups $H$ and $\psi_{0}(H)$ have order $p$. Hence we have $\psi_{0}(H)=\langle (1,a) \rangle \rtimes \{1\}$ for some $a\in \F_{p}^{\times}$ by Proposition \ref{prop:clrd} (ii). Now, consider the automorphism of $\Fp[C_{\ell}]$-modules
\begin{equation*}
U_{p,\ell}^{(1,m)}\xrightarrow{\cong}U_{p,\ell}^{(1,m)};(x_{1},x_{2})\mapsto (x_{1},a^{-1}x_{2}). 
\end{equation*}
This map and the identity map on $C_{\ell}$ induce an automorphism $\psi_{1}$ on $U_{p,\ell}^{(1,m)}\rtimes G'$. Moreover, the composite $\psi_{1} \circ \psi_{0}$ maps $S_{p}\cap H$ onto $\langle (1,1)\rangle \rtimes \{1\}$. Therefore, we obtain the validity of the condition ($\alpha$). 

\textbf{Case 2.~}($\beta'$) \textbf{holds. }

In this case, we have $H'=\{1\}$. Since $S_{p}$ is non-trivial as an $\Fp$-representation of $G'$, Proposition \ref{prop:cyir} (iii) implies that there is an isomorphism $S_{p}\cong U_{p,\ell}^{j}$ for some $j\in (\Z/\ell)^{\times}$. Combining this with Proposition \ref{prop:sdpc}, we obtain an isomorphism of groups
\begin{equation*}
\psi_{0} \colon G\xrightarrow{\cong} U_{p,\ell}^{1}\rtimes G'. 
\end{equation*}
Note that the right-hand side coincides with $(C_{p})^{2}\rtimes_{\varphi_{1}}C_{\ell}$ in ($\beta$). On the other hand, by assumption, the groups $H$ and $\psi_{0}(H)$ have order $p$. Hence Proposition \ref{prop:cyir} (ii) implies an equality $\psi_{0}(H)=L \rtimes \{1\}$ for some $\{1\}$-special line $L$ in $U_{p,\ell}^{1}$. Now, take a generator $v$ of $L$. Then the map 
\begin{equation*}
U_{p,\ell}^{1}\xrightarrow{\cong}U_{p,\ell}^{1};v\mapsto (1,1)
\end{equation*}
is an automorphism of $\Fp[C_{\ell}]$-modules, which follows from Proposition \ref{prop:cyir} (i). This map and the identity map on $C_{\ell}$ induce an automorphism $\psi_{1}$ on $U_{p,\ell}^{1}\rtimes G'$. Moreover, the composite $\psi_{1} \circ \psi_{0}$ maps $S_{p}\cap H$ onto $\langle (1,1)\rangle \rtimes \{1\}$. Therefore, we obtain the validity of the condition ($\beta$). 

\textbf{Case 3.~}($\gamma'$) \textbf{holds. }

We may assume $H'=\langle \tau_{\ell}\rangle$. Since $S_{p}$ is non-trivial as an $\Fp$-representation of $G'$, Proposition \ref{prop:dlbc} (ii) gives the existence of an isomorphism $S_{p}\cong W_{p,\ell}^{j}$ for some $j\in \{1,\ldots,(\ell-1)/2\}$. Combining this with Proposition \ref{prop:sdpd}, we obtain an isomorphism of groups
\begin{equation*}
\psi_{0} \colon G\xrightarrow{\cong} W_{p,\ell}^{1}\rtimes G'. 
\end{equation*}
Note that the right-hand side coincides with $(C_{p})^{2}\rtimes_{\varphi_{2}}D_{\ell}$ in ($\gamma$). On the other hand, by assumption, the groups $H$ and $\psi_{0}(H)$ have order $2p$. Hence Proposition \ref{prop:dlbc} (i) implies an equality $\psi_{0}(H)=\langle (1,1)\rangle \rtimes H'$. Therefore, we obtain the validity of the condition ($\gamma$). 
\end{proof}

\begin{prop}\label{prop:sdcf}
Let $p$ be a prime number satisfying $p\equiv 1\bmod 4$. Take two elements $\bj=(j_{1},j_{2})$ and $\bj'=(j'_{1},j'_{2})$ of $(\Z/4)^{2}$ where $\{j_{1},j_{2}\},\{j'_{1},j'_{2}\}\in \{\{1,2\},\{-1,2\}\}$. Consider $\{1\}$-extremal $\Fp$-representations $U_{p,4}^{\bj}$ and $U_{p,4}^{\bj'}$ of $C_{4}$ in Proposition \ref{prop:cfrd}, and let $G_{\bj}:=U_{p,4}^{\bj}\rtimes C_{4}$ and $G_{\bj'}:=U_{p,4}^{\bj'}\rtimes C_{4}$ the semi-direct products associated to $U_{p,4}^{\bj}$ and $U_{p,4}^{\bj'}$ respectively. Then there is an isomorphism of groups $G_{\bj}\cong G_{\bj'}$. 
\end{prop}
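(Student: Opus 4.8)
The plan is to mimic the strategy of the proof of Proposition \ref{prop:sdpb}, but using the smaller group $C_4$ in place of $C_\ell$ and exploiting the fact that the set of admissible index pairs $\{\{1,2\},\{-1,2\}\}$ is permuted by multiplication by the unit $-1 \in (\Z/4)^\times$. First I would reduce to showing that whenever $\{j_1,j_2\}$ and $\{j_1',j_2'\}$ both lie in $\{\{1,2\},\{-1,2\}\}$, there is some $m \in (\Z/4)^\times$ with $\{j_1,j_2\} = \{m j_1', m j_2'\}$. Indeed $(\Z/4)^\times = \{1,-1\}$, and multiplication by $-1$ sends $\{1,2\}$ to $\{-1,2\}$ (since $-2 = 2$ in $\Z/4$) and fixes the underlying multiset structure otherwise; so either $\bj$ and $\bj'$ already have the same underlying set, in which case $m=1$ works, or they differ, in which case $m=-1$ works. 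This is the analogue of condition (ii) in Proposition \ref{prop:sdpb}.

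Next I would carry out the construction of the isomorphism exactly as in the implication (ii) $\Rightarrow$ (i) of Proposition \ref{prop:sdpb}. Write $m \in (\Z/4)^\times$ for the unit produced above, and after relabeling the two characters assume $j_1 = m j_1'$ and $j_2 = m j_2'$. Regard $\chi_{p,4}^{j_i} = \chi_{p,4}^{j_i'} = \Fp$ as abelian groups, so that $U_{p,4}^{\bj}$ and $U_{p,4}^{\bj'}$ coincide as abelian groups with $\F_p^{\oplus 2}$. Define
\begin{equation*}
\psi \colon G_{\bj} \rightarrow G_{\bj'};\, (v,v')\rtimes c \mapsto (v,v')\rtimes c^{m}.
\end{equation*}
Here I would check, by direct computation using that $c_4$ acts on $\chi_{p,4}^{j_i}$ by $\zeta_4^{j_i}$ (on the source) and by $\zeta_4^{j_i'}$ (on the target), that $\psi$ respects the semidirect-product multiplication: the point is that conjugation by $c^m$ on the target acts on $\chi_{p,4}^{j_i'}$ by $\zeta_4^{m j_i'} = \zeta_4^{j_i}$, which matches the action of $c$ on $\chi_{p,4}^{j_i}$ in the source. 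Hence $\psi$ is a group homomorphism; it is visibly surjective (both the $\F_p^{\oplus 2}$-part and, since $m$ is a unit, the $C_4$-part are hit), and since $\#G_{\bj} = p^2\cdot 4 = \#G_{\bj'}$, it is an isomorphism.

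The only mild subtlety — and the step I expect to require the most care — is the bookkeeping in the case $m = -1$: one must confirm that multiplication by $-1$ really does carry the character data $(1,2)$ to $(-1,2)$ at the level of the $\GL_2(\Fp)$-valued homomorphisms $\varphi$, i.e. that $\diag(\zeta_4, \zeta_4^2)$ becomes $\diag(\zeta_4^{-1}, \zeta_4^{-2}) = \diag(\zeta_4^{-1}, \zeta_4^{2})$ after the substitution $c \mapsto c^{-1}$, using $\zeta_4^{-2} = \zeta_4^2 = -1$ in $\Fp$ (valid since $p \equiv 1 \bmod 4$, so $\zeta_4 \in \Fp^\times$ exists). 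Once this identity is in hand the argument is entirely parallel to Propositions \ref{prop:sdpc} and \ref{prop:sdpd}, where the same ``raise the generator to a unit power'' trick is used, so I would simply remark that the verification is routine and refer back to those proofs.
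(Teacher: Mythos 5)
Your proposal is correct and takes essentially the same route as the paper: the paper likewise reduces to the observation that multiplication by the unit $-1\in(\Z/4)^{\times}$ carries $\{1,2\}$ to $\{-1,2\}$ (using $-2=2$ in $\Z/4$) and then invokes the $c\mapsto c^{m}$ construction from the implication (ii) $\Rightarrow$ (i) of Proposition \ref{prop:sdpb}. Your write-up merely makes explicit the homomorphism check that the paper leaves as a reference.
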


\begin{proof}
We may assume $j_{2}=2$ and $\bj'=(1,2)$. Then we have $j_{1}\in \{\pm 1\}$. If $j_{1}=1$, then the assertion is clear. On the other hand, if $j_{1}=-1$, then we have $j_{1}=-j'_{1}$ and $j_{2}=j'_{2}$. Hence the assertion follows from the same argument as (ii) $\Rightarrow$ (i) in Proposition \ref{prop:sdpb}. 
\end{proof}

\begin{cor}\label{cor:fpcf}
Let $G$ be a finite group of degree $4p$, where $p$ is an odd prime number, and $H$ a corresponding subgroup of $G$. We further assume that a $p$-Sylow subgroup $S_{p}$ of $G$ is normal. Then \emph{(a)}, \emph{(b)} and \emph{(c)} in Theorem \ref{thm:tspt} are satisfied if and only $p\equiv 1 \bmod 4$, $G\cong (C_{p})^{2}\rtimes_{\varphi}C_{4}$ and $H\cong \Delta(C_{4})\rtimes_{\varphi}\{1\}$, where $\varphi$ is defined by
\begin{equation*}
C_{4}\rightarrow \GL_{2}(\Fp);\,c_{4}\mapsto \diag(\zeta_{4},-1). 
\end{equation*}
\end{cor}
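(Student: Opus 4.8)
The plan is to mimic the proof of Corollary \ref{cor:plcd}, substituting Proposition \ref{prop:idx4} for Proposition \ref{prop:tgdp} and Proposition \ref{prop:sdcf} for the semidirect-product comparison results used there. For the easy direction, I would first check directly that for $p\equiv 1\bmod 4$ the group $G=(C_{p})^{2}\rtimes_{\varphi}C_{4}$ with $\varphi(c_{4})=\diag(\zeta_{4},-1)$ satisfies (a), (b) and (c): the $p$-Sylow subgroup $S_{p}=(C_{p})^{2}$ has rank $2$; the operator $c_{4}-1$ acts invertibly on each of the two eigenlines, so $[S_{p},G]=S_{p}$; and $S_{p}\cap H=\Delta(C_{p})$ is a line contained in neither coordinate axis, whose stabilizer and pointwise stabilizer in $G$ both equal $S_{p}$, giving (c).

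For the converse, assume (a), (b) and (c). By Lemma \ref{lem:sbsd} (iii) we may write $G=S_{p}\rtimes G'$ and $H=(S_{p}\cap H)\rtimes H'$ with $H'\le G'$. Lemma \ref{lem:sbsd} (ii) gives $(S_{p}:S_{p}\cap H)=p$, hence $(G':H')=(G:H)/p=4$, and Proposition \ref{prop:tggp} (ii) together with Proposition \ref{prop:bcgr} (ii) shows that $G'$ is a transitive group of degree $4$ with corresponding subgroup $H'$. By Proposition \ref{prop:bcgr} (i), conditions (a), (b), (c) are equivalent to $S_{p}$, viewed as an $\Fp$-representation of $G'$, being $H'$-extremal with $S_{p}\cap H$ an $H'$-special line. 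Proposition \ref{prop:idx4} then forces $p\equiv 1\bmod 4$, $G'\cong C_{4}$ and $H'=\{1\}$ (so $\lvert H\rvert=p$).

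Since $4\mid p-1$, Lemma \ref{lem:chsm} (ii) yields an isomorphism of $\Fp[C_{4}]$-modules $S_{p}\cong U_{p,4}^{(j_{1},j_{2})}$ for some $j_{1},j_{2}\in\Z/4$, and Proposition \ref{prop:cfrd} (i) says that extremality of $S_{p}$ forces $\{j_{1},j_{2}\}=\{1,2\}$ or $\{-1,2\}$. In either case Proposition \ref{prop:sdcf} gives an isomorphism of groups $\psi_{0}\colon G\xrightarrow{\cong}U_{p,4}^{(1,2)}\rtimes C_{4}$, and the target is precisely $(C_{p})^{2}\rtimes_{\varphi}C_{4}$ with the stated $\varphi$, because $\chi_{p,4}^{1}$ sends $c_{4}$ to $\zeta_{4}$ and $\chi_{p,4}^{2}$ sends $c_{4}$ to $\zeta_{4}^{2}=-1$. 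As $\lvert H\rvert=p$, Proposition \ref{prop:cfrd} (ii) gives $\psi_{0}(S_{p}\cap H)=L\rtimes\{1\}$ for a $\{1\}$-special line $L$, i.e. one avoiding both coordinate axes; since $\chi_{p,4}^{1}\not\cong\chi_{p,4}^{2}$, every diagonal matrix is $C_{4}$-equivariant, so for a generator $(a,b)$ of $L$ the automorphism $\diag(a^{-1},b^{-1})$ of $U_{p,4}^{(1,2)}$ and the identity on $C_{4}$ induce an automorphism $\psi_{1}$ of $U_{p,4}^{(1,2)}\rtimes C_{4}$ with $\psi_{1}\circ\psi_{0}$ carrying $S_{p}\cap H$ onto $\langle(1,1)\rangle\rtimes\{1\}=\Delta(C_{p})\rtimes\{1\}$, which is the claim.

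I expect that essentially all the work is already done in the cited results: the only delicate point is the index bookkeeping (verifying $(G':H')=4$ and $\lvert S_{p}\cap H\rvert=p$) needed to legitimately apply Propositions \ref{prop:idx4} and \ref{prop:cfrd}, plus the small observation that the relevant $\Fp[C_{4}]$-module automorphisms act transitively on lines off the coordinate axes; no new representation-theoretic computation is required.
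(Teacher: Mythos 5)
Your proposal is correct and follows exactly the route the paper takes: the paper's proof of this corollary simply says to repeat Case 1 ($\alpha'$) of Corollary \ref{cor:plcd} with Propositions \ref{prop:clrd} and \ref{prop:tgdp} replaced by Propositions \ref{prop:cfrd} and \ref{prop:idx4}, which is precisely what you have spelled out (including the implicit use of Proposition \ref{prop:sdcf} and the rescaling automorphism that moves the special line onto the diagonal).
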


\begin{proof}
The proof is the same as that of Corollary \ref{cor:plcd} in the case ($\alpha$) or ($\alpha'$) with Propositions \ref{prop:clrd} and \ref{prop:tgdp} replaced by Propositions \ref{prop:cfrd} and \ref{prop:idx4} respectively. 
\end{proof}

\section{Proof of main theorems}\label{sect:pfmt}

\subsection{Cohomological invariants}

\begin{thm}\label{thm:mtub}
Let $F$ be a field, and $E/F$ a finite separable field extension with its Galois closure $\widetilde{E}/F$. Put $G:=\Gal(\widetilde{E}/F)$ and $H:=\Gal(\widetilde{E}/E)$. We further assume 
\begin{itemize}
\item $[E:F]\in p\Z \setminus p^{2}\Z$; and 
\item there is a unique $p$-Sylow subgroup $S_{p}$ of $G$. 
\end{itemize}
\begin{enumerate}
\item The abelian group $H^{1}(F,\Pic(\overline{X}))[p^{\infty}]$ is trivial or isomorphic to $\Z/p$. Moreover, it is non-trivial if and only if
\begin{itemize}
\item[(a)] $S_{p}\cong (C_{p})^{2}$; 
\item[(b)] $[S_{p},G]=S_{p}$; and
\item[(c)] $N_{G}(S_{p}\cap H)=Z_{G}(S_{p}\cap H)$. 
\end{itemize}
\item Let $E_{0}/F$ be the subextension of $\widetilde{E}/F$ corresponding to $HS_{p}$. Then there is an isomorphism of finite abelian groups
\begin{equation*}
H^{1}(F,\Pic(\overline{X}))^{(p)}\cong H^{1}(F,\Pic(\overline{X}_{0})), 
\end{equation*}
where $\overline{X}_{0}=X_{0}\otimes_{F}F^{\sep}$ and $X_{0}$ is a smooth compactification of $T_{E_{0}/F}$ over $F$. 
\end{enumerate}
\end{thm}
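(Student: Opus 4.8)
The plan is to transport both sides of each assertion through the equivalence between cohomological invariants of norm one tori and $\Sha^{2}$-type groups, and then to quote Theorem~\ref{thm:tspt}. First I would set up the dictionary. By Lemma~\ref{lem:extt}, $G=\Gal(\widetilde{E}/F)$ is a transitive group of degree $[E:F]$ and $H=\Gal(\widetilde{E}/E)$ is a corresponding subgroup of it; since $[E:F]\in p\Z\setminus p^{2}\Z$ and $S_{p}$ is the unique (hence normal) $p$-Sylow subgroup of $G$, the hypotheses of Theorem~\ref{thm:tspt} are satisfied. I would then use $\widetilde{E}$ itself as a common splitting field: applying Proposition~\ref{prop:mnjg} to the single extension $E/F$ gives an isomorphism of $G$-lattices $X^{*}(T_{E/F})\cong J_{G/H}$, while applying it to $E_{0}/F$, which also lies in $\widetilde{E}$ and satisfies $HS_{p}=\Gal(\widetilde{E}/E_{0})$, gives $X^{*}(T_{E_{0}/F})\cong J_{G/HS_{p}}$. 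Combined with Proposition~\ref{prop:host}, this yields
\[
H^{1}(F,\Pic(\overline{X}))\cong \Sha^{2}_{\omega}(G,J_{G/H}),\qquad H^{1}(F,\Pic(\overline{X}_{0}))\cong \Sha^{2}_{\omega}(G,J_{G/HS_{p}}).
\]

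For part (i) I would restrict the first isomorphism to $p$-primary torsion and apply Theorem~\ref{thm:tspt}(i) with the admissible set $\cD=\cC_{G}$ of all cyclic subgroups. That result says $\Sha^{2}_{\omega}(G,J_{G/H})[p^{\infty}]$ is nonzero exactly when (a), (b), (c) hold, and that when they hold the group is $0$ or $\Z/p$ according to whether some cyclic subgroup of $G$ contains $S_{p}$; but (a) forces $S_{p}\cong (C_{p})^{2}$, which is non-cyclic, so no cyclic subgroup can contain it, and the value is $\Z/p$. Hence $H^{1}(F,\Pic(\overline{X}))[p^{\infty}]$ is trivial or isomorphic to $\Z/p$, and it is nontrivial precisely when (a), (b) and (c) hold.

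For part (ii) I would restrict the first isomorphism to the prime-to-$p$ part and apply Theorem~\ref{thm:tspt}(ii) with $\cD=\cC_{G}$, obtaining
\[
H^{1}(F,\Pic(\overline{X}))^{(p)}\cong \Sha^{2}_{\omega}(G,J_{G/H})^{(p)}\cong \Sha^{2}_{(\cC_{G})_{/S_{p}}}(G/S_{p},J_{G/HS_{p}}).
\]
To conclude I would observe that $J_{G/HS_{p}}$ is genuinely a $G/S_{p}$-module: $S_{p}$ is normal in $G$ and contained in $HS_{p}$, hence $S_{p}\subseteq N^{G}(HS_{p})$, and $N^{G}(HS_{p})$ acts trivially on $\Ind_{HS_{p}}^{G}\Z$ and therefore on $J_{G/HS_{p}}$. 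Proposition~\ref{prop:ifts}, applied with $N=S_{p}$ and $\cD=\cC_{G}$, then identifies the right-hand side with $\Sha^{2}_{\cC_{G}}(G,J_{G/HS_{p}})=\Sha^{2}_{\omega}(G,J_{G/HS_{p}})$, which by the second displayed isomorphism above is $H^{1}(F,\Pic(\overline{X}_{0}))$. Chaining the isomorphisms gives part (ii).

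There is essentially no genuine obstacle remaining once Theorem~\ref{thm:tspt} is granted: the argument is pure bookkeeping with the torus–lattice dictionary. The two points that deserve a moment's attention are (1) that taking $\widetilde{E}$ as the common splitting field in Proposition~\ref{prop:mnjg} yields exactly the lattices $J_{G/H}$ and $J_{G/HS_{p}}$ on the nose, which is immediate from the statement of that proposition since it allows any Galois extension containing the relevant subfields; and (2) the triviality of the $S_{p}$-action on $J_{G/HS_{p}}$ needed to invoke Proposition~\ref{prop:ifts}. Both are routine, so the substantive content is entirely inherited from Theorem~\ref{thm:tspt}.
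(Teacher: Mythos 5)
Your argument is correct and follows the paper's own route: identify $H^{1}(F,\Pic(\overline{X}))$ with $\Sha_{\omega}^{2}(G,J_{G/H})$ and $H^{1}(F,\Pic(\overline{X}_{0}))$ with $\Sha_{\omega}^{2}(G,J_{G/HS_{p}})$ via Proposition \ref{prop:host}, then invoke Theorem \ref{thm:tspt}. The paper states this in two lines, leaving implicit the details you spell out (the lattice identifications from Proposition \ref{prop:mnjg}, the observation that no cyclic subgroup contains $(C_{p})^{2}$, and the inflation step via Proposition \ref{prop:ifts} identifying $\Sha_{(\cC_{G})_{/S_{p}}}^{2}(G/S_{p},J_{G/HS_{p}})$ with $\Sha_{\omega}^{2}(G,J_{G/HS_{p}})$); these verifications are all accurate.
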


\begin{proof}
By Proposition \ref{prop:host}, there are isomorphisms
\begin{equation*}
H^{1}(F,\Pic(\overline{X}))\cong \Sha_{\omega}^{2}(G,J_{G/H}),\quad 
H^{1}(F,\Pic(\overline{X}_{0}))\cong \Sha_{\omega}^{2}(G,J_{G/HS_{p}}). 
\end{equation*}
Then the assertion follows from the above isomorphisms and Theorem \ref{thm:tspt}. 
\end{proof}

\subsection{The Hasse norm principle}\label{ssec:pfhn}

\begin{thm}\label{thm:ptnt}
Let $K/k$ be a finite separable field extension of a global field with its Galois closure $\widetilde{K}/k$. Put $G:=\Gal(\widetilde{K}/k)$ and $H:=\Gal(\widetilde{K}/K)$. We further assume 
\begin{itemize}
\item $[K:k]\in p\Z \setminus p^{2}\Z$; and 
\item a $p$-Sylow subgroup $S_{p}$ of $G$ is normal in $G$. 
\end{itemize}
\begin{enumerate}
\item If $\Sha(K/k)[p^{\infty}]\neq 1$, then
\begin{itemize}
\item[(a)] $S_{p}\cong (C_{p})^{2}$; 
\item[(b)] $[S_{p},G]=S_{p}$; and
\item[(c)] $N_{G}(S_{p}\cap H)=Z_{G}(S_{p}\cap H)$. 
\end{itemize}
Conversely, if \emph{(a)}, \emph{(b)} and \emph{(c)} hold, then there is an isomorphism
\begin{equation*}
\Sha(K/k)[p^{\infty}]\cong 
\begin{cases}
1&\text{if a decomposition group of $\widetilde{K}/k$ contains $S_{p}$;}\\
\Z/p&\text{otherwise. }
\end{cases}
\end{equation*}
\item Let $K_{0}/k$ be the subextension of $\widetilde{K}/k$ corresponding to $HS_{p}$. Then there is an isomorphism
\begin{equation*}
\Sha(K/k)^{(p)}\cong \Sha(K_{0}/k). 
\end{equation*}
\end{enumerate}
\end{thm}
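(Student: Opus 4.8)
The plan is to translate the whole statement into the group-cohomological result of Theorem \ref{thm:tspt} via the dictionary assembled in Sections \ref{sect:prlm} and \ref{sect:mnot}. First I would record that, by Lemma \ref{lem:extt}, $G$ is a transitive group of degree $[K:k]\in p\Z\setminus p^{2}\Z$ and $H$ is a corresponding subgroup, and that the normality of $S_{p}$ makes it the unique $p$-Sylow subgroup of $G$. Writing $\cD$ for the set of decomposition groups of $\widetilde{K}/k$ --- an admissible set of subgroups of $G$ --- Corollary \ref{cor:onhn} gives $\Sha(K/k)\cong \Sha_{\cD}^{2}(G,J_{G/H})^{\vee}$. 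Since Pontryagin duality is exact on finite abelian groups and carries $\Z/p^{n}$ to itself, it commutes with the decomposition $A=A[p^{\infty}]\oplus A^{(p)}$; hence $\Sha(K/k)[p^{\infty}]\cong (\Sha_{\cD}^{2}(G,J_{G/H})[p^{\infty}])^{\vee}$ and $\Sha(K/k)^{(p)}\cong (\Sha_{\cD}^{2}(G,J_{G/H})^{(p)})^{\vee}$, so in particular the two sides of each isomorphism vanish simultaneously.

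For part (i) I would invoke Theorem \ref{thm:tspt}(i). For the forward implication: if $\Sha(K/k)[p^{\infty}]\neq 1$ then $\Sha_{\cD}^{2}(G,J_{G/H})[p^{\infty}]\neq 0$; as $\cD$ contains all cyclic subgroups we have $\Sha_{\cD}^{2}(G,J_{G/H})\subset \Sha_{\omega}^{2}(G,J_{G/H})$, so $\Sha_{\omega}^{2}(G,J_{G/H})[p^{\infty}]\neq 0$, and Theorem \ref{thm:tspt}(i) yields (a), (b), (c). For the converse: assuming (a), (b), (c), Theorem \ref{thm:tspt}(i) applied to the admissible set $\cD$ gives $\Sha_{\cD}^{2}(G,J_{G/H})[p^{\infty}]\cong 0$ when some $D\in\cD$ contains $S_{p}$ and $\cong\Z/p$ otherwise; dualizing, and noting that ``$D\in\cD$ contains $S_{p}$'' is literally ``a decomposition group of $\widetilde{K}/k$ contains $S_{p}$'', gives the stated isomorphism.

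For part (ii) I would start from Theorem \ref{thm:tspt}(ii), which gives $\Sha_{\cD}^{2}(G,J_{G/H})^{(p)}\cong \Sha_{\cD_{/S_{p}}}^{2}(G/S_{p},J_{G/HS_{p}})$, and then identify the right-hand side with $\Sha(K_{0}/k)^{\vee}$. The key observation is that $S_{p}\triangleleft G$ and $S_{p}\subset HS_{p}$ force $S_{p}$ to act trivially on $G/HS_{p}$, hence on $J_{G/HS_{p}}$, so $J_{G/HS_{p}}$ is a $G/S_{p}$-lattice and Proposition \ref{prop:ifts} gives $\Sha_{\cD_{/S_{p}}}^{2}(G/S_{p},J_{G/HS_{p}})\cong \Sha_{\cD}^{2}(G,J_{G/HS_{p}})$. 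On the other hand $T_{K_{0}/k}$ splits over $\widetilde{K}$ with $X^{*}(T_{K_{0}/k})\cong J_{G/HS_{p}}$ by Proposition \ref{prop:mnjg} (as $HS_{p}=\Gal(\widetilde{K}/K_{0})$), so Propositions \ref{prop:loyt} and \ref{prop:almt} give $\Sha(K_{0}/k)\cong \Sha^{1}(k,T_{K_{0}/k})\cong \Sha_{\cD}^{2}(G,J_{G/HS_{p}})^{\vee}$. Combining these isomorphisms and dualizing once more (using the finiteness of $\Sha(K_{0}/k)$ and the first step) yields $\Sha(K/k)^{(p)}\cong \Sha(K_{0}/k)$.

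I expect essentially all the content to live in Theorem \ref{thm:tspt}; what remains here is bookkeeping with Pontryagin duals and $p$-primary parts. The one genuinely delicate point will be the identification of $\Sha(K_{0}/k)$ with $\Sha_{\cD}^{2}(G,J_{G/HS_{p}})^{\vee}$: the field $\widetilde{K}$ need not be the Galois closure of $K_{0}/k$, so Corollary \ref{cor:onhn} does not apply to $K_{0}/k$ directly, and I route around this by using Proposition \ref{prop:almt}, which only requires $T_{K_{0}/k}$ to split over the (possibly larger) Galois extension $\widetilde{K}/k$, together with the triviality of the $S_{p}$-action on $J_{G/HS_{p}}$ and Proposition \ref{prop:ifts}.
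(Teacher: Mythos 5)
Your proof is correct and follows the same route as the paper: reduce via Corollary \ref{cor:onhn} to $\Sha_{\cD}^{2}(G,J_{G/H})^{\vee}$ with $\cD$ the set of decomposition groups, and apply Theorem \ref{thm:tspt} after observing that Pontryagin duality respects the splitting into $p$-primary and prime-to-$p$ parts. The paper's proof is a two-line reduction that leaves the identification $\Sha(K_{0}/k)\cong \Sha_{\cD_{/S_{p}}}^{2}(G/S_{p},J_{G/HS_{p}})^{\vee}$ in part (ii) implicit; your handling of this via Propositions \ref{prop:mnjg}, \ref{prop:loyt}, \ref{prop:almt} and \ref{prop:ifts} (noting that $\widetilde{K}$ need not be the Galois closure of $K_{0}/k$, so one must route through the splitting-field version) supplies exactly the missing bookkeeping.
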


\begin{proof}
Let $\cD$ be the set of decomposition groups of $\widetilde{K}/k$. Then Corollary \ref{cor:onhn} gives an isomorphism
\begin{equation*}
\Sha(K/k) \cong \Sha_{\cD}^{2}(G,J_{G/H})^{\vee}. 
\end{equation*}
Hence the assertion from Theorem \ref{thm:tspt}. 
\end{proof}

\begin{cor}\label{cor:pttr}
Let $K/k$ be a finite separable field extension of a global field with its Galois closure $\widetilde{K}/k$. Put $G:=\Gal(\widetilde{K}/k)$ and $H:=\Gal(\widetilde{K}/K)$. We further assume 
\begin{itemize}
\item $[K:k]\in p\Z \setminus p^{2}\Z$; 
\item $n:=[K:k]/p$ satisfies $\gcd(n,p-1)\leq 2$ and $\gcd(n,p+1)\in 2^{\Znn}$; and
\item a $p$-Sylow subgroup $S_{p}$ of $G$ is normal. 
\end{itemize}
Then there is an isomorphism
\begin{equation*}
\Sha(K/k) \cong \Sha(K_{0}/k),
\end{equation*}
where $K_{0}/k$ is the subextension of $\widetilde{K}/k$ corresponding to $HS_{p}$. In particular, $\Sha(K/k)$ has no $p$-primary torsion part. 
\end{cor}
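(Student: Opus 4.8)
The plan is to derive Corollary~\ref{cor:pttr} by feeding the numerical hypotheses on $n$ into the representation-theoretic obstruction of Corollary~\ref{cor:nabc}, so that the $p$-primary part of $\Sha(K/k)$ is forced to vanish, and then reading off the prime-to-$p$ part from Theorem~\ref{thm:ptnt}. Concretely, the first step is to use the canonical decomposition $A = A[p^{\infty}] \oplus A^{(p)}$ of a finite abelian group to write $\Sha(K/k) \cong \Sha(K/k)[p^{\infty}] \oplus \Sha(K/k)^{(p)}$. Since Theorem~\ref{thm:ptnt}(ii) already identifies the second summand with $\Sha(K_0/k)$, the whole corollary reduces to proving $\Sha(K/k)[p^{\infty}] = 1$.

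For that I would argue by contradiction. Suppose $\Sha(K/k)[p^{\infty}] \neq 1$; then Theorem~\ref{thm:ptnt}(i) forces conditions (a), (b), (c) to hold for the triple $(G, H, S_p)$. By Lemma~\ref{lem:extt}, $G = \Gal(\widetilde{K}/k)$ is a transitive group of degree $[K:k] = pn \in p\Z \setminus p^{2}\Z$ and $H = \Gal(\widetilde{K}/K)$ is a corresponding subgroup; since $S_p$ is normal and $\gcd(n, p-1) \leq 2$, $\gcd(n, p+1) \in 2^{\Znn}$, Corollary~\ref{cor:nabc} then asserts that at least one of (a), (b), (c) fails — a contradiction. (Corollary~\ref{cor:nabc} is stated only for odd $p$. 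For $p = 2$ the hypothesis $\gcd(n,3) \in 2^{\Znn}$ forces $3 \nmid n$, and one excludes (a), (b), (c) directly: by Lemma~\ref{lem:sbsd} one may write $G \cong S_2 \rtimes G'$ and $H \cong (S_2 \cap H) \rtimes H'$, Proposition~\ref{prop:bcgr}(i) makes $S_2$ an $H'$-extremal $\F_2$-representation of $G'$, and Proposition~\ref{prop:trtr} then gives $3 = (\rho(G') : \rho(H')) \mid (G':H') = n$, contradicting $3 \nmid n$.) Hence $\Sha(K/k)[p^{\infty}] = 1$, so $\Sha(K/k) \cong \Sha(K/k)^{(p)} \cong \Sha(K_0/k)$, which by definition of the prime-to-$p$ part has trivial $p$-primary torsion.

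I do not anticipate a genuine obstacle: the argument is essentially the assembly of Theorem~\ref{thm:ptnt} with Corollary~\ref{cor:nabc}, and the only points needing attention are the verification (via Lemma~\ref{lem:extt}) that $G$ is a transitive group of the required degree with $H$ a corresponding subgroup, and the minor bookkeeping that lets the $p = 2$ case be handled by Propositions~\ref{prop:bcgr} and~\ref{prop:trtr} in place of Corollary~\ref{cor:nabc}. If anything is delicate it is purely this $p=2$ edge case, which is why I would single it out explicitly rather than quoting Corollary~\ref{cor:nabc} blindly.
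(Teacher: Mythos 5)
Your proposal is correct and follows essentially the same route as the paper: split off $\Sha(K/k)^{(p)}\cong\Sha(K_{0}/k)$ via Theorem \ref{thm:ptnt}(ii), and kill $\Sha(K/k)[p^{\infty}]$ by showing (a), (b), (c) cannot all hold, quoting Corollary \ref{cor:nabc}. Your explicit treatment of $p=2$ via Proposition \ref{prop:trtr} is a genuine (and correct) improvement, since Corollary \ref{cor:nabc} is stated only for odd $p$ while the paper cites it without comment; your divisibility argument $3=(\rho(G'):\rho(H'))\mid(G':H')=n$ against $3\nmid n$ closes that small gap cleanly.
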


\begin{proof}
It suffices to prove $\Sha(K/k)[p^{\infty}]=1$. By Theorem \ref{thm:ptnt}, it suffices to prove that (a), (b) or (c) fails. however, it follows from Corollary \ref{cor:nabc}. 
\end{proof}

\begin{lem}[{\cite[Lemma 4]{Bartels1981a}}]\label{lem:bata}
Let $K/k$ be a finite separable field extension of a global field. If $[K:k]$ is a prime number, then $\Sha(K/k)=1$. 
\end{lem}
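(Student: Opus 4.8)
The plan is to reduce the statement to a cohomological vanishing that has already been proved, namely Proposition \ref{prop:bart}. First I would introduce the standard setup: let $\widetilde{K}/k$ be the Galois closure of $K/k$, set $G := \Gal(\widetilde{K}/k)$, $H := \Gal(\widetilde{K}/K)$, and let $\cD$ denote the set of decomposition groups of $\widetilde{K}/k$. By Corollary \ref{cor:onhn} there is an isomorphism
\begin{equation*}
\Sha(K/k) \cong \Sha_{\cD}^{2}(G,J_{G/H})^{\vee},
\end{equation*}
so it suffices to prove $\Sha_{\cD}^{2}(G,J_{G/H}) = 0$.

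Next I would exploit that $\cD$ is an admissible set of subgroups of $G$, hence contains every cyclic subgroup of $G$, i.e.\ $\cC_{G} \subseteq \cD$. Since enlarging the family of subgroups over which one restricts can only shrink the corresponding $\Sha^{2}$, this gives an inclusion $\Sha_{\cD}^{2}(G,J_{G/H}) \subseteq \Sha_{\cC_{G}}^{2}(G,J_{G/H}) = \Sha_{\omega}^{2}(G,J_{G/H})$. Thus it is enough to check $\Sha_{\omega}^{2}(G,J_{G/H}) = 0$. Now, by Galois theory (cf.\ Lemma \ref{lem:extt}) one has $(G:H) = [K:k]$, which is a prime number by hypothesis, so $H$ has prime index in $G$ and Proposition \ref{prop:bart} applies directly, yielding $\Sha_{\omega}^{2}(G,J_{G/H}) = 0$. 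Combining the two displays, $\Sha(K/k) \cong 0^{\vee} = 1$, as claimed.

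I do not expect any genuine obstacle here: the mathematical content of Bartels' lemma has effectively been absorbed into Proposition \ref{prop:bart}, which itself rests on the observation that a $p$-Sylow subgroup of a transitive group of prime degree $p$ is cyclic (Lemma \ref{lem:trdp}) together with the vanishing $\Sha_{\omega}^{2}(G',M) = 0$ for groups with cyclic Sylow subgroups (Corollary \ref{cor:cyan}) and the standard norm/corestriction argument of Proposition \ref{prop:tsan}(i). The only points requiring a little care are the bookkeeping of admissible sets --- in particular the inclusion $\Sha_{\cD}^{2} \subseteq \Sha_{\omega}^{2}$ and the fact, used implicitly, that the decomposition groups of a Galois extension of a global field do form an admissible set --- both of which are already recorded in the excerpt.
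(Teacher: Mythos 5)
Your proof is correct and follows essentially the same route as the paper: reduce via Corollary \ref{cor:onhn} to showing $\Sha_{\cD}^{2}(G,J_{G/H})=0$, observe that this is contained in $\Sha_{\omega}^{2}(G,J_{G/H})$ because $\cD$ is admissible, and apply Proposition \ref{prop:bart} since $(G:H)=[K:k]$ is prime. The paper leaves the inclusion $\Sha_{\cD}^{2}\subseteq\Sha_{\omega}^{2}$ implicit, but otherwise the arguments coincide.
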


\begin{proof}
Let $\widetilde{K}/k$ be the Galois closure of $K/k$. Put $G:=\Gal(\widetilde{K}/k)$ and $H:=\Gal(\widetilde{K}/K)$. By Corollary \ref{cor:onhn}, there is an isomorphism
\begin{equation*}
\Sha(K/k)\cong \Sha_{\cD}^{2}(G,J_{G/H})^{\vee}, 
\end{equation*}
where $\cD$ is the set of decomposition subgroups of $G$. Hence, it suffices to prove $\Sha_{\omega}^{2}(G,J_{G/H})=0$. However, this follows from Proposition \ref{prop:bart} since $(G:H)=[K:k]$ is prime. 
\end{proof}

\begin{thm}\label{thm:pldt}
Let $p$ and $\ell$ be two distinct prime numbers. Consider a finite separable extension $K/k$ of degree $p\ell$ of a global field, and write for $\widetilde{K}/k$ its Galois closure. Put $G:=\Gal(\widetilde{K}/k)$ and $H:=\Gal(\widetilde{K}/K)$. We further assume that a $p$-Sylow subgroup $S_{p}$ of $G$ is normal. If $\Sha(K/k)\neq 1$, then one of the following is satisfied: 
\begin{itemize}
\item[($\alpha$)] $2<\ell \mid p-1$, $G\cong (C_{p})^{2}\rtimes_{\varphi_{0,m}} C_{\ell}$ and $H\cong \Delta(C_{p}) \rtimes_{\varphi_{0,m}} 1$, where $\varphi_{0,m}$ is defined by
\begin{equation*}
C_{\ell} \rightarrow \GL_{2}(\Fp); c_{\ell} \mapsto \diag(\zeta_{\ell},\zeta_{\ell}^{m}),
\end{equation*}
$\zeta_{\ell}\in \F_{p}^{\times}$ is a primitive $\ell$-th root of unity, and $m\in \{2,\ldots,\ell-1\}$ satisfies $mm'\not\equiv 1 \bmod \ell$ for any $m'\in \{1,\ldots,m-1\}$;
\item[($\beta$)] $2<\ell \mid p+1$, $G\cong (C_{p})^{2}\rtimes_{\varphi_{1}} C_{\ell}$ and $H\cong \Delta(C_{p}) \rtimes_{\varphi_{1}} 1$, where $\varphi_{1}$ is defined by
\begin{equation*}
C_{\ell} \rightarrow \GL_{2}(\Fp); c_{\ell} \mapsto 
\begin{pmatrix}
0&-1\\
1&\zeta_{\ell}+\zeta_{\ell}^{-1}
\end{pmatrix}
\end{equation*}
and $\zeta_{\ell}\in \F_{p^{2}}^{\times}\setminus \F_{p}^{\times}$ is a primitive $\ell$-th root of unity. 
\item[($\gamma$)] $p\geq 5$, $2<\ell \mid p^{2}-1$, $G \cong (C_{p})^{2}\rtimes_{\varphi_{2}} D_{\ell}$ and $H\cong \Delta(C_{p}) \rtimes_{\varphi_{2}} \langle \tau_{\ell} \rangle$, where $\varphi_{2}$ is defined by
\begin{equation*}
D_{\ell} \rightarrow \GL_{2}(\Fp); \sigma_{\ell}^{i}\tau_{\ell}^{i'}\mapsto 
\begin{pmatrix}
0&-1\\
1&\zeta_{\ell}+\zeta_{\ell}^{p}
\end{pmatrix}^{i}
\begin{pmatrix}
0&1\\
1&0
\end{pmatrix}^{i'}
\end{equation*}
and $\zeta_{\ell} \in \F_{\ell^{2}}^{\times}$ is a primitive $\ell$-th root of unity. 
\end{itemize}
Conversely, if \emph{($\alpha$)}, \emph{($\beta$)} or \emph{($\gamma$)} holds, then there is an isomorphism
\begin{equation*}
\Sha(K/k) \cong 
\begin{cases}
1&\text{if a decomposition group of $\widetilde{K}/k$ contains $S_{p}$; }\\
\Z/p &\text{otherwise. }
\end{cases}
\end{equation*}
\end{thm}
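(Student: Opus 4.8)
The plan is to assemble the statement from the two main structural results already proved: Theorem~\ref{thm:ptnt}, which splits $\Sha(K/k)$ into its $p$-primary and prime-to-$p$ parts and computes each under the standing hypotheses, and Corollary~\ref{cor:plcd}, which is precisely the translation of conditions (a), (b), (c) into the group-theoretic trichotomy ($\alpha$)/($\beta$)/($\gamma$) together with the induced shape of $G$ and $H$.

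First I would note that $[K:k]=p\ell$ with $\ell\neq p$ prime lies in $p\Z\setminus p^2\Z$, so Theorem~\ref{thm:ptnt} applies, and that by Lemma~\ref{lem:extt} the group $G$ is a transitive group of degree $p\ell$ with $H$ a corresponding subgroup, so Corollary~\ref{cor:plcd} applies verbatim. Next I would dispose of the prime-to-$p$ part. Let $K_0/k$ be the subextension of $\widetilde{K}/k$ fixed by $HS_p$. Lemma~\ref{lem:sbsd}(ii) applied with $D=H$ gives $(S_p:S_p\cap H)=p^{\ord_p(G:H)}=p$, hence $(HS_p:H)=p$ and $[K_0:k]=(G:HS_p)=\ell$, a prime. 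Bartels' theorem (Lemma~\ref{lem:bata}) then gives $\Sha(K_0/k)=1$, so Theorem~\ref{thm:ptnt}(ii) yields $\Sha(K/k)^{(p)}\cong\Sha(K_0/k)=1$; therefore $\Sha(K/k)=\Sha(K/k)[p^\infty]$, and the whole theorem reduces to a statement about the $p$-primary part.

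To finish, I would quote Theorem~\ref{thm:ptnt}(i): $\Sha(K/k)=\Sha(K/k)[p^\infty]$ is nontrivial if and only if (a), (b), (c) hold, and in that case it is trivial when some decomposition group of $\widetilde{K}/k$ contains $S_p$ and isomorphic to $\Z/p$ otherwise. Corollary~\ref{cor:plcd} identifies the conjunction of (a), (b), (c) (with the induced isomorphisms for $G$ and $H$) with the disjunction of ($\alpha$), ($\beta$), ($\gamma$); substituting this equivalence into the previous sentence delivers both directions of the claim at once, including the final dichotomy for $\Sha(K/k)$.

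I do not expect a genuine obstacle here: all the substantive work --- the cohomology computation behind Theorem~\ref{thm:tspt}/Theorem~\ref{thm:ptnt} and the classification of $H'$-extremal $\Fp$-representations feeding Corollary~\ref{cor:plcd} --- is already in place, and what remains is bookkeeping. The one point to watch is the degree computation $[K_0:k]=\ell$, which licenses the appeal to Bartels and annihilates $\Sha(K/k)^{(p)}$; this rests on $S_p\cap H$ having index exactly $p$ in $S_p$, i.e.\ on Lemma~\ref{lem:sbsd}(ii).
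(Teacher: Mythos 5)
Your proposal is correct and follows essentially the same route as the paper's proof: reduce to the $p$-primary part via Theorem~\ref{thm:ptnt}(ii) and Bartels' theorem applied to the degree-$\ell$ subextension $K_0/k$, then combine Theorem~\ref{thm:ptnt}(i) with Corollary~\ref{cor:plcd} to translate conditions (a), (b), (c) into the trichotomy ($\alpha$)/($\beta$)/($\gamma$). The extra care you take in verifying $(G:HS_p)=\ell$ via Lemma~\ref{lem:sbsd}(ii) is a welcome detail the paper leaves implicit.
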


\begin{proof}
By Theorem \ref{thm:ptnt} (ii), there is an isomorphism
\begin{equation*}
\Sha(K/k)^{(p)}\cong \Sha(K_{0}/k), 
\end{equation*}
where $K_{0}$ is the intermediate field of $\widetilde{K}/k$ corresponding to $HS_{p}$. Since $(G:HS_{p})=\ell$ is a prime number, one has $\Sha(K_{0}/k)=1$ by Lemma \ref{lem:bata} (i). Hence we obtain an equality 
\begin{equation*}
\Sha(K/k)=\Sha(K/k)[p^{\infty}]. 
\end{equation*}

In the following, fix an isomorphism $G\cong S_{p}\rtimes G'$, where $G'$ is a subgroup of $G$, such that $H$ corresponds to $(S_{p}\cap H)\rtimes H'$ for some subgroup $H'$ of $G'$. Note that this is possible by Lemma \ref{lem:sbsd} (v). We regard $S_{p}$ as an $\Fp$-representation of $G'$. By Corollary \ref{cor:plcd}, the validity of (a), (b) and (c) are valid if and only if ($\alpha$), ($\beta$) or ($\gamma$) is satisfied. Hence, if $\Sha(K/k)\neq 1$, then Theorem \ref{thm:ptnt} (i) implies that ($\alpha$), ($\beta$) or ($\gamma$) is satisfied. Conversely, if ($\alpha$), ($\beta$) or ($\gamma$) is satisfied then Theorem \ref{thm:ptnt} (i) also follows that 
\begin{equation*}
\Sha(K/k) \cong 
\begin{cases}
1&\text{if a decomposition group of $\widetilde{K}/k$ contains $S_{p}$; }\\
\Z/p &\text{otherwise. }
\end{cases}
\end{equation*}
Therefore, the proof is complete. 
\end{proof}

\begin{thm}\label{thm:sfhf}
Let $p$ and $\ell$ be two distinct prime numbers satisfying $2<\ell \mid p^{2}-1$, and $d$ a multiple of $p\ell$ that is square-free. Take a global field $k$ whose characteristic is different from $p$. Then there is a finite extension $K$ of $k$ of degree $d$ for which the Hasse norm principle fails. 
\end{thm}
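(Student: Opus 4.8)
The plan is to exhibit $K/k$ as the fixed field of a subgroup $H$ inside a Galois extension $\widetilde K/k$ whose group $G$ has a normal $p$-Sylow $S_p\cong(C_p)^2$ satisfying conditions (a), (b), (c) of Theorem~\ref{thm:ptnt}, arranged so that no decomposition group of $\widetilde K/k$ contains $S_p$; Theorem~\ref{thm:ptnt}(i) will then force $\Sha(K/k)[p^{\infty}]\cong\Z/p$, so that the Hasse norm principle fails for $K/k$. Write $d=p\ell m$ with $m$ square-free and coprime to $p\ell$, which is possible because $d$ is square-free and $p\ell\mid d$. Since $\ell>2$ is prime and divides $(p-1)(p+1)$, it divides exactly one of $p-1$ and $p+1$, and I would treat the two cases in parallel.

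First I would build the pair $(G,H)$. If $\ell\mid p-1$, then $p$ is not a Fermat prime (its predecessor $p-1$ has the odd prime divisor $\ell$), so Proposition~\ref{prop:clrd} provides a $\{1\}$-extremal $\Fp$-representation $W:=U_{p,\ell}^{(j_1,j_2)}$ of $C_\ell$ with $j_1,j_2,j_1-j_2$ all nonzero, together with a $\{1\}$-special line $L\subset W$. If instead $\ell\mid p+1$, I take $W:=V_{p,\ell}^{1}$, the irreducible $\{1\}$-extremal $\Fp$-representation of $C_\ell$ from Proposition~\ref{prop:cyir}, and $L\subset W$ any line. In either case set $G':=C_\ell\times C_m$ and $H':=\{1\}$, with $C_m$ acting trivially on $W$. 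Then $W$ is an $H'$-extremal $\Fp$-representation of $G'$ with $H'$-special line $L$: properties (A) and (B) of Definition~\ref{cdbc} are inherited from $C_\ell$ (as $C_m$ acts trivially), and $\Stab_{G'}(L)=\Fix_{G'}(L)=\{1\}\times C_m\supset H'$ because $\Stab_{C_\ell}(L)=\{1\}$. Moreover $G'$ is a transitive group of degree $\ell m$ with corresponding subgroup $H'$, and $N^{G'}(H')=\{1\}$. Put $G:=W\rtimes G'$ and $H:=L\rtimes H'$, and $S_p:=W$, so $S_p\cap H=L$. Since $p\nmid|G'|=\ell m$ (as $d$ is square-free), $S_p\cong(C_p)^2$ is the unique, hence normal, $p$-Sylow of $G$; also $(G:H)=p\ell m=d\in p\Z\setminus p^{2}\Z$. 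A normal subgroup $N$ of $G$ contained in $H$ has $N\cap W$ a $G'$-stable subspace of $L$, hence $\{0\}$ since $L$ is not $G'$-stable, so $N$ embeds into $G'$ onto a normal subgroup contained in $H'$, i.e.\ into $N^{G'}(H')=\{1\}$; thus $N^G(H)=\{1\}$. By Proposition~\ref{prop:bcgr}(i), conditions (a), (b), (c) of Theorem~\ref{thm:tspt} hold for $(G,H,S_p)$.

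Next I would realize $G$ over $k$. As $G/S_p\cong C_\ell\times C_m$, Proposition~\ref{prop:shf2} furnishes a Galois extension $\widetilde K_0/k$ with $\Gal(\widetilde K_0/k)\cong C_\ell\times C_m$ in which every decomposition group is cyclic. Since $\ch k\neq p$ and $S_p$ is normal, Proposition~\ref{prop:shf1} then produces a Galois extension $\widetilde K/k$ with $\Gal(\widetilde K/k)\cong G$, with $\widetilde K_0$ the fixed field of $S_p$, all of whose decomposition groups are cyclic. Let $K\subset\widetilde K$ be the fixed field of $H$; then $[K:k]=(G:H)=d$, and because $N^G(H)=\{1\}$, $\widetilde K/k$ is the Galois closure of $K/k$. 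As $S_p\cong(C_p)^2$ is non-cyclic while every decomposition group of $\widetilde K/k$ is cyclic, no decomposition group can contain $S_p$; Theorem~\ref{thm:ptnt}(i) then yields $\Sha(K/k)[p^{\infty}]\cong\Z/p$, so in particular $\Sha(K/k)\neq1$, as required.

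I do not expect a genuine obstacle: the cohomological heart (Theorem~\ref{thm:tspt}) and the inverse-Galois input with cyclic decomposition groups (Propositions~\ref{prop:shf1} and~\ref{prop:shf2}, ultimately Shafarevich's theorem) are already in hand. The care needed is organizational — selecting $W$ and $L$, checking that inflating the $C_\ell$-representation along the projection $C_\ell\times C_m\twoheadrightarrow C_\ell$ preserves $H'$-extremality and the $H'$-special line, verifying $N^G(H)=\{1\}$ and the remaining hypotheses of Theorem~\ref{thm:ptnt} and Proposition~\ref{prop:bcgr}, and keeping the cases $\ell\mid p-1$ and $\ell\mid p+1$ aligned.
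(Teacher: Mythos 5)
Your proposal is correct and follows essentially the same route as the paper: the paper takes $\widetilde G=((C_p)^2\rtimes C_\ell)\times C_{d'}$ with the semidirect product from Theorem~\ref{thm:pldt} ($\alpha$) or ($\beta$), which is exactly your $W\rtimes(C_\ell\times C_m)$ with $C_m$ acting trivially, and it likewise realizes $\widetilde G$ over $k$ with cyclic decomposition groups via Propositions~\ref{prop:shf2} and~\ref{prop:shf1} before applying Theorem~\ref{thm:ptnt}(i). The only cosmetic difference is that the paper transfers conditions (a)--(c) from the degree-$p\ell$ case by Lemma~\ref{lem:abcp}, whereas you verify $H'$-extremality of the inflated representation directly through Proposition~\ref{prop:bcgr}(i).
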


\begin{proof}
Write $d=p\ell d'$, where $d'$ is a positive integer. Put
\begin{equation*}
G:=(C_{p})^{2}\rtimes C_{\ell}, \quad H:=\Delta(C_{p})\rtimes C_{\ell},
\end{equation*}
where the semi-direct product is the same as Theorem \ref{thm:pldt} ($\alpha$) for $m=\ell-1$ if $p\equiv 1\bmod \ell$, and is the same as  Theorem \ref{thm:pldt} ($\beta$) if $p\equiv -1\bmod \ell$. We denote by $S_{p}$ the unique $p$-Sylow subgroup of $G$, and set 
\begin{equation*}
\widetilde{G}:=G\times C_{d'},\quad \widetilde{H}:=H\times \{1\},\quad \widetilde{S}_{p}:=S_{p}\times \{1\}. 
\end{equation*}
Then there is an isomorphism $\widetilde{G}\cong \widetilde{S}_{p}\rtimes C_{\ell d'}$ and $(G:H)=p\ell d'=d$. In particular, Moreover, Lemma \ref{lem:abcp} implies that
\begin{itemize}
\item $\widetilde{S}_{p}$ is a unique $p$-Sylow subgroup of $\widetilde{G}$ (hence $\widetilde{S}_{p}$ is normal in $G$); and
\item $\widetilde{G}$, $\widetilde{H}$, and $\widetilde{S}_{p}$ satisfy (a), (b) and (c) in Theorem \ref{thm:ptnt}. 
\end{itemize}
On the other hand, since $C_{\ell d'}$ is cyclic, Proposition \ref{prop:shf2} gives the existence of a finite Galois extension $K_{0}/k$ with Galois group $\widetilde{G}/\widetilde{S}_{p}$. In addition, by Proposition \ref{prop:shf1}, there is a finite Galois extension $\widetilde{K}/k$ with Galois group $\widetilde{G}$ such that $\widetilde{K}^{\widetilde{S}_{p}}=K_{0}$ and all decomposition groups of $\widetilde{K}/k$ are cyclic. Now, let $K$ be the intermediate field of $\widetilde{K}/k$ corresponding to $\widetilde{H}$. Then, Theorem \ref{thm:ptnt} implies $\Sha(K/k)[p^{\infty}]\cong \Z/p$ since (a), (b) and (c) are satisfied. Furthermore, we have $\Sha(K/k)^{(p)}\cong \Sha(K_{0}/k)$ by Theorem \ref{thm:ptnt} (ii), where $K_{0}$ is the intermediate field of $\widetilde{K}/k$ corresponding to $\widetilde{S}_{p}$. Since $\widetilde{G}/\widetilde{S}_{p}\cong C_{\ell d'}$, we obtain $\Sha(K_{0}/k)=1$ by Lemma \ref{lem:bata} (ii). In summary, there is an isomorphism
\begin{equation*}
\Sha(K/k)\cong \Z/p, 
\end{equation*}
which concludes the proof. 
\end{proof}

\begin{prop}[{cf.~\cite[Proposition 1]{Kunyavskii1984}}]\label{prop:knth}
Let $k$ be a global field, and $K/k$ a finite separable field extension of degree $4$ with its Galois closure $\widetilde{K}/k$. Put $G:=\Gal(\widetilde{K}/k)$ and $H:=\Gal(\widetilde{K}/K)$. 
\begin{enumerate}
\item If $\Sha(K/k)\neq 1$, then we have
\begin{itemize}
\item[(a)] $G\cong (C_{2})^{2}$; or
\item[(b)] $G\cong \fA_{4}$. 
\end{itemize}
\item If \emph{(a)} or \emph{(b)} in \emph{(i)} is satisfied, then there is an isomorphism
\begin{equation*}
\Sha(K/k)\cong 
\begin{cases}
1&\text{if there is a place of $\widetilde{K}/k$ containing $(C_{2})^{2}$; }\\
\Z/2&\text{otherwise. }
\end{cases}
\end{equation*}
\end{enumerate}
\end{prop}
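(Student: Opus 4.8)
\textit{Proof proposal.} The plan is to run through the five transitive groups of degree $4$ one at a time. By Lemma~\ref{lem:extt}, $G$ is a transitive group of degree $4$ and $H$ a corresponding subgroup, so $G$ is (conjugate in $\fS_4$ to) one of $C_4$, $(C_2)^2$, $D_4$, $\fA_4$, $\fS_4$, and $H$ is a point stabiliser. By Corollary~\ref{cor:onhn} there is an isomorphism $\Sha(K/k)\cong \Sha_{\cD}^2(G,J_{G/H})^{\vee}$, where $\cD$ is the admissible set of decomposition groups of $\widetilde{K}/k$; since $(\Z/2)^{\vee}\cong \Z/2$, it suffices to compute $\Sha_{\cD}^2(G,J_{G/H})$ in each case.

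First I would dispose of the three cases contributing to part (i). If $G\cong C_4$, then $\Sha_{\omega}^2(G,J_{G/H})=0$ by Corollary~\ref{cor:cyan}, so $\Sha_{\cD}^2(G,J_{G/H})=0$ and $\Sha(K/k)=1$. If $G\cong D_4$ or $G\cong \fS_4$, the Hasse norm principle for $K/k$ holds by Bartels~\cite{Bartels1981b}, resp.\ Voskresenskii--Kunyavskii~\cite{Voskresenskii1984}, so again $\Sha(K/k)=1$. Within the present framework one can also argue directly for $\fS_4$: $\Sha_{\omega}^2(\fS_4,J_{\fS_4/\fS_3})$ is a $2$-group by Proposition~\ref{prop:andg}, and restriction to a $2$-Sylow $D_4\subset \fS_4$ embeds it, via the Mackey identification $J_{\fS_4/\fS_3}\cong J_{D_4/H_0}$ of Proposition~\ref{prop:hkor} ($H_0$ a point stabiliser of $D_4$, the double coset space being a single orbit), into $\Sha_{\omega}^2(D_4,J_{D_4/H_0})$; this reduces $\fS_4$ to $D_4$. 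In all three cases $\Sha(K/k)=1$, which proves (i).

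Next I would treat $G\cong (C_2)^2$ and $G\cong \fA_4$ and verify (ii). If $G\cong (C_2)^2$, then $H=\{1\}$ and $J_{G/H}=J_G$; by Proposition~\ref{prop:abts} one has $\Sha_{\omega}^2(G,J_G)\cong \Z/\gcd(2,2)\cong \Z/2$, and by Theorem~\ref{thm:upcj} (the only non-cyclic subgroup of $(C_2)^2$ being $G$ itself) one gets $\Sha_{\cD}^2(G,J_G)=0$ if $G\in\cD$ and $\Sha_{\cD}^2(G,J_G)\cong\Z/2$ otherwise, i.e.\ $\Sha(K/k)$ is trivial exactly when some decomposition group equals $(C_2)^2$. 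If $G\cong \fA_4$, let $S_2\cong (C_2)^2$ be its (normal) $2$-Sylow; then $H\cong C_3$, every conjugate of $H$ meets $S_2$ trivially, and $S_2\backslash G/H$ is a single double coset, so Proposition~\ref{prop:hkor} gives $J_{G/H}\cong J_{S_2}$ as $S_2$-lattices. Since $\Sha_{\cD}^2(G,J_{G/H})$ is a $2$-group (Proposition~\ref{prop:andg}), Proposition~\ref{prop:tsan}(ii) embeds it into $\Sha_{\cD_{\cap S_2}}^2(S_2,J_{S_2})$, $\cD_{\cap S_2}$ being admissible by Lemma~\ref{lem:adsq}(i); moreover $\Cor_{G/S_2}$ maps this latter group back into $\Sha_{\cD}^2(G,J_{G/H})$ (a Mackey computation, using normality of $S_2$), and $\Res_{G/S_2}\circ\Cor_{G/S_2}$ sends $\alpha$ to $\sum_{g\in G/S_2}g\cdot\alpha=3\alpha\equiv\alpha\bmod 2$ because $G/S_2\cong C_3$ acts trivially on the $2$-torsion group $\Sha_{\omega}^2(S_2,J_{S_2})$. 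Hence $\Res_{G/S_2}$ is an isomorphism, and the $(C_2)^2$-computation (applied to $S_2$ with the admissible set $\cD_{\cap S_2}$) yields $\Sha_{\cD}^2(\fA_4,J_{G/H})\cong \Sha_{\cD_{\cap S_2}}^2(S_2,J_{S_2})$, which vanishes when some $D\in\cD$ contains $S_2$ and equals $\Z/2$ otherwise. Dualising, and noting that in both $G\cong(C_2)^2$ and $G\cong\fA_4$ "containing $(C_2)^2$" means "containing $S_2$", proves (ii).

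I expect the main obstacle to be the vanishing $\Sha_{\omega}^2(D_4,J_{D_4/H})=0$ (equivalently, the $\fS_4$ case): unlike $\fA_4$, the group $D_4$ is its own $2$-Sylow, so the restriction-to-Sylow reduction is empty, and one must either invoke Bartels' theorem for dihedral Galois closures (combined, if one wants a purely cohomological statement, with Corollary~\ref{cor:onhn} applied to a realisation with cyclic decomposition groups as furnished by Proposition~\ref{prop:shf1}) or carry out a direct flasque-resolution computation for $D_4$. A second delicate point is pinning down the exact order of $\Sha_{\omega}^2(\fA_4,J_{G/H})$: the embedding into $\Sha_{\omega}^2(S_2,J_{S_2})\cong\Z/2$ only gives the bound $\le\Z/2$, so the corestriction section is genuinely needed both to see that it is $\Z/2$ and to track the dependence on the decomposition groups.
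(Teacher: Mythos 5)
Your proposal is correct, but note that the paper does not prove Proposition~\ref{prop:knth} at all: it is imported wholesale from Kunyavskii (\cite[Proposition 1]{Kunyavskii1984}), precisely because the degree $4=2^2$ falls outside the scope of Theorem~\ref{thm:tspt} (which needs degree in $p\Z\setminus p^2\Z$). So any proof you give is by construction a different route, and yours is a sensible reconstruction inside the paper's toolkit. The cases you handle internally check out: $C_4$ via Corollary~\ref{cor:cyan}; $(C_2)^2$ via Proposition~\ref{prop:abts} together with Theorem~\ref{thm:upcj} (correctly observing that after discarding the nontrivial cyclic subgroups only $\{1\}$ and possibly $G$ survive in $\cD_{(\cH)}$); and $\fA_4$ via the single-double-coset identification $J_{\fA_4/C_3}\cong J_{S_2}$ from Proposition~\ref{prop:hkor}, the injectivity of restriction on $2$-primary parts (Proposition~\ref{prop:tsan}(ii)), and the corestriction section. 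The corestriction step is the one place that needs the care you give it: one must verify both that $\Cor_{G/S_2}$ lands in $\Sha_{\cD}^{2}$ (which follows from the double coset formula plus normality of $S_2$ and conjugation-stability of $\cD$) and that $G/S_2\cong C_3$ acts trivially on a group of order at most $2$, so that $\Res\circ\Cor=3=\mathrm{id}$ on $2$-torsion; both points hold. The genuine limitation you flag yourself is accurate: the vanishing for $D_4$ (and hence the purely internal treatment of $\fS_4$, which you correctly reduce to $D_4$ by Mackey plus restriction to a $2$-Sylow) is not available from the paper's general theorems, and you must fall back on Bartels \cite{Bartels1981b} and Voskresenskii--Kunyavskii \cite{Voskresenskii1984}. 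Since the paper itself simply cites \cite{Kunyavskii1984} for the whole proposition, this reliance on external published results is no weaker than the paper's own treatment, and your argument has the advantage of deriving the $(C_2)^2$ and $\fA_4$ dichotomy in part (ii) directly from the cohomological machinery already set up in Sections~\ref{sect:prlm} and~\ref{sect:mnot}.
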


\begin{thm}\label{thm:pctw}
Let $p>2$ be an odd prime number. Consider a finite separable extension $K/k$ of degree $4p$ of a global field, and write for $\widetilde{K}/k$ its Galois closure. Put $G:=\Gal(\widetilde{K}/k)$ and $H:=\Gal(\widetilde{K}/K)$. We further assume that a $p$-Sylow subgroup $S_{p}$ of $G$ is normal. 
\begin{enumerate}
\item If $\Sha(K/k)$ is non-trivial, then it is isomorphic to $\Z/2$ or $\Z/p$. 
\item If $\Sha(K/k)\cong \Z/p$, then $p\equiv 1\bmod 4$ and there exist an isomorphisms
\begin{equation}\label{eq:fpzp}
G\cong (C_{p})^{2}\rtimes_{\varphi_{p,4}}C_{4},\quad H\cong \Delta(C_{p}) \rtimes_{\varphi_{p,4}}\{1\}. 
\end{equation}
Here $\varphi_{p,4}$ is defined by the homomorphism
\begin{equation*}
C_{4} \rightarrow \GL_{2}(\Fp);\,c_{4} \mapsto \diag(-1,\sqrt{-1}). 
\end{equation*}
Conversely, if $p\equiv 1\bmod 4$ and \eqref{eq:fpzp} hold, then one has an isomorphism
\begin{equation*}
\Sha(K/k)\cong 
\begin{cases}
1&\text{if a decomposition group of $\widetilde{K}/k$ contains $S_{p}$; }\\
\Z/p &\text{otherwise. }
\end{cases}
\end{equation*}
\item If $\Sha(K/k)\cong \Z/2$, then we have
\begin{itemize}
\item[(a)] $G/N^{G}(HS_{p})\cong (C_{2})^{2}$; or 
\item[(b)] $p\geq 5$ and $G/N^{G}(HS_{p})\cong \fA_{4}$. 
\end{itemize}
Conversely, if \emph{(a)} or \emph{(b)} is satisfied, then there is an isomorphism
\begin{equation*}
\Sha(K/k)\cong 
\begin{cases}
1&\text{if a decomposition group of $\widetilde{K}/k$ contains $(C_{2})^{2}$; }\\
\Z/2&\text{otherwise. }
\end{cases} 
\end{equation*}
\end{enumerate}
\end{thm}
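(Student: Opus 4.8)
The plan is to decompose $\Sha(K/k)=\Sha(K/k)[p^{\infty}]\oplus \Sha(K/k)^{(p)}$ and analyse the two summands with the machinery already established. Since $p>2$, the degree $[K:k]=4p$ lies in $p\Z\setminus p^{2}\Z$, so Theorem \ref{thm:ptnt} applies verbatim. It gives $\Sha(K/k)[p^{\infty}]\cong 1$ or $\Z/p$, with the precise alternative governed by conditions (a), (b), (c) of Theorem \ref{thm:tspt} and by decomposition groups, and it identifies $\Sha(K/k)^{(p)}\cong \Sha(K_{0}/k)$ where $K_{0}/k$ is the subextension of $\widetilde{K}/k$ fixed by $HS_{p}$, of degree $(G:HS_{p})=4$. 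Everything will follow once these two pieces are pinned down.

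For the prime-to-$p$ part I would feed $K_{0}/k$ into Kunyavskii's theorem, i.e.\ Proposition \ref{prop:knth}. The Galois closure of $K_{0}=\widetilde{K}^{HS_{p}}$ inside $\widetilde{K}$ is fixed by $N^{G}(HS_{p})$, so its Galois group is $\bar G:=G/N^{G}(HS_{p})$, a transitive group of degree $4$. Thus $\Sha(K_{0}/k)\neq 1$ forces $\bar G\cong (C_{2})^{2}$ or $\fA_{4}$, and then $\Sha(K_{0}/k)\cong \Z/2$ or $1$ according to whether some decomposition group of the Galois closure of $K_{0}/k$ contains the Klein four-subgroup of $\bar G$; since these decomposition groups are the images in $\bar G$ of the decomposition groups of $\widetilde{K}/k$, this is exactly the dichotomy phrased in part (3). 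The extra constraint $p\geq 5$ in case (b) comes for free: as $S_{p}\trianglelefteq G$ and $S_{p}\subseteq HS_{p}$ one has $S_{p}\subseteq N^{G}(HS_{p})$, so $\bar G$ is a quotient of $G/S_{p}$ and has order prime to $p$; if $\bar G\cong\fA_{4}$, then $3\mid 12$ forces $p\neq 3$. In particular $\Sha(K/k)^{(p)}\in\{1,\Z/2\}$.

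For the $p$-primary part I would combine Theorem \ref{thm:ptnt}(i) with Corollary \ref{cor:fpcf}, which translates ``(a), (b), (c) hold'' into ``$p\equiv 1\bmod 4$ together with \eqref{eq:fpzp}''. Hence $\Sha(K/k)[p^{\infty}]\neq 1$ implies $p\equiv 1\bmod 4$ and \eqref{eq:fpzp}, and conversely, under those hypotheses, Theorem \ref{thm:ptnt}(i) returns $\Sha(K/k)[p^{\infty}]\cong 1$ if a decomposition group of $\widetilde{K}/k$ contains $S_{p}$ and $\cong\Z/p$ otherwise; this is part (2). Part (1) then falls out: if $\Sha(K/k)[p^{\infty}]\neq 1$ then by \eqref{eq:fpzp} we have $G/S_{p}\cong C_{4}$, so $\bar G$ is a cyclic quotient of $C_{4}$, hence $\bar G\not\cong (C_{2})^{2},\fA_{4}$, and therefore $\Sha(K/k)^{(p)}=\Sha(K_{0}/k)=1$ by the previous paragraph. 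So the two torsion parts never vanish simultaneously, and $\Sha(K/k)$ is one of $1$, $\Z/2$, $\Z/p$, completing (1); part (3) is then just the prime-to-$p$ analysis above.

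The substantive input is all borrowed: Theorem \ref{thm:ptnt}, Corollary \ref{cor:fpcf} and Proposition \ref{prop:knth}. The only steps needing real care are the elementary group theory matching $\Gal$ of the Galois closure of $K_{0}/k$ with $G/N^{G}(HS_{p})$ and transporting the decomposition-group conditions between $\widetilde{K}/k$ and that closure, and the remark --- \emph{not} a hypothesis of Kunyavskii's theorem --- that the normality of $S_{p}$ is precisely what makes $G/N^{G}(HS_{p})$ a prime-to-$p$ quotient of $G$, which simultaneously yields $p\geq 5$ in case (b) and forbids both torsion parts of $\Sha(K/k)$ from being nontrivial at once.
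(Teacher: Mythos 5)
Your proposal is correct and follows essentially the same route as the paper: split $\Sha(K/k)$ into its $p$-primary and prime-to-$p$ parts, handle the former via Theorem \ref{thm:ptnt}(i) together with Corollary \ref{cor:fpcf}, the latter via Theorem \ref{thm:ptnt}(ii) together with Kunyavskii's theorem (Proposition \ref{prop:knth}) applied to $K_{0}=\widetilde{K}^{HS_{p}}$, and rule out simultaneous non-vanishing because $G/S_{p}\cong C_{4}$ forces $G/N^{G}(HS_{p})$ to be cyclic. Your explicit justification that $G/N^{G}(HS_{p})$ has order prime to $p$ (whence $p\geq 5$ in case (b)) is a detail the paper leaves implicit, and is correct.
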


\begin{proof}
By Theorem \ref{thm:ptnt} (i) and Corollary \ref{cor:fpcf}, we have the following. 
\begin{itemize}
\item[(1)] If $\Sha(K/k)[p^{\infty}]\neq 1$, then $p\equiv 1 \bmod 4$ and \eqref{eq:fpzp} is valid. 
\item[(2)] If $p\equiv 1 \bmod 4$ and \eqref{eq:fpzp} is valid, then
\begin{equation*}
\Sha(K/k)[p^{\infty}]\cong 
\begin{cases}
1&\text{if a decomposition group of $\widetilde{K}/k$ contains $S_{p}$; }\\
\Z/p &\text{otherwise. }
\end{cases}
\end{equation*}
\end{itemize}
On the other hand, since Theorem \ref{thm:ptnt} (ii) gives an isomorphism $\Sha(K/k)^{(p)}\cong \Sha(K_{0}/k)$, where $K_{0}$ is the intermediate field of $\widetilde{K}/k$ corresponding to $HS_{p}$, Proposition \ref{prop:knth} implies the following. 
\begin{itemize}
\item[(3)] If $\Sha(K/k)^{(p)}\neq 1$, then (a) or (b) is satisfied. 
\item[(4)] If (a) or (b) holds, then
\begin{equation*}
\Sha(K/k)^{(p)}\cong 
\begin{cases}
1&\text{if a decomposition group of $\widetilde{K}/k$ contains $(C_{2})^{2}$; }\\
\Z/2&\text{otherwise. }
\end{cases} 
\end{equation*}
\end{itemize}

(i): If $\Sha(K/k)[p^{\infty}]\neq \{1\}$, then we have $G/S_{p}\cong C_{4}$. On the other hand, if $\Sha(K/k)^{(p)}\neq \{1\}$, then $G/S_{p}$ is isomorphic to $(C_{2})^{2}$ or $\fA_{4}$. Since $C_{4}$ is not isomorphic to $(C_{2})^{2}$ or $\fA_{4}$, the assertion follows from (1)--(4). 

(ii): This follows from (i), (1) and (2). 

(iii): The assertion is a consequence of (i), (3) and (4). 
\end{proof}

\begin{lem}[{cf.~\cite[p.~198]{Tate1967}}]\label{lem:tate}
Let $K/k$ be a finite abelian extension of a global field. We further assume that 
\begin{itemize}
\item $\Gal(K/k)\cong C_{n_{1}}\times C_{n_{2}}$ where $n_{1},n_{2}\in \Zpn$; and
\item all decomposition groups of $K/k$ are cyclic. 
\end{itemize}
Then there is an isomorphism
\begin{equation*}
\Sha(K/k)\cong \Z/\gcd(n_{1},n_{2}). 
\end{equation*}
\end{lem}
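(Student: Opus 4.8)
The plan is to reduce the statement to the purely group-theoretic computation already carried out in Proposition~\ref{prop:abts}. Since $K/k$ is Galois we have $\widetilde{K}=K$, so with $G:=\Gal(K/k)$ the associated subgroup is $H:=\Gal(K/K)=\{1\}$; hence the multiset $\cH$ attached to $K/k$ is $\{\{1\}\}$ and $J_{G/\cH}=J_{G}$. Writing $\cD$ for the set of decomposition groups of $K/k$, Corollary~\ref{cor:onhn} gives
\begin{equation*}
\Sha(K/k)\cong \Sha_{\cD}^{2}(G,J_{G})^{\vee}.
\end{equation*}

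The next step is to identify $\cD$ with the set $\cC_{G}$ of all cyclic subgroups of $G$. On the one hand $\cD$ is admissible, so it contains every cyclic subgroup of $G$; on the other hand the hypothesis says every decomposition group of $K/k$ is cyclic, so $\cD\subseteq\cC_{G}$. Therefore $\cD=\cC_{G}$, and by the very definition of $\Sha_{\omega}^{2}$ we get $\Sha_{\cD}^{2}(G,J_{G})=\Sha_{\omega}^{2}(G,J_{G})$. Since $G\cong C_{n_{1}}\times C_{n_{2}}$, Proposition~\ref{prop:abts} then yields $\Sha_{\omega}^{2}(G,J_{G})\cong \Z/\gcd(n_{1},n_{2})$. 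As this group is finite cyclic, it is isomorphic to its Pontryagin dual, and chaining the isomorphisms above gives $\Sha(K/k)\cong \Z/\gcd(n_{1},n_{2})$.

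There is no genuinely hard step; the only point requiring care is the equality $\cD=\cC_{G}$, which relies on the fact that the set of decomposition groups of a finite Galois extension of a global field is admissible (a consequence of the Chebotarev density theorem, already used in Proposition~\ref{prop:almt}) together with the cyclicity hypothesis. Everything else is a direct appeal to Corollary~\ref{cor:onhn} and Proposition~\ref{prop:abts}, so the write-up should be short.
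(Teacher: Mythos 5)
Your proposal is correct and follows the same route as the paper: apply Corollary \ref{cor:onhn}, identify the set of decomposition groups with the set of all cyclic subgroups (so that $\Sha_{\cD}^{2}=\Sha_{\omega}^{2}$), and invoke Proposition \ref{prop:abts}. The paper's own proof is just a terser version of exactly this argument.
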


\begin{proof}
Since all decomposition groups of $K/k$ are cyclic, Corollary \ref{cor:onhn}, gives an isomorphism
\begin{equation*}
\Sha(K/k)\cong \Sha_{\omega}^{2}(G,J_{G})^{\vee}.  
\end{equation*}
Moroever, Proposition \ref{prop:abts} gives an isomorphism $\Sha_{\omega}^{2}(G,J_{G})^{\vee}\cong \Z/\gcd(n_{1},n_{2})$. Hence the assertion holds. 
\end{proof}

\begin{thm}\label{thm:cltt}
Let $p\neq 3$ be a prime number, and $k$ a global field whose characteristic is different from $p$. 
\begin{enumerate}
\item There is a finite separable field extension $K/k$ of degree $9p$ such that
\begin{equation*}
\Sha(K/k)\cong \Z/3p. 
\end{equation*}
\item We further assume that $k$ has characteristic $\neq \ell$, where $\ell\notin \{3,p\}$ is a prime number. Then there is a finite separable field extension $K/k$ of degree $3p\ell$ such that
\begin{equation*}
\Sha(K/k)\cong \Z/p\ell. 
\end{equation*}
\end{enumerate}
\end{thm}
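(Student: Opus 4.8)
\emph{Strategy.} Both assertions are proved along the lines of Theorem~\ref{thm:sfhf}. We choose a finite group $G$ and a subgroup $H$, realize $G$ as $\Gal(\widetilde{K}/k)$ by iterated Shafarevich lifting (Propositions~\ref{prop:shf1} and~\ref{prop:shf2}) so that \emph{all decomposition groups of $\widetilde{K}/k$ are cyclic}, and set $K:=\widetilde{K}^{H}$. If the $p$-Sylow subgroup $S_p$ of $G$ is normal and the triple $(G,H,S_p)$ satisfies conditions (a), (b), (c) of Theorem~\ref{thm:ptnt}, then, since no cyclic group contains $(C_p)^{2}$, Theorem~\ref{thm:ptnt} yields $\Sha(K/k)[p^{\infty}]\cong\Z/p$ and $\Sha(K/k)^{(p)}\cong\Sha(K_0/k)$, where $K_0\leftrightarrow HS_p$ is of degree $[K:k]/p$ over $k$. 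As $\Sha(K/k)=\Sha(K/k)[p^{\infty}]\oplus\Sha(K/k)^{(p)}$, it then suffices to arrange $\Sha(K_0/k)\cong\Z/3$ in (i) and $\Sha(K_0/k)\cong\Z/\ell$ in (ii), so that $\Sha(K/k)\cong\Z/p\oplus\Z/3\cong\Z/3p$, resp.\ $\Z/p\oplus\Z/\ell\cong\Z/p\ell$.

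\emph{Part (i).} Put $G:=(C_p)^{2}\rtimes_{\varphi}(C_3)^{2}$, where $\varphi$ is the first projection $(C_3)^{2}\twoheadrightarrow C_3$ followed by a two-dimensional $\Fp$-representation $\rho$ of $C_3$ with no nonzero fixed vector; such $\rho$ exists for every prime $p\neq 3$, namely $\chi_{p,3}^{1}\oplus\chi_{p,3}^{2}$ if $3\mid p-1$ (Proposition~\ref{prop:clrd}) and the two-dimensional irreducible representation if $3\mid p+1$ (Proposition~\ref{prop:cyir}), and in both cases $\Delta(C_p)$ is a $\{1\}$-special line. Set $H:=\Delta(C_p)\rtimes 1$ and $S_p:=(C_p)^{2}$. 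Then $[G:H]=9p$, $S_p$ is the normal $p$-Sylow subgroup, $N^{G}(H)=1$, and Proposition~\ref{prop:bcgr}(i) shows that $(G,H,S_p)$ satisfies (a), (b), (c). Realize $C_3\times C_3$ over $k$ with cyclic decomposition groups (Proposition~\ref{prop:shf2}) and lift to $G$ (Proposition~\ref{prop:shf1}, legitimate since $\ch k\neq p$ and $S_p\triangleleft G$), keeping decomposition groups cyclic; put $K:=\widetilde{K}^{H}$. Since $H\subseteq S_p$ we have $HS_p=S_p$, so $K_0=\widetilde{K}^{S_p}$ is Galois over $k$ with group $(C_3)^{2}$ and cyclic decomposition groups, whence $\Sha(K_0/k)\cong\Z/\gcd(3,3)=\Z/3$ by Lemma~\ref{lem:tate}. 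Thus $\Sha(K/k)\cong\Z/3p$.

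\emph{Part (ii).} Put $G:=\bigl((C_p)^{2}\times(C_\ell)^{2}\bigr)\rtimes_{\varphi}C_3$ with the two normal factors commuting and $C_3$ acting on $(C_p)^{2}$, resp.\ $(C_\ell)^{2}$, through two-dimensional representations $\rho_p$ over $\Fp$, resp.\ $\rho_\ell$ over $\Fl$, each without nonzero fixed vector and having $\Delta(C_p)$, resp.\ $\Delta(C_\ell)$, as a $\{1\}$-special line (these exist as in (i), since $3\nmid p$ and $3\nmid\ell$). Set $H:=\bigl(\Delta(C_p)\times\Delta(C_\ell)\bigr)\rtimes 1$, $S_p:=(C_p)^{2}$, $S_\ell:=(C_\ell)^{2}$. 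Then $[G:H]=3p\ell$, $S_p$ and $S_\ell$ are normal Sylow subgroups, $N^{G}(H)=1$, and Proposition~\ref{prop:bcgr}(i)---applied with complement $(C_\ell)^{2}\rtimes C_3$, resp.\ $(C_p)^{2}\rtimes C_3$, which acts on the relevant Sylow only through its $C_3$-quotient---shows that both $(G,H,S_p)$ and $(G,H,S_\ell)$ satisfy (a), (b), (c). Realize $G$ over $k$ with cyclic decomposition groups by a three-step tower: $C_3$ over $k$; then $(C_\ell)^{2}\rtimes C_3$ by Proposition~\ref{prop:shf1} ($\ch k\neq\ell$); then $G$ by Proposition~\ref{prop:shf1} ($\ch k\neq p$). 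Applying Theorem~\ref{thm:ptnt} to $K:=\widetilde{K}^{H}$ with the primes $p$ and $\ell$ separately gives $\Sha(K/k)[p^{\infty}]\cong\Z/p$ and $\Sha(K/k)[\ell^{\infty}]\cong\Z/\ell$. Moreover $\Sha(K/k)^{(p)}\cong\Sha(K_0/k)$ with $[K_0:k]=3\ell$, and Theorem~\ref{thm:ptnt}(ii) applied to $K_0/k$ with the prime $\ell$ (whose normality hypothesis holds because $S_\ell\triangleleft G$) gives $\Sha(K_0/k)^{(\ell)}\cong\Sha(K_{00}/k)$ with $[K_{00}:k]=3$, hence $\Sha(K_{00}/k)=1$ by Lemma~\ref{lem:bata}; so $\Sha(K_0/k)$, and therefore $\Sha(K/k)^{(p)}$, is an $\ell$-group, and comparison with $\Sha(K/k)[\ell^{\infty}]\cong\Z/\ell$ forces $\Sha(K/k)^{(p)}\cong\Z/\ell$. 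Hence $\Sha(K/k)\cong\Z/p\oplus\Z/\ell\cong\Z/p\ell$.

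\emph{Main obstacle.} The hard part is not the cohomology---which is entirely supplied by Theorem~\ref{thm:ptnt}---but the bookkeeping. One must (1) verify conditions (a), (b), (c) for the abstract triples, which I reduce via Proposition~\ref{prop:bcgr}(i) to the representation-theoretic statements of Section~\ref{sect:fnrp} about two-dimensional $\Fp$-representations of $C_3$ carrying a special line; and (2) make sure the smaller extension $K_0/k$ produced by Theorem~\ref{thm:ptnt}(ii) falls precisely under a previously established result---immediate in (i) because $HS_p=S_p$ makes $K_0/k$ an explicit $(C_3)^{2}$-extension (Lemma~\ref{lem:tate}), and in (ii) handled via a further application of Theorem~\ref{thm:ptnt} using the normality of $S_\ell$ (alternatively, by computing $N^{G}(HS_p)=S_p$ to identify $\Gal(\widetilde{K_0}/k)\cong(C_\ell)^{2}\rtimes C_3$). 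One should also mind the hypotheses on $\ch k$: $\ch k\neq p$ alone suffices in (i) because $C_3\times C_3$ is realizable over any global field, while (ii) additionally needs $\ch k\neq\ell$ to construct the $(C_\ell)^{2}\rtimes C_3$ layer.
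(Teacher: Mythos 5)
Your proof is correct and follows essentially the same route as the paper: the same groups (your $G$ in each part is isomorphic to the paper's $\widetilde{G}$), the same realization over $k$ via Propositions~\ref{prop:shf1} and~\ref{prop:shf2} with all decomposition groups cyclic, and the same application of Theorem~\ref{thm:ptnt} together with the splitting $\Sha(K/k)=\Sha(K/k)[p^{\infty}]\oplus\Sha(K/k)^{(p)}$. The only cosmetic difference is in part (ii), where the paper computes $\Sha(K_0/k)\cong\Z/\ell$ by recognizing $K_0/k$ as a degree-$3\ell$ instance of Theorem~\ref{thm:pldt}, whereas you reach the same conclusion by applying Theorem~\ref{thm:ptnt} a second time for the prime $\ell$ and comparing with $\Sha(K/k)[\ell^{\infty}]$.
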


\begin{proof}
Put
\begin{equation*}
G:=(C_{p})^{2}\rtimes C_{3}, \quad H:=\Delta(C_{p})\rtimes \{1\},
\end{equation*}
where the semi-direct product is the same as  Theorem \ref{thm:pldt} ($\alpha$) for $\ell=3$ and $m=2$ if $p\equiv 1\bmod \ell$, and is the same as  Theorem \ref{thm:pldt} ($\beta$) for $\ell=3$ if $p\equiv -1\bmod \ell$. We denote by $S_{p}$ the unique $p$-Sylow subgroup of $G$. 

(i): Set 
\begin{equation*}
\widetilde{G}:=G\times C_{3},\quad \widetilde{H}:=H\times \{1\},\quad \widetilde{S}_{p}:=S_{p}\times \{1\}. 
\end{equation*}
Then $(G:H)=9p$ and $\widetilde{S}_{p}$ is a $p$-Sylow subgroup of $G$. Moreover, Lemma \ref{lem:abcp} implies that
\begin{itemize}
\item $\widetilde{S}_{p}$ is normal in $G$; and
\item $\widetilde{G},\widetilde{H}$ and $\widetilde{S}_{p}$ satisfy (a), (b) and (c) in Theorem \ref{thm:ptnt}. 
\end{itemize}
In particular, we have $\widetilde{G}/\widetilde{S}_{p}\cong (C_{3})^{3}$. On the other hand, by Proposition \ref{prop:shf2}, there is an abelian extension $K_{0}/k$ with Galois group $(C_{3})^{2}$ in which all decomposition groups are cyclic. Furthermore, since the characteristic of $k$ is different from $p$, Proposition \ref{prop:shf1} implies the existence of a finite Galois extension $\widetilde{K}/k$ that contains such that $\Gal(\widetilde{K}/k)\cong \widetilde{G}$, $\widetilde{K}^{\widetilde{S}_{p}}=\widetilde{K}_{0}$ and all decomposition groups of $\widetilde{K}/k$ are cyclic. Now, let $K$ be the intermediate field of $\widetilde{K}/k$ corresponding to $\widetilde{H}$. Then we obtain
\begin{equation*}
\Sha(K/k)[p^{\infty}]\cong \Z/p
\end{equation*}
by Theorem \ref{thm:ptnt} (i). On the other hand, since $\widetilde{H}$ is contained in $\widetilde{S}_{p}$, Theorem \ref{thm:ptnt} (ii) gives an isomorphism $\Sha(K/k)^{(p)}\cong \Sha(\widetilde{K}^{\widetilde{S}_{p}}/k)$. Therefore, we obtain an isomorphism 
\begin{equation*}
\Sha(K/k)^{(p)}\cong \Z/3
\end{equation*}
because $G/S_{p}\cong (C_{3})^{2}$. Consequently, one has
\begin{equation*}
\Sha(K/k)\cong \Z/p\oplus \Z/3\cong \Z/3p
\end{equation*}
as desired since $p\neq 3$. 

(ii): Consider a group extension of $G$ as follows: 
\begin{equation}\label{eq:expf}
1\rightarrow (C_{\ell})^{2}\rightarrow \widetilde{G} \xrightarrow{\pi} G \rightarrow 1, 
\end{equation}
where the action of $G$ on $(C_{\ell})^{2}$ is defined as the composite of the surjection $G\twoheadrightarrow C_{3}$ and the homomorphism
\begin{equation*}
C_{3}\rightarrow \GL_{2}(\Fl);\,c_{3}\mapsto 
\begin{pmatrix}
0&-1\\
1&-1
\end{pmatrix}. 
\end{equation*}
In this case, the exact sequence \eqref{eq:expf} splits because $\#G$ is coprime to $\ell$. Fix an isomorphism $G\cong (C_{\ell})^{2}\rtimes G$, and denote by $\widetilde{H}$ the subgroup of $\widetilde{G}$ corresponding to $\Delta(C_{\ell})\rtimes H$. Then, it is contained in $\pi^{-1}(H)$. Furthermore, Lemma \ref{lem:abcp} implies that 
\begin{itemize}
\item a $p$-Sylow subgroup $\widetilde{S}_{p}$ of $\widetilde{G}$ is normal; and 
\item $\widetilde{G}$, $\widetilde{H}$ and $\widetilde{S}_{p}$ satisfy (a), (b) and (c) in Theorem \ref{thm:ptnt}. 
\end{itemize}
In addition, we have the following: 
\begin{equation}\label{eq:spqt}
\widetilde{G}/\widetilde{S}_{p}\cong (C_{\ell})^{2}\rtimes C_{3},\quad
\widetilde{H}\widetilde{S}_{p}/\widetilde{S}_{p}\cong \Delta(C_{\ell})\rtimes \{1\}
\end{equation}
Here the right-hand sides of two isomorphisms in \eqref{eq:spqt} coincides with ($\alpha$) in Theorem \ref{thm:pldt} attached to the pair of prime numbers $(\ell,3)$ and $m=2$ if $\ell \equiv 1\bmod 3$, and with ($\beta$) in Theorem \ref{thm:pldt} attached to the pair of prime numbers $(\ell,3)$ if $\ell \equiv 2 \bmod 3$. 

On the other hand, by the same proof as (i), there is a finite Galois extension $\widetilde{K}/k$ with Galois group $G$ of which all decomposition groups are cyclic. Then Theorem \ref{thm:ptnt} (i) gives an isomorphism
\begin{equation*}
\Sha(K/k)[p^{\infty}]\cong \Z/p. 
\end{equation*}
In addition, since \eqref{eq:spqt} is valid, Theorem \ref{thm:pldt} gives an isomorphism $\Sha(\widetilde{K}^{\widetilde{H}\widetilde{S}_{p}}/k)\cong \Z/\ell$. Combining this result with Theorem \ref{thm:ptnt} (ii), we obtain an isomorphism
\begin{equation*}
\Sha(K/k)^{(p)}\cong \Z/\ell. 
\end{equation*}
Therefore, one has
\begin{equation*}
\Sha(K/k)\cong \Z/p \oplus \Z/\ell \cong \Z/p\ell
\end{equation*}
as desired since $p$ and $\ell$ are coprime to each other. 
\end{proof}

\subsection{Comparison with previous results in small degree}\label{ssec:cppr}

Here we compare Theorems \ref{thm:pldt} and \ref{thm:pctw} with previous results; \cite{Drakokhrust1987} and \cite{Hoshi2022}. 

\subsubsection{The case of degree $6=2\cdot 3$. }

In this case, there exist $16$ permutation groups of degree $6$ that are not isomorphic to each other. See also \cite{Butler1983} and \cite[p.~60, Table 2.1]{Dixon1996}. We denote them by $6Tm$ where $m\in \{1,\ldots,16\}$. Then, by \cite[\S 10]{Drakokhrust1987}, $\Sha(K/k)$ is trivial or isomorphic to $\Z/2$, and $\Sha(K/k)\cong \Z/2$ may happen only in the cases $6T4 \cong \fA_{4}$ and $6T12 \cong \fA_{5}$. Note that a $2$-Sylow subgroup of $6T4$ is normal. On the other hand, a $2$-Sylow or a $3$-Sylow subgroup is normal if and only if $m$ is one of the $9$ integers as follows: 
\begin{equation*}
1\sim 6,9,10,13. 
\end{equation*}
Hence Theorem \ref{thm:pldt} recovers $1$ of the $2$ cases where $\Sha(K/k)\cong \Z/2$ happens. 

\subsubsection{The case of degree $10=2\cdot 5$. }

In this case, there exist $45$ permutation groups of degree $10$ that are not isomorphic to each other. See also \cite{Butler1983}. We denote them by $10Tm$ where $m\in \{1,\ldots,45\}$. It is known by Hoshi--Kanai--Yamasaki (\cite{Hoshi2022}) that $\Sha(K/k)$ is trivial or isomorphic to $\Z/2$, and $\Sha(K/k)\cong \Z/2$ may happen only in the cases $10T7 \cong \fA_{5}$, $10T26 \cong \fA_{6}$ and $10T32\cong \fS_{6}$. Note that $2$-Sylow subgroups of these groups are not normal. On the other hand, a $2$-Sylow or a $5$-Sylow subgroup is normal if and only if $m$ is one of $18$ integers as follows: 
\begin{equation*}
1\sim 6,8\sim 10,14,17\sim 21,27,28,33. 
\end{equation*}
All cases satisfy $\Sha(K/k)=1$ by Theorem \ref{thm:pldt}, which is consistent with the previous result. 

\subsubsection{The case of degree $12=4\cdot 3$. }

In this case, there exist $301$ permutation groups of degree $12$ that are not isomorphic to each other. See also \cite{Royle1987}. We denote them by $12Tm$ where $m\in \{1,\ldots,301\}$. It is known by Hoshi--Kanai--Yamasaki (\cite{Hoshi2023}) that $\Sha(K/k)$ is trivial or isomorphic to $\Z/2$, and there are $64$ cases where $\Sha(K/k)\cong \Z/2$ may happen. On the other hand, a $3$-Sylow subgroup of $12Tm$ is normal if and only if $m$ is one of the $77$ integers as follows. 
\begin{equation}\label{eq:itho}
\begin{gathered}
1,5,11\sim 15,17,19,35,36,38,39,41,42,46,72,73,78\sim 82,84,\\
116,118\sim 121,125,131,156,167,169,170,173,\\
209,211,212,215\sim 217,243\sim 245,247\sim 249,262\sim 264,266,267,274,\\
2,3,16,18,34,40,47,70,71,117,130,168,171,172,174,246,261,\\
10,37,77,210,214,242
\end{gathered}
\end{equation}
By Theorem \ref{thm:pctw}, the isomorphism $\Sha(K/k) \cong \Z/2$ happens possibly only when $m$ is one of the last two lines in \eqref{eq:itho}. Moreover, if $m$ is one of the integers that lie in the lowest line, then we cannot determine the vanishing of $\Sha(K/k)$ by a condition that is stable under automorphisms of $G$. This follows from the fact that $12Tm$ is contained in \cite[Table 2-2]{Hoshi2023} in that case.

\subsubsection{The case of degree $14$. }

In this case, there exist $63$ permutation groups of degree $14$ that are not isomorphic to each other. See also \cite{Butler1993}. We denote them by $14Tm$ where $m\in \{1,\ldots,63\}$. It is known by Hoshi--Kanai--Yamasaki (\cite{Hoshi2022}) that $\Sha(K/k)$ is trivial or isomorphic to $\Z/2$, and $\Sha(K/k)\cong \Z/2$ happens only in the case $14T30 \cong \PSL_{2}(13)$. On the other hand, a $2$-Sylow or a $7$-Sylow subgroup of $14Tm$ is normal if and only if $m$ is one of the $30$ integers as follows: 
\begin{equation*}
1\sim 9,11\sim 15,18,20\sim 26,29,31,32,35\sim 37,44,45. 
\end{equation*}
All cases satisfy $\Sha(K/k)=1$ by Theorem \ref{thm:pldt}, which is consistent with the previous result. 

\subsubsection{The case of degree $15$. }

In this case, there exist $104$ transitive groups of degree $15$ which are not isomorphic to each other. See also \cite{Butler1993}. We denote them by $15Tm$ where $m\in \{1,\ldots,104\}$. It is known by Hoshi--Kanai--Yamasaki (\cite[Theorem 1.15]{Hoshi2022}) that $\Sha(K/k)$ is trivial or isomorphic to $\Z/5$, and $\Sha(K/k)\cong \Z/5$ happens only in the cases $15T9\cong (C_{5})^{2}\rtimes C_{3}$ and $15T14\cong (C_{5})^{2}\rtimes D_{3}$. On the other hand, the normality of $3$-Sylow subgroup or a $5$-Sylow subgroup holds if and only if $m$ is one of the $64$ integers as follows: 
\begin{gather*}
1\sim 4,6\sim 9,11\sim 14,17\sim 19,25\sim 27,30\sim 46,\\
48\sim 52,54\sim 60,64\sim 68,71,73\sim 75,79\sim 82,84\sim 87. 
\end{gather*}
Hence Theorem \ref{thm:pldt} recovers all the cases where $\Sha(K/k)\cong \Z/5$ may happen.

\end{document}